\newcommand{\R}{\mathbb R}
\newtheorem{thm}{Theorem}[section]
\newtheorem{lem}[thm]{Lemma}
\newtheorem{prop}[thm]{Proposition}
\newtheorem{cor}[thm]{Corollary}
\newtheorem*{thma}{Theorem A}
\newtheorem*{thmb}{Theorem B}
\newtheorem*{thmc}{Theorem C}
\newtheorem*{thmd}{Theorem D}
\DeclareMathOperator{\C}{\mathbb{C}}
\DeclareMathOperator{\Diff}{\mathrm{Diff}}
\DeclareMathOperator{\CP}{\mathbb{C}\mathbb{P}}
\DeclareMathOperator{\SL}{\mathrm{SL}}
\DeclareMathOperator{\PSL}{\mathrm{PSL}}
\DeclareMathOperator{\inners}{\langle \cdot, \cdot \rangle}
\newcommand{\ccpair}{(c_1, \overline{c_2})}
\DeclareMathOperator{\fm}{\vb*{\overline {f_{--}}}}
\DeclareMathOperator{\f+}{\vb*{f_+}}
\newcommand{\hpair}{\vb*h_{(c_1, \overline{c_2})}}
\DeclareMathOperator{\phee}{\varphi}
\DeclareMathOperator{\eps}{\varepsilon}
\DeclareMathOperator{\Q}{\mathrm Q}
\DeclareMathOperator{\CAS}{\mathrm {CAS}}
\DeclareMathOperator{\CD}{\mathcal{CD}}
\newcommand{\veee}{\mathscr T}
\newcommand{\veeet}{\mathscr T_t}
\newcommand{\veet}{{\textstyle \mathstrut}\mathscr T_{Z_t}}
\newcommand{\CSCS}{\mathcal C(S)\times \mathcal C(\overline S)}
\newcommand{\TSTS}{\mathcal T(S)\times \mathcal T(\overline S)}
\newcommand{\Sph}{\mathbb{CAS}}
\theoremstyle{definition}
\newtheorem{defn}[thm]{Definition}
\newtheorem{remark}[thm]{Remark}
\newtheorem{example}[thm]{Example}
\newtheorem*{convention}{Convention}
\begin{document}
\title[Complex affine spheres and a Bers theorem for $\mathrm{SL}(3,\C)$]{Complex affine spheres and a Bers theorem for $\mathrm{SL}(3,\C)$}

\author[Christian El Emam]{Christian El Emam}
\address{Christian El Emam: University of Luxembourg, 
Maison du Nombre,
6 Avenue de la Fonte,
L-4364 Esch-sur-Alzette, Luxembourg.} \email{christian.elemam@uni.lu}

\author[Nathaniel Sagman]{Nathaniel Sagman}
\address{Nathaniel Sagman: University of Luxembourg, 
Maison du Nombre,
6 Avenue de la Fonte,
L-4364 Esch-sur-Alzette, Luxembourg.} \email{nathaniel.sagman@uni.lu}

\begin{abstract}
For $S$ a closed surface of genus at least $2$, let $\mathrm{Hit}_3(S)$ be the Hitchin component of representations to $\mathrm{SL}(3,\R),$ equipped with the Labourie-Loftin complex structure. We construct a mapping class group equivariant holomorphic map from a large open subset of $\mathrm{Hit}_3(S)\times \overline{\mathrm{Hit}_3(S)}$ to the $\mathrm{SL}(3,\C)$-character variety that restricts to the identity on the diagonal. The open subset contains $\mathrm{Hit}_3(S)\times \overline{\mathcal{T}(S)}$ and $\mathcal{T}(S)\times \overline{\mathrm{Hit}_3(S)}$ and the holomorphic map restricts to Bers' simultaneous uniformization on $\mathcal{T}(S)\times \overline{\mathcal{T}(S)}$.

The map is realized by associating pairs of Hitchin representations to interesting new immersions into $\C^3$ that we call complex affine spheres, which are obtained by solving a second-order complex elliptic PDE that resembles both the Beltrami and Tzitz{\'e}ica equations. To study this equation we establish analytic results that should be of independent interest.

\end{abstract}
\maketitle
\tableofcontents

\section{Introduction}
Given a closed oriented surface $S$ of genus $\mathrm g\geq 2$, let $\mathcal{T}(S)$ be the Teichm{\"u}ller space of marked complex structures on $S$. By the uniformization theorem, every point in $\mathcal{T}(S)$ determines a unique conjugacy class of Fuchsian representations of $\pi_1(S)$ into $\mathrm{PSL}(2,\R)$. 

Embedding $\mathrm{PSL}(2,\R)$ inside its complexification $\mathrm{PSL}(2,\C),$ any Fuchsian representation produces a representation to $\mathrm{PSL}(2,\C)$ that preserves a round circle in $\mathbb{CP}^1.$ If we deform such a representation inside the character variety $\chi(\pi_1(S),\mathrm{PSL}(2,\C))$ in a controlled manner, the resulting representation is quasi-Fuchsian, which means it preserves a Jordan curve in $\mathbb{CP}^1$. Quasi-Fuchsian representations have a number of nice properties; for example, quasi-Fuchsian representations are discrete and faithful, and hence the quotient of the action of a quasi-Fuchsian representation on $\mathbb{H}^3$ is a quasi-Fuchsian $3$-manifold. Given a quasi-Fuchsian representation $\rho$ preserving a Jordan curve $\gamma$, $\rho$ also preserves and acts properly discontinuously on the two connected domains $\Omega_\rho^+,\Omega_\rho^-\subset \mathbb{CP}^1$ that are complementary to $\gamma$. The resulting quotients $\faktor{\Omega_\rho^+}{\rho}$ and $\faktor{\Omega_\rho^-}{\rho}$ represent points in $\mathcal{T}(S)$ and $\mathcal{T}(\overline{S})$ respectively, where $\mathcal{T}(\overline{S})$ is the Teichm{\"u}ller space of the oppositely oriented surface $\overline{S}$. On the other hand, Bers proved that one can associate a quasi-Fuchsian representation to any pair of oppositely oriented complex structures, in a way that reverses the process above. Moreover, if we denote by $\mathcal{QF}(S)\subset {\chi}(\pi_1(S),\mathrm{PSL}(2,\C))$ the open subset of quasi-Fuchsian representations, and give $\mathcal{T}(S)$ its natural complex structure, then Bers' simultaneous uniformization theorem (Theorem \ref{thm: Bers thm} below) says that this construction descends to a biholomorphism $$\mathcal{T}(S)\times\mathcal{T}(\overline{S})\to \mathcal{QF}(S).$$  

In another direction, for every $n\geq 2$, given a Fuchsian representation into $\mathrm{PSL}(2,\R)$, we can turn it into a representation to $\mathrm{PSL}(n,\R)$ by post-composing with an embedding to $\mathrm{PSL}(n,\R)$ induced from an irreducible representation $\mathrm{PSL}(2,\R)\to \mathrm{PSL}(n,\R)$. Any deformation of such a representation inside $\chi(\pi_1(S),\mathrm{PSL}(n,\R))$ is called a Hitchin representation, and for $n$ odd, such representations fill up a connected component $\textrm{Hit}_n(S)$ of $\chi(\pi_1(S),\mathrm{PSL}(n,\R))$ called the Hitchin component (and when $n$ is even, there are two isomorphic Hitchin components). Introduced in Hitchin's seminal paper \cite{Hi}, Hitchin representations are Anosov (hence discrete and faithful) \cite{Lab1}, and are central objects of study in higher Teichm{\"u}ller theory.

For $n$ odd, $\mathrm{SL}(n,\R) = \mathrm{PSL}(n,\R)$, so for the rest of the paper we use $\mathrm{SL}(3,\cdot)$ over $\mathrm{PSL}(3,\cdot)$. Let $\mathcal{M}_3(S)$ be the holomorphic vector bundle over $\mathcal{T}(S)$ whose fiber over a class of a Riemann surface is identified with its space of holomorphic cubic differentials, i.e., it consists of pairs $[c,\Q]$, where $c$ is a complex structure on $S$ and $\Q$ is a holomorphic cubic differential on $(S,c)$. Labourie in \cite{Lab2} and Loftin in \cite{Lof} independently constructed a mapping class group equivariant and real analytic diffeomorphism $$\mathcal{L}: \mathcal{M}_3(S) \xrightarrow{\sim}\mathrm{Hit}_3(S)\ .$$ 
Both authors build this map using hyperbolic affine spheres. Labourie also gave a way to see $\mathcal{L}$ using minimal surfaces in symmetric spaces \cite{Lab3}, which generalizes to other higher Teichm{\"u}ller spaces for Lie groups of rank $2$ (see \cite{CTT}).

Similar to the case $n=2$, we can deform the Hitchin locus inside ${\chi}(\pi_1(S),\mathrm{SL}(3,\C)),$ and moreover we can inquire to what extent a Bers' phenomenon might hold for $\mathrm{SL}(3,\C),$ when we give $\mathrm{Hit}_3(S)$ the complex structure induced by $\mathcal{L}$. That is, one could--perhaps optimistically--try to holomorphically identify points in $\mathcal{M}_3(S)\times \mathcal{M}_3(\overline{S})$ with representations to $\mathrm{SL}(3,\C)$. In fact, embedding $\mathcal{M}_3(S)$ as the ``diagonal" inside $\mathcal{M}_3(S)\times \mathcal{M}_3(\overline{S})$, by analytic continuing power series, one can find neighbourhoods $U$ of this diagonal and $V$ of the Hitchin representations inside ${\chi}(\pi_1(S),\mathrm{SL}(3,\C))$ on which $\mathcal{L}$ extends uniquely to a biholomorphism $${\mathcal{L}}_{\C}: U\to V.$$ 

In this paper, we introduce maps from surfaces to $\C^3$, called complex affine spheres, that provide a concrete and geometrically meaningful realization of ${\mathcal{L}}_{\C}$. Since hyperbolic affine spheres have been used to study Hitchin representations to $\textrm{SL}(3,\R)$, we expect that complex affine spheres will become a useful piece of the theory of deformations of Hitchin representations inside $\textrm{SL}(3,\C)$. For our problem of interest, namely generalizing Bers' theorem, we use complex affine spheres to show that ${\mathcal{L}}_{\C}$ extends surprisingly far into $\mathcal{M}_3(S)\times \mathcal{M}_3(\overline{S})$, well past the diagonal as shown in Theorem A below, providing a more substantial and interesting Bers' phenomenon for $\mathrm{SL}(3,\C)$.

\subsection{Bers theorem for $\textrm{SL}(3,\C)$}
The Labourie-Loftin parametrization via hyperbolic affine spheres comes from the classical theory of affine differential geometry. See the book \cite{NS} for references. It follows, from work of Cheng-Yau in \cite{CY1} and \cite{CY2} on affine differential geometry,  and Goldman in \cite{Gol} and Choi-Goldman in \cite{ChG} on geometric structures, that every Hitchin representation to $\mathrm{SL}(3,\R)$ is uniquely associated with an equivariant hyperbolic affine sphere. Labourie and Loftin proved that every equivariant hyperbolic affine sphere is determined by the data of a complex structure on $S$ and an accompanying holomorphic cubic differential (see also Wang \cite{Wa}), determining the map $\mathcal L$ above.
 
We define (positive hyperbolic) complex affine spheres to be immersions of surfaces inside $\C^3$ satisfying a geometric condition and structural equation analogous to that of the real affine spheres in $\R^3$ (see Definition \ref{def: CASdef} and the subsection below). We show in Corollary \ref{cor: PDE of Gauss eq} that a complex affine sphere is equivalent to a pair of complex structures on $S$ with opposite orientations (not necessarily conjugate), holomorphic cubic differentials for the two complex structures, and a solution to an equation that depends on the complex structures and differentials. 
Through this formalism, we explicitly construct and study ${\mathcal{L}}_{\C}$. Denote the mapping class group by $\mathrm{MCG}(S).$ The following is proved in Section 6.

\begin{thma}\label{thm A}
The Labourie-Loftin parametrization extends uniquely to a $\mathrm{MCG}(S)$-equivariant holomorphic map
\[
\mathcal M_3(S)\times \mathcal M_3(\overline S)\supset \Omega \xrightarrow{\ {\mathcal L}_{\C}\  } \chi(\pi_1(S), \mathrm{SL}(3,\C) )
\]
where $\Omega$ is a $\mathrm{MCG}(S)$-invariant open subset 
containing the diagonal, $\mathcal{M}_3(S)\times \mathcal{T}(\overline{S})$, and ${\mathcal T}(S)\times \mathcal{M}_3(\overline{S})$. Moreover, ${\mathcal{L}}_{\C}$ agrees with Bers' simultaneous uniformization map on ${\mathcal T}(S)\times \mathcal{T}(\overline{S})$. Image points are holonomies of complex affine spheres.
\end{thma}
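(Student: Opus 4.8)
The plan is to define $\Omega\subset\mathcal M_3(S)\times\mathcal M_3(\overline S)$ to be the set of pairs $([c_1,\Q_1],[c_2,\Q_2])$ for which the complex Gauss-type equation of Corollary~\ref{cor: PDE of Gauss eq} --- the hybrid of the Beltrami and Tzitz\'eica equations --- is uniquely solvable, with the linearized operator at the solution an isomorphism. By integrating the structure equations of Definition~\ref{def: CASdef}, such a solution assembles into a $\pi_1(S)$-equivariant complex affine sphere $\widetilde f\colon\widetilde S\to\C^3$ with a well-defined holonomy $\rho\colon\pi_1(S)\to\mathrm{SL}(3,\C)$, and I would set $\mathcal L_\C([c_1,\Q_1],[c_2,\Q_2]):=[\rho]\in\chi(\pi_1(S),\mathrm{SL}(3,\C))$. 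Tautologically the image is the set of holonomies of complex affine spheres, and since the equation and the passage to the affine sphere and its holonomy are all natural under diffeomorphisms of $S$, the set $\Omega$ is $\mathrm{MCG}(S)$-invariant and $\mathcal L_\C$ is $\mathrm{MCG}(S)$-equivariant. Because invertibility of the linearization is an open condition carrying an implicit function theorem, $\Omega$ is open.

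The first substantive task is to show $\Omega$ is as large as claimed. Along the diagonal the two complex structures, and the two cubic differentials, are complex conjugates of one another, so the equation collapses to the classical hyperbolic affine sphere (Tzitz\'eica) equation on $S$; this is uniquely solvable by Cheng--Yau and Wang, with linearization invertible by the maximum principle, so the diagonal lies in $\Omega$. On $\mathcal M_3(S)\times\mathcal T(\overline S)$ and on $\mathcal T(S)\times\mathcal M_3(\overline S)$ one of the two cubic differentials vanishes and the equation degenerates to one governed by its Beltrami-type term; existence, uniqueness, and invertibility of the linearization on this region, which is far from the perturbative regime near the diagonal, are exactly the analytic results established in the earlier sections. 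I expect this PDE step --- solving the Beltrami--Tzitz\'eica equation on such a large domain --- to be \emph{the main obstacle} and the technical heart of the proof.

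Granting this, holomorphicity comes next. The operator defining the equation depends holomorphically on $([c_1,\Q_1],[c_2,\Q_2])$ for the product complex structure on $\mathcal M_3(S)\times\mathcal M_3(\overline S)$ (here $\mathcal M_3(\overline S)$ is the conjugate of $\mathcal M_3(S)$); combined with invertibility of the linearization on $\Omega$, the holomorphic implicit function theorem yields that the solution, and hence --- by integrating the holomorphic structure equations --- the immersion $\widetilde f$ and the holonomy $\rho$, depend holomorphically on the parameters. Testing against the trace functions generating the coordinate ring of $\chi(\pi_1(S),\mathrm{SL}(3,\C))$ then gives that $\mathcal L_\C$ is holomorphic. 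Over the diagonal the complex affine sphere is the complexification of the real hyperbolic affine sphere underlying the Labourie--Loftin parametrization, so its holonomy is the corresponding Hitchin representation and $\mathcal L_\C$ restricts to $\mathcal L$. Uniqueness of the extension is then automatic: the diagonal is a maximal totally real submanifold of $\mathcal M_3(S)\times\mathcal M_3(\overline S)$, so two holomorphic maps agreeing on it agree on the connected component of $\Omega$ containing it --- and that component contains all the sets in the statement, since each of them meets the diagonal along the Fuchsian locus $\{([c,0],[\overline c,0])\}$.

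It remains to identify $\mathcal L_\C$ on $\mathcal T(S)\times\mathcal T(\overline S)$, where $\Q_1=\Q_2=0$. There the complex affine sphere takes values in the complexified hyperboloid $\{z\cdot z=1\}\subset\C^3$ --- the two-dimensional complex quadric --- and the equation reduces to a pure Beltrami equation relating $c_1$ to $\overline{c_2}$; solving it and uniformizing, one recognizes the resulting data as precisely what Bers' simultaneous uniformization produces, with holonomy the image of the quasi-Fuchsian representation under the principal (symmetric-square) embedding $\mathrm{SL}(2,\C)\hookrightarrow\mathrm{SL}(3,\C)$. Hence $\mathcal L_\C$ restricted to $\mathcal T(S)\times\mathcal T(\overline S)$ agrees with Bers' map; unwinding the two constructions to check that they coincide on the nose, rather than merely up to conjugacy, will take some care. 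As a byproduct, since a $\mathbb{CP}^1$-structure is sent to an $\mathrm{SL}(3,\C)$-oper by the principal embedding, the image of $\mathcal L_\C$ contains the $\mathrm{SL}(3,\C)$-oper locus.
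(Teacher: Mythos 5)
Your outline catches part of the architecture but misses the technical heart, and one of your definitions is actually false for this problem.

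The largest gap is well-definedness on the quotient $\mathcal{M}_3(S)\times\mathcal{M}_3(\overline S)$. The Gauss equation, the affine sphere, and its holonomy are built from a \emph{representative} $(c_1,\overline{c_2},\Q_1,\overline{\Q_2},u)\in\mathcal{CD}(S)\times\mathcal{CD}(\overline S)\times C^\infty(S,\C)$, not from the pair of isotopy classes. Two representatives of the same point of $\mathcal{M}_3(S)\times\mathcal{M}_3(\overline S)$ differ by a pair of \emph{independent} isotopies $(\phi_1,\phi_2)\in\mathrm{Diff}_0(S)\times\mathrm{Diff}_0(S)$. Your ``naturality under diffeomorphisms of $S$'' only handles the diagonal action $\phi_1=\phi_2$ (giving $\mathrm{MCG}(S)$-equivariance once well-definedness is known), but says nothing about why $\rho$ is unchanged when $c_1$ and $\overline{c_2}$ are moved by \emph{different} isotopies. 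There is no single diffeomorphism of the surface relating the two affine spheres, so invariance of the holonomy is genuinely non-obvious; it is in fact the central technical point of the paper. It is resolved via the complex Lie derivative and transport machinery of Section 4 (Theorem~\ref{thm: complex transport of tensor}, Proposition~\ref{prop: holonomy is constant}), which requires establishing first that the solutions are real analytic for real analytic data (Theorem~F), and a density argument over $\mathrm{Diff}_0^\omega(S)$. None of this appears in your proposal, so your $\mathcal L_\C$ is not actually defined on $\mathcal{M}_3(S)\times\mathcal{M}_3(\overline S)$; this is why the paper goes through the finite-dimensional quotient $\Sph(S)$ rather than descending directly.

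A second, compounding problem is your definition of $\Omega$ by \emph{unique} solvability. The paper neither assumes nor proves uniqueness, and in fact uniqueness fails: Theorem~\ref{thm: HLL} (Huang--Loftin--Lucia) gives data $(c,\overline c,t\Q,-t\overline\Q)$ with multiple solutions for small $t$, and Theorem~C is built precisely on this failure. The paper's $\Omega$ is instead built from a \emph{chosen} branch of solutions that is infinitesimally rigid --- roughly, the branch continuously connected to the real affine spheres or to the explicit solutions $u\equiv0$ --- and constructed by patching local inverses of $\pi$ with the generalized inverse function theorem, enforcing $\mathrm{MCG}(S)$- and $\C^*$-twistor-invariance of the neighborhoods along the way (Theorem~\ref{thm: main existence of inverse}). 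You also misidentify the analytic input on $\mathcal{M}_3(S)\times\mathcal{T}(\overline S)$: existence there is trivial ($u\equiv0$ solves the equation when $\overline{\Q_2}=0$); what is hard, and what Theorem~G provides, is that the linearization $\Delta_{\hpair}-2$ at $u=0$ is invertible on a suitably large set of Bers metrics, proved by a continuity/transport argument that is again far from perturbative.

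Your discussion of holomorphicity of the holonomy, the uniqueness of the analytic continuation off the diagonal, the identification with the Tzitz\'eica equation on the diagonal, and the identification with Bers' map and opers on $\mathcal T(S)\times\mathcal T(\overline S)$ and $\mathcal M_3(S)\times\mathcal T(\overline S)$ are all in line with the paper. But as written, the proof is missing the descent to isotopy classes and uses a wrong (and unavailable) uniqueness hypothesis to carve out $\Omega$, and both omissions are load-bearing.
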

  Above, we are using $\mathcal{T}(S)$ to denote the embedding of $\mathcal{T}(S)$ into $\mathcal{M}_3(S)$ as the zero section. The fact that $\Omega$ contains $\mathcal{M}_3(S)\times \mathcal{T}(\overline{S})$ and $\mathcal{T}(\overline{S})\times \mathcal{M}_3(\overline{S})$ is very interesting to us because, a priori, ${\mathcal{L}}_{\C}$ is not expected to extend this far from the diagonal. In fact, $\Omega$ can be constructed to be invariant under what we call the $\C^*$ twistor action, defined by $\zeta\cdot ([c_1,\Q_1],[\overline{c_2},\overline{\Q_2}])= ([c_1,\zeta \Q_1],[\overline{c_2},\zeta^{-1} \overline{\Q_2}])$, which also shows that $\Omega$ is large. Note we need to be precise when talking about holomorphicity around the character variety (see Remark \ref{rem: character variety}).

We do not know whether the representations in the image of ${\mathcal{L}}_{\C}$ are Anosov. Note that real affine spheres descend to maps from $\widetilde{S}\to \mathbb{RP}^2$, which then pull back convex projective structures to $S$. In light of this point, it would be interesting to understand if one can use complex affine spheres to see geometric structures.

\begin{remark}\label{rem: previous version}
Labourie and Baraglia have shown that real hyperbolic affine spheres are equivalent to certain minimal surfaces in the
Riemannian symmetric space of $\textrm{SL}(3,\R)$ (see \cite[Section 9]{Lab2} and \cite[Section 3.4]{B}). 

In the original version of this paper posted on the arXiv, we introduced harmonic maps to the holomorphic Riemannian manifold $\textrm{SL}(n,\C)/\textrm{SO}(n,\C)$, with the holomorphic Riemannian metric induced by the Killing form, which give rise to gauge-theoretic objects that generalize Higgs bundles. We showed that the complex affine spheres of Theorem A are in correspondence with certain conformal harmonic maps to $\textrm{SL}(3,\C)/\textrm{SO}(3,\C)$, providing an alternative route to Theorem A and expanding the results of Labourie and Baraglia.

We have since removed that part from the paper, since it is not necessary for Theorem A, and because we treat this theory in more depth in an upcoming work. In that upcoming work, we generalize Theorem A to rank $2$ Hitchin components, use our tools to study Goldman's symplectic form on rank $2$ Hitchin components, and also give an interpretation for the points in $\mathcal{M}_3(S)\times \mathcal{T}(\overline{S})$ and its partner related to the theory of opers.
\end{remark}

\subsection{The space of complex affine spheres}\label{subsec: CAS space}
Before turning to Theorem A, we introduce complex affine spheres (Section 3), a new tool for studying representations to $\textrm{SL}(3,\C)$. We prove Theorem A by constructing a finite dimensional moduli space of complex affine spheres through which ${\mathcal{L}}_{\C}$ factors. 

The first ingredient in our theory is the notion of complex metrics on $S$ (discussed in detail in Section 2). A complex metric on $S$ is a non-degenerate bilinear symmetric form on the complexified tangent bundle $\C TS$, and it is called positive if it satisfies a further non-degeneracy condition (see Definition \ref{def: positive}). Given a positive complex metric $g$, one can associate two complex structures $c_1$ and $\overline{c_2}$ to $S$ of opposite orientations, and one can always locally find coordinates $z$ for $c_1$ and $\overline{w}$ for $\overline{c_2}$, and a function $\lambda$ to $\C^*$ such that $$g = \lambda dz d\overline{w}.$$

In \cite{BEE}, the authors formulate a covariant differential calculus for positive complex metrics, analogous to that of Riemannian metrics, in which we have Levi-Civita connections, curvature, Laplacians, and other usual objects. Finally, for every pair of oppositely oriented complex structures $(c_1, \overline{c_2})$, one can find a special positive complex metric $\hpair$ of constant curvature $-1$ in that conformal class, which is called the Bers metric, and every positive complex metric is conformally equivalent to a Bers metric.

Toward building ${\mathcal{L}}_{\C}$, we study (positive hyperbolic) complex affine spheres that can be deformed to real affine spheres. Complex affine spheres carry a number of tensors that generalize those from the real theory, such as (complex) Blaschke metrics and Pick tensors, satisfying Gauss and Codazzi-type equations. If the Blaschke metric has conformal class $\ccpair$, then the Codazzi equation is equivalent to the fact that the Pick tensor is of the form $C=\Q_1+\overline{\Q_2}$, with $\Q_1$ and $\overline{\Q_2}$ holomorphic cubic differentials for $c_1$ and $\overline{c_2}$ respectively (see Theorem \ref{thm: integrating GE for CAS}). For a distinctive subset of complex affine spheres, including those deformable to the real locus, solving the Gauss equation corresponds to finding complex solutions $u$ to the equation
\begin{equation}\label{eqn: firstgauss}
G(c_1, \overline{c_2}, \Q_1, \overline{\Q_2},u):=\Delta_{h} u-e^{2u}+\frac 1 4 h(\Q_1,\overline{\Q_2})\cdot   e^{-4u}+1\ .
\end{equation}
where $h=\hpair$.
Note that for a ``diagonal" point $(c,\overline{c},\Q,\overline{\Q})$, Equation \eqref{eqn: firstgauss} reduces to the ordinary Tzitz{\'e}ica equation for real hyperbolic affine spheres, as studied by Labourie and Loftin.

 Let $\mathcal{C}(S)$ be the space of complex structures on $S$, let $\mathcal{CD}(S)$ be the bundle of cubic differentials over $\mathcal{C}(S)$, and let $\mathcal{C}(\overline S)$ and $\mathcal{CD}(\overline{S})$ be the analogous spaces for $\overline{S}$. The product $\mathcal{CD}(S)\times \mathcal{CD}(\overline{S})$ is a complex Banach manifold, and when we take the quotient by $\mathrm{Diff}_0(S)\times \mathrm{Diff}_0(S),$ where $\mathrm{Diff}_0(S)$ is the space of diffeomorphisms of $S$ isotopic to the identity, we get $\mathcal{M}_3(S)\times \mathcal{M}_3(\overline{S})$. Our distinctive subset of affine spheres is parametrized by
 $$\mathrm{CAS}(S)=\{(c_1,\overline{c_2},\Q_1,\overline{\Q_2},u)\in \mathcal{CD}(S)\times \mathcal{CD}(\overline{S})\times C^\infty(S,\C): G(c_1,\overline{c_2},\Q_1,\overline{\Q_2},u)=0\}.$$
We say that a complex affine sphere $(c_1,\overline{c_2},\Q_1,\overline{\Q_2},u)$ is infinitesimally rigid if linearization of $G$ in the direction of $u$ is an isomorphism. We denote the space of infinitesimally rigid complex affine spheres by $\mathrm{CAS}^*(S)$. The space $\CAS^*(S)$ is a complex Fréchet manifold for which the natural projection $\hat\pi \colon \CAS^*(S)\to \CD(S)\times \CD(S)$ is a local biholomorphism and the holonomy map $hol\colon \CAS^*(S)\to \chi(\pi_1(S), \SL(3,\C))$ is holomorphic (see Section \ref{sec: BM and Gauss}).

An important issue concerning the spaces $\CAS(S)$ and $\CAS^*(S)$, which stems from a general issue around complex-metric related objects, is that their quotients under the diagonal action of $\mathrm{Diff}_0(S)$ are infinite-dimensional. To contend with this matter, in Section 4 we introduce notions of complex Lie derivative and transport through paths of complex vector fields, which allow to construct local deformations through ``double isotopies" (also see the introduction to Section 4). Defining the transport amounts to solving a Cauchy problem (see Equation (\ref{eq: transport problem})), which is analytically not so well-behaved. Precisely, the Cauchy problem fails H{\"o}rmander's criterion for local solvability (see Section \ref{subsec: trans}).

Using complex Lie derivatives and transport, we manage to take a finite-dimensional quotient $\Sph(S)$ of the space $\CAS^*(S)$ for which the fibers roughly correspond to double isotopies of the cubic differentials. The space $\Sph(S)$ keeps track of the geometric information of the complex affine spheres, as summarized in the following theorem (corresponding to the statements in Section \ref{sec: CAS space} of the text).

\begin{thmb}\label{thm b}
The space $\Sph(S)$ admits the structure of a finite dimensional complex manifold for which the quotient $\CAS^*(S)\to \Sph(S)$ is holomorphic. Moreover, $hol$ descends holomorphically to a map $hol\colon \Sph(S)\to \chi(\pi_1(S), \SL(3,\C))$ and $\hat \pi$ descends to a local biholomorphism $\pi\colon \Sph(S)\to \mathcal M_3(S)\times \mathcal M_3(\overline S)$.
\end{thmb}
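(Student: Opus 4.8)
\emph{Proof strategy.} The plan is to realize $\Sph(S)$ as the quotient of $\CAS^*(S)$ by the equivalence relation generated by the transport construction of Section~4, and to build holomorphic charts by intersecting $\CAS^*(S)$ with the $\hat\pi$-preimages of local slices for the $\Diff_0(S)\times\Diff_0(S)$-action on $\CD(S)\times\CD(\overline S)$. The two inputs recalled above are: (i) $\CAS^*(S)$ is a complex Fr\'echet manifold, $\hat\pi\colon\CAS^*(S)\to\CD(S)\times\CD(\overline S)$ is a local biholomorphism onto a Banach manifold, and $hol$ is holomorphic; and (ii) from Section~4, given $x\in\CAS^*(S)$ and a small ``double isotopy'' $(\phi_1^t,\phi_2^t)$ of pairs of diffeomorphisms starting at the identity, the transport Cauchy problem yields a path $x_t\in\CAS^*(S)$ with $x_0=x$ and $\hat\pi(x_t)=(\phi_1^t,\phi_2^t)\cdot\hat\pi(x)$, depending holomorphically on the data, along which $hol$ is constant. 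Thus $x_t$ is the $\hat\pi$-horizontal lift of the orbit path, and the substantive point is the constancy of $hol$; I would isolate this as a lemma and prove it by differentiating, showing that the flat $\SL(3,\C)$-connection of a complex affine sphere moves by an infinitesimal gauge transformation under the complex Lie derivative that implements an infinitesimal double isotopy, so that its conjugacy class is unchanged. Define $x\sim y$ when $y$ is reached from $x$ by a finite chain of transports; then $\hat\pi$ sends each $\sim$-class into a single $\Diff_0(S)\times\Diff_0(S)$-orbit, and, using contractibility of $\Diff_0(S)$ (so that orbits are contractible and horizontal lifts extend), $\hat\pi$ restricts to a surjection of each $\sim$-class onto that orbit.

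For the charts, fix $x_0\in\CAS^*(S)$, and by a standard local slice construction (Ahlfors--Bers) applied in each factor choose a finite-dimensional complex submanifold $\Sigma\subset\CD(S)\times\CD(\overline S)$ through $\hat\pi(x_0)$ that is a local holomorphic section of $\CD(S)\times\CD(\overline S)\to\mathcal M_3(S)\times\mathcal M_3(\overline S)$ and meets each nearby orbit exactly once (the action is free and proper for genus $\geq 2$). Since $\hat\pi$ is a local biholomorphism, $\widetilde\Sigma:=\hat\pi^{-1}(\Sigma)$ is, near $x_0$, a finite-dimensional complex submanifold of $\CAS^*(S)$ mapped biholomorphically by $\hat\pi$ onto a neighborhood of $\hat\pi(x_0)$ in $\Sigma$. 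I claim $\widetilde\Sigma\hookrightarrow\CAS^*(S)\to\Sph(S)$ is a homeomorphism onto a neighborhood. For surjectivity, given $y$ near $x_0$ I join $\hat\pi(y)$ to the unique orbit representative $p\in\Sigma$ by a small double isotopy and transport $y$ along it; the result lies over $p$, hence coincides with the point of $\widetilde\Sigma$ over $p$ by local injectivity of $\hat\pi$, and is $\sim$-equivalent to $y$. For injectivity, if $y',y''\in\widetilde\Sigma$ are $\sim$-equivalent then $\hat\pi(y'),\hat\pi(y'')\in\Sigma$ lie in one orbit, hence coincide, hence $y'=y''$. Declaring these maps charts gives $\Sph(S)$ the structure of a finite-dimensional complex manifold: transition maps are induced, through $\hat\pi$ and the holomorphic transport, by the holomorphic transitions between slices downstairs, and the same bookkeeping shows that $\CAS^*(S)\to\Sph(S)$ is holomorphic; Hausdorffness follows from properness of the $\Diff_0(S)\times\Diff_0(S)$-action together with continuous dependence of the transport endpoint on its data, which makes the graph of $\sim$ closed. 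Finally, $hol$ is constant on $\sim$-classes by the lemma and equals in each chart the restriction $hol|_{\widetilde\Sigma}$ of a holomorphic map, so it descends holomorphically, and $\hat\pi$ descends to $\pi\colon\Sph(S)\to\mathcal M_3(S)\times\mathcal M_3(\overline S)$ which in the chart $\widetilde\Sigma$ is the biholomorphism $\widetilde\Sigma\xrightarrow{\hat\pi}\Sigma$ followed by the biholomorphism of $\Sigma$ onto its open image in $\mathcal M_3(S)\times\mathcal M_3(\overline S)$, hence is a local biholomorphism.

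I expect the main obstacle to be the transport input as it is actually used here: existence of the horizontal lift $x_t$, and especially its holomorphic dependence on the double isotopy, cannot be quoted from general PDE theory, since the defining Cauchy problem fails H\"ormander's local solvability criterion, and must instead be extracted from the algebraic structure of the complex affine sphere equations---by differentiating the already-solved Gauss equation and using infinitesimal rigidity (invertibility of the linearization defining $\CAS^*(S)$) to recast transport as an ODE on the Fr\'echet manifold $\CAS^*(S)$ along the local $\hat\pi$-lift. Two secondary points are the holonomy-invariance lemma, which requires identifying the variation of the flat connection as an infinitesimal gauge transformation, and the triviality of transport monodromy around loops in an orbit---needed so that the chain relation $\sim$ meets each local slice in exactly one point---which follows from contractibility of $\Diff_0(S)$ together with uniqueness of $\hat\pi$-horizontal lifts.
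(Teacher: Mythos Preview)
Your outline is broadly correct and close in structure to the paper's argument, but it diverges at the crucial step, and the paper's resolution of that step is quite different from what you propose.

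The paper defines the equivalence $\sim$ more simply than you do: $\sigma\sim\sigma'$ if and only if they lie in the same connected component of a fiber of $[\hat\pi]\colon\CAS^*(S)\to\mathcal M_3(S)\times\mathcal M_3(\overline S)$. No transport is needed to \emph{define} $\sim$ or to put the complex manifold structure on $\Sph(S)$: since $\hat\pi$ is a local biholomorphism onto an open set locally trivialized as $A_1\times B_1\times A_2\times B_2$ (with $A_i\subset\mathcal M_3$, $B_i\subset\Diff_0(S)$), the quotient charts come directly from the projection $A_1\times B_1\times A_2\times B_2\to A_1\times A_2$, exactly the slice picture you describe but with no transport input whatsoever. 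Your insight that the $\hat\pi$-horizontal lift of a double-isotopy path \emph{is} the transport (``recast transport as an ODE along the $\hat\pi$-lift'') is correct and is the content of the paper's Proposition preceding the definition of $\Sph(S)$; but this is used only as motivation.

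The genuine difference is the holonomy-constancy argument. You propose to show that $\mathscr L_Z D$ is an infinitesimal gauge transformation for every complex vector field $Z$, which would give constancy of $hol$ along any path in a fiber by integrating a gauge ODE. This is plausible---essentially because $\mathscr L_X D$ for real $X$ lies in the kernel of the holomorphic map $d\,hol$, and one then uses $\C$-linearity---but you do not justify it, and the bundle $\C TS\oplus N$ and the specific lift of diffeomorphisms need to be handled carefully for the claim to even parse. The paper takes a completely different route: it (i) observes that \emph{diagonal} isotopies obviously preserve holonomy, (ii) uses density of real analytic diffeomorphisms to reduce to real analytic double-isotopy paths, (iii) invokes Theorem~F to show that the solution $u$ of the Gauss equation is then real analytic, and (iv) applies the Cauchy--Kovalevskaya-based transport of Section~4 (Theorem~\ref{thm: complex transport of tensor} and Proposition~\ref{prop: holonomy is constant}) to transport parallel sections and conclude that holonomy is constant. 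In particular, Theorem~F is indispensable in the paper's proof and is entirely absent from your outline; if your infinitesimal-gauge shortcut works, it would sidestep one of the paper's main analytic theorems at this juncture.
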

As we show in Section 6, the map ${\mathcal L}_{\C}$ in Theorem \ref{thm A} is the composition of $hol$ with a local inverse of $\pi$. 

A natural question is whether $ {\mathcal L}_{\C}$ admits a global extension. The following statement provides evidence towards a negative answer.
\begin{thmc}
    There is no holomorphic map $$\mathcal{CD}(S)\times \mathcal{CD}(\overline{S})\to  \mathcal{CD}(S)\times \mathcal{CD}(\overline{S})\times C^\infty(S,\C)$$ with image in $\CAS(S)$ that inverts $\hat \pi$ and that coincides with the real affine spheres parametrization on the real locus.
\end{thmc}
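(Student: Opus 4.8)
\noindent\textit{Proof plan.}
The plan is to argue by contradiction, reducing to a single complex line of data and exploiting the $t\mapsto-t$ symmetry together with the structure of \eqref{eqn: firstgauss}. Fix a complex structure $c$ on $S$, a nonzero holomorphic cubic differential $\Q$ for $c$, let $\overline\Q$ be the conjugate cubic differential for $\overline c$, and let $h_0$ be the hyperbolic metric of $(S,c)$, i.e.\ the Bers metric of the conjugate pair $(c,\overline c)$. Put $\mu:=h_0(\Q,\overline\Q)$, a non-negative function on $S$ that is not identically zero, and for $s\in\C$ set
\begin{equation*}
G_s(u):=\Delta_{h_0}u-e^{2u}+\tfrac{s}{4}\,\mu\, e^{-4u}+1 .
\end{equation*}
Let $\phi(s)\in C^\infty(S,\R)$ denote the (real-valued) Labourie-Loftin solution of $G_s=0$ associated to $(c,\sqrt{s}\,\Q)$, which exists for every $s\ge 0$. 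The heart of the proof is the following claim: \emph{the map $s\mapsto\phi(s)$, $s\in[0,\infty)$, does not extend to an entire map $\C\to C^\infty(S,\C)$}. Granting this, I will show that the hypothetical section forces precisely such an extension.

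To prove the claim, suppose $\Phi\colon\C\to C^\infty(S,\C)$ is entire with $\Phi(s)=\phi(s)$ for $s\ge0$. Since $(s,u)\mapsto G_s(u)$ is holomorphic and $s\mapsto G_s(\Phi(s))$ vanishes on $[0,\infty)$, it vanishes identically, so $\Phi(s)$ solves $G_s=0$ for all $s\in\C$. As $\Phi$ is real-valued on $[0,\infty)$, Schwarz reflection ($s\mapsto\overline{\Phi(\overline s)}$ is holomorphic and agrees with $\Phi$ on $[0,\infty)$) shows $\Phi(s)$ is real-valued for every $s\in\R$. Take $s<0$: then $u:=\Phi(s)$ is a real solution of $\Delta_{h_0}u=e^{2u}+\tfrac{|s|}{4}\mu\, e^{-4u}-1$ on the closed surface $(S,h_0)$. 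Integrating over $S$ annihilates the left side, giving $\int_S\big(e^{2u}+\tfrac{|s|}{4}\mu\, e^{-4u}\big)\,dA_{h_0}=\mathrm{Area}(S,h_0)$. But minimizing $v\mapsto e^{2v}+Ce^{-4v}$ over $v\in\R$ gives $e^{2v}+Ce^{-4v}\ge c_0\,C^{1/3}$ for a universal $c_0>0$, so the left side is at least $c_0\,(|s|/4)^{1/3}\int_S\mu^{1/3}\,dA_{h_0}$, which tends to $+\infty$ as $|s|\to\infty$ because $\int_S\mu^{1/3}\,dA_{h_0}>0$. This contradicts the preceding equality once $|s|$ is large, proving the claim.

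Now suppose a holomorphic section $\sigma$ as in the statement exists. Because $\hat\pi$ only forgets the $C^\infty(S,\C)$-valued coordinate, $\sigma$ amounts to a holomorphic assignment $(c_1,\overline{c_2},\Q_1,\overline{\Q_2})\mapsto u$ with $G(c_1,\overline{c_2},\Q_1,\overline{\Q_2},u)=0$ that returns the real hyperbolic affine sphere solution on the real locus. Restrict $\sigma$ to the holomorphic line $t\mapsto(c,\overline c,t\Q,t\overline\Q)$, $t\in\C$, and let $u(t)\in C^\infty(S,\C)$ be the resulting function, a holomorphic map $\C\to C^\infty(S,\C)$. This point lies on the real locus exactly when $t\in\R$ (conjugating $(c,t\Q)$ gives $(\overline c,\overline t\,\overline\Q)$), so $u(t)=\phi(t^2)$ for $t\in\R$, since $\|t\Q\|^2=t^2\mu$ there. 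Moreover the pairing in \eqref{eqn: firstgauss} is complex bilinear, which is what makes the Gauss equation invariant under the $\C^*$ twistor action; hence $h_0(t\Q,t\overline\Q)=t^2\mu$ and $G_{t^2}(u(t))=0$ for all $t\in\C$. The functions $u(t)$ and $u(-t)$ both solve $G_{t^2}=0$ and agree on $\R$, so by the identity theorem for holomorphic maps into a Fr\'echet space $u(-t)=u(t)$; an even entire function is an entire function of its argument squared, so $u(t)=\Phi(t^2)$ for an entire $\Phi\colon\C\to C^\infty(S,\C)$ with $\Phi(s)=\phi(s)$ for $s\ge0$. This contradicts the claim, so no such $\sigma$ exists.

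The main obstacle is the claim, and within it the fact that $G_s=0$ has no real solution for $s\ll 0$: this coercivity is what makes the (reflection-forced) real solution on the negative axis impossible, and it is the conceptual reason the real parametrization cannot be globally complexified. The surrounding functional-analytic points — holomorphic dependence of the section's last coordinate, the identity theorem and Schwarz reflection for $C^\infty(S,\C)$-valued holomorphic maps, and holomorphicity of $(s,u)\mapsto G_s(u)$ — are standard; if desired one can run the entire argument in a Banach space such as $C^{2,\alpha}(S,\C)$ and recover $C^\infty$ regularity at the end.
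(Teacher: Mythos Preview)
Your proof is correct and arrives at the same obstruction as the paper, but via a genuinely different route. The paper restricts the hypothetical section to the ray $t\mapsto(c,\overline c,t\Q,-t\overline\Q)$, which touches the real locus only at $t=0$; it then uses that $\hat\pi$ is a local biholomorphism at the infinitesimally rigid point $(c,\overline c,0,0,0)\in\CAS^*(S)$ to force $u_t$ to equal the real implicit-function-theorem branch for small $t$, and then real analyticity in $t$ propagates reality to all $t>0$. Non-existence of a real solution for large $t$ is obtained by citing Huang--Loftin--Lucia (whose relevant part is, as you rediscover, a short Gauss--Bonnet computation). You instead restrict to the line $t\mapsto(c,\overline c,t\Q,t\overline\Q)$, which sits on the real locus for all of $\R$; this lets you use the hypothesis on the real locus along an entire real line rather than at a single point. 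The evenness (via uniqueness of the Labourie--Loftin solution) plus Schwarz reflection then produces an entire extension $\Phi(s)$ that is real on all of $\R$, and for $s<0$ you land on the same ``bad'' equation $\Delta_{h_0}u=e^{2u}+\tfrac{|s|}{4}\mu e^{-4u}-1$, whose non-solvability for $|s|$ large you prove directly by integrating over $S$.

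What each approach buys: your argument is more self-contained and elementary---it does not invoke the $\CAS^*(S)$ machinery or infinitesimal rigidity, and it spells out the Gauss--Bonnet obstruction rather than citing \cite{HLM}. The paper's route, on the other hand, makes the link to minimal Lagrangians in $\mathbb{CH}^2$ explicit (the equation at $(c,\overline c,t\Q,-t\overline\Q)$ is exactly their structural equation), which is geometrically illuminating and ties into the broader discussion around Theorem~\ref{thm: HLL}.
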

   In particular, $\hat \pi\colon \CAS^*(S)\to \CD(S)\times\CD(\overline S)$ has no globally defined right inverse extending the real affine spheres parametrization.

 For input data $(c,\overline{c},\Q,-\overline{\Q})$, Equation (\ref{eqn: firstgauss}) is exactly the structural equation for minimal Lagrangians in $\mathbb{CH}^2$. Theorem C is a consequence of the fact that for $(c,\overline{c},\Q,-\overline{\Q})$, as $\Q$ grows larger the linearization of Equation (\ref{eqn: firstgauss}) eventually develops a kernel, which was proved by Huang-Loftin-Lucia in their paper about minimal Lagrangians \cite{HLM}. More discussion in Section \ref{sec: ThmD}.

\subsection{Analytic results}
As indicated above, the study of affine spheres and proving Theorem A require studying Equation (\ref{eqn: firstgauss}). In Section 5, we compute in local coordinates that if $g=\lambda dzd\overline{w}$, and $\mu$ is the Beltrami form of $w$ as a function of $z$, i.e.,  $\partial_{\overline z} w=\mu \partial_z w$, then the Laplace operator of $g$ is  
\begin{equation}\label{laplacian}
    \Delta_g = \frac{1}{\lambda\partial_{\overline z} \overline{w}}{\partial}_{\overline{z}}\left({\partial}_{ z}-\overline{\mu}{\partial}_{\overline{z}}\right).
\end{equation}
Note that the factor ${\partial}_{ z}-\overline{\mu}{\partial}_{\overline{z}}$ is the complex conjugation of the Beltrami operator. Since $|\mu|<1$, $\Delta_g$ is a second order complex elliptic operator, i.e., it's principle symbol never vanishes. The general class of such PDE's is not so well studied (see \cite[\S 7.8]{Ast} and \cite[\S 5]{Beg} for references), and hence we have to develop some general results ourselves. As a start, drawing on the analysis for the Beltrami operator, originating from Ahlfors-Bers, we prove interior elliptic estimates for complex operators such as $\Delta_g$ (Proposition \ref{interiorestimate}), from which we deduce Fredholm properties (Proposition \ref{fredholmness}). We record one of our more substantial results as a theorem below. 

 To extend $\Omega$ to $\mathcal{M}_3(S)\times \mathcal{T}(\overline{S})$ and $\mathcal{M}_3(\overline{S})\times \mathcal{T}(S)$, we need to show that we have infinitesimally rigid complex affine spheres lying over these loci, and doing so boils down to studying the spectrum of $\Delta_g.$ For the theorem below, see the more precise statement in Theorem \ref{thm F extended}.
\begin{thmd}
The operator $L_{\hpair}: C^{\infty}(S,\C)\to C^\infty(S,\C)$ defined by
    $$  L_{\hpair} u = \Delta_{\hpair} u - 2u\ $$ 
is invertible for an open and dense subset of elements $\ccpair\in \CSCS$, including all the pairs $\ccpair$ such that $c_1$ and $\overline{c_2}$ induce the same real analytic structure on $S$.
\end{thmd}
Theorem D can be recast as saying that a dense class of Bers metrics are infinitesimally rigid in their conformal class. In \cite{BEE}, it is proved that Bers metrics form a connected component of the space of positive complex metrics of constant curvature $-1$; this result is consistent with that one. 

The proof of Theorem D falls outside the standard analytic toolkit: we use a continuity argument, relying on the transport technique defined in Section 4 and on one of the main theorems proved in the first author's work on variations of positive complex metrics \cite[Theorem A]{ElE}, to connect pairs $(c_1, \overline{c_2})$ inducing the same real analytic structure to pairs $(c, \overline c)$, for which the result of Theorem D is elementary. In order to use the transport technique, we need to establish a regularity result, which is independently significant.

\begin{lem}\label{lem: analyticity}
       Assume that $g$ is a real analytic positive complex metric on $U\subset \C$, let $V\subset \C\times \C^2$ be an open domain, let $F:U\times V \to \C$ be a real analytic function that is linear in the second two coordinates. Suppose that  $u\in C^2(U,\C)$ with $(u,Du)(U)\subset V$ satisfies the inhomogeneous second order complex elliptic linear equation $$\Delta_g u +F(z,u,Du)=0.$$ 
Then $u$ is real analytic.
\end{lem}
We prove Lemma \ref{lem: analyticity} by adapting a proof of an analogous result for real elliptic PDE's, which is attributed to Kato \cite{Kato}, Hashimoto \cite{Hash}, and Blatt \cite{Blatt}. There are some genuine differences in our complex setting (see Remark \ref{rem: differences}), but still the idea pushes through.

Returning to Theorem D, we believe that $L_{\hpair}$ is an isomorphism for all Bers metrics, but we didn't need to prove the result in this generality. While Theorem D shows that $2$ is not in the spectrum, we do not know whether $u\mapsto \Delta_h u - ku$ is an isomorphism for every $k>0$. Studying the spectrum of $\Delta_h$ further seems interesting. 

%\vspace{10mm}
%Our first main analytic theorem shows that complex affine spheres with real analytic input data are real analytic. Using this theorem, we're able to use our transport method, defined in Section 4, on complex affine spheres in the proofs of Theorems A and B (we also use it in the proof of Theorem E below).

\subsection{Outline of the paper} Toward understanding the main ideas, on a first reading one might want to focus on Sections 2, 3, and 6, and take the more analytic results from Sections 4 and 5 (such as Theorem D) for granted. Theorems A, B, and C are proved in Section 6.

In Section 2 we give the preliminaries on Teichm{\"u}ller theory and complex metrics, among other things. Section 3 contains the geometry that underpins the rest of the paper: we develop a general theory of complex affine immersions, define complex affine spheres, and investigate their basic properties.   We also give a number of explicit examples of complex affine immersions and complex affine spheres. Key results include Theorem \ref{thm: integrating GE for CAS}, which shows that every positive hyperbolic complex affine sphere determines a pair of complex structures and cubic differentials, and Corollary \ref{cor: PDE of Gauss eq} , which shows that if one can solve Equation (\ref{eqn: firstgauss}) built out of that data, then one can recover a complex affine sphere. 

In Section 4, which can be read independently of the rest of the paper, we develop the notion of complex Lie derivatives and transport along complex vector fields. These tools are used in the proof of Theorem D, and then eventually used to show in the proofs of Theorems A and C that local double isotopy deformations preserve the holonomy classes of infinitesimally rigid complex affine spheres. 

In Section 5, which is independent of the theory of affine spheres, we carry out our general analysis of complex elliptic operators. We first derive our expression for $\Delta_g$, prove interior elliptic estimates, and Fredholm properties. We then prove Lemma \ref{lem: analyticity} and Theorem D.

With the essential tools developed, in Section 6 we return to complex affine spheres. Here we formalize the strategy outlined in Section \ref{subsec: CAS space}, construct the moduli space of infinitesimally rigid complex affine spheres (thus proving Theorem B), and then give the proof of Theorem A. We then prove Theorem C in Section \ref{sec: ThmD}.

\subsection{Related work}
There have been a number of works related to representations of surface groups into complex Lie groups close to the Hitchin locus, although the perspectives of all of the works so far are distinct from ours. See for example \cite{DS1}, \cite{DS2}, \cite{ADL}, \cite{AMTW}, \cite{Dav}, and in particular \cite{DS2}, where Dumas-Sanders find a local continuation of Bers' theorem in a setting that falls more in line with the theory of Anosov representations.

We could frame the first half of Section 3 as the formulation of a basic theory of affine real submanifolds of dimension $n$ inside $\mathbb{C}^{n+1}$, which then leads to the definition of complex affine spheres. We'd like to point out that there is an established theory of complex affine differential geometry, but centered around complex hypersurfaces in $\C^{n+1}$, developed in \cite{Abe} and \cite{DVV} among other sources.

After writing this paper, we were notified about the paper \cite{Kim}, in which Kim studies a complex extension of the Riemannian Laplacian, which seems to agree with our Bers Laplacians, and studies their determinants.
Our analytic results, such as elliptic estimates and Fredholmness for our Laplacians, Lemma \ref{lem: analyticity}, and especially Theorem D, can be seen as contributing to the study initiated in \cite{Kim}.

Around the end of writing this paper, we learned of independent work of Rungi and Tamburelli \cite{RT}, who introduce complex minimal Lagrangian surfaces in the bi-complex hyperbolic space, which generalize minimal Lagrangian surfaces in $\mathbb{CH}^2$, hyperbolic affine spheres in $\R^3$, and Bers
embeddings in the holomorphic space form $\mathbb{\CP}^1 \times\mathbb{\CP}^1 \backslash \Delta$. In \cite{RT}, the authors define what they call $\mathrm{SL}(3,\C)$-bi-complex Higgs bundles, and they give a parameterization of a subset of $\mathrm{SL}(3,\C)$-Anosov representations by an open set in $\mathcal{M}_3(S)\times \mathcal{M}_3(\overline{S})$, from which they deduce that this subset of representations is endowed with a bi-complex structure. The approach of their work seems to give a different point of view and it would be interesting to analyze the interplay further. Nevertheless, we underline that the two works are completely independent.

\subsection{Acknowledgements}
We thank Kari Astala, Simon Blatt, and Peter Smillie for helpful discussions and correspondence. N.S. thanks the organizers and participants of the conference \emph{Advances in Higgs Bundles}, which took place at the Brin Mathematics Research Center at the University of Maryland in April 2024, for conversations and sharing references from which this work benefited. We also thank the Geometry and Topology group in Luxembourg for the support and for creating such a nice working environment. 

Both authors are funded by the FNR grant O20/14766753, \textit{Convex Surfaces in Hyperbolic Geometry.}

\section{Preliminaries}
For the rest of the paper, let $S$ be a closed oriented surface of genus $\mathrm g\geq 2,$ and let $\overline{S}$ be that same surface with the opposite orientation. Throughout the paper, we use $W^{k,p}(\cdot)$, $k\geq 1,$ $1\leq p \leq \infty$, for Sobolev spaces.
As we will see, the analysis of the equations that we will eventually encounter require precise choices of Sobolev spaces.
\subsection{Teichm{\"u}ller space}
Fix a basepoint complex structure on $S$ that's compatible with the orientation, say $c_0,$ and let $\mathcal{K}$ be the canonical bundle. A \textbf{Beltrami form} $\mu$ on $S$ is an $L^\infty$-measurable section of the bundle $\mathcal{K}^*\otimes \mathcal{K}^{-1}$ with $L^\infty$ norm strictly less than $1$. Recall that marked complex structures are parametrized by Beltrami forms. Explicitly, given a Beltrami form $\mu$ on $(S,c_0)$, the Measurable Riemann Mapping Theorem produces a unique pair consisting of a compatible complex structure $c_\mu$ together with a quasiconformal map $f_\mu:(S,c_0)\to (S,c_\mu)$. We say that a complex structure is in $W^{k,p}$ if the Beltrami form is $W^{k,p}$. For a positive integer $l>1$, denote by $\mathcal{C}^l(S)$ the space of $W^{l,\infty}$ complex structures on $S$ that are compatible with the orientation. We write $\mathcal{C}(S)$ for the $C^\infty$ complex structures. Let $\mathrm{Diff}_+(S)$ be the space of orientation preserving diffeomorphisms of $S$ and let $\mathrm{Diff}_0(S)$ be the normal subgroup of diffeomorphisms isotopic to the identity. The quotient $\faktor{\mathrm{Diff}_+(S)}{\mathrm{Diff}_0(S)}$ is the mapping class group $\mathrm{MCG}(S).$ 
\begin{defn}
    The \textbf{Teichm{\"u}ller space} of marked complex structures on $S$ is the quotient $\mathcal{T}(S)=\faktor{\mathcal{C}(S)}{\mathrm{Diff}_0(S)}$.
\end{defn}

For $l<\infty,$ $
\mathcal C^l(S)$ is a complex Banach manifold modelled on the space of $W^{l,\infty}$ Beltrami forms, and for $l=\infty$, $\mathcal{C}(S)$ is a complex Fréchet manifold, with semi-norms coming from the $W^{l,\infty}$ norms (by Sobolev embedding theorems, this Fréchet structure is biholomorphic to all of the more common ones). The following is due to Earle-Eells. \cite{EE}.
\begin{thm}[Earle-Eells]\label{thm: earle eells}
  Under the quotient topology, $\mathcal{T}(S)$ inherits a complex structure with respect to which the map from $\mathcal{C}(S)\to \mathcal{T}(S)$ is holomorphic and a principle $\mathrm{Diff}_0(S)$-fiber bundle.
\end{thm}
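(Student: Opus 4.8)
The plan is to make the $\mathrm{Diff}_0(S)$-action on $\mathcal{C}(S)$ concrete and then verify the three ingredients of the standard slice construction for principal bundles: freeness, Hausdorffness of the quotient, and the existence of local holomorphic slices. First I would identify $\mathcal{C}(S)$ with the open unit ball $\mathcal{B}$ in the Fréchet space of smooth Beltrami forms on $(S,c_0)$ --- smooth sections of $\mathcal{K}^{*}\otimes\mathcal{K}^{-1}$ of sup-norm $<1$ --- on which $\mathrm{Diff}_0(S)$, a Fréchet Lie group (by the work of Omori and Ebin--Marsden), acts by pullback of Beltrami coefficients. The key point is that for a fixed diffeomorphism $\phi$ this action is, in a local $c_0$-coordinate, a fractional-linear transformation in the Beltrami form whose coefficients depend smoothly on the first derivatives of $\phi$; in particular it is smooth and holomorphic. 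Hence $\mathrm{Diff}_0(S)$ acts by biholomorphisms of $\mathcal{B}$, and once $\mathcal{C}(S)\to\mathcal{T}(S)$ is shown to be a locally trivial principal bundle, the quotient automatically carries a complex structure for which the projection is holomorphic and the transition maps between slice charts are holomorphic --- so that half of the statement comes essentially for free.

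Freeness is the soft step: if $\phi\in\mathrm{Diff}_0(S)$ fixes a complex structure $c$, then $\phi$ is a holomorphic automorphism of the Riemann surface $(S,c)$, and since $g\geq 2$ the automorphism group is finite and maps injectively into $\mathrm{MCG}(S)$ --- a nontrivial finite-order orientation-preserving diffeomorphism of a closed surface of genus $\geq 2$ is not isotopic to the identity --- so $\phi=\mathrm{id}$. Hausdorffness of the quotient, equivalently closedness of the orbits and properness of the action, follows from the Teichm{\"u}ller metric (or a posteriori from the slice theorem together with freeness).

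The analytic heart, and the main obstacle, is the construction of a holomorphic slice: through each $c_1\in\mathcal{C}(S)$ a locally closed complex submanifold $\Sigma\ni c_1$ of complex dimension $3g-3$ meeting every nearby $\mathrm{Diff}_0(S)$-orbit transversally in exactly one point. Infinitesimally this is Hodge theory --- at $c_1$ the tangent space to $\mathcal{C}(S)$ is the space of Beltrami differentials, the tangent space to the orbit is the range of $\overline{\partial}$ acting on smooth sections of the holomorphic tangent bundle $\mathcal{K}^{-1}$, and the harmonic Beltrami differentials (Serre-dual to the holomorphic quadratic differentials $H^0(S,\mathcal{K}^{2})$, hence of complex dimension $3g-3$) form a closed complement --- but integrating this splitting to an actual slice requires an inverse function theorem that is not available in the Fréchet category. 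I would get around this either by Hamilton's tame (Nash--Moser) inverse function theorem, which handles precisely this example, or by first building the slice in the Banach manifolds $\mathcal{C}^{l}(S)$ of $W^{l,\infty}$ complex structures --- where the implicit function theorem and the orbit splitting are routine --- and then passing to $C^{\infty}$ by elliptic regularity for $\overline{\partial}$. One then checks that $(\phi,c)\mapsto\phi^{*}c$ is a local diffeomorphism $\mathrm{Diff}_0(S)\times\Sigma\to\mathcal{C}(S)$ onto a $\mathrm{Diff}_0(S)$-saturated open set; the ``exactly one orbit point'' property uses freeness plus a connectedness argument, or, more robustly, the existence and uniqueness of the harmonic diffeomorphism $(S,\sigma_{c_0})\to(S,\sigma_{c})$ homotopic to the identity (Eells--Sampson, Hartman, Sampson, Schoen--Yau), which in fact furnishes a canonical alternative slice. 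Granting the slice, the standard principal-bundle construction assembles these charts into a holomorphic principal $\mathrm{Diff}_0(S)$-bundle $\mathcal{C}(S)\to\mathcal{T}(S)$ with base a complex manifold of dimension $3g-3$, completing the proof; the only genuinely hard ingredient is the Fréchet-category slice theorem, and everything else is either soft or standard once the slice is in hand.
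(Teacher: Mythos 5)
The paper does not prove this statement: it is quoted with attribution to \cite{EE} and used as a black box, so there is no proof of record in the paper to compare against. Taken on its own, your sketch is a correct outline of the modern slice-theorem proof, and you have isolated where the genuine work lies. Freeness is handled correctly: if $\phi\in\mathrm{Diff}_0(S)$ fixes $c$ then $\phi\in\mathrm{Aut}(S,c)$, and the cited fact that a nontrivial finite-order orientation-preserving diffeomorphism of a genus $g\geq2$ surface is not isotopic to the identity has the clean Lefschetz justification that such a $\phi$ would have Lefschetz number $2-2g<0$ while each of its (necessarily isolated, locally rotational) fixed points contributes $+1$. For the slice, your diagnosis of the Fr\'echet inverse-function-theorem obstruction and your two workarounds --- Nash--Moser, or building slices in the Banach completions $\mathcal{C}^l(S)$ and bootstrapping by elliptic regularity for $\overline\partial$ --- are both viable, and the latter is close in spirit to how this paper handles the analogous problem in Proposition~\ref{prop: CAS* local biholo}, where $\CAS^*(S)$ is realized as a projective limit of Banach manifolds. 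One caveat: Hausdorffness does \emph{not} follow ``a posteriori from the slice theorem together with freeness,'' since a free action admitting local slices can still have a non-Hausdorff orbit space (slices give local Euclideanity, not separation); you should rely on your first alternative, namely a separating function such as the Teichm\"uller metric, or properness of the $\mathrm{Diff}_0(S)$-action \`a la Ebin. Finally, note that your harmonic-map slice via Eells--Sampson/Hartman/Sampson/Schoen--Yau is necessarily a different route from the original 1969 argument of Earle and Eells, which predates both Hamilton's tame framework and the diffeomorphism property of harmonic maps between hyperbolic surfaces and instead built its sections by quasiconformal-analytic means on top of Ahlfors--Bers theory; with modern inputs your version is the tidier one.
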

Throughout the paper, we insist on viewing $\mathcal{C}(S)$ as a Fr{\'e}chet manifold, so that we can directly apply the result of Earle-Eells.

Note that the complex structure on the Teichm{\"u}ller space $\mathcal{T}(\overline{S})$ of the oppositely oriented surface $\overline{S}$,  identifies with the complex structure conjugate to the usual one on $\mathcal{T}(S)$.

\subsection{Holomorphic Riemannian manifolds}
\label{section: holo Riem mflds}
Let $\mathbb X$ be a complex manifold with almost complex structure $\mathbb J$. A \textbf{holomorphic Riemannian metric} $\inners$ on $\mathbb X$ is a holomorphic assignment of a symmetric non-degenerate $\C$-bilinear form on each fiber of the holomorphic tangent bundle of $\mathbb X$, or more concisely a holomorphic nowhere degenerate section of $Sym_2(T^{1,0}(\mathbb X))$. These objects have been widely studied in literature with different approaches, see for instance \cite{DZ}. 

A holomorphic affine connection $D$ on $T\mathbb{X}$ is an affine connection such that $D\mathbb J=0$ and for local holomorphic vector fields $Z_1,Z_2$ one has $D_{\mathbb JZ_1} Z_2=\mathbb J\left( D_{Z_1} Z_2\right)$. Holomorphic Riemannian manifolds $(\mathbb X, \inners)$ have a natural notion of \textbf{Levi-Civita connection $D$}, which is a holomorphic affine connection on $T\mathbb X$ that makes the metric parallel. Such a connection induces a natural notion of $\C$-multilinear curvature tensor and, for all complex vector subspace $V< T_p \mathbb X$ such that $dim_{\C} V=2$ and that $\inners|_{V}$ is non-degenerate, one can define the sectional curvature.

Given a complex semisimple Lie group, its complex Killing form extends to the whole tangent bundle by left or right translation and hence determines a holomorphic Riemannian metric $\inners$. For $G=\mathrm{SL}(n, \C)$, at $A\in \mathrm{SL}(n,\C)$, the metric is given by, for $V,W\in T_A \mathrm{SL}(n, \C)$, $\langle V,W\rangle|_A= 2n \mathrm{tr}(A^{-1}V A^{-1} W)$.  We highlight two examples that arise from taking quotients of complex Lie groups.
\begin{enumerate}[leftmargin=*]
    \item $\mathbb C^n$ with the canonical symmetric $\C$-bilinear form given by $\langle\underline z, \underline w\rangle=\sum_{k=1}^n z_k w_k$, for $\underline z =(z_1,\dots, z_n),\underline w =(w_1,\dots, w_n)\in \C^n$. 
     $(\mathbb \C^n, \inners)$ is the unique simply connected complete holomorphic Riemannian metric with constant sectional curvature zero \cite[Theorem 2.6]{BEE}.
\item The hyperboloid
\[
\mathbb X_n =\{\underline z \in \C^{n+1}\ |\ \langle \underline z, \underline z \rangle =-1 \}
\]
is a complex submanifold of $\mathbb C^n$ and it inherits a holomorphic Riemannian metric, under which it identifies with $\faktor{\mathrm{SO}(n+1,\C)}{\mathrm{SO}(n,\C)}$. For $n\ge 2$, $\mathbb X_n$ is the unique complete simply connected holomorphic Riemannian manifold with constant sectional curvature -1 \cite[Theorem 2.6]{BEE}. Moreover, for all $p,q$ with $p+q=n$, $\mathbb{H}^{p,q}=\faktor{\mathrm{SO}(p,q)}{\mathrm{SO}(p)\times \mathrm{SO}(q)}$ embeds isometrically in $\mathbb X_n$ in a unique way up to composition with ambient isometries. 
\end{enumerate}

Observe that, since all non-degenerate bilinear forms on $\C ^n$ are isomorphic, one can define several isometric models for $\C^{n+1}$ and $\mathbb X_n$ by changing the ambient bilinear form. For instance, in Example \ref{bersmaps}, we will refer to the isometric model for $(\C^{n+1}, \inners)$ given by $\C^{n,1}$, namely $\C^{n+1}$ endowed with the bilinear form $\langle \underline{z}, \underline w\rangle_{n,1}= -z_{n+1}w_{n+1}+ \sum_{k=1}^n z_k w_k$. Via the isometry between these two spaces one gets that $\mathbb X_n$ is equivalent to $(\hat {\mathbb X}_{n}, \inners_{n,1})$ with 
\begin{equation}
\hat {\mathbb X}_{n}=\{\underline z \in \C^{n+1}\ |\ \langle \underline z, \underline z\rangle_{n,1}=-1\}\ .
\end{equation}

Finally, $\mathbb X_2$ models the space of oriented geodesics of hyperbolic space $\mathbb{H}^3$, which will be relevant below. Indeed, identifying $\mathrm{SO}(3,\C)$ with $\PSL(2,\C)$ and viewing $\mathbb X_2$ as $\faktor{\mathrm{PSL}(2,\C)}{\mathrm{SO}(2,\C)}$, $\mathrm{SL}(2,\C)$ acts transitively on the space of oriented geodesics, and the stabilizer of any oriented geodesic is conjugate to $\mathrm{SO}(2,\C)$. By identifying a geodesic in $\mathbb{H}^3$ with its endpoints in $\mathbb{CP}^1$, we obtain a biholomorphism from $$\mathbb X_2\to \mathbb{G}:=\mathbb{CP}^1\times \mathbb{CP}^1\backslash \Delta.$$ Under this biholomorphism,  the pushed-forward holomorphic Riemannian metric has the following description: given any complex affine chart $(U,z)$ on $\CP^1$, the metric on $(U\times U\setminus \Delta, (z_1,z_2))$  has the form \[
 \inners_{\mathbb G}= -\frac 4 {(z_1-z_2)^2} dz_1\cdot dz_2 \ .
 \]
See \cite[Section 2.4]{BEE} for details. 
Several relations between immersions of surfaces into $\mathbb H^3$ and $\mathbb G$ are treated in \cite{EleSeppi}.

\subsection{Complex metrics}
\label{subsection: complex metrics}
Complex metrics first appeared in a paper by the first author on immersions of (real) smooth manifolds into holomorphic Riemannian manifolds \cite{BEE}. Throughout the paper, given a smooth manifold $M,$ we denote the complexified tangent bundle $TM\otimes_{\R}\C$ by $\C TM,$ and likewise for the complexified cotangent bundle $\C T^*M$.
\begin{defn}
    A \textbf{complex metric} on a smooth (real) manifold $M$ is a smooth section of $\mathrm{Sym}^2(\mathbb C TM)$ that determines a non-degenerate symmetric bilinear form in each fiber.
\end{defn}
Of course, the bilinear extension of a Riemannian metric to the complexified tangent bundle is a complex metric. Throughout the paper, we will view a complex metric on a manifold $M$ as the same thing as an invariant complex metric on the universal cover $\widetilde{M},$ and we won't distinguish notation.

A lot of the usual constructions in Riemannian geometry extend to the setting of complex metrics and the proofs apply verbatim. Here are a few objects and facts that we will use in this paper.
\begin{enumerate}[leftmargin=*]
    \item Every complex metric $g$ has a natural \textbf{Levi-Civita connection}, namely an affine connection $\nabla^g\colon \Gamma( \C TM) \to \Gamma(\mathrm{End}(\C TM))$ uniquely determined by the fact that it is torsion-free and compatible with the metric, i.e. $\nabla^g g=0$.
    \item  Through $g$ and its Levi-Civita connection $\nabla^g$, one can define the $(0, 4)$-type and the $(1, 3)$-type \textbf{curvature tensors}, which we both denote by $\mathrm R^g$ when there is no ambiguity, as 
    \[
    \mathrm R^g(X,Y,Z,W)= g(\mathrm R^g(X,Y)Z, W):=g(\nabla^g_X \nabla^g_Y Z-\nabla^g_Y\nabla^g_X Z- \nabla^g_{[X,Y]}Z,W) 
    \]
    for all $X,Y,Z,W\in\Gamma(\C TM)$, which is $\C$-multilinear and has the usual symmetries as in the Riemannian case. If $X$ and $Y$ span a $g$-non-degenerate $2$-plane in $\C TM$, we have the sectional curvature 
     \[\mathrm K(X,Y)=\frac{\mathrm R^g(X,Y,Y,X)}{g(X,X)g(Y,Y)-(g(X,Y))^2}.\]
     When $M$ is a surface, this sectional curvature is called the \textbf{Gauss curvature} and defines a function $\mathrm K_g: M\to \C.$
    \item For any $C^1$ function $f\colon M\to \C$, one can define the \textbf{gradient} of $f$ as the unique vector field $\nabla^g f$ defined by $df=g(\nabla^g f, \cdot)$. If $f$ is also $C^2$, we can define the \textbf{Hessian} and the \textbf{Laplacian} of $f$ as
    \begin{align*}
        &\mathrm{Hess}_g(f) (X,Y)= g(\nabla^g_X \nabla^g f, Y),\\
        &\Delta_g(f) = tr_g(\mathrm{Hess}_g(f)).
    \end{align*}
    \item Assume $M$ is a surface, and let $\varrho\colon M\to \C^*$. Then, the conformal metric $\hat g= \varrho \cdot g$ satisfies $\Delta_{\hat g}= \frac 1 {\varrho} \Delta_g$ and
    \begin{equation}\label{eq: conf curvature} 
    \mathrm K_{\hat g}= \frac 1 \varrho (\mathrm K_g-\frac 1 2\Delta_g \log(\varrho)).
   \end{equation}
    While the function $\log(\varrho)$ is only locally well-defined, up to choices, $\Delta_g \log(\varrho)$ is globally well-defined.
\end{enumerate}
In the following, we restrict to the case of surfaces $M=S$. The fact that a complex metric $g$ on $S$ is non-degenerate implies that $g$ has two isotropic directions at each point. In other words, for each point $p\in S$, the set $\{v\in \C T_p S\ |\ g_p(v,v)=0 \}$ consists of two complex lines of $\mathbb C T_p S$, corresponding to two points in $\mathbb P \C T_pS$. Observe that $\mathbb P T_p S$ embeds into the sphere $\mathbb P \C T_pS$ as an equatorial circle, which cuts $\mathbb P \C T_pS$ into two connected components.

\begin{defn}\label{def: positive}
    A complex metric $g$ is \textbf{positive} if for each point $p\in S$, no isotropic direction of $g_p$ lies in $\mathbb P T_p S$, and the two isotropic directions are contained in distinct components of $\mathbb P \C T_pS\setminus \mathbb P T_p S$.
\end{defn}
Riemannian metrics complexify to positive complex metrics: the isotropic directions are the eigen-lines of the almost complex structure and are hence antipodal.

We denote the space of $C^{\infty}$ -- resp. $W^{l,2}$ -- positive complex metrics on $S$ by $\mathcal{CM}(S)$ -- resp. $\mathcal{CM}^l(S)$. Being open subsets of the space of $C^{\infty}$ -- resp. $W^{l,2}$ -- sections of $\mathrm{Sym}^2(\mathbb C TS)$, one can see that $\mathcal{CM}(S)$ is a complex Fréchet manifold and $\mathcal{CM}^l(S)$ is a complex Banach manifold. 

We further define $\mathcal{CM}_{-1}(S)$ and $\mathcal{CM}_{-1}^l(S)$ to be the subsets of positive complex metrics of constant curvature $-1$ in $\mathcal{CM}(S)$ and $\mathcal{CM}^l(S)$ respectively.

The isotropic directions of a positive complex metric determine a pair of oppositely oriented complex structures $c_1$ and $\overline{c_2}$ on $S$. In fact, if $z$ and $\overline w$ are local holomorphic coordinates for $c_1$ and $\overline{c_2}$ respectively, then $g(\partial_{\overline z}, \partial_{\overline z})=g(\partial_w, \partial_w)=0$, and $g$ can be locally written as $g=\lambda dz d\overline w$, for some non-vanishing complex valued function $\lambda$ (see \cite[Section 6.1]{BEE} for details).

This defines a projection map 
\[
(\vb*c_+, \vb* c_-)\colon \mathcal{CM}(S)\to \mathcal C(S)\times \mathcal C(\overline S)
\]
whose fibers correspond to conformal classes of positive complex metrics.

The locally defined basis for $\C TS$ given by $\{\partial_{w},\partial_{\overline{z}}\}$ will be particularly useful for computations, so we record a few elementary facts here. Let $\nabla^g$ be the Levi-Civita connection for $g$, and let $\Pi_1$ and $\Pi_2$ be the projections of $\C TS$ onto the isotropic directions of $g$ for $c_1$ and $\overline{c_2}$ respectively, which are locally spanned by $\partial_{\overline{z}}$ and $\partial_w$ respectively.
\begin{lem}\label{parallellem}
 For any locally defined complex vector field $X$, $\nabla_X^g \partial_{\overline{z}}$ is parallel to $\partial_{\overline{z}}$, and  $\nabla_X^g \partial_{w}$ is parallel to $\partial_{w}$. 
\end{lem}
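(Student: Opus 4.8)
The plan is to argue entirely locally, in a coordinate chart where $z$ is a holomorphic coordinate for $c_1$ and $\overline w$ is a holomorphic coordinate for $\overline{c_2}$, so that $g = \lambda\, dz\, d\overline w$ for a non-vanishing $\lambda \colon U \to \C^*$. In this frame the only nonzero components of $g$ in the basis $\{\partial_z,\partial_{\overline z},\partial_w,\partial_{\overline w}\}$ are $g(\partial_z,\partial_{\overline w})$, and—this is the key structural point—$\partial_{\overline z}$ and $\partial_w$ are the two isotropic directions, so $g(\partial_{\overline z},\cdot)$ and $g(\partial_w,\cdot)$ each vanish on everything except $g(\partial_{\overline z},\partial_w)=0$ too; in fact $g(\partial_{\overline z},\partial_z)=g(\partial_{\overline z},\partial_{\overline z})=g(\partial_{\overline z},\partial_w)=0$ and similarly for $\partial_w$. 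Wait—I should be careful: $g(\partial_{\overline z},\partial_{\overline w})$ need not vanish. Let me restate: writing $z = x+iy$ etc., the complexified frame adapted to $c_1$ is $\{\partial_z,\partial_{\overline z}\}$ and to $\overline{c_2}$ is $\{\partial_w,\partial_{\overline w}\}$; since $g=\lambda\,dz\,d\overline w$ we get $g(\partial_z,\partial_{\overline w})=\tfrac{\lambda}{2}$ (up to the convention on symmetric products) and all other pairings among $\{\partial_z,\partial_{\overline z}\}\times\{\partial_w,\partial_{\overline w}\}$ vanish, in particular $g(\partial_{\overline z},\partial_w)=g(\partial_{\overline z},\partial_{\overline w})=0$ and $g(\partial_w,\partial_z)=g(\partial_w,\partial_{\overline w})=0$. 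So $\partial_{\overline z}$ is $g$-orthogonal to all of $\partial_{\overline z},\partial_w,\partial_{\overline w}$ and pairs nontrivially only with $\partial_z$; symmetrically for $\partial_w$.

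Now I would compute $\nabla^g_X\partial_{\overline z}$ by testing it against the frame using the Koszul formula, exactly as in the Riemannian case. To show $\nabla^g_X\partial_{\overline z}$ is parallel to $\partial_{\overline z}$ it suffices to show $g(\nabla^g_X\partial_{\overline z},Y)=0$ for every $Y$ that is $g$-orthogonal to $\partial_{\overline z}$, i.e.\ for $Y\in\{\partial_{\overline z},\partial_w,\partial_{\overline w}\}$; then since $g$ is non-degenerate the component along any complement of $\C\partial_{\overline z}$ is forced to vanish. For $Y=\partial_{\overline z}$: $g(\nabla^g_X\partial_{\overline z},\partial_{\overline z})=\tfrac12 X\big(g(\partial_{\overline z},\partial_{\overline z})\big)=0$ since $g(\partial_{\overline z},\partial_{\overline z})\equiv 0$. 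For $Y=\partial_w$ or $\partial_{\overline w}$, I expand the Koszul formula
\[
2g(\nabla^g_X\partial_{\overline z},Y) = X g(\partial_{\overline z},Y) + \partial_{\overline z}\, g(X,Y) - Y\, g(X,\partial_{\overline z}) + g([X,\partial_{\overline z}],Y) - g([\partial_{\overline z},Y],X) + g([Y,X],\partial_{\overline z}),
\]
and check each term vanishes. The terms $Xg(\partial_{\overline z},Y)$ and $g([X,\partial_{\overline z}],Y)$ need care: $[X,\partial_{\overline z}]$ is some combination of coordinate vector fields, and $g(\cdot,\partial_w)$, $g(\cdot,\partial_{\overline w})$ only detect the $\partial_z$-component (resp.\ appropriate component), so these reduce to derivatives of the identically-zero functions $g(\partial_{\overline z},\partial_w)$, $g(\partial_{\overline z},\partial_{\overline w})$ after noting the coordinate frame has vanishing Lie brackets among $\partial_z,\partial_{\overline z},\partial_w,\partial_{\overline w}$—crucially $[\partial_{\overline z},\partial_w]=0$ because these are coordinate vector fields on $\C TS$ viewed via the two complex structures (this uses that $z,\overline w$ are genuine coordinates, so all mixed second partials commute). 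So every surviving term is a derivative of a function identically zero, and the whole expression vanishes. The argument for $\nabla^g_X\partial_w$ is identical with the roles of the two isotropic directions swapped.

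The main obstacle—really the only subtle point—is making sure the bracket terms are handled honestly: one must confirm that $[\partial_{\overline z},\partial_w]=0$ and more generally that the four vector fields $\partial_z,\partial_{\overline z},\partial_w,\partial_{\overline w}$ pairwise commute as sections of $\C TS$, which is legitimate precisely because $z$ and $\overline w$ are local coordinate functions (for the respective complex structures) and partial derivatives commute; and to handle the term $g([X,\partial_{\overline z}],Y)$ for general $X$ one writes $X$ in the coordinate frame and uses linearity plus the commuting of the frame, reducing again to derivatives of the vanishing pairings $g(\partial_{\overline z},\partial_w)$ and $g(\partial_{\overline z},\partial_{\overline w})$. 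Once this bookkeeping is in place the lemma follows. I would present it compactly: state that it suffices to check $g(\nabla^g_X\partial_{\overline z},\partial_w)=g(\nabla^g_X\partial_{\overline z},\partial_{\overline w})=g(\nabla^g_X\partial_{\overline z},\partial_{\overline z})=0$, invoke Koszul, observe all terms are derivatives of functions that vanish identically in these adapted coordinates, and conclude by non-degeneracy; then say "the claim for $\partial_w$ is symmetric."
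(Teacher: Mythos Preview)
Your argument rests on a basic misconception about the setup: $z$ and $\overline w$ are \emph{not} independent coordinates on a four-real-dimensional space. The surface $S$ is two-real-dimensional, so $\C TS$ has complex rank $2$; the four vector fields $\partial_z,\partial_{\overline z},\partial_w,\partial_{\overline w}$ are linearly dependent, and the relevant frame is the isotropic one $\{\partial_{\overline z},\partial_w\}$. Concretely, $w=w(z,\overline z)$ is a function on the same chart, so $\partial_w = (\partial_w z)\partial_z + (\partial_w\overline z)\partial_{\overline z}$ with non-constant coefficients. In particular your claim that $[\partial_{\overline z},\partial_w]=0$ is false: the paper computes this bracket explicitly in Lemma~\ref{LCbelt} and it is generically nonzero (it involves $\partial_{\overline z}\overline\mu$). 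Likewise $d\overline w(\partial_{\overline z})=\partial_{\overline z}\overline w=\overline{\partial_z w}\neq 0$, so the pairings you assert to vanish, such as $g(\partial_{\overline z},\partial_w)$ and $g(\partial_{\overline z},\partial_z)$, do not: in fact $g(\partial_{\overline z},\partial_w)\neq 0$ is exactly what makes $g$ non-degenerate in the isotropic basis. So the Koszul computation as you set it up collapses.

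The paper's proof is a one-liner that you should adopt. Since $\C TS$ is spanned by the two isotropic directions $\partial_{\overline z}$ and $\partial_w$, write $\nabla^g_X\partial_{\overline z}=a\,\partial_{\overline z}+b\,\partial_w$; then $g(\nabla^g_X\partial_{\overline z},\partial_{\overline z})=b\,g(\partial_w,\partial_{\overline z})$, so $b=0$ iff this pairing vanishes. By metric compatibility, $g(\nabla^g_X\partial_{\overline z},\partial_{\overline z})=\tfrac12\,X\,g(\partial_{\overline z},\partial_{\overline z})=0$. That's the whole proof; the statement for $\partial_w$ is symmetric.
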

\begin{proof}
By metric compatibility, $g(\nabla_X^g \partial_{\overline{z}}, \partial_{\overline{z}})=\frac{1}{2}\nabla_X^g g(\partial_{\overline{z}},\partial_{\overline{z}})=0,$ and likewise for $\partial_{w}$.
\end{proof}
\begin{lem}\label{projectionlem}
$\nabla_{\overline{z}}^g\partial_w=\Pi_2([\partial_{\overline{z}},\partial_w])$ and $\nabla_{w}^g\partial_{\overline{z}}=\Pi_1([\partial_{w},\partial_{\overline{z}}]).$
\end{lem}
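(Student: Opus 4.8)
The plan is to extract the identities purely from torsion-freeness of $\nabla^g$ together with Lemma \ref{parallellem}, without ever computing Christoffel symbols. First I would record that, since $\{\partial_w,\partial_{\overline z}\}$ is a local frame for $\C TS$ adapted to the two isotropic line fields, the projections satisfy $\Pi_1+\Pi_2=\mathrm{id}$, with $\Pi_1$ the projection onto $\mathrm{span}(\partial_{\overline z})$ along $\mathrm{span}(\partial_w)$ and $\Pi_2$ the projection onto $\mathrm{span}(\partial_w)$ along $\mathrm{span}(\partial_{\overline z})$. By Lemma \ref{parallellem}, $\nabla^g_{\overline z}\partial_w$ is a multiple of $\partial_w$, hence $\Pi_2(\nabla^g_{\overline z}\partial_w)=\nabla^g_{\overline z}\partial_w$ and $\Pi_1(\nabla^g_{\overline z}\partial_w)=0$; symmetrically $\nabla^g_w\partial_{\overline z}$ is a multiple of $\partial_{\overline z}$, so $\Pi_1(\nabla^g_w\partial_{\overline z})=\nabla^g_w\partial_{\overline z}$ and $\Pi_2(\nabla^g_w\partial_{\overline z})=0$.

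Next I would invoke the torsion-free condition, which gives
\[
\nabla^g_{\overline z}\partial_w-\nabla^g_w\partial_{\overline z}=[\partial_{\overline z},\partial_w].
\]
Applying $\Pi_2$ to both sides and using the observations above yields $\nabla^g_{\overline z}\partial_w=\Pi_2([\partial_{\overline z},\partial_w])$, which is the first claim. Applying $\Pi_1$ instead gives $-\nabla^g_w\partial_{\overline z}=\Pi_1([\partial_{\overline z},\partial_w])=-\Pi_1([\partial_w,\partial_{\overline z}])$, i.e. $\nabla^g_w\partial_{\overline z}=\Pi_1([\partial_w,\partial_{\overline z}])$, which is the second claim.

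There is no real obstacle here: the only input beyond linear algebra is that $\nabla^g$ exists, is torsion-free, and is metric-compatible (used already in Lemma \ref{parallellem}), all of which are part of the verbatim extension of Riemannian geometry recalled in Section \ref{subsection: complex metrics}. If one wanted a self-contained statement I would additionally note that the right-hand sides are manifestly independent of the choice of local coordinates $z,\overline w$ within the conformal class, since $\Pi_1,\Pi_2$ and the Lie bracket are, so the lemma is coordinate-free despite being phrased in the $\{\partial_w,\partial_{\overline z}\}$ frame.
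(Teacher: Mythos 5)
Your proof is correct and is exactly the argument the paper uses: torsion-freeness gives $\nabla^g_{\overline z}\partial_w-\nabla^g_w\partial_{\overline z}=[\partial_{\overline z},\partial_w]$, and Lemma \ref{parallellem} shows the two terms on the left lie in complementary isotropic lines, so applying $\Pi_2$ (resp.\ $\Pi_1$) isolates each. The paper's proof is just a one-line compression of what you wrote.
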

\begin{proof}
Since $\nabla^g$ is torsion free, $\nabla_{\overline{z}}^g\partial_w - \nabla_{w}^g\partial_{\overline{z}} = [\partial_{\overline{z}},\partial_w]$. We apply Lemma \ref{parallellem}.

\end{proof}

\subsection{Bers' theorem and Bers metrics}
In the introduction we defined quasi-Fuchsian representations. Bers' Simultaneous Uniformization Theorem is more formally stated below. If $\rho$ is quasi-Fuchsian, preserving a Jordan curve $\gamma\subset \mathbb{CP}^1$, the domains complementary to $\gamma$ are called the domains of discontinuity. Recall that, identifying $\pi_1(S)$ with the group of covering transformations of $\widetilde{S}\to S$, given an action $\rho$ of $\pi_1(S)$ on a space $X,$ we say that $f:\widetilde{S}\to X$ is $\rho$-$\textbf{equivariant}$ if for all $\gamma\in \pi_1(S)$, $f\circ \gamma = \rho(\gamma)\circ f$. 
\begin{thm} [Bers' Simultaneous Uniformization Theorem \cite{SU}]
\label{thm: Bers thm}
	For all $(c_1, \overline{c_2})\in \mathcal{C}^l(S)\times \mathcal{C}^l(\overline{S})$, there exists a quasi-Fuchsian representation $\rho\colon \pi_1(S)\to \PSL(2, \mathbb C)$, unique up to conjugation, with domains of discontinuity $\Omega_\rho^+$ and $\Omega_\rho^-$, together with unique $\rho$-equivariant holomorphic  diffeomorphisms $$\vb*{f_+}(c_1,\overline {c_2})\colon (\widetilde S, c_1) \to \Omega_\rho^+,\qquad \qquad  \vb*{\overline{f_-}}(c_1,\overline {c_2})\colon (\widetilde S, \overline{c_2}) \to \Omega_\rho^-.$$
	This correspondence determines a biholomorphism \[ \TSTS\xrightarrow{\sim}\mathrm{QF}(S).\]
\end{thm}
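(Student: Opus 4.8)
The plan is to deduce the statement from the Measurable Riemann Mapping Theorem (MRMT) together with the Ahlfors--Bers theory of holomorphic dependence of normalized quasiconformal maps on their Beltrami coefficient, by gluing two Beltrami differentials across a Fuchsian-invariant partition of $\mathbb{CP}^1$.

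First I would fix a base configuration: a round circle $C_0\subset\mathbb{CP}^1$ with complementary round disks $D^+, D^-$, and a Fuchsian group $\Gamma_0$ (conjugate into $\mathrm{PSL}(2,\mathbb{R})$) preserving $C_0$ with $D^+/\Gamma_0\cong(S,c_0)$ and $D^-/\Gamma_0\cong(\overline S,\overline{c_0})$; identify $\pi_1(S)$ with $\Gamma_0$ and $\widetilde S$ with $D^+$. Given $(c_1,\overline{c_2})\in\mathcal{C}^l(S)\times\mathcal{C}^l(\overline S)$, lift the Beltrami form of $c_1$ to a $\Gamma_0$-invariant Beltrami differential $\mu_1$ on $D^+$ and that of $\overline{c_2}$ to a $\Gamma_0$-invariant Beltrami differential $\mu_2$ on $D^-$, and form the measurable coefficient $\mu$ on $\mathbb{CP}^1$ equal to $\mu_1$ on $D^+$, to $\mu_2$ on $D^-$, and to $0$ on $C_0$; then $\mu$ is $\Gamma_0$-invariant with $\|\mu\|_\infty<1$. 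Solving $\partial_{\overline z}w=\mu\,\partial_z w$ gives, by the MRMT, a quasiconformal homeomorphism $w=w^\mu$ of $\mathbb{CP}^1$, unique after normalizing to fix $0,1,\infty$. For $\gamma\in\Gamma_0$ the map $w\gamma w^{-1}$ is quasiconformal with vanishing Beltrami coefficient (by $\Gamma_0$-invariance of $\mu$), hence M\"obius, so $\rho(\gamma):=w\gamma w^{-1}$ is a homomorphism $\rho\colon\pi_1(S)\to\mathrm{PSL}(2,\mathbb{C})$. The Jordan curve $w(C_0)$ is $\rho$-invariant with components $\Omega_\rho^{\pm}:=w(D^{\pm})$; since $w$ is $\rho$-equivariant and $\Gamma_0$ acts properly discontinuously and cocompactly on $D^{\pm}$, so does $\rho$ on $\Omega_\rho^{\pm}$, and $\rho$ is quasi-Fuchsian with domains of discontinuity $\Omega_\rho^{\pm}$. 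Finally $w|_{D^+}$ has Beltrami coefficient $\mu_1$, so after composing with the identifications above it defines a $\rho$-equivariant map $f_+(c_1,\overline{c_2})\colon(\widetilde S,c_1)\to\Omega_\rho^+$ whose Beltrami coefficient in the $c_1$-structure vanishes; it is therefore a holomorphic diffeomorphism, and $\overline{f_-}(c_1,\overline{c_2})$ is built in the same way from $w|_{D^-}$.

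Next I would prove uniqueness and bijectivity. Once $\rho$ is fixed, $f_+$ and $\overline{f_-}$ are unique: any two choices differ by an equivariant conformal self-map of a domain of discontinuity, which descends to a holomorphic self-map of the closed surface $\Omega_\rho^{\pm}/\rho$ inducing the identity on $\pi_1$, hence homotopic to the identity, hence (genus $\geq 2$) the identity. Uniqueness of $\rho$ up to conjugation, as well as well-definedness and injectivity of the induced map $\Phi\colon\mathcal T(S)\times\mathcal T(\overline S)\to\mathrm{QF}(S)$, follow the same way: two solutions of the same gluing differ by post-composition of $w^\mu$ with a M\"obius transformation, and a M\"obius conjugacy between two such representations induces, via the associated $f_+$'s, a $\pi_1$-equivariant biholomorphism $(\widetilde S,c_1)\to(\widetilde S,c_1')$ commuting with the deck transformations, hence a biholomorphism $(S,c_1)\to(S,c_1')$ isotopic to the identity, so $[c_1]=[c_1']$ in $\mathcal T(S)$ and likewise on the $\overline S$ factor. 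For surjectivity, given $\rho\in\mathrm{QF}(S)$ one reverses the construction: $\rho$ preserves a quasicircle with domains of discontinuity $\Omega^{\pm}$, the quotients $\Omega^+/\rho$ and $\Omega^-/\rho$ are marked complex structures on $S$ and $\overline S$, and MRMT uniqueness shows $\Phi$ sends this pair back to $\rho$.

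It remains to see that $\Phi$ is a biholomorphism. The assignment $(c_1,\overline{c_2})\mapsto\rho$ is holomorphic: $w^\mu$ depends holomorphically on $\mu$ by Ahlfors--Bers — in the $W^{l,\infty}$ topology relevant here, with the refinement to higher Sobolev spaces recorded in Theorem~\ref{higherholdep} — and each $\rho(\gamma)$ is obtained from $w^\mu$ by evaluation. Composing with the holomorphic quotient to the character variety and using that the map is $\mathrm{Diff}_0(S)\times\mathrm{Diff}_0(S)$-invariant, the Earle--Eells theorem (Theorem~\ref{thm: earle eells}), which realizes $\mathcal{C}(S)\to\mathcal T(S)$ as a holomorphic principal $\mathrm{Diff}_0(S)$-bundle, lets $\Phi$ descend holomorphically. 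Since $\mathcal T(S)\times\mathcal T(\overline S)$ and $\mathrm{QF}(S)$ are finite-dimensional complex manifolds of the same dimension and $\Phi$ is a holomorphic bijection, $\Phi$ is a biholomorphism. I expect the main obstacle to be the analytic core rather than the formal bookkeeping: establishing the Ahlfors--Bers holomorphic dependence in exactly the function spaces needed (precisely why Theorem~\ref{higherholdep} is proved in the appendix) and tracking regularity well enough to know $f_+,\overline{f_-}$ are diffeomorphisms and that holomorphicity survives the Earle--Eells descent; the boundary-patching step in the uniqueness argument also needs the standard but nontrivial facts on the boundary behavior of quasiconformal homeomorphisms.
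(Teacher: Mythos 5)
Your proposal is correct and amounts to the standard proof of Bers' simultaneous uniformization, which the paper cites to \cite{SU} rather than reproving. The only place the paper sketches the construction is in Appendix A.3, en route to Proposition~\ref{appendixholomorphicity}: there the basepoint is first changed to $c_2$ so that only a single Beltrami form (that of $c_1$ relative to $c_2$, extended by $0$ across the real line) needs to be fed into the Measurable Riemann Mapping Theorem, whereas you glue $\mu_1$ and $\mu_2$ on the two sides of the circle simultaneously. The two are equivalent and both entirely classical; the paper's basepoint-change version is convenient there because, combined with Lemma~\ref{basepointhol} and Hartogs, it isolates the dependence on each of the two factors separately, which is what Proposition~\ref{appendixholomorphicity} needs. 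One small refinement to your appeal: for the biholomorphism statement of the theorem itself, the classical Ahlfors--Bers holomorphic dependence in $W_{loc}^{1,p}(\C)$ already suffices, since the holonomy is read off by evaluation of the normalized solution at finitely many points; the higher-Sobolev Theorem~\ref{higherholdep} is only required later for Proposition~\ref{appendixholomorphicity}, so citing it here is a stronger appeal than strictly necessary.
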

Fix a pair $(c_1,\overline{c_2})\in \CSCS$. The two maps $f_1= \f+\ccpair$ and $\overline{f_2}=\fm \ccpair$ define an immersion $(f_1, \overline{f_2})\colon \widetilde S\to \mathbb G\supset \Omega^+_\rho\times \Omega^-_\rho$, and the pull-back of $\inners_{
\mathbb G}$ is the positive complex metric 
\begin{equation}
\label{eq: def Bers metric}
\hpair:= -\frac 4 {(f_1-\overline{f_2})^2}df_1d\overline{f_2}
\end{equation}
that, as a result of Theorem 6.7 in \cite{BEE}, has constant curvature $-1$. We refer to the metrics obtained in this fashion as \textbf{Bers metrics}. In \cite{BEE}, the first author with Bonsante proved that one can uniformize positive complex metrics via Bers metrics.

\begin{prop}[Theorem 6.11 in \cite{BEE}]
    A complex metric $g$ is positive if and only if there exists a (unique) Bers metric $\vb*h_{\ccpair}$ and a function $\varrho\colon S\to \mathbb C^*$ such that 
    \[g=\varrho\cdot \vb*h_{(c_1, \overline{c_2})}\ .\] 
\end{prop}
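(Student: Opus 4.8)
The plan is to deduce the statement from two facts: (i) a positive complex metric is determined, up to multiplication by a $\C^*$-valued function, by the pair of complex structures $\ccpair$ it induces via its isotropic line fields, together with the local normal form $g=\lambda\,dz\,d\overline w$; and (ii) Bers' Simultaneous Uniformization Theorem (Theorem \ref{thm: Bers thm}), which associates to any pair $\ccpair\in\CSCS$ the equivariant holomorphic diffeomorphisms $f_1,\overline{f_2}$ and hence the metric $\hpair$ of \eqref{eq: def Bers metric}. The argument then splits into the two implications and a uniqueness check, all short once (i) and (ii) are in place.

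For the ``if'' direction, suppose $g=\varrho\cdot\hpair$ with $\varrho\colon S\to\C^*$ and $\hpair$ a Bers metric for some $\ccpair$. In local holomorphic coordinates $z$ for $c_1$ and $\overline w$ for $\overline{c_2}$ one has $df_1=f_1'\,dz$ and $d\overline{f_2}=\overline{f_2}'\,d\overline w$, so $\hpair$ equals $\bigl(-4 f_1'\,\overline{f_2}'/(f_1-\overline{f_2})^2\bigr)\,dz\,d\overline w$, and its coefficient is nowhere zero because $f_1$ and $\overline{f_2}$ are holomorphic diffeomorphisms onto the disjoint domains $\Omega_\rho^{+},\Omega_\rho^{-}$. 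Thus $\hpair$ is a non-degenerate complex metric whose isotropic line fields are $\langle\partial_{\overline z}\rangle$ and $\langle\partial_w\rangle$; these lie off $\mathbb P T_pS$ and in distinct components of $\mathbb P\C T_pS\setminus\mathbb P T_pS$ precisely because $c_1$ is compatible with the orientation of $S$ and $\overline{c_2}$ with that of $\overline S$, so $\hpair$ is positive in the sense of Definition \ref{def: positive}. Since $\varrho$ never vanishes, $g=\varrho\cdot\hpair$ has the same isotropic directions and is likewise non-degenerate, hence positive.

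For the ``only if'' direction, let $g$ be positive. By the discussion following Definition \ref{def: positive} (see \cite[Section 6.1]{BEE}) its isotropic line fields integrate to a pair $\ccpair\in\CSCS$ of oppositely oriented complex structures, and locally $g=\lambda\,dz\,d\overline w$ with $\lambda$ nowhere zero. Feeding $\ccpair$ into Theorem \ref{thm: Bers thm} yields $f_1,\overline{f_2}$ and the Bers metric $\hpair$, written in the same coordinates as above; in any such chart set $\varrho:=-\lambda\,(f_1-\overline{f_2})^2/(4 f_1'\,\overline{f_2}')$. Because in every chart both $g$ and $\hpair$ are a nonvanishing multiple of $dz\,d\overline w$ for the same coordinates, this ratio is chart-independent, so $\varrho$ is a globally defined map $S\to\C^*$ with $g=\varrho\cdot\hpair$, inheriting the regularity of $g$ and $\hpair$. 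For uniqueness, if also $g=\varrho'\cdot\vb*h_{(c_1',\overline{c_2}')}$, then since multiplication by a $\C^*$-valued function preserves isotropic directions, $g$, $\hpair$ and $\vb*h_{(c_1',\overline{c_2}')}$ all share the same isotropic line fields; hence $(c_1',\overline{c_2}')=\ccpair$, the uniqueness clause of Theorem \ref{thm: Bers thm} gives $\vb*h_{(c_1',\overline{c_2}')}=\hpair$, and non-degeneracy of $\hpair$ then forces $\varrho'=\varrho$.

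The one genuinely non-formal ingredient is fact (i) --- that the two isotropic distributions of a positive complex metric are integrable and yield honest complex structures $c_1,\overline{c_2}$ with the correct orientations, one matching the domain of the Bers map $f_1$ and the other that of $\overline{f_2}$. This orientation bookkeeping is exactly what is carried out in \cite[Section 6.1]{BEE}, so it is the single place where the argument really leans on earlier work; everything else reduces to the ratio computation above.
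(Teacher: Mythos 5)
The paper does not actually prove this proposition: it is stated as a citation of Theorem 6.11 in \cite{BEE}, together with the surrounding discussion in that reference about isotropic directions and the normal form $g=\lambda\,dz\,d\overline{w}$. So there is no in-paper argument to compare against, and the right question is whether your reconstruction is sound on its own terms.

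Your argument is correct. The ``if'' direction is the computation of $\hpair$ in the isotropic coordinates $(z,\overline{w})$, the observation that the coefficient $-4f_1'\overline{f_2}'/(f_1-\overline{f_2})^2$ never vanishes (because $f_1,\overline{f_2}$ are holomorphic diffeomorphisms onto disjoint domains), and the orientation argument that $\langle\partial_{\overline z}\rangle$ and $\langle\partial_w\rangle$ sit in opposite components of $\mathbb P\C T_pS\setminus\mathbb P T_pS$: the $(-i)$-eigenline of an orientation-compatible almost complex structure always lands in one fixed disk, and the $(+i)$-eigenline in the other, so $c_1,c_2$ both orientation-compatible is exactly what is needed. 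Multiplying by a nowhere-zero $\varrho$ preserves the isotropic line fields, so positivity passes to $g$. The ``only if'' direction and the uniqueness are correct and, as you note, essentially reduce to the single non-formal ingredient you flag: that a positive complex metric determines $(c_1,\overline{c_2})\in\CSCS$ with the normal form $\lambda\,dz\,d\overline w$. Given that input (which the paper also imports from \cite[Section~6.1]{BEE} prior to stating the proposition), the local ratio $\varrho=\lambda/\lambda_h$ is automatically chart-independent because two symmetric bilinear forms on a two-dimensional complex space with the same pair of distinct isotropic lines are proportional. This is the natural route and almost certainly coincides in spirit with the proof in \cite{BEE}.
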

As proved in \cite{BEE}, in the smooth category, Bers metrics form a connected component of the subset of smooth complex metrics of constant curvature $-1$, and $\ccpair \mapsto \vb*h_{(c_1,\overline{c_2})}$ defines a bijective correspondence between pairs of complex structures and Bers metrics. 

In the paper \cite{ESbelt}, we proved a holomorphic dependence result for solutions to the Beltrami equation in higher Sobolev spaces \cite[Theorem A]{ESbelt}. Our work there shows that if the complex structures are $W^{l,\infty}$, then the Bers metric, viewed as a metric on $S$, is a $W^{l,2}$ bilinear form on $\C TS$. Moreover, we proved the following.

\begin{prop}[Theorem B in \cite{ESbelt}]\label{appendixholomorphicity}
The maps $\mathcal{C}^l(S)\times \mathcal{C}^l(\overline{S})\to \mathcal{CM}^{l}(S)$ and $\mathcal{C}(S)\times \mathcal{C}(\overline{S})\to \mathcal{CM}(S)$ defined by $(c_1,\overline{c_2})\mapsto \vb*{h}_{(c_1,\overline{c_2})}$, with image contained in $\mathcal{CM}_{-1}^l(S)$ and $\mathcal{CM}_{-1}(S)$ respectively, are holomorphic.
\end{prop}

%From the definition of the Fréchet structures on $\mathcal{C}(S)\times \mathcal{C}(\overline{S})$ and $\mathcal{CM}(S),$ it follows that the map $(c_1,\overline{c_2})\mapsto \vb*{h}_{(c_1,\overline{c_2})}$ is holomorphic in these spaces as well.
%Proposition \ref{appendixholomorphicity} follows from the analytic dependence of solutions to the Beltrami equation in higher Sobolev spaces; since we couldn't find a source for the results we need, we prove these results, as well as Proposition \ref{appendixholomorphicity}, in the Appendix A. 

Finally, we record some results from \cite{ElE} on holomorphically perturbing positive complex metrics, which we will use in the proof of Theorem D in Section \ref{sec: thmE}. For all $c\in \mathcal C(\mathrm S)$ (resp. $\overline c\in \mathcal C(\overline{\mathrm S})$), denote by $\mathrm{QD}(c)$ (resp. $\mathrm{QD}(\overline c)$) the space of holomorphic quadratic differentials for $c$ (resp. $\overline c$). If $g$ is a positive complex metric, then by openness of the positive property, for all $q_1\in \mathrm{QD}(c)$ and $\overline{q_2}\in \mathrm{QD}(\overline c)$ small enough, $g+q_1$ and $g+\overline{q_2}$ are positive metrics as well.
\begin{prop}
\label{prop: Kg and Kg+q}
    Let $g$ be a positive complex metric, with $(c_1, \overline{c_2})= (\vb*{c_+}, \vb*{c_-})(g)$, and $q_1\in \mathrm{QD}(c)$ and $\overline{q_2}\in \mathrm{QD}(\overline c)$ so that $g+q_1$ and $g+\overline{q_2}$ are both positive. Then $$\mathrm K_g= \mathrm K_{g+q_1}, \quad \mathrm{ and }\quad \vb*{c_+}(g+q_1)=c_1,$$ and $$\mathrm K_g=\mathrm K_{g+\overline{q_2}},\quad \mathrm{ and }\quad \vb*{c_-}(g+\overline{q_2})= \overline{c_2}\ .$$
\end{prop}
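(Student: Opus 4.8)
The plan is to reduce the statement to a purely local computation in a coframe adapted to $c_1$. By symmetry of the assertions under interchanging the two complex structures — equivalently, regarding $g$ as a positive complex metric on the oppositely oriented surface $\overline S$, under which $\vb*{c_+}$ and $\vb*{c_-}$ are swapped — it suffices to prove $\mathrm K_g = \mathrm K_{g+q_1}$ and $\vb*{c_+}(g+q_1)=c_1$; the statements about $\overline{q_2}$ then follow verbatim with the roles of $c_1$ and $\overline{c_2}$ exchanged.

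First I would fix a chart $U$ carrying a $c_1$-holomorphic coordinate $z$ and a $\overline{c_2}$-holomorphic coordinate $\overline w$, so that $g = \lambda\,dz\,d\overline w$ with $\lambda \colon U \to \C^*$ and $\{dz, d\overline w\}$ a complex coframe on $U$. Since $q_1$ is holomorphic for $c_1$, on $U$ it equals $\phi\,dz^2$ with $\phi = \phi(z)$ a holomorphic function, so $d\phi = \phi'(z)\,dz$ and hence $d(\phi\,dz) = 0$. This vanishing is the only place holomorphicity of $q_1$ is used, and it is the crux of the argument. Setting $\alpha := \lambda\,d\overline w + \phi\,dz$ we get $g + q_1 = dz\cdot\alpha$, with $dz \wedge \alpha = \lambda\,dz\wedge d\overline w \neq 0$, so $\{dz, \alpha\}$ is again a (null) complex coframe, now for $g+q_1$. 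The two isotropic lines of $g+q_1$ are therefore $\ker(dz)$ and $\ker(\alpha)$. Since $\ker(dz) = \C\partial_{\overline z}$ is precisely the $(0,1)$-line of $c_1$ — the same isotropic line of $g$ that defines $c_1$ — and $g+q_1$ is positive, $\vb*{c_+}(g+q_1)$ is the complex structure cut out by that line, namely $c_1$. (The line $\ker(\alpha)$ determines $\vb*{c_-}(g+q_1)$, which will in general differ from $\overline{c_2}$.)

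For the curvature I would invoke the moving-frame formalism, which transfers verbatim to complex metrics (cf.\ \cite{BEE}). In a coframe $\{\theta^1, \theta^2\}$ with $g = \theta^1\cdot\theta^2$, the Levi-Civita connection form and the Gauss curvature are universal local expressions built from $d\theta^1$, $d\theta^2$ and the area form $\theta^1\wedge\theta^2$; in particular, two complex metrics presented by coframes of this type with the same triple $(d\theta^1, d\theta^2, \theta^1\wedge\theta^2)$ have the same curvature. Now $g$ in the coframe $\{dz, \lambda\,d\overline w\}$ yields the triple $(0,\, d\lambda\wedge d\overline w,\, \lambda\,dz\wedge d\overline w)$, while $g+q_1$ in the coframe $\{dz, \alpha\}$ yields $(0,\, d\alpha,\, \lambda\,dz\wedge d\overline w)$ with $d\alpha = d(\lambda\,d\overline w) + d(\phi\,dz) = d\lambda\wedge d\overline w$. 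These triples coincide, so $g$ and $g+q_1$ have identical Levi-Civita connection forms, hence identical curvature on $U$ — whatever the precise universal formula — and so $\mathrm K_g = \mathrm K_{g+q_1}$ on all of $S$.

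I expect the only genuinely delicate point to be the orientation bookkeeping in the second paragraph: one must check that of the two isotropic lines of $g+q_1$ it is $\ker(dz)$, and not $\ker(\alpha)$, that is picked out by $\vb*{c_+}$. This is exactly where positivity of $g+q_1$ is used — it forces exactly one of the two isotropic lines to lie in the orientation-compatible component of $\mathbb P\,\C T_pS \setminus \mathbb P\,T_pS$ — together with the elementary observation that $\ker(dz)$ literally is the $(0,1)$-line of the genuine complex structure $c_1 \in \mathcal C(S)$. Everything else is the bilinear algebra of the factorization $g+q_1 = dz\cdot\alpha$ and the standard structure equations.
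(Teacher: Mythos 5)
Your argument is correct and self-contained. The paper itself does not prove this proposition but defers to Proposition~3.8 of \cite{ElE} (noting the proof there, given for Bers metrics, applies verbatim), so there is no in-text proof to compare against; your moving-frame argument is an independent, direct route. The reduction to the $q_1$ case by an orientation flip, the factorization $g + q_1 = dz\cdot\alpha$ with $d(\phi\,dz)=0$ forced by holomorphicity of $\phi$, and the identification of $\ker(dz) = \C\partial_{\overline z}$ as the $(0,1)$-line of $c_1$ (which positivity of $g+q_1$ then promotes to $\vb*{c_+}(g+q_1)=c_1$) are all fine.

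One imprecision worth tightening in the curvature step: as stated, the principle that ``two complex metrics presented by coframes with the same triple $(d\theta^1,d\theta^2,\theta^1\wedge\theta^2)$ have the same curvature'' is too strong. In a null coframe the connection form is diagonal, $\omega = a\,\theta^1 + b\,\theta^2$ with $a = d\theta^2/(\theta^1\wedge\theta^2)$ and $b = d\theta^1/(\theta^1\wedge\theta^2)$; the scalar functions $a,b$ are determined by the triple, but the $1$-form $\omega$ (and hence $d\omega$, hence $\mathrm K$) is not, since it also involves $\theta^1,\theta^2$ themselves. Your application nonetheless succeeds because both coframes share $\theta^1 = dz$ and both have $d\theta^1 = 0$, so $b = 0$ in both and $\omega = a\,dz$ is literally the same $1$-form in both cases; then $d\omega$ and $\theta^1\wedge\theta^2$ each coincide, giving $\mathrm K_g = \mathrm K_{g+q_1}$. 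Rephrasing the intermediate claim as ``equal triples together with equal $\theta^1$ force equal connection forms'' would make the derivation airtight; as it stands the general principle is false but its use here is correct.
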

This Proposition is stated as Proposition 3.8 in \cite{ElE} for Bers metrics, but the proof works verbatim for any positive complex metric.
Moreover, in \cite{ElE} it is proved that deformations from $h$ to $h+q_1$ and $h+\overline{q_2}$ descend to give complex charts on Teichm{\"u}ller space.
\begin{thm} [Theorems A and B in \cite{ElE}]\label{thm: ElE deform}
        Let $\ccpair \in \CSCS$, $h=\hpair$, and let $U_{\ccpair}\subset \mathrm{QD}(c_1)$ be an open neighborhood of zero such that $\hpair+q_1$ is a Bers metric. The map
            \begin{align*}
                U&\to \mathcal T(\overline { S})\\
                q_1&\mapsto [\vb*{{c_-}}(h+q_1)]
            \end{align*}
        is a biholomorphism onto its image. The analogous statement holds for deformations of the form $\hpair\mapsto\hpair+\overline{q_2}$.
\end{thm}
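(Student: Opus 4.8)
First I would verify that $\Phi\colon q_1\mapsto[\vb*{c_-}(\hpair+q_1)]$ actually lands in $\mathcal T(\overline S)$ and is holomorphic. Openness of the positivity condition ensures that, for $q_1$ in a sufficiently small ball $U_{\ccpair}$ about $0$, the whole segment $\{\hpair+tq_1:t\in[0,1]\}$ consists of positive complex metrics; by Proposition \ref{prop: Kg and Kg+q} (applied with $tq_1\in\mathrm{QD}(c_1)$) each of these has constant curvature $-1$ and satisfies $\vb*{c_+}(\hpair+tq_1)=c_1$. Since $\hpair$ is a Bers metric, since Bers metrics form a connected component of $\mathcal{CM}_{-1}(S)$ (by \cite{BEE}), and since the above segment joins $\hpair$ to $\hpair+q_1$ inside $\mathcal{CM}_{-1}(S)$, the metric $\hpair+q_1$ is itself a Bers metric; combined with $\vb*{c_+}(\hpair+q_1)=c_1$ and the bijection between pairs of complex structures and Bers metrics of \cite{BEE}, this gives $\hpair+q_1=\vb*{h}_{(c_1,\,\vb*{c_-}(\hpair+q_1))}$, so $\Phi(q_1)$ is defined. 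Holomorphicity of $\Phi$ is then formal: $q_1\mapsto\hpair+q_1$ is affine, hence holomorphic, $\mathrm{QD}(c_1)\to\mathcal{CM}(S)$; $g\mapsto\vb*{c_-}(g)$ is holomorphic $\mathcal{CM}(S)\to\mathcal C(\overline S)$ (the isotropic line fields, and the complex structures they induce, depend holomorphically on the metric, consistently with Proposition \ref{appendixholomorphicity}); and $\mathcal C(\overline S)\to\mathcal T(\overline S)$ is holomorphic by Theorem \ref{thm: earle eells}.

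The core of the proof is to compute $d\Phi_0$ and show it is an isomorphism. By Proposition \ref{prop: Kg and Kg+q}, $q_1\mapsto\frac{d}{dt}\big|_{t=0}(\hpair+tq_1)=q_1$ realizes $\mathrm{QD}(c_1)$ as a $\C$-linear subspace of $T_{\hpair}\mathcal{CM}_{-1}(S)$ on which $d\vb*{c_+}$ vanishes. Since $(\vb*{c_+},\vb*{c_-})$ restricts to the smooth inverse of the bijection $\ccpair\mapsto\vb*{h}_{\ccpair}$ onto the (open) component of Bers metrics (\cite{BEE} and Proposition \ref{appendixholomorphicity}), its differential at $\hpair$ is an isomorphism $T_{\hpair}\mathcal{CM}_{-1}(S)\xrightarrow{\sim}T_{c_1}\mathcal C(S)\oplus T_{\overline{c_2}}\mathcal C(\overline S)$; composing, $d\vb*{c_-}$ restricts to an injection $\mathrm{QD}(c_1)\hookrightarrow T_{\overline{c_2}}\mathcal C(\overline S)$, and $d\Phi_0$ is this injection followed by the quotient $T_{\overline{c_2}}\mathcal C(\overline S)\twoheadrightarrow T_{[\overline{c_2}]}\mathcal T(\overline S)$. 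Writing $\hpair=\lambda\,dz\,d\overline w$ in local coordinates $z$ for $c_1$ and $\overline w$ for $\overline{c_2}$, and $q_1=\phi(z)\,dz^2$, one finds that the isotropic line of $\hpair+q_1$ other than $\C\partial_{\overline z}$ is spanned by $\partial_z-\big(\overline\mu+\tfrac{\phi}{\lambda\,\partial_{\overline z}\overline w}\big)\partial_{\overline z}$, so that $d\vb*{c_-}[q_1]$ is represented by the Beltrami differential for $\overline{c_2}$ attached to the section $q_1/\hpair$. By Serre duality on $(\overline S,\overline{c_2})$, this Beltrami class is zero exactly when $\int_S q_1\,\overline{q_2}/\hpair=0$ for every $\overline{q_2}\in\mathrm{QD}(\overline{c_2})$; hence $d\Phi_0$ is injective if and only if the $\C$-bilinear pairing
\[
\mathrm{QD}(c_1)\times\mathrm{QD}(\overline{c_2})\to\C,\qquad (q_1,\overline{q_2})\longmapsto\int_S\frac{q_1\,\overline{q_2}}{\hpair},
\]
is non-degenerate, in which case, since $\dim_{\C}\mathrm{QD}(c_1)=\dim_{\C}\mathrm{QD}(\overline{c_2})=\dim_{\C}\mathcal T(\overline S)=3g-3$, injectivity of $d\Phi_0$ upgrades to invertibility.

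The hard part will be establishing the non-degeneracy of this pairing for all $\ccpair$. At a diagonal point $(c,\overline c)$ the Bers metric is the hyperbolic metric $\rho\,dz\,d\overline z$ and the pairing degenerates to the classical one: if $\int_S q_1\overline{q_2}/\hpair=0$ for all $\overline{q_2}$, taking $\overline{q_2}=\overline{q_1}$ forces $\int_S|\phi|^2/\rho=0$, hence $q_1=0$. Off the diagonal $\hpair$ is genuinely complex valued, so the conjugation trick is unavailable and a different input is needed; I would either work directly with the explicit form $\hpair=-4(f_1-\overline{f_2})^{-2}\,df_1\,d\overline{f_2}$ of Theorem \ref{thm: Bers thm} together with the properties of the Bers uniformizing maps $f_1,\overline{f_2}$, or identify $\Phi$ with (the local inverse of) the Bers embedding of $\mathcal T(\overline S)$ based at $(S,c_1)$ and invoke that Bers embeddings are holomorphic embeddings. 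Granting this, $\Phi$ is holomorphic with $d\Phi_0$ invertible, hence a local biholomorphism at $0$, and after shrinking $U_{\ccpair}$ it becomes a biholomorphism onto the open subset $\Phi(U_{\ccpair})\subseteq\mathcal T(\overline S)$. The statement for $\hpair\mapsto\hpair+\overline{q_2}$ follows by the same argument with the roles of $c_1$ and $\overline{c_2}$ interchanged, using the second half of Proposition \ref{prop: Kg and Kg+q}.
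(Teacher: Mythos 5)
This is a result quoted from \cite{ElE} (it is cited as ``Theorems A and B in \cite{ElE}''), not proved in the present paper, so there is no internal proof to compare against; I can only assess your argument on its own merits. Your preliminary steps are sound: well-definedness of $\Phi$ via openness of positivity, $\mathrm K_g$-invariance from Proposition \ref{prop: Kg and Kg+q}, connectedness of the Bers component of $\mathcal{CM}_{-1}(S)$, and the bijectivity of $\ccpair\mapsto\hpair$; and holomorphicity by composing the three evident holomorphic maps. The linearization argument is also structured correctly: $d\vb*{c_+}$ vanishes on the $q_1$-direction, so $d\vb*{c_-}$ is injective there; the issue is whether the image survives the quotient to $T_{[\overline{c_2}]}\mathcal T(\overline S)$, and by Serre duality this reduces to non-degeneracy of the $\C$-bilinear pairing $(q_1,\overline{q_2})\mapsto\int_S q_1\overline{q_2}/\hpair$. (One minor caveat: be careful that what is being quotiented out is the full $\mathrm{Diff}_0(S)$-orbit direction and that the Beltrami class you write down is the right representative — the paper's own convention, via \cite[Remark~3.9]{ElE}, has a conjugation that is easy to get wrong.)

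The genuine gap, which you yourself flag, is the non-degeneracy of that pairing for an arbitrary $\ccpair$. On the diagonal, taking $\overline{q_2}=\overline{q_1}$ gives $\int_S|\phi|^2/\rho$ and positivity does the work, but off the diagonal $\hpair$ is genuinely $\C^*$-valued and no positivity or conjugation argument is available; openness of the non-degeneracy locus near the diagonal is insufficient. Writing ``Granting this'' does not close the argument — this is precisely where the content of the theorem lies. The paper's remark immediately after Theorem \ref{thm: ElE deform}, that these charts are affine reparametrizations of the Schwarzian parametrization, points toward your second suggested route: identifying $\Phi$ (up to an affine map on $\mathrm{QD}(c_1)$) with the inverse of the Bers embedding based at $c_1$, and then appealing to the fact that the Bers embedding is a holomorphic open embedding. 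But that identification itself requires a computation with the Bers developing maps $f_+,\overline{f_-}$ of Theorem \ref{thm: Bers thm} comparable in difficulty to a direct non-degeneracy argument, so you should not treat it as a free pass; you would need to actually carry it out.
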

Although we will not use this fact in the paper, it is worthwhile to note that the charts from the theorem above are affine reparametrizations of the Schwarzian parametrization.

\section{Complex affine spheres}
In this section we develop the theory of complex affine immersions and complex affine spheres as an extension of the theory of (real) affine immersions and affine spheres. We don't recall the real theory here, but we highlight it as a special case of our construction (see Example \ref{ex: ras}, and also Section \ref{sec: LL parametrization} later on). Sources on the real theory include \cite{NS}, \cite{Lof}, and \cite{Lab2}.

In the section below, if regularity is unspecified, we will assume that all the objects are regular enough for the operations to be well-defined.
\subsection{Complex affine immersions}
Let $\mathbb X$ be a complex manifold of complex dimension $n=m+1$ with almost complex structure $\mathbb{J}$ and torsion free holomorphic affine connection $D$. Starting in Section \ref{section: complex affine spheres}, $\mathbb X=\mathbb{C}^3,$ equipped with the Levi-Civita connection of its holomorphic Riemannian metric, so it might be helpful to have $\mathbb X=\C^n$ in mind as the typical target space. We denote by $\text{Aff}(\mathbb X, D)$ the space of biholomorphisms of $\mathbb X$ that preserve $D$.

Now, let $M$ be a (real) smooth manifold of dimension $m$, let $\widetilde M$ be its universal cover, and identify $\pi_1(M)$ with the deck group of the covering map. From now on until Section \ref{section: Gauss, Codazzi, integration}, fix a representation $\rho\colon \pi_1(M)\to \text{Aff}(\mathbb X, D)$ and let $\sigma\colon \widetilde M\to \mathbb X$ be a equivariant map. We refer to $\rho$ as the \textbf{holonomy} of $\sigma$. Note that one can see immersions $M\to \mathbb X$ as equivariant immersions $\widetilde M\to \mathbb X$ with the trivial holonomy $\rho\equiv id_{\mathbb X}$. 

The tangent map $\sigma_*\colon T\widetilde M \to \sigma^*(T\mathbb X)$ extends to a $\C$-linear map $\sigma_*\colon \C T\widetilde M \to \sigma^*(T\mathbb X)$ by defining $\sigma_*(v+iw):=\sigma_*(v)+\mathbb J(\sigma_*(w))$. In the following, with $\sigma_*$ we will always denote the extension to $\mathbb C T\widetilde M$.

\begin{defn}
    We say that an immersion $\sigma:\widetilde M\to \mathbb X$ is \textbf{admissible} if the induced map $\sigma_*: \C T\widetilde M\to \sigma^*T\mathbb X$ is injective. 
\end{defn}
Admissibility means that $\sigma_*$ maps $\R$-linearly independent vectors in $T_p \widetilde M$ to $\C$-linearly independent vectors in $\sigma^*(T\mathbb X)$. 
The admissibility condition allows the construction of several geometric tensors associated with the immersion, imitating the theory of hypersurfaces in smooth manifolds.

Since $\sigma$ is $\rho$-equivariant, $\pi_1(M)$ acts on the bundle $\sigma^*(T\mathbb X)$ through $\rho$ preserving both $\sigma^*D$ and $\sigma^*\mathbb J$: the quotient defines a bundle $E_\rho\to M$ that inherits both a connection, which we still denote with $\sigma^*D$, and a complex structure that we denote with $i$ to relax the notation.
Assuming $\sigma$ is admissible, $\mathbb C T M$ can be seen as a complex subbundle of $E_\rho$ with complex co-rank 1. We abuse notation slightly and write this subbundle as $\C T M\subset E_\rho$ when the context is clear.

\begin{defn}
A $\rho$-equivariant \textbf{complex affine immersion} of $\widetilde M$ in $\mathbb X$ is a pair $(\sigma, \xi)$ where $\sigma\colon M\to \mathbb X$ is a $\rho$-equivariant admissible immersion, and $\xi$ is a smooth section of $E_\rho$ called the \textbf{transversal vector field}, such that $\mathbb C T\widetilde M\oplus \mathrm{Span}_{\mathbb C }(\xi)= E_\rho$. 
\end{defn}
Compare with \cite[Definition II.1.1]{NS}.

A \textbf{complex affine connection} $\nabla$ on $M$ is a $\mathbb C$-bilinear affine connection on the bundle $\mathbb C TM$. Observe that all affine connections on $T M$ can be extended by linearity to complex affine connections on $\C TM$. Notice that this notion, defined on smooth manifolds, is very different from the notion of holomorphic connection on a complex manifold introduced above.

 According to the splitting $\mathbb C T M\oplus \mathrm{Span}_{\mathbb C} (\xi)$, we get that for all $\mathsf X \in \Gamma(T M)$, $Y\in \Gamma(\mathbb C T M)$
\begin{equation}
\label{eq: affinedecomposition}
    \begin{cases}
        (\sigma^*D)_{\mathsf X} Y&=: \nabla_{\mathsf X} Y + g(\mathsf X, Y)\cdot \xi\\
        (\sigma^*D)_{\mathsf X} \ \xi&=: -\mathsf S(\mathsf X) + \tau(\mathsf X) \cdot \xi.
    \end{cases}
\end{equation}
\begin{remark}
    One can see that:
\begin{itemize}[leftmargin=*]
        \item for all $X,Y\in \Gamma(\mathbb CT M)$, with $X=\mathsf X_1+i\mathsf X_2$, $\nabla_XY:=\nabla_{\mathsf X_1} Y+i\nabla_{\mathsf X_2} Y$ is a torsion-free complex affine connection on $\mathbb C T M$;
        \item $g$ extends to a symmetric $\mathbb C$-bilinear form on $\mathbb C T M$, which, by analogy with the theory of real affine spheres, we call the \textbf{affine fundamental form};
        \item $\mathsf S$ is linear, and it extends to a section of $\mathrm{End}(\mathbb C T M)$, called the \textbf{shape operator};
        \item $\tau$ is linear as well, and it extends to an element in $\Omega^1(\mathbb C T M)$, called \textbf{transversal connection form}.
    \end{itemize}
\end{remark}

\subsection{Blaschke affine immersions}
In this subsection we construct preferred transversal vector fields called affine normals. The theory here is roughly parallel to \cite[\S II.3]{NS}. {However, unlike the real case, we will see that there are topological obstructions to the existence of such an affine normal}.
First of all, let us see how the affine tensors defined in the previous subsection change according to a change in the transversal vector field.

\begin{lem}
\label{lemma: rescale affine normal}
Let $(\sigma, \xi)$ be a complex affine immersion, with $\nabla, g, \mathsf S, \tau$ as in Equation \eqref{eq: affinedecomposition}. 
Let $\hat{\xi}=\alpha\xi + \eta$ be a different transversal vector field for $\sigma$, with $\alpha\in C^\infty( M,\C^*)$ and $\eta\in \Gamma(\C T M)$. Then, the complex affine immersion $(\sigma, \hat \xi)$ induces $\hat \nabla, \hat g, \hat {\mathsf S}, \hat \tau$ according to: 
\begin{enumerate}
    \item  $\hat{g} =\frac 1 \alpha g $,
    \item $\hat{\nabla}_X Y = \nabla_X Y - \frac 1 \alpha g(X,Y)Z$,
      \item   $\hat{\mathsf S}(X)=\alpha \mathsf S(X)-\nabla_X \eta + \tau(X)\eta +\frac{\partial_X \alpha+ g(X,\eta)}{\alpha} \eta$,
    \item $\hat{\tau}(X)=\tau(X)+\frac{\partial_X \alpha+ g(X,\eta)}{\alpha}$.
\end{enumerate}
\end{lem}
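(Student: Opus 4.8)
The plan is to compute directly from the defining equations \eqref{eq: affinedecomposition} for the new transversal field $\hat\xi = \alpha\xi + \eta$, decomposing $(\sigma^*D)_{\mathsf X} Y$ and $(\sigma^*D)_{\mathsf X}\hat\xi$ according to the splitting $\C TM \oplus \mathrm{Span}_\C(\hat\xi)$. The only inputs are the Leibniz rule for the connection $\sigma^*D$ and $\C$-linearity; all the work is bookkeeping, so I will not grind through every term here but instead indicate the shape of the argument.

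First I would establish (1) and (2) together. Write $(\sigma^*D)_{\mathsf X} Y = \nabla_{\mathsf X} Y + g(\mathsf X, Y)\xi$. To re-express the $\xi$-component in terms of $\hat\xi = \alpha\xi + \eta$, substitute $\xi = \frac1\alpha(\hat\xi - \eta)$, giving $(\sigma^*D)_{\mathsf X} Y = \bigl(\nabla_{\mathsf X} Y - \tfrac1\alpha g(\mathsf X, Y)\eta\bigr) + \tfrac1\alpha g(\mathsf X, Y)\hat\xi$. Since $\nabla_{\mathsf X} Y - \tfrac1\alpha g(\mathsf X,Y)\eta$ lies in $\C TM$ and $\hat\xi$ spans the complement, uniqueness of the decomposition forces $\hat g(\mathsf X, Y) = \tfrac1\alpha g(\mathsf X, Y)$ and $\hat\nabla_{\mathsf X} Y = \nabla_{\mathsf X} Y - \tfrac1\alpha g(\mathsf X, Y)\eta$, which is (1) and (2) (here $Z$ in the statement is $\eta$; I would keep the paper's notation $\eta$ to avoid confusion, or flag that $Z = \eta$). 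The extension to $X = \mathsf X_1 + i\mathsf X_2 \in \C TM$ is then automatic by the $\C$-linearity already noted in the Remark following \eqref{eq: affinedecomposition}.

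Next I would do (3) and (4) by expanding $(\sigma^*D)_{\mathsf X}\hat\xi = (\sigma^*D)_{\mathsf X}(\alpha\xi + \eta)$. By Leibniz, this equals $(\partial_{\mathsf X}\alpha)\,\xi + \alpha\,(\sigma^*D)_{\mathsf X}\xi + (\sigma^*D)_{\mathsf X}\eta$. Now substitute $(\sigma^*D)_{\mathsf X}\xi = -\mathsf S(\mathsf X) + \tau(\mathsf X)\xi$ and $(\sigma^*D)_{\mathsf X}\eta = \nabla_{\mathsf X}\eta + g(\mathsf X,\eta)\xi$ (the latter using \eqref{eq: affinedecomposition} applied to the section $\eta$ of $\C TM$), collect all the $\xi$-terms, and again replace $\xi = \frac1\alpha(\hat\xi - \eta)$. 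The coefficient of $\hat\xi$ that emerges is $\tau(\mathsf X) + \tfrac{\partial_{\mathsf X}\alpha + g(\mathsf X,\eta)}{\alpha}$, which is $\hat\tau(\mathsf X)$, proving (4); the remaining $\C TM$-valued part, after moving the $-\eta$ contributions across, is $-\bigl(\alpha\mathsf S(\mathsf X) - \nabla_{\mathsf X}\eta + \tau(\mathsf X)\eta + \tfrac{\partial_{\mathsf X}\alpha + g(\mathsf X,\eta)}{\alpha}\eta\bigr)$, so $\hat{\mathsf S}(\mathsf X)$ is the expression in (3). Finally extend $\C$-linearly in $X$ as before.

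There is no real obstacle here; the statement is a purely formal change-of-transversal computation. The only points requiring a little care are: (i) matching the paper's notation — the $Z$ appearing in (1)--(2) should be the $\eta$ of the hypothesis, and I would make this explicit; (ii) tracking signs correctly when passing the $-\eta/\alpha$ terms between the two components, especially in the shape-operator term where both $\nabla_{\mathsf X}\eta$ and the curvature-free torsion-free structure contribute; and (iii) confirming that the $\C$-linear extensions are consistent, i.e. that defining $\hat\nabla_{\mathsf X_1 + i\mathsf X_2} = \hat\nabla_{\mathsf X_1} + i\hat\nabla_{\mathsf X_2}$ agrees with the formula in (2) — this is immediate since $g$ and the right-hand sides are already $\C$-bilinear. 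I would present the computation for real $\mathsf X$ and invoke linearity to finish.
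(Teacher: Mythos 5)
Your proof is correct and its main thrust is a direct expansion, which is a genuinely different (and arguably cleaner) route from the one in the paper. The paper cites the real-variable version of this computation from Nomizu--Sasaki \cite[Proposition II.2.5]{NS}, which is stated for real scalar $\alpha$ and real shift $\eta$, and then bootstraps to complex $\alpha$ and complex $\eta$ by composing three transversal changes: a real shift by $\mathrm{Re}(\eta/\alpha)$, then a rescaling by $-i$ plus a real shift by $\mathrm{Im}(\eta/\alpha)$, and finally a rescaling by $i\alpha$. Your approach instead runs the computation once, directly, with complex $\alpha$ and complex $\eta$, substituting $\xi=\tfrac1\alpha(\hat\xi-\eta)$ and using the Leibniz rule together with the decomposition \eqref{eq: affinedecomposition} applied to $\eta\in\Gamma(\C TM)$. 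This avoids the composition bookkeeping and is self-contained, but it does rely on one fact that the paper's approach sidesteps and that you should state explicitly: the Leibniz rule $(\sigma^*D)_{\mathsf X}(\alpha\xi)=(\partial_{\mathsf X}\alpha)\xi+\alpha\,(\sigma^*D)_{\mathsf X}\xi$ must hold for \emph{complex-valued} $\alpha$, not just real. This is true here because $D$ is a holomorphic affine connection, so $\sigma^*D$ commutes with the complex structure $i$ on $E_\rho$; writing $\alpha=a+ib$, the $a\xi$ and $b\xi$ terms obey the usual real Leibniz rule, and $D_{\mathsf X}(i\,b\xi)=i\,D_{\mathsf X}(b\xi)$ then gives the complex rule. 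With that observation made explicit, your argument is complete; your identifications of $Z$ with $\eta$ and your sign-tracking in items (3)--(4) are correct, and the $\C$-linear extension in $X$ is, as you note, immediate.
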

\begin{proof}
For $X,Y, \eta\in \Gamma(TM)$, the proof follows by direct computation, as shown for the real case in \cite[Proposition II.2.5]{NS}. The equations for $X,Y\in \Gamma(\mathbb C  TM)$ follow by $\mathbb C$-linearity, while the result for $\eta\in \Gamma(\mathbb C TM)$ follows by applying transformations repeatedly, namely:
\begin{align*}
\xi& \quad \leadsto \quad \xi+ Re\left(\frac \eta \alpha\right) \quad \leadsto \quad -i \left(\xi+Re\left(\frac {\eta}{\alpha} \right) \right)+Im\left(\frac {\eta}{\alpha}\right) = -i \xi -i \frac{\eta}{\alpha}\quad \leadsto\\
&\leadsto \quad i\alpha \left( -i \xi -i \frac{\eta}{\alpha}\right) =\alpha\xi +\eta.
\end{align*}
\end{proof}

From now on, assume that $\mathbb X$ carries a \textbf{holomorphic $D$-flat volume form $\omega$}, i.e. a nowhere vanishing holomorphic $n$-form $\omega$ such that $D \omega=0$, and that our actions $\rho$ preserve this volume form. We denote by Aff$(\mathbb X, D, \omega)$ the space of biholomorphisms of $\mathbb X$ that preserve both $D$ and $\omega$.

Given $(\sigma, \xi)$ a complex affine immersion of $\widetilde M$ into $\mathbb X$, we define a nowhere vanishing section $\theta$ of $\bigwedge^{k} \C T^*\widetilde M$ on $\widetilde M$ by 
\begin{equation}
    \label{eq: def volume form}
    \theta(X_1,\dots, X_m) = (\sigma^*\omega)(X_1,\dots, X_m,\xi)\ .
\end{equation}
We call $\theta$ the \textbf{induced complex volume form}. By equivariance, the induced complex volume form $\theta$ descends to a volume form on $M$.

\begin{prop}\label{prop: nablatheta}
   For any $X\in \C TM,$ $\nabla_X \theta=\tau(X)\theta.$
\end{prop}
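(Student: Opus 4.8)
The plan is to differentiate the defining identity \eqref{eq: def volume form} for $\theta$ and use the affine decomposition \eqref{eq: affinedecomposition} together with the $D$-flatness of $\omega$. Fix a local frame $X_1,\dots,X_m$ of $\C TM$ and a vector field $X$; since both sides of the claimed identity are tensorial in $X$ and $\C$-multilinear and alternating in the $X_i$, it suffices to verify the formula on such a frame. Because $\sigma^*D$ is the pull-back connection and $D\omega = 0$, the pulled-back form $\sigma^*\omega$ is parallel for $\sigma^*D$, so for the $(m+1)$-form $\sigma^*\omega$ on $E_\rho$ we have the Leibniz expansion
\[
0 = \big((\sigma^*D)_X \,\sigma^*\omega\big)(X_1,\dots,X_m,\xi)
= X\big((\sigma^*\omega)(X_1,\dots,X_m,\xi)\big) - \sum_{j=1}^m (\sigma^*\omega)\big(X_1,\dots,(\sigma^*D)_X X_j,\dots,X_m,\xi\big) - (\sigma^*\omega)\big(X_1,\dots,X_m,(\sigma^*D)_X\xi\big).
\]
The first term is $X(\theta(X_1,\dots,X_m))$. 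In the $j$-th summand, substitute $(\sigma^*D)_X X_j = \nabla_X X_j + g(X,X_j)\xi$; since $\sigma^*\omega$ evaluated on a list containing $\xi$ twice vanishes, only the $\nabla_X X_j$ part survives, giving $(\sigma^*\omega)(X_1,\dots,\nabla_X X_j,\dots,X_m,\xi) = \theta(X_1,\dots,\nabla_X X_j,\dots,X_m)$. In the last term, substitute $(\sigma^*D)_X\xi = -\mathsf S(X) + \tau(X)\xi$; the $\mathsf S(X)$ contribution lies in $\C TM$, so $(\sigma^*\omega)(X_1,\dots,X_m,\mathsf S(X))$ is the wedge of $m+1$ vectors all lying in the rank-$m$ subbundle $\C TM$, hence zero, and we are left with $\tau(X)\,(\sigma^*\omega)(X_1,\dots,X_m,\xi) = \tau(X)\,\theta(X_1,\dots,X_m)$.

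Reassembling,
\[
X\big(\theta(X_1,\dots,X_m)\big) = \sum_{j=1}^m \theta(X_1,\dots,\nabla_X X_j,\dots,X_m) + \tau(X)\,\theta(X_1,\dots,X_m),
\]
which is precisely the statement that $(\nabla_X\theta)(X_1,\dots,X_m) = \tau(X)\,\theta(X_1,\dots,X_m)$, i.e. $\nabla_X\theta = \tau(X)\theta$ as desired. Finally, one checks this is consistent with equivariance: both $\theta$ and $\tau$ descend to $M$ (the former because $\rho$ preserves $\omega$, the latter by its definition via \eqref{eq: affinedecomposition}), so the identity, established on $\widetilde M$, descends to $M$.

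I do not expect a serious obstacle here; this is the complex-linear analogue of the classical computation in \cite[\S II.3]{NS}, and the only points requiring a little care are (i) that $\sigma^*\omega$ is genuinely $\sigma^*D$-parallel — which follows formally from $D\omega=0$ by naturality of pull-back connections — and (ii) the repeated use of the vanishing of an $(m+1)$-form on a list of vectors that either contains $\xi$ twice or lies entirely in the corank-one subbundle $\C TM$. The $\C$-multilinearity of $\nabla$, $g$, $\mathsf S$, and $\tau$ (recorded in the Remark following \eqref{eq: affinedecomposition}) is what lets the real-variable argument pass verbatim to the complexified setting.
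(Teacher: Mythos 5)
Your proof is correct and is essentially the argument the paper relies on. The paper's proof simply cites the real-case computation from Nomizu--Sasaki (Proposition II.1.4) and notes that the statement extends by $\C$-linearity of $\nabla\theta$ and $\tau$; you have written out that same Leibniz-rule computation explicitly in the complexified setting, with the two key vanishing observations (a repeated $\xi$, and $m+1$ vectors confined to the corank-one subbundle $\C TM$) spelled out rather than referenced.
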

\begin{proof}
  The proof for $X\in \Gamma(TS)$ is identical to the proof of the analogous result for real affine immersions \cite[Proposition II.1.4]{NS}, then the statement follows by $\C$-linearity of $\nabla \theta$ and $\tau$.
\end{proof}

Proposition \ref{prop: nablatheta} and Equation \eqref{eq: affinedecomposition} tell us that $\theta$ is $\nabla$-parallel if and only if $(\sigma^*D)\xi$ preserves the sub-bundle $\mathbb C T\widetilde M$. 

We say that $\sigma$ is \textbf{non-degenerate} if, given a transversal vector field $\xi,$ the affine fundamental form $g$ is nowhere degenerate, i.e., it is a complex metric. By Lemma \ref{lemma: rescale affine normal}, all choices of affine transversal sections determine conformally equivalent fundamental forms, so this notion of non-degeneracy is independent of our choice of the transversal section.

\begin{remark}
Given a complex metric $g$ on a smooth manifold $\widetilde M$, on each simply connected chart with local coordinates $(x_i)_{i=1}^m$ one can define a local \textbf{metric volume form} $dV_g=\sqrt{det(g(\partial_{x_i}, \partial_{x_j}))_{i,j}}\cdot dx_1\wedge\dots\wedge dx_n$, which is well-defined up to a sign. Since $\widetilde M$ is simply connected, there always exists a global metric volume form on $\widetilde M$. 
\end{remark}

\begin{defn}
   Let $\rho\colon \pi_1(M)\to \text{Aff}(\mathbb X, D, \omega)$ be a homomorphism. A $\rho$-equivariant \textbf{Blaschke affine immersion} of $\widetilde M$ in $\mathbb X$ is an (equivariant) affine immersion $(\sigma, \xi)$ such that:
   \begin{enumerate}
       \item the affine fundamental form $g$ is nowhere degenerate, and there exists a global metric volume form $dV_g$ on $M$,
       \item $\nabla\theta=0$, hence $\tau=0$ by Proposition \ref{prop: nablatheta},
       \item $\theta=dV_g$.
   \end{enumerate} 
The transversal vector field $\xi$ is called \textbf{affine normal} and $g$ is called \textbf{Blaschke metric}.
\end{defn}

\begin{prop}
    \label{prop: blaschkenorm}
   Let $M=\widetilde M$ be simply connected. Given a non-degenerate affine immersion $(\sigma, \xi)$ of $\widetilde M$ into $\mathbb X$, there exists a transversal vector field $\hat\xi$, unique up to a sign, such that $(\sigma, \hat \xi)$ is a Blaschke affine immersion. $(\sigma, \hat \xi)$ is called the \textbf{Blaschke normalization} of $(\sigma, \xi)$.
\end{prop}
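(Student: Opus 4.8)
The plan is to use the transformation rules for the affine tensors under a change of transversal vector field from Lemma \ref{lemma: rescale affine normal}, together with the structure identity of Proposition \ref{prop: nablatheta}, to reduce the existence of a Blaschke normalization to solving for the scaling function $\alpha \in C^\infty(\widetilde M, \C^*)$. Since $\widetilde M$ is simply connected, no topological obstruction arises — this is precisely the place where the simple connectedness hypothesis enters, in contrast to the remark preceding the statement about obstructions in the non-simply-connected case.

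First I would fix any transversal vector field $\xi$ for the non-degenerate affine immersion $\sigma$, giving tensors $\nabla, g, \mathsf S, \tau$ and the induced complex volume form $\theta$. By Proposition \ref{prop: nablatheta}, $\nabla_X \theta = \tau(X)\theta$. Since $\widetilde M$ is simply connected and $\theta, dV_g$ are both nowhere-vanishing sections of $\bigwedge^m \C T^*\widetilde M$, their ratio is a well-defined function to $\C^*$, and we may write $\theta = \phi \cdot dV_g$ for some $\phi\colon \widetilde M \to \C^*$ (again using simple connectedness to choose such a global ratio consistently, and to take roots later). Now I restrict attention to transversal fields of the form $\hat\xi = \alpha \xi$, i.e. $\eta = 0$ in Lemma \ref{lemma: rescale affine normal}. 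Then $\hat g = \frac 1\alpha g$, so $dV_{\hat g} = \alpha^{-m/2} dV_g$ up to sign, and from Equation \eqref{eq: def volume form} one computes $\hat\theta(X_1,\dots,X_m) = (\sigma^*\omega)(X_1,\dots,X_m,\alpha\xi) = \alpha\,\theta(X_1,\dots,X_m)$, i.e. $\hat\theta = \alpha\theta = \alpha\phi\, dV_g$. I then need to choose $\alpha$ so that simultaneously $\hat\tau = 0$ and $\hat\theta = dV_{\hat g}$. The condition $\hat\theta = dV_{\hat g}$ reads $\alpha\phi = \alpha^{-m/2}$ up to sign, i.e. $\alpha^{1 + m/2} = \pm\phi^{-1}$, which (using simple connectedness to extract the root) determines $\alpha$ up to a sign; note $m = \dim M$, so $1 + m/2 \neq 0$ and the root is genuinely solvable. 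With this $\alpha$, I must check $\hat\tau = 0$: by Lemma \ref{lemma: rescale affine normal}(4), $\hat\tau(X) = \tau(X) + \partial_X(\log\alpha)$, and I claim this forced choice of $\alpha$ makes it vanish. Indeed, applying Proposition \ref{prop: nablatheta} to the immersion $(\sigma,\hat\xi)$ gives $\nabla^{\hat{}}_X \hat\theta = \hat\tau(X)\hat\theta$, while $\hat\theta = dV_{\hat g}$ is $\hat\nabla$-parallel because $\hat\nabla$ is metric-compatible with $\hat g$ up to the usual computation (the metric volume form is parallel for the Levi-Civita-type connection of a metric — here $\hat\nabla$ is torsion-free and one checks $\hat\nabla \hat g$ is totally symmetric, which suffices for $\hat\nabla dV_{\hat g} = 0$); hence $\hat\tau(X)\hat\theta = 0$ and so $\hat\tau = 0$. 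This verifies condition (2), condition (3) is $\hat\theta = dV_{\hat g}$ by construction, and condition (1) holds since non-degeneracy is conformally invariant and $dV_{\hat g}$ exists globally on the simply connected $\widetilde M$.

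For uniqueness, suppose $\hat\xi$ and $\hat\xi'$ both give Blaschke normalizations. Writing $\hat\xi' = \beta\hat\xi + \eta$ via Lemma \ref{lemma: rescale affine normal}, the conditions $\hat\tau = \hat\tau' = 0$ force $\partial_X(\log\beta) + \hat g(X,\eta)/\beta = 0$ for all $X$, and the matching of induced volume forms with metric volume forms forces $\beta^{1+m/2}$ to be constant of modulus making $dV_{\hat g'} = \hat\theta'$; combining these, I expect to conclude $\eta = 0$ (by a rank/degree argument on the $1$-form $\hat g(\cdot,\eta)$ being exact and the volume constraint pinning $\beta$) and $\beta = \pm 1$, giving the claimed uniqueness up to sign. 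The main obstacle I anticipate is the bookkeeping in this uniqueness step — specifically, showing $\eta$ must vanish rather than just $\beta\hat\xi+\eta$ being forced — and being careful that taking the $(1+m/2)$-th root of $\phi^{-1}$ is legitimate, which is exactly where simple connectedness is used and where the degenerate-to-real case would break; the existence half is essentially the real-case argument of \cite[\S II.3]{NS} carried through by $\C$-linearity.
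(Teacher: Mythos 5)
The existence half of your argument has a genuine gap. You restrict to $\hat\xi=\alpha\xi$ (i.e.\ $\eta=0$ in Lemma \ref{lemma: rescale affine normal}), fix $\alpha$ by the volume condition alone, and assert that $\hat\tau=0$ then follows because $\hat\theta=dV_{\hat g}$ is $\hat\nabla$-parallel. That assertion does not hold: $\hat\nabla$ is the \emph{induced affine connection} of $(\sigma,\hat\xi)$, not the Levi-Civita connection of $\hat g$, and for such a connection the metric volume form is $\hat\nabla$-parallel precisely when $\mathrm{tr}_{\hat g}(\hat\nabla\hat g)=0$ — the apolarity condition a Blaschke normalization is supposed to \emph{deliver}, not an input. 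Total symmetry of $\hat\nabla\hat g$ is no substitute (a symmetric $3$-tensor can have nonzero $g$-trace), and by Codazzi equation \eqref{eq: Cod1 generic} total symmetry of $\hat\nabla\hat g$ is itself equivalent to $\hat\tau=0$, so the reasoning is circular at exactly the step you need. Concretely, writing $\theta=\phi\cdot dV_g$, one computes from Proposition \ref{prop: nablatheta} that $\tau=d\log\phi+\tfrac12\,\mathrm{tr}_g(\nabla g)$; the $\alpha$ that the volume condition forces cancels only a fraction of $d\log\phi$ and none of the $\mathrm{tr}_g(\nabla g)$ term, so $\hat\tau\ne0$ in general.

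The missing idea is that $\eta$ must be allowed to be nonzero: with $\alpha$ fixed by the volume condition, one uses non-degeneracy of $g$ to pick the complex vector field $\eta$ with $g(\eta,\cdot)=-(\alpha\tau+d\alpha)$, which by item (4) of Lemma \ref{lemma: rescale affine normal} kills $\hat\tau$. This does not disturb the volume condition, since $\omega(X_1,\dots,X_m,\alpha\xi+\eta)=\omega(X_1,\dots,X_m,\alpha\xi)$ whenever $\eta\in\Gamma(\C T\widetilde M)$ and $\hat g=\tfrac1\alpha g$ depends only on $\alpha$. Your uniqueness sketch, which does allow a general change $\hat\xi'=\beta\hat\xi+\eta$, is essentially correct once tidied: $\hat\theta'=\beta\hat\theta$ and $dV_{\hat g'}=\beta^{-m/2}dV_{\hat g}$ force $\beta^{1+m/2}=1$, hence $\beta$ is a constant root of unity (so $\pm1$ for $m=2$), and then $\hat\tau'=\hat\tau=0$ with $d\beta=0$ forces $\hat g(\cdot,\eta)=0$ and hence $\eta=0$ by non-degeneracy.
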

\begin{proof}
This proof is totally analogous to that of the corresponding result in the real case (see \cite[Theorem II.3.1]{NS}), but we include the full proof to show where the topological assumption is used.

Since $\widetilde M$ is simply connected, the nondegenerate affine fundamental form $g$ of $(\sigma, \xi)$ admits a global metric volume form $dV_{g}$. Given $\alpha\in C^\infty(M,\C^*)$ and $\eta\in \Gamma(\C TM)$, if we transform $\xi$ to $\hat{\xi}=\alpha\xi + \eta,$ the metric volume form $dV_{g}$ and the induced complex volume form $\theta$ of $(\sigma, \xi)$ change according to $dV_{\hat{g}}=\frac 1 \alpha dV_g$ and  $$\hat{\theta}(X,Y)=\omega(X,Y,\alpha\xi + \eta)=\omega(X,Y,\alpha \xi)=\alpha \theta(X,Y). \ $$ 
    Since $M$ is simply connected, there exists a function $\alpha\in C^\infty(\widetilde M,\C^*),$ unique up to multiplication by $-1$, such that $$\alpha^2 = \frac{dV_g}{\theta}.$$ Choosing such $\alpha$ guarantees that the condition (3) holds, so it remains to pick $\eta$ so that (2) holds. Since $g$ is non-degenerate, there exists a unique complex vector field $\eta$ such that, as $$g(\eta,\cdot)=-(\alpha\tau + d\alpha).\ $$ Using item (4) from Lemma \ref{lemma: rescale affine normal}, we see that $\hat{\xi}=\alpha \xi + \eta$ satisfies the requirements of the proposition. Observe that if we exchange $\alpha$ with $-\alpha,$ then $\eta$ becomes $-\eta,$ and hence $\xi$ is unique up to multiplication by $\pm 1$.
\end{proof}

\begin{remark}
Looking into the proof of Proposition \ref{prop: blaschkenorm}, one can see that, unlike the real case, for general $M$ and for a general representation $\rho\colon \pi_1(M)\to \mathrm{Aff}(\mathbb X, D, \omega)$, a $\rho$-equivariant non-degenerate affine immersion $(\sigma,\xi)$ might not admit a $\rho$-equivariant Blaschke normalization. 
    In fact, one needs to assume that $(M,g)$ admits a global volume form (equivalently, a $\pi_1(M)$-invariant metric volume form for $\widetilde g$ on $\widetilde M$), and - even assuming this - the function $\frac {dV_g}{\theta}$ might not admit a $\pi_1(M)$-invariant square root.

    Nevertheless, the obstructions only depend on the topological properties of $g$, $dV_g$, and $\theta$, so, if $\sigma$ is a deformation of a real affine immersion, then it admits a Blaschke normalization. 
\end{remark}
\begin{example}[Graph immersions] 
   Let $U$ a domain in $\R^n$ and $F:\R^n\to \C$ a $C^1$ function. A graph immersion is an admissible immersion $\sigma:U\to \C^{n+1}$ of the form $$\sigma(x_1,\dots, x_n) = (x_1,\dots, x_n, F(x_1,\dots, x_n)).$$ The choice of transversal vector field $\xi=(0,\dots, 0,1)$ yields $\mathsf S=0$ and $\tau=0$.
\end{example}
\begin{example}[Complex paraboloids]

    As a special example of a graph immersion, we have what we call complex paraboloids.   
   Consider, for instance, $F(x,y)=(x,y, \frac{1}{2}(x^2+e^{i\theta}y^2)).$ Taking $e^{i\theta}=1$ gives the well-known elliptic paraboloid, and taking $e^{i\theta}=-1$ gives a hyperbolic paraboloid. It is not hard to compute that $\xi=(0,0,e^{-i\theta})$ produces a transversal vector field with $\tau=0$ and $\theta = dV_g.$
\end{example}

\subsection{The fundamental theorem for complex affine immersions}
\label{section: Gauss, Codazzi, integration}
From now on, we will assume that $\mathbb X = \mathbb C^{n+1}$ endowed with the Levi-Civita connection of the holomorphic Riemannian metric, say $D$.
The standard equations from affine differential geometry admit complex extensions.
\begin{prop}
\label{prop: GCReqns}

   Consider an equivariant complex affine immersion $(\sigma, \xi)$ of $\widetilde M$ into $(\mathbb C^{m+1}, D)$ inducing the data $\nabla, g,\mathsf S, \tau$ on $M$ as in Equation \eqref{eq: affinedecomposition}.

  The following equations hold for all $X,Y,Z\in \Gamma(\mathbb C T M)$.
    \begin{align}
        \label{eq: GE generic} \text{Gauss equation:} &\qquad \mathrm R^{\nabla}(X,Y)Z=g(Y,Z)\mathsf S(X)-g(X,Z)\mathsf S(Y),\\
        \label{eq: Cod1 generic} \text{Codazzi equation 1:} &\qquad (\nabla_X g)(Y,Z)+\tau(X)g(Y,Z)=(\nabla_Yg)(X,Z)+\tau(Y)g(X,Z),\\
         \label{eq: Cod2 generic} \text{Codazzi equation 2:}& \qquad (\nabla_X \mathsf S)(Y)-\tau(X)\mathsf S(Y)=(\nabla_Y \mathsf S)(X)-\tau(Y)\mathsf S(X),\\
       \label{eq: Ricci generic} \text {Ricci equation:}&\qquad  g(X,\mathsf S(Y))-g(\mathsf S(X),Y)=d\tau(X,Y).
     \end{align}
\end{prop}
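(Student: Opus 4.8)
The statement is the ``fundamental theorem'' (Gauss--Codazzi--Ricci identities) for complex affine immersions, and the plan is to derive it exactly as in the classical real case (cf. \cite[\S II.2]{NS}), exploiting the fact that everything in sight --- the connection $\sigma^*D$ on $E_\rho$, the splitting $\mathbb C T\widetilde M \oplus \mathrm{Span}_{\mathbb C}(\xi) = E_\rho$, the induced data $\nabla, g, \mathsf S, \tau$, and the curvature operator --- is $\mathbb C$-bilinear. So it suffices to prove the four identities for $X,Y,Z\in\Gamma(T\widetilde M)$ (i.e.\ genuine real vector fields), and then extend by $\mathbb C$-linearity; both sides of each equation are $\mathbb C$-multilinear in their arguments, so the extension is automatic.

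The core input is that $\mathbb X=\mathbb C^{n+1}$ with its Levi-Civita connection $D$ is flat, so the curvature of $\sigma^*D$ on the pulled-back bundle vanishes: $R^{\sigma^*D}(X,Y)s = 0$ for every section $s$. First I would write down, for $X,Y\in\Gamma(T\widetilde M)$ and $Z\in\Gamma(\mathbb C T\widetilde M)$, the expansion of
\[
(\sigma^*D)_X(\sigma^*D)_Y Z - (\sigma^*D)_Y(\sigma^*D)_X Z - (\sigma^*D)_{[X,Y]}Z
\]
using repeatedly the two structural equations \eqref{eq: affinedecomposition}. Each application of $\sigma^*D$ introduces a tangential part governed by $\nabla$ and a normal part governed by $g$ (when differentiating a tangent field) or by $-\mathsf S$ and $\tau$ (when differentiating $\xi$). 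Collecting the terms tangent to $\mathbb C T\widetilde M$ and setting them to zero yields the Gauss equation \eqref{eq: GE generic} together with the first Codazzi equation \eqref{eq: Cod1 generic}; collecting the normal ($\xi$-)components yields a relation that, after using the torsion-freeness of $\nabla$ and the symmetry of $g$, is equivalent to \eqref{eq: Cod1 generic} as stated. To obtain the second Codazzi equation \eqref{eq: Cod2 generic} and the Ricci equation \eqref{eq: Ricci generic}, I would instead expand the curvature identity $R^{\sigma^*D}(X,Y)\xi = 0$ applied to the transversal field: the tangential component gives \eqref{eq: Cod2 generic}, and the $\xi$-component gives \eqref{eq: Ricci generic} once one rewrites $d\tau(X,Y) = X(\tau(Y)) - Y(\tau(X)) - \tau([X,Y])$ and uses that $g$ restricted to $\mathbb C T\widetilde M$ is symmetric while $g(X,\mathsf S(Y))$ need not be.

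\textbf{Main obstacle.} Honestly there is no deep obstacle: the computation is formally identical to the real case, since the only facts used are $\mathbb C$-bilinearity of the structure equations, torsion-freeness of $\nabla$, symmetry of $g$, and flatness of $D$ on $\mathbb C^{n+1}$ --- none of which are sensitive to working over $\mathbb C$ rather than $\mathbb R$. The one point requiring a word of care is the passage from the real arguments to complex ones: one must check that $R^{\nabla}$, $g$, $\mathsf S$, $\tau$ as defined by $\mathbb C$-linear extension from $T\widetilde M$ genuinely satisfy the same structure equations \eqref{eq: affinedecomposition} on all of $\mathbb C T\widetilde M$ (this is the content of the Remark following \eqref{eq: affinedecomposition}), after which every identity, being polynomial and $\mathbb C$-multilinear in the inputs, propagates from real vector fields to complex ones by the standard argument that a $\mathbb C$-multilinear map vanishing on a real frame vanishes identically. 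Thus the proof reduces to the bookkeeping of expanding two flatness identities and matching components against the splitting $\mathbb C T\widetilde M\oplus\mathrm{Span}_{\mathbb C}(\xi)$.
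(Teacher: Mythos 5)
Your proposal is correct and takes essentially the same route as the paper: expand the two flatness identities $R^{\sigma^*D}(X,Y)Z=0$ and $R^{\sigma^*D}(X,Y)\xi=0$ for real vector fields via repeated application of the structure equations \eqref{eq: affinedecomposition}, match components against the splitting $\mathbb C T\widetilde M\oplus\mathrm{Span}_{\mathbb C}(\xi)$, and pass to complex vector fields by $\mathbb C$-multilinearity of all the tensors involved. The only slip is in your component bookkeeping for the first identity: the tangential part of $R^{\sigma^*D}(X,Y)Z$ yields the Gauss equation alone, while Codazzi 1 comes entirely from the $\xi$-component (no appeal to torsion-freeness or symmetry of $g$ is needed to extract it); your sentence double-assigns Codazzi 1 to both parts, but the conclusion that both Gauss and Codazzi 1 drop out of this single flatness identity is correct.
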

\begin{proof}
    The corresponding results hold for (real) affine immersions between real manifolds \cite[Theorem II.2.1]{NS}, as a result of the flatness of $D$. Indeed, one can see explicitly from Equation \eqref{eq: affinedecomposition} that for all $\mathsf X,{\mathsf Y},{\mathsf Z}\in \Gamma(T M)$

    \begin{equation}
    \label{eq: flatness-identities}
    \begin{split}
        \mathrm R^D(\mathsf X,{\mathsf Y}){\mathsf Z}=& \mathrm R^{\nabla}(\mathsf X,{\mathsf Y}){\mathsf Z}- g({\mathsf Y},{\mathsf Z})\mathsf S(\mathsf X)+g(\mathsf X,{\mathsf Z})\mathsf S({\mathsf Y}) \\
        &+(\nabla_{\mathsf X}g)({\mathsf Y},{\mathsf Z})\xi -(\nabla_{\mathsf Y}g)({\mathsf X},{\mathsf Z})\xi +g({\mathsf Y},{\mathsf Z})\tau({\mathsf X})-g({\mathsf X},{\mathsf Z})\tau({\mathsf Y}),\\
        \mathrm R^D({\mathsf X},{\mathsf Y})\xi&= -(\nabla_{\mathsf X}\mathsf S)({\mathsf Y})+(\nabla_{\mathsf Y}\mathsf S)({\mathsf X})- \tau({\mathsf X}) \mathsf S({\mathsf Y})+\tau({\mathsf Y})\mathsf S({\mathsf X}) \\
        &-g({\mathsf X}, \mathsf S({\mathsf Y}))\xi+g({\mathsf Y}, \mathsf S({\mathsf X}))\xi +d\tau({\mathsf X},{\mathsf Y})  \xi,
        \end{split}
    \end{equation}
    hence Equations \eqref{eq: GE generic}-\eqref{eq: Cod1 generic}-\eqref{eq: Cod2 generic}-\eqref{eq: Ricci generic} follow by $\mathrm R^D(\mathsf X,\mathsf Y)Z=0=\mathrm R^D(\mathsf X,\mathsf Y)\xi$. 
    Finally, the statement follows for all $X,Y,Z\in \Gamma(\C TS)$ by $\C$-linearity of $\nabla$ and of the tensors.
\end{proof}
Conversely, the immersion data $\nabla, g, \mathsf S,$ and $ \tau$ determine uniquely an affine immersion, as shown in the following statement. 
\begin{thm}
\label{thm: integration GC}
Let $M$ be a smooth real $m$-manifold.
Let $\nabla\colon \Gamma(\C TM)\to \Gamma(End_{\C} (\C TM))$ be a torsion-free affine connection, let $g$ be a complex metric, $\mathsf{S}$ an endomorphism of $\C TM$, and $\tau \in \Omega^1(\C TM)$ such that Equations \eqref{eq: GE generic}-\eqref{eq: Cod1 generic}-\eqref{eq: Cod2 generic}-\eqref{eq: Ricci generic} hold.

Then, there exists a $\pi_1(M)$-equivariant complex affine immersion $(\sigma, \xi)\colon \widetilde M\to \mathbb C^{m+1}$ with affine connection $\nabla$, affine fundamental form $g$, shape operator $\mathsf S$, and transversal connection form $\tau$.
Such affine immersions are unique up to composition with elements in $\mathrm{Aff}(\mathbb C^{m+1}, D)=\mathrm{GL}(m+1, \C)\ltimes \C^{m+1}$. 

{Moreover, if $\tau=0$ and $\nabla dV_g=0$, $(\sigma, \xi)$ can be chosen to be an equivariant Blaschke affine immersion for $\C^{m+1}$ endowed with the canonical volume form $\omega=dz_1\wedge \dots \wedge dz_{n+1}$, and its holonomy takes values in $\SL(m+1, \C)\ltimes \C^{m+1}$.}

\end{thm}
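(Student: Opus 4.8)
# Proof Proposal for Theorem 3.24 (Integration of Gauss–Codazzi–Ricci data)

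\textbf{The plan.} The strategy is the classical moving-frame/integrability argument, complexified. I would set up a connection on a rank-$(m+1)$ bundle over $\widetilde M$ whose flatness is precisely equivalent to the system \eqref{eq: GE generic}--\eqref{eq: Ricci generic}, integrate it using simple connectivity of $\widetilde M$ to obtain a frame, then integrate once more to obtain the immersion $\sigma$ together with its transversal field $\xi$; equivariance and the holonomy statement come out of the uniqueness clause. Because everything is $\C$-linear and the target is the flat affine space $(\C^{m+1}, D)$, the real-case proof in \cite[Theorem II.2.1]{NS} (or the bundle-theoretic version) transplants with only bookkeeping changes.

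\textbf{Step 1: the ambient connection.} On $\widetilde M$ form the bundle $\mathcal E = \C T\widetilde M \oplus \underline{\C}$, where $\underline{\C}$ is the trivial line bundle with generator $\mathbf e$ (this is the model for $E_\rho$ restricted to $\widetilde M$). Define a $\C$-linear connection $\widehat\nabla$ on $\mathcal E$ by mimicking \eqref{eq: affinedecomposition}:
\[
\widehat\nabla_{\mathsf X}(Y + f\,\mathbf e) = \bigl(\nabla_{\mathsf X} Y + f\,(-\mathsf S(\mathsf X)) + df(\mathsf X)\,? \bigr) + \bigl(g(\mathsf X,Y) + f\,\tau(\mathsf X) + \partial_{\mathsf X}f\bigr)\mathbf e,
\]
that is, $\widehat\nabla_{\mathsf X}Y = \nabla_{\mathsf X}Y + g(\mathsf X,Y)\mathbf e$ and $\widehat\nabla_{\mathsf X}\mathbf e = -\mathsf S(\mathsf X) + \tau(\mathsf X)\mathbf e$ extended by the Leibniz rule. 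A direct computation of $R^{\widehat\nabla}$, using exactly the block identities recorded in \eqref{eq: flatness-identities}, shows that the $\C T\widetilde M$-component of $R^{\widehat\nabla}(\mathsf X,\mathsf Y)$ acting on sections of $\C T\widetilde M$ is Gauss \eqref{eq: GE generic}, the $\mathbf e$-component on sections of $\C T\widetilde M$ is Codazzi 1 \eqref{eq: Cod1 generic}, the $\C T\widetilde M$-component on $\mathbf e$ is Codazzi 2 \eqref{eq: Cod2 generic}, and the $\mathbf e$-component on $\mathbf e$ is Ricci \eqref{eq: Ricci generic}. Hence the hypotheses are equivalent to $\widehat\nabla$ being flat.

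\textbf{Step 2: integrating to a frame and to $\sigma$.} Since $\widetilde M$ is simply connected, flatness of $\widehat\nabla$ yields a global parallel trivialization $\Phi\colon \mathcal E \xrightarrow{\sim} \widetilde M \times \C^{m+1}$ (pick a parallel frame). Identify $\C T\widetilde M$ with its image; the inclusion $\C T\widetilde M \hookrightarrow \mathcal E$ becomes a $\C^{m+1}$-valued $1$-form $\beta$ on $\widetilde M$ (the restriction of $\Phi$ to the $\C T\widetilde M$-summand, read off on real tangent vectors). The torsion-freeness of $\nabla$ together with the symmetry of $g$ — applied inside the structure equation $d\beta = -\,(\text{parallel connection form})\wedge\beta + (\text{curvature})$, which reduces to $d\beta = 0$ because the model connection on $\C^{m+1}$ is trivial — shows $\beta$ is closed as a $\C^{m+1}$-valued form on the real manifold $\widetilde M$. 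By simple connectivity there is $\sigma\colon \widetilde M \to \C^{m+1}$ with $d\sigma = \beta$, i.e. $\sigma_* = \beta$ on $T\widetilde M$, and one checks $\sigma$ is an admissible immersion because $\beta$ was fibrewise injective by construction. Let $\xi = \Phi^{-1}(\mathbf e)$ read in $\C^{m+1}$; then by construction the splitting $\C T\widetilde M \oplus \C\xi = \sigma^*T\C^{m+1}$ holds and decomposing $\sigma^* D$ recovers exactly $\nabla, g, \mathsf S, \tau$. Uniqueness up to $\mathrm{Aff}(\C^{m+1},D) = \mathrm{GL}(m+1,\C)\ltimes \C^{m+1}$ is the standard ambiguity in choosing the parallel frame and the primitive of $\beta$.

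\textbf{Step 3: equivariance and the Blaschke/volume refinement.} For equivariance, push forward the data by $\gamma \in \pi_1(M)$: since $\nabla, g, \mathsf S, \tau$ descend to $M$, the pulled-back immersion $\sigma\circ\gamma$ satisfies the same integrated data as $\sigma$, so by uniqueness there is $\rho(\gamma)\in \mathrm{Aff}(\C^{m+1},D)$ with $\sigma\circ\gamma = \rho(\gamma)\circ\sigma$; the cocycle identity for $\rho$ follows again from uniqueness, giving the equivariant complex affine immersion with holonomy $\rho$. For the last clause, assume $\tau = 0$ and $\nabla\,dV_g = 0$. Normalizing the parallel frame in Step 2 to have unit determinant with respect to $\omega = dz_1\wedge\cdots\wedge dz_{m+1}$ is possible because $dV_g$ is $\nabla$-parallel and globally defined on the simply connected $\widetilde M$: the function comparing $\omega(\beta(\cdot),\dots,\beta(\cdot),\xi) = \theta$ to $dV_g$ is $\nabla$-parallel hence locally constant hence constant, so after rescaling $\xi$ by that constant (Lemma \ref{lemma: rescale affine normal}, which leaves $\tau = 0$ since $\alpha$ is constant) we get $\theta = dV_g$, i.e. $(\sigma,\xi)$ is Blaschke. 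Finally, because $\rho(\gamma)$ carries the parallel unit-volume frame at one point to the parallel unit-volume frame at another, it preserves $\omega$, so $\rho(\gamma)\in \mathrm{SL}(m+1,\C)\ltimes\C^{m+1}$.

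\textbf{Main obstacle.} The only genuinely non-formal point is making sure the complexification does not break the two ``reality'' facts that the real proof silently uses: that $d\beta = 0$ as a $\C^{m+1}$-valued form on the \emph{real} manifold $\widetilde M$ (this needs the symmetry $g(X,Y) = g(Y,X)$ of the \emph{affine fundamental form} and torsion-freeness of $\nabla$, both of which hold by hypothesis after $\C$-linear extension), and that the primitive $\sigma$ exists globally (pure de Rham on the real simply connected $\widetilde M$, no holomorphicity needed). Once one is careful that $\beta, \sigma$ are objects on the real manifold valued in $\C^{m+1}$ — rather than holomorphic objects — while $\nabla, g, \mathsf S, \tau$ are the $\C$-linear tensors, the argument is a routine transcription of \cite[Theorem II.2.1]{NS}, and I would present it as such, spelling out only the curvature computation of Step 1 and the constancy argument in Step 3.
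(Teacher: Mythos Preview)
Your proposal is correct and follows essentially the same route as the paper: build the connection $\widehat\nabla$ on $\C T\widetilde M\oplus\underline{\C}$, identify its flatness with equations \eqref{eq: GE generic}--\eqref{eq: Ricci generic}, trivialize via a parallel frame, show the resulting $\C^{m+1}$-valued $1$-form is closed (using torsion-freeness of $\nabla$ and symmetry of $g$), integrate to $\sigma$, and derive equivariance from uniqueness.

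One small slip in Step~3: rescaling $\xi\mapsto\alpha\xi$ via Lemma~\ref{lemma: rescale affine normal} changes the affine fundamental form to $\hat g=\frac{1}{\alpha}g$, so the resulting immersion no longer realizes the \emph{prescribed} $g$, and $dV_{\hat g}\neq dV_g$ in general. The paper instead applies a scalar element of $\mathrm{GL}(m+1,\C)$ to the pair $(\sigma,\xi)$ --- an ambient affine equivalence, which leaves all of $\nabla,g,\mathsf S,\tau$ unchanged but rescales $\theta$ by the determinant --- to force $\theta=dV_g$. This is the correct normalization, and the fix to your argument is immediate.
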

   Fo this statement, too, the computation follows as in the real case \cite[Theorem II.8.1]{NS}, up to a few adjustments. We recall the essential ideas and point out the main differences.
\begin{proof}
\emph{Step 1: Existence for $M=\widetilde M$.} Let $N\colon \C \times \widetilde M\to \widetilde M$ be the trivial bundle, and fix $\hat \xi\in \Gamma(N)$ a nowhere zero section. Consider the bundle $\mathrm{V}=\C T \widetilde M\oplus N$ and the connection $\hat D$ defined for all $\mathsf X\in \Gamma(T\widetilde M)$ and $Y\in \Gamma(\C T\widetilde M)$ as 
    \[
    \begin{cases}
        \hat D_{\mathsf X} Y &= \nabla_\mathsf X  Y+ g(\mathsf X, Y)\hat \xi\\
        \hat D_{\mathsf X} \hat \xi &=- \mathsf S(\mathsf X) +\tau(\mathsf X) \hat \xi\\
         \hat D_{\mathsf X} (i\hat \xi) &= -i\mathsf S(\mathsf X) +i\tau(\mathsf X) \hat \xi.
    \end{cases}
    \]
    Then, one gets the same equation as in \eqref{eq: flatness-identities} after replacing $D$ with $\hat D$, and $\mathrm R^{\hat D} (\mathsf X,Y)(i\xi)=i\mathrm R^{\hat D} (\mathsf X,Y)(\xi)$, hence the hypotheses of the theorem imply that $R^{\hat D}=0$. Moreover, $\hat D iY=i\hat D Y$, so there exists a bundle isomorphism 
    \[
  \Phi \colon  (\C T\widetilde M \oplus N, \hat D) \to (\C^{m+1}\times \widetilde M, D)
    \]
    with $D$ denoting the standard connection on the trivial bundle, and $\Phi$ being a $\C$-linear isomorphism fiberwise.

   For all $X\in \Gamma(\C T\widetilde M)$, we can see $\Phi(X_p)=:(p, \alpha(X_p))$ with $\alpha$ a $\C^{m+1}$-valued 1-form on $\widetilde M$. With a little abuse of notation, we will say that $\alpha=\Phi_{|\C T\widetilde M}$. Then, for all $\mathsf X, \mathsf Y\in \Gamma(T\widetilde M)$, $D_{\mathsf X}(\Phi(\mathsf Y)) = \alpha(\mathsf Y)$, and 
    \[d\alpha(\mathsf X, \mathsf Y)= \partial_\mathsf X(\alpha(\mathsf Y))-\partial_\mathsf Y(\alpha(\mathsf X))-\alpha([\mathsf X,\mathsf Y])=\Phi(\nabla_{\mathsf X} \mathsf Y-\nabla_{\mathsf Y}\mathsf X- [\mathsf X, \mathsf Y])=0
    \]
    as a result of $\nabla$ being torsion-free. Hence, there exists $\sigma \colon \widetilde M\to \C^{m+1}$ such that $d\sigma=\alpha$. As a consequence, $\sigma_*( X)=d\sigma( X)=  \Phi(X)$ so $\sigma_*(X)=0$ iff $X=0$, implying that $\sigma$ is an admissible immersion. 
    
    Moreover, if we consider the pull-back bundle $\sigma^*(\C^{m+1})$ with the pull-back connection $\sigma^*D$ and transversal vector field $\xi$ defined by $\sigma_*(\xi)= \Phi(\hat \xi)$, we get that, for all $ \mathsf X\in \Gamma(T\widetilde M)$ and $Y\in \Gamma(\C T\widetilde M)$, we have at each point that
    \[
    \begin{cases}
        \sigma_*\left((\sigma^*D)_{\mathsf X} Y\right)= D_\mathsf X(\sigma_*(Y))=D_\mathsf X(\Phi(Y))=\Phi(\hat D_\mathsf X Y)= \Phi(\nabla_\mathsf X Y+g(\mathsf X, Y)\hat \xi) =\sigma_*(\nabla_{\mathsf X} Y) +g(\mathsf X, Y) \sigma_*(\xi)\\
       \sigma_*\left( (\sigma^*D)_{\mathsf X}\xi\right)= D_X (\sigma_*\xi)= D_{\mathsf X}(\Phi(\hat \xi))= \Phi(\hat D_{\mathsf X} \hat \xi)= -\Phi(\mathsf S(\mathsf X)) +\tau(\mathsf X)\Phi(\hat \xi)= \sigma_*(-\mathsf S(\mathsf X) +\tau(\mathsf X)\xi),
    \end{cases}
    \]
    which implies that $(\sigma, \xi)$ has immersion data given by $\nabla, g, \mathsf S, \tau$.

    \vspace{8 pt}

    \emph{Step 2: Uniqueness and the equivariant case.}
    Let $(\sigma, \xi)$ and $(\sigma', \xi')$ be two affine immersions with the same immersion data $\nabla, g, \mathsf S, \tau$. We prove that there exists $A\in \mathrm{GL}(m+1, \C)\ltimes \C^{m+1}$ such that $\sigma'=A\circ\sigma$ and $\sigma'_*\xi'=A(\sigma_*\xi)$. 

Define the function $L\colon \widetilde M\to \mathrm{GL}(m+1, \C)$ as the unique matrix such that, for all $X\in \Gamma(\C T\widetilde M)$, $\sigma'_*(X)=L\circ\sigma_*(X)$ and $\sigma'_*(\xi')=L(\sigma_*(\xi))$. 

 Then (recalling that $\sigma_*((\sigma^*D)_\mathsf X Y)= \partial_\mathsf X (d\sigma(Y))= \partial_\mathsf X (\sigma_*(Y))$), we have that for all $\mathsf X\in \Gamma(T\widetilde M)$, $Y\in \Gamma(\C T\widetilde M)$,
 \begin{align*}
 (\partial_\mathsf X L) (\sigma_* Y)&= \partial_\mathsf X( (L\circ \sigma_*)(Y))- L(\partial_\mathsf X(\sigma_* Y))= \partial_\mathsf X(\sigma'_*(Y))- L(\partial_\mathsf X(\sigma_* Y))\\
 &=\sigma'_*(\nabla_\mathsf X Y)+g(\mathsf X,Y) \xi' - L(\sigma_*(\nabla_\mathsf X Y))-g(\mathsf X, Y) L(\xi)=0,\\
 (\partial_\mathsf X L) (\sigma_* \xi)&= \partial_{\mathsf X}(L(\sigma_*\xi)) -L(\partial_{\mathsf X} (\sigma_*\xi))= \partial_{\mathsf X}(L(\sigma_*\xi)) -L(\partial_{\mathsf X} (\sigma_*\xi)) -L(\partial_{\mathsf X}(\sigma_*\xi))\\
 &=-\sigma'_*(\mathsf S({\mathsf X}))+ \tau({\mathsf X})\sigma'_*(\xi)+L(\sigma_*(\mathsf S({\mathsf X})))+ \tau({\mathsf X})L(\sigma_*(\xi)) =0,
 \end{align*}
hence $dL=0$, implying that $L$ is a constant. Moreover, $d(\sigma'-L\sigma)=0$, hence there exists $b\in \C^{m+1}$ such that $\sigma'=L\cdot \sigma +b$, proving uniqueness. The equivariant case follows from uniqueness in the usual way.

\vspace{8 pt}

\emph{Step 3: Blaschke immersions.} Let $\omega=dz_1\wedge\dots \wedge dz_{m+1}$ be the standard $D$-flat volume form on $\C^{m+1}$. Assume $\tau=0$, and that $\nabla dV_g=0$ and construct a corresponding equivariant affine immersion $(\sigma, \xi)\colon \widetilde M\to \C^{m+1}$, and let $\theta=(\sigma^*\omega)(\cdot, \xi)$ be the pull-back form. Then, by Proposition \ref{prop: nablatheta}, $\nabla \theta=0$. Assuming $\nabla (dV_g)=0$ as well, we get that $dV_g= C \theta$ for some constant $C$ as a consequence of the fact that $\bigwedge^n \C T^*\widetilde M$ is rank-1. 
{Up to replacing now $(\sigma, \xi)$ with the affinely equivalent affine immersion $(\frac 1 C \sigma, \frac 1 C \xi)$, we can assume that $C=1$, therefore $(\sigma, \xi)$ is a Blaschke affine immersion in $(\C^{m+1}, D, \omega)$.}
If $(\sigma, \xi)$ is equivariant, its holonomy must have image in $\SL(m+1, \C)\ltimes\C^{m+1}$.

\end{proof}

\subsection{Complex affine spheres}
\label{section: complex affine spheres}
We now restrict to the case where $M$ is a closed oriented surface, and accordingly now write $M=S,$ and $\mathbb X= \mathbb C^3$ with the standard affine connection $D$ and volume form $\omega=dz_1\wedge dz_2\wedge dz_3$. 
We have that $\mathrm{Aff}(\mathbb C^3, D, \omega)=\SL(3,\mathbb C)\ltimes \mathbb C^3$.

\begin{defn}\label{def: CASdef}
Let $\rho\colon \pi_1(M)\to \mathrm{Aff}(\mathbb C^3, D, \omega)$, and let $(\sigma,\xi)$ be a $\rho$-equivariant Blaschke affine immersion.  We say that $(\sigma,\xi)$ is a (proper) \textbf{complex affine sphere} if the shape operator has the form $\mathsf S=\lambda \cdot \mathrm{id},$ where $\lambda\in \mathbb C^*$. A complex affine sphere $(\sigma, \xi)$ is \textbf{positive and hyperbolic} if $\lambda=-1$ and the Blaschke metric $g$ is a positive complex metric.
\end{defn}

\begin{remark}
    As shown in \cite[Theorem 6.5]{BEE}, every positive complex metric on an oriented surface $S$ admits a metric volume form compatible with the orientation. We will assume that the affine normal is compatible with the induced volume form. 
\end{remark}

\begin{remark}
    If an affine sphere $(\sigma, \xi)$ has shape operator in the form $\mathsf S=\lambda \cdot \mathrm{id}$ with $\lambda$ a function, by Codazzi Equation in Proposition \ref{prop: GCReqns} one gets that $d\lambda=0$, hence $\lambda$ is a constant. 
\end{remark}

\begin{remark}
\label{rmk: meeting point}
    If $(\sigma,\xi)$ is a proper complex affine sphere, then all of the affine normals meet at a single point. Indeed, one can see that the map $p\mapsto \sigma(p) +\frac 1 \lambda \xi(p)$ is a constant. 
\end{remark}

\begin{convention}
By Remark \ref{rmk: meeting point}, up to composing $\sigma$ with a translation, from now on {we will assume that the meeting point of the affine normals is} $\underline 0\in \C^3$, hence \begin{equation}
\label{eq: sigma multiple of xi}
\sigma=-\frac 1 \lambda \xi\ 
\end{equation}
and the data of $\sigma$ and $\lambda$ include the data of $\xi$. In this setting, if $\sigma$ is an equivariant affine sphere, then its holonomy fixes $\underline 0$, and hence takes values in $\SL(3,\mathbb C)$.
\end{convention}

Pick tensors play a prominent role in the real theory, and we turn to them now. Given $g,$ we have both the induced connection $\nabla$ and the Levi-Civita connection of $g$, say $\nabla^g$. The difference tensor on $\C TS$ is $$K(X,Y) = K_X(Y) = \nabla_XY - \nabla_X^g Y.$$ Since both connections have no torsion, $K$ is symmetric. The \textbf{Pick tensor} is the complex $3$-form defined by lowering indices, $$C(X,Y,Z) = -2g(K_X Y,Z).$$ 
The proposition below can be proved as in the real case for real vector fields (see \cite[Section II.4]{NS}), and by $\C$-linearity we get the general case.

\begin{prop}
\label{prop: formulas affine spheres}
Let $(\sigma, \xi)$ be an equivariant proper complex affine sphere. Then, for all $X, Y, Z, W \in \Gamma(\C TS),$ we have that 
\begin{enumerate}
\item $C=\nabla g$,
    \item $\mathrm{tr}_g K_X=0$,
\item $C,\nabla C,$ $\nabla K$, $\nabla^g C$, and $\nabla^g K$ are symmetric in all of their entries,
    \item $g((\nabla^g_X K)(Y,Z), W)=  g((\nabla^g_X K)(Y,W), Z)$,
    \item $\mathrm R^\nabla(X,Y)Z = \mathrm R^g(X,Y)Z +[K_X,K_Y](Z).$
\end{enumerate}
\end{prop}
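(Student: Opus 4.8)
The plan is to reduce everything to the corresponding identities for real affine spheres in $\R^3$, which are stated in \cite[Section II.4]{NS}, and then bootstrap to the complexified tangent bundle by $\C$-bilinearity. Concretely, for real vector fields $\mathsf X, \mathsf Y, \mathsf Z, \mathsf W \in \Gamma(TS)$, one knows the five identities hold by the classical affine sphere theory: item (1) is the definition of the induced connection together with metric compatibility of $\nabla^g$; item (2) follows from the fact that an affine sphere satisfies $\nabla\theta=0$ and $\theta=dV_g$, so $\nabla$ and $\nabla^g$ induce the same volume form, forcing $\operatorname{tr}_g K_{\mathsf X}=0$; item (3) is the ``apolarity + flatness'' package, using that the Pick tensor $C=\nabla g$ is totally symmetric and that the Codazzi equation with $\tau=0$ says $\nabla C$ is symmetric, which then transfers to $\nabla K$, $\nabla^g C$, $\nabla^g K$; item (4) is the standard consequence of total symmetry of $\nabla^g C$ combined with the symmetries of $g$; and item (5) is the Gauss equation \eqref{eq: GE generic} rewritten with $\mathsf S = \lambda\cdot\mathrm{id}$ and $\nabla = \nabla^g + K$, expanding $R^\nabla$ and using that $R^g$ absorbs the $\lambda$-term by the constant curvature property of an affine sphere.

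The first step I would carry out is to verify that each of the tensors appearing — $C$, $K$, $\nabla C$, $\nabla K$, $\nabla^g C$, $\nabla^g K$, $R^\nabla$, $R^g$ — is $\C$-multilinear in its arguments. This is immediate from the construction: $\nabla$ and $\nabla^g$ are by definition $\C$-bilinear affine connections on $\C TS$ (the former from Remark after \eqref{eq: affinedecomposition}, the latter from the Levi-Civita construction for complex metrics in Section \ref{subsection: complex metrics}), $g$ is $\C$-bilinear, and covariant derivatives and curvature of $\C$-linear objects with respect to $\C$-linear connections stay $\C$-linear. Once $\C$-multilinearity is in hand, the general statement for $X,Y,Z,W \in \Gamma(\C TS)$ follows by writing $X = \mathsf X_1 + i\mathsf X_2$, etc., and expanding: any $\C$-multilinear tensor identity that holds on $\Gamma(TS)$ automatically holds on $\Gamma(\C TS)$.

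Thus the real content is verifying the five identities for real vector fields, and for that I would simply point to \cite[Section II.4]{NS}, where the proofs are given for real affine spheres in $\R^{n+1}$; the only input needed is that $(\sigma,\xi)$ is a Blaschke affine immersion with $\mathsf S = \lambda\cdot\mathrm{id}$ and $\lambda$ constant, which is exactly Definition \ref{def: CASdef} together with the Remark observing $d\lambda=0$. The passage from the real setting to our complex affine immersion setting does not require the target to be $\R^{n+1}$: the proofs in \cite{NS} only use the flatness identities \eqref{eq: flatness-identities}, which we have established in Proposition \ref{prop: GCReqns} verbatim for complex affine immersions into $(\C^{n+1}, D)$, and the defining properties of the Blaschke normalization. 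So the argument is: invoke Proposition \ref{prop: GCReqns} and the Blaschke condition to get the flatness and symmetry identities; run the \cite{NS} computations unchanged on real vector fields; conclude by $\C$-bilinearity.

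I do not expect any genuine obstacle here — this proposition is ``soft'' in the sense that it is a formal consequence of already-established structure. The one point requiring a little care is item (5): one must use that a proper affine sphere has the right curvature normalization so that the $\lambda g(Y,Z)X - \lambda g(X,Z)Y$ term coming from $\mathsf S=\lambda\cdot\mathrm{id}$ in the Gauss equation \eqref{eq: GE generic} is precisely the sectional-curvature term absorbed into $R^g$; equivalently, one rewrites $R^\nabla = R^{\nabla^g + K}$ by the standard formula for curvature of a connection plus a symmetric tensor, namely $R^\nabla(X,Y)Z = R^g(X,Y)Z + (\nabla^g_X K)_Y Z - (\nabla^g_Y K)_X Z + [K_X, K_Y]Z$, and then the middle two terms cancel by the total symmetry of $\nabla^g C$ from item (3). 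This is a routine but not entirely automatic manipulation, and it is the natural place to expend a sentence of explanation; everything else is bookkeeping.
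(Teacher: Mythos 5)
Your approach matches the paper's: the paper's entire proof is the one-sentence observation that the identities hold for real vector fields as in \cite[Section II.4]{NS}, and then follow on $\Gamma(\C TS)$ by $\C$-multilinearity, which is exactly your two-step plan.

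One small correction to your commentary on item (5): the statement $R^\nabla(X,Y)Z = R^g(X,Y)Z + [K_X, K_Y]Z$ does not invoke the Gauss equation \eqref{eq: GE generic} at all, and the $\lambda g(Y,Z)X - \lambda g(X,Z)Y$ terms play no role in it, so your first sentence about them being ``absorbed into $R^g$'' is off base. The identity is purely the general formula for the curvature of $\nabla = \nabla^g + K$, with the terms $(\nabla^g_X K)(Y,Z) - (\nabla^g_Y K)(X,Z)$ canceling by the total symmetry of $\nabla^g K$ from item (3) — which is exactly what your ``equivalently'' clause says, and that part is the correct argument. (The Gauss equation then enters afterwards, when one wants to convert this identity into a formula involving $g$ and $\mathsf S$, as in the proof of Theorem \ref{thm: integrating GE for CAS}, but that is not part of item (5).)
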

We use only items (2) and (3), but the other results above help motivate some aspects of the proof of Theorem \ref{thm: integrating GE for CAS} below. Using that $\nabla^g C$ is symmetric, we show that positive hyperbolic affine spheres come with holomorphic cubic differentials, and thus we see the link to Theorem A.

\begin{prop}
\label{prop: Pick tensor is sum}
    Let $(\sigma, \xi)$ be an equivariant positive hyperbolic complex affine sphere in $\mathbb C^3$. Let $g$ be the Blaschke metric, with conformal classes given by $\ccpair\in \CSCS$. Then, 
    \[
    C=\Q_1 +\overline{\Q_2},
    \]
where $\Q_1$ is a $c_1$-holomorphic cubic differential and $\overline \Q_2$ is a $\overline{c_2}$-holomorphic cubic differential.
\end{prop}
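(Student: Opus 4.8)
The plan is to exploit the symmetry properties from Proposition \ref{prop: formulas affine spheres}, particularly that $\nabla^g C$ is totally symmetric together with the fact that $\mathrm{tr}_g K_X = 0$, to show that the Pick tensor $C$ has vanishing $(2,1)$ and $(1,2)$ components with respect to the bigrading induced by the two complex structures $c_1$ and $\overline{c_2}$. First I would fix local holomorphic coordinates $z$ for $c_1$ and $\overline{w}$ for $\overline{c_2}$, so that $g = \lambda\, dz\, d\overline{w}$, and work with the frame $\{\partial_z, \partial_{\overline w}\}$ (note $\partial_{\overline z}$ spans the $c_1$-isotropic line and $\partial_w$ the $\overline{c_2}$-isotropic line, but in these coordinates $\partial_z$ and $\partial_{\overline w}$ are the relevant non-isotropic directions). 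Since $C$ is a symmetric $3$-tensor on a bundle that splits (complexified) as a sum of two line bundles — the span of $\partial_z$ and the span of $\partial_{\overline w}$ — it decomposes as $C = C_{300}(dz)^3 + C_{210}(dz)^2 d\overline w + C_{120}\, dz (d\overline w)^2 + C_{030}(d\overline w)^3$ in terms of the dual coframe; I would write $\Q_1 = C_{300}(dz)^3$ and $\overline{\Q_2} = C_{030}(d\overline w)^3$ and aim to show the two mixed coefficients vanish and that these two pieces are respectively $c_1$- and $\overline{c_2}$-holomorphic.

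The key computational step is to combine the apolarity condition $\mathrm{tr}_g K_X = 0$ with $C = \nabla g = -2g(K\cdot,\cdot)$. Because $g$ pairs $\partial_z$ with $\partial_{\overline w}$ (and annihilates $\partial_z$ with $\partial_z$, $\partial_{\overline w}$ with $\partial_{\overline w}$ — these being essentially the isotropic behavior, though here the precise statement is $g(\partial_z,\partial_z)=0$ since $\partial_{\overline z}$ is isotropic... I need to be careful: $g(\partial_z,\partial_z) = \lambda\, dz(\partial_z) d\overline w(\partial_z) = 0$ because $d\overline w(\partial_z) = 0$, and similarly $g(\partial_{\overline w},\partial_{\overline w}) = 0$, while $g(\partial_z, \partial_{\overline w}) = \lambda$). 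So raising and lowering indices swaps $\partial_z \leftrightarrow \partial_{\overline w}$ directions. The trace-free condition $\mathrm{tr}_g K_X = 0$ then forces the mixed components of $C$ to vanish: spelling this out, $\mathrm{tr}_g K_{\partial_z} = 0$ relates $C(\partial_z,\partial_z,\partial_z)$'s "trace part" and $C(\partial_z,\partial_{\overline w}, \cdot)$ contributions, and one finds the $C_{210}$ and $C_{120}$ coefficients are pinned to zero. I would carry this out cleanly using an adapted orthonormal-type frame or just the explicit $\lambda\, dz\, d\overline w$ form.

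Next, to get holomorphicity, I would use item (4) of Proposition \ref{prop: formulas affine spheres}, $g((\nabla^g_X K)(Y,Z),W) = g((\nabla^g_X K)(Y,W),Z)$, equivalently the total symmetry of $\nabla^g C$, which is the complex Codazzi-type equation. Applied in coordinates, the total symmetry of $\nabla^g C$ together with the already-established form $C = C_{300}(dz)^3 + C_{030}(d\overline w)^3$ yields $\nabla^g_{\partial_{\overline w}} C$ having its $(dz)^3$-part equal to a derivative of $C_{300}$, and symmetry forces $\partial_{\overline w}(\text{of the }(dz)^3\text{ part})$ to be zero after accounting for Christoffel terms; since $\overline w$ is an anti-holomorphic coordinate for $c_1$ (i.e. $\partial_{\overline w}$ moves in the $\overline{c_2}$ direction, and relative to $c_1$ the relevant $\overline\partial$-operator is $\partial_{\overline z}$, not $\partial_{\overline w}$), I should instead extract that $\partial_{\overline z} C_{300} = 0$. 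The honest statement: $\nabla^g C$ totally symmetric, restricted to the component $\nabla^g_{\partial_{\overline z}} C(\partial_z,\partial_z,\partial_z)$, should reduce — after the Levi-Civita Christoffel symbols for $g = \lambda dz d\overline w$ are computed via Lemmas \ref{parallellem} and \ref{projectionlem} — to $\partial_{\overline z}(C_{300}) = 0$, which is precisely $c_1$-holomorphicity of $\Q_1$; symmetrically $\partial_w C_{030} = 0$ gives $\overline{c_2}$-holomorphicity of $\overline{\Q_2}$.

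The main obstacle I anticipate is bookkeeping with the two independent complex structures and their mismatched $\partial/\overline\partial$ operators: one must keep straight that "holomorphic for $c_1$" means killed by $\partial_{\overline z}$ while the frame naturally adapted to $g$ uses $\partial_z, \partial_{\overline w}$, and that the isotropic directions are $\partial_{\overline z}, \partial_w$ rather than $\partial_z, \partial_{\overline w}$. Getting the Christoffel symbols of the Levi-Civita connection of $g = \lambda\, dz\, d\overline w$ right — using Lemma \ref{projectionlem}, which says $\nabla^g_{\overline z}\partial_w$ and $\nabla^g_w \partial_{\overline z}$ are the relevant projections of Lie brackets, hence vanish in coordinates — and then feeding them through the total symmetry of $\nabla^g C$ to kill exactly the right derivative is where errors could creep in. A secondary subtlety is making sure $\Q_1$ and $\overline{\Q_2}$ are globally well-defined tensors on $S$ and not just local expressions, but this is automatic since $C$ is global and its decomposition into $(3,0)$ and $(0,3)$ parts with respect to the globally-defined pair of complex structures is canonical; I would remark on this briefly. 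The real-case analogue in \cite[Section II.4]{NS} and the Labourie–Loftin treatment provide the template, so the structure of the argument is clear and only the complex-bigrading adaptation needs care.
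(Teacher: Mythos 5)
Your strategy coincides with the paper's (decompose $C$ into the $(3,0)$, $(2,1)$, $(1,2)$, $(0,3)$ pieces determined by $dz$ and $d\overline w$; kill the mixed pieces via $\mathrm{tr}_g K_X = 0$; then deduce holomorphicity from total symmetry of $\nabla^g C$ together with the fact that $\nabla^g$ preserves the isotropic splitting). But there is a concrete computational error at the heart of your setup that invalidates the execution: you assert $d\overline w(\partial_z) = 0$ and hence $g(\partial_z,\partial_z) = 0$. This is false in general. We have $d\overline w(\partial_z) = \partial_z \overline w = \overline{\partial_{\overline z} w} = \overline{\mu\, \partial_z w}$ where $\mu$ is the Beltrami form of $w$ relative to $z$; this vanishes if and only if $c_1 = c_2$, i.e.\ exactly in the Riemannian (real affine sphere) case. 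In particular $g(\partial_z,\partial_z) = \lambda\, dz(\partial_z)\, d\overline w(\partial_z) = \lambda\, \partial_z\overline w \neq 0$ in general, so $\{\partial_z, \partial_{\overline w}\}$ is \emph{not} a null frame for $g$. You correctly note in passing that the isotropic lines are $\mathrm{Span}(\partial_{\overline z})$ and $\mathrm{Span}(\partial_w)$, but then your explicit frame computation contradicts that observation.

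This error propagates: when you trace $K_{\partial_z}$ you would be computing $\mathrm{tr}_g C(\partial_z,\cdot,\cdot) \propto C(\partial_z, \partial_{\overline z}, \partial_w)$ (the trace must be taken against the isotropic pairing $g(\partial_{\overline z},\partial_w)$), which evaluates to a \emph{mixture} of the $dz^2 d\overline w$ and $dz\, d\overline w^2$ coefficients rather than isolating one. The paper avoids this by tracing $K_{\partial_{\overline z}}$ and $K_{\partial_w}$, i.e.\ along the genuinely isotropic directions, so that $C(\partial_{\overline z},\partial_{\overline z},\partial_w)$ picks out exactly $r$ and $C(\partial_w,\partial_w,\partial_{\overline z})$ picks out exactly $q$. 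For the holomorphicity step you should also explicitly invoke Lemma \ref{parallellem} (not Lemma \ref{projectionlem}): the fact that $\nabla^g$ preserves the lines $\mathrm{Span}(\partial_{\overline z})$ and $\mathrm{Span}(\partial_w)$ implies $\nabla^g dz \in \Gamma(\mathrm{Span}(dz^2))$ and $\nabla^g d\overline w \in \Gamma(\mathrm{Span}(d\overline w^2))$, which is precisely what collapses the Christoffel terms when you compare $(\nabla^g_{\partial_{\overline z}}C)(\partial_w,\partial_w,\partial_w)$ with $(\nabla^g_{\partial_w}C)(\partial_w,\partial_w,\partial_{\overline z})$ and extracts $\partial_{\overline z} p = 0$. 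Once you switch to the isotropic frame $\{\partial_{\overline z},\partial_w\}$ throughout and use Lemma \ref{parallellem}, the argument goes through exactly as you intend.
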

\begin{proof}
Let $z$ and $\overline w$ denote local coordinates for $c_1$ and $\overline{c_2}$ respectively. 
    By Proposition \ref{prop: formulas affine spheres}, $C$ is symmetric, so it decomposes as
    \[
    C=pdz^3+ q dz^2 \cdot d\overline w + r dz \cdot d\overline w^2 + s d\overline{w}^3\ ,
    \]
    where $p,q,r,s$ are local $\mathbb C$-valued functions. By $(2)$ in Proposition \ref{prop: formulas affine spheres}, $tr_g(K_{\partial_{\overline z}})=0$, hence 
    \[
    0= tr_g( C(\partial_{\overline z}, \cdot, \cdot))= 2\frac{1}{g(\partial_{\overline z}, \partial_w)} C(\partial_{\overline z}, \partial_{\overline z}, \partial_w)= \frac{\partial_{\overline z}\overline w \cdot \partial_w z}{h(\partial_{\overline z}, \partial_w)} r \ ,
    \]
    hence $r=0$. Similarly, from $tr_g(K_{\partial_{w}})=0$ we deduce that $q=0$.
    
    We are left to prove that $p$ is $c_1$-holomorphic and $s$ is $\overline{c_2}$-holomorphic. First, by Lemma \ref{parallellem}, for all $X\in \Gamma (\C TS)$, $\nabla^g _X \partial_{\overline z}\in \mathrm{Span}(\partial_{\overline z})$ and $\nabla^g _X \partial_{w}\in \Gamma(\mathrm{Span}(\partial_{w}))$. 
As a result, 
        \[(\nabla^g dz)(\partial_{\overline z})= d( dz(\partial_{\overline z}) )- dz(\nabla^g \partial_{\overline z})=0-0=0.\ 
        \]
        Moreover, by Cartan's formula, for all $X,Y\in \Gamma(\C TS)$,
        \begin{align*}
        (\nabla^g_X dz) (Y) - (\nabla^g_Y dz) (X)&=\partial_X( dz(Y))-dz(\nabla^g_X Y) - \partial_Y(dz(X)) +dz(\nabla^g_Y X)= \\
        &=\partial_X(dz(Y))-\partial_Y(dz(X))-dz([X,Y])= (ddz)(X,Y)=0,
        \end{align*}
        hence $\nabla^g dz$ is commutative and $\nabla^g dz\in \Gamma(\mathrm{Span}(dz^2))$. 
        In the same fashion, $\nabla^g d\overline w\in \Gamma(\mathrm{Span}(d\overline w^2))$.
       Iterating the derivative, one can see that for all $m\in \mathbb Z_+$
       \[
       \nabla^g (dz^m) \in \Gamma(\mathrm{Span}(dz^{m+1}))\qquad \qquad \qquad   \nabla^g (d\overline w^m) \in \Gamma(\mathrm{Span}(d\overline w^{m+1})).\ 
       \]
 Now, by $(3)$ in Proposition \ref{prop: formulas affine spheres}, we have that $\nabla^g C$ is symmetric. Hence, from
        \[
        \nabla^g C= dp \otimes dz^3 + p\nabla^g (dz^3) + ds\otimes d\overline w ^3 + s\nabla^g (d\overline w^3) 
        \]
        we finally observe that
        \begin{align*}
            \partial_{\overline z} p \cdot (\partial_w z)^3= (\nabla^g_{\partial_{\overline z}}C)(\partial_{w}, \partial_w, \partial_w)&= (\nabla^g_{\partial_{w}}C)(\partial_{w}, \partial_w, \partial_{\overline z}) =0,  \\
             \partial_{w} s \cdot (\partial_{\overline z} \overline w)^3= (\nabla^g_{\partial_{w}}C)(\partial_{\overline z}, \partial_{\overline z}, \partial_{\overline z})&= (\nabla^g_{\partial_{\overline z}}C)(\partial_{\overline z}, \partial_{\overline z}, \partial_{w}) =0,  
        \end{align*}
        implying that $  \partial_{\overline z} p=\partial_{w} s\equiv 0 $.
\end{proof}
Conversely, the data of $g$ and $C$ completely characterize the complex affine immersion, as described by the following theorem.
\begin{thm}
    \label{thm: integrating GE for CAS}
    Let $g$ be a positive complex metric on a closed oriented surface $S$, with conformal class $\ccpair\in  \CSCS$, and let $\Q_1$ and $\overline \Q_2$ be holomorphic cubic differentials for $c_1$ and $\overline{c_2}$ respectively. 
    
    There exists an equivariant positive hyperbolic complex affine sphere $\sigma\colon \widetilde S\to (\C^3, D,\omega)$ with Blaschke metric $g$, Pick tensor $C=\Q_1+\overline{\Q_2}$, and with holonomy in $\SL(3,\C)$ if and only if $g$ and $C$ satisfy 
    \begin{equation} 
        \label{eq: Gauss CAS}
        \mathrm K_g=-1+ \frac{1}{4}g(\Q_1,\overline{\Q_2}).
    \end{equation}
Moreover, such a complex affine sphere is unique up to composition with elements in $\SL(3,\C)$.
\end{thm}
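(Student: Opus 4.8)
The plan is to exploit the fundamental theorem for complex affine immersions, Theorem \ref{thm: integration GC}, which reduces the problem to verifying that the Gauss, Codazzi, and Ricci equations of Proposition \ref{prop: GCReqns} hold for the natural immersion data built from $g$, $C$, and $\mathsf{S} = -\mathrm{id}$. First I would set up this data: take $\nabla^g$ to be the Levi-Civita connection of $g$, define the difference tensor $K$ by $C(X,Y,Z) = -2g(K_XY,Z)$, and set $\nabla = \nabla^g + K$. Since $C$ is a symmetric $3$-form, $K$ is symmetric and hence $\nabla$ is torsion-free; moreover by Proposition \ref{prop: formulas affine spheres}(1) we need $C = \nabla g$, which holds automatically for $\nabla = \nabla^g + K$ with this $K$. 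The shape operator is $\mathsf{S} = -\mathrm{id}$ and $\tau = 0$. With $\tau = 0$ the Ricci equation \eqref{eq: Ricci generic} reads $g(X,\mathsf{S}(Y)) = g(\mathsf{S}(X),Y)$, which is immediate since $\mathsf{S}$ is a scalar multiple of the identity and $g$ is symmetric. Codazzi equation 2 \eqref{eq: Cod2 generic} with $\tau = 0$, $\mathsf{S} = -\mathrm{id}$ becomes $\nabla_X(\mathrm{id})(Y) = \nabla_Y(\mathrm{id})(X)$, i.e. $0 = 0$. Codazzi equation 1 \eqref{eq: Cod1 generic} with $\tau = 0$ is the statement that $\nabla g = C$ is totally symmetric, which is exactly our hypothesis (or follows from Proposition \ref{prop: formulas affine spheres}(3)).

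The substantive step is the Gauss equation \eqref{eq: GE generic}: $R^\nabla(X,Y)Z = g(Y,Z)\mathsf{S}(X) - g(X,Z)\mathsf{S}(Y) = -g(Y,Z)X + g(X,Z)Y$. By Proposition \ref{prop: formulas affine spheres}(5) (whose proof is purely formal in the connections, so it applies to our abstractly defined $\nabla = \nabla^g + K$), we have $R^\nabla(X,Y)Z = R^g(X,Y)Z + [K_X,K_Y](Z)$. So the Gauss equation becomes $R^g(X,Y)Z + [K_X,K_Y]Z = -g(Y,Z)X + g(X,Z)Y$, which on a surface is an identity about the Gauss curvature: taking the $g$-inner product with a suitable vector and tracing, it is equivalent to a scalar equation $\mathrm{K}_g = -1 + (\text{correction term from } [K_X,K_Y])$. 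The plan here is to compute in the local null coframe $dz, d\overline{w}$ where $g = \lambda\, dz\, d\overline{w}$ and, by Proposition \ref{prop: Pick tensor is sum}, $C = \Q_1\, dz^3 + \overline{\Q_2}\, d\overline{w}^3$ with $\Q_1, \overline{\Q_2}$ holomorphic. Lemma \ref{parallellem} makes $\nabla^g$ block-diagonal in the basis $\{\partial_w, \partial_{\overline z}\}$, so $K_{\partial_{\overline z}}\partial_{\overline z}$ is proportional to $\partial_w$ (with coefficient essentially $\Q_1/\lambda$) and $K_{\partial_w}\partial_w$ proportional to $\partial_{\overline z}$ (coefficient $\overline{\Q_2}/\lambda$), while the mixed terms vanish by the computation of $q = r = 0$ in Proposition \ref{prop: Pick tensor is sum}. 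A direct computation of $[K_{\partial_{\overline z}}, K_{\partial_w}]$ then yields the term $\tfrac14 g(\Q_1,\overline{\Q_2})$ — this is where the contraction $g(\Q_1,\overline{\Q_2})$ comes from, matching the real Tzitzéica-type computation of Labourie and Loftin. I expect this local curvature computation, together with checking the constant $\tfrac14$ and bookkeeping the holonomy constraint, to be the main obstacle; it is essentially the same calculation as in the real case (\cite[Section II.4]{NS}, and compare Wang \cite{Wa}), but one must keep track of which quantities are holomorphic in $z$ versus $\overline{w}$.

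For the global statement: once the three equations hold, Theorem \ref{thm: integration GC} produces a $\pi_1(S)$-equivariant complex affine immersion $(\sigma, \xi)\colon \widetilde{S} \to (\C^3, D)$ realizing $\nabla, g, \mathsf{S} = -\mathrm{id}, \tau = 0$, unique up to $\mathrm{Aff}(\C^3, D) = \mathrm{GL}(3,\C)\ltimes\C^3$. Since $\tau = 0$ and, by Proposition \ref{prop: formulas affine spheres}(2), $\mathrm{tr}_g K_X = 0$ gives $\nabla\, dV_g = 0$ (the metric volume form is $\nabla$-parallel because $\nabla = \nabla^g + K$ with $\nabla^g dV_g = 0$ and $K$ trace-free), the last part of Theorem \ref{thm: integration GC} upgrades $(\sigma,\xi)$ to a Blaschke affine immersion for $\omega = dz_1\wedge dz_2\wedge dz_3$ with holonomy in $\SL(3,\C)\ltimes\C^3$. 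Because $\mathsf{S} = -\mathrm{id}$, by the Convention following Remark \ref{rmk: meeting point} the affine normals meet at a point; translating that point to $\underline{0}$ (an affine change) forces $\sigma = \xi$ and pins the holonomy inside $\SL(3,\C)$. By construction the Pick tensor of $\sigma$ is $C = \Q_1 + \overline{\Q_2}$ and the Blaschke metric is $g$, so $(\sigma,\xi)$ is the required positive hyperbolic complex affine sphere. Conversely, Proposition \ref{prop: Pick tensor is sum} together with the Gauss equation \eqref{eq: GE generic} specialized to a surface (run in reverse through the same local computation) shows any such affine sphere forces \eqref{eq: Gauss CAS}. Uniqueness up to $\SL(3,\C)$ follows from the uniqueness clause in Theorem \ref{thm: integration GC} combined with the normalization that the center of the affine normals is $\underline 0$, which removes the translational freedom and reduces the $\mathrm{GL}(3,\C)$-ambiguity to $\SL(3,\C)$ via the volume form.
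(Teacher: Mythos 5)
Your plan reproduces the paper's strategy (define $\nabla=\nabla^g+K$ from $C$, take $\mathsf S=-\mathrm{id}$, $\tau=0$, verify the structure equations, then apply Theorem \ref{thm: integration GC}), and your argument for $\nabla\, dV_g=0$ via $K$ being trace-free is in fact cleaner than the paper's. But one step cannot be obtained by citing Proposition \ref{prop: formulas affine spheres}: that proposition is proved for an \emph{existing} equivariant proper complex affine sphere, so its conclusions already presuppose the Gauss and Codazzi equations you are trying to verify here. In particular item (5), $R^\nabla = R^g + [K_X,K_Y]$, is \emph{not} a purely formal consequence of $\nabla=\nabla^g+K$ with $K$ symmetric. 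The formal identity is
\[
R^\nabla(X,Y)Z = R^g(X,Y)Z + (\nabla^g_X K)(Y,Z) - (\nabla^g_Y K)(X,Z) + [K_X, K_Y]Z,
\]
and the two $\nabla^g K$ terms cancel only because $\nabla^g K$ is totally symmetric, which is not automatic. This is precisely where the holomorphicity of $\Q_1$ and $\overline{\Q_2}$ enters: the paper derives symmetry of $\nabla^g C$ from $\nabla^g(dz^3)\in\Gamma(\mathrm{Span}(dz^4))$, $\nabla^g(d\overline w^3)\in\Gamma(\mathrm{Span}(d\overline w^4))$ (consequences of Lemma \ref{parallellem}), together with $\partial_{\overline z}\phee = \partial_w\overline\psi = 0$, and then transfers this to symmetry of $\nabla^g K$. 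You should insert this verification before reducing the Gauss equation to a scalar; as written, that reduction is circular. The same caveat applies, with lower stakes, to your appeals to items (2) and (3): $\mathrm{tr}_g K_X=0$ and symmetry of $C=\nabla g$ must be read off directly from the hypothesis $C=\Q_1+\overline{\Q_2}$ (vanishing of the mixed $dz^2 d\overline w$ and $dz\, d\overline w^2$ components), not quoted from Proposition \ref{prop: formulas affine spheres}. The facts do hold and are easy, but the citations invert the logical order of the argument.
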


In local holomorphic coordinates $z$ and $\overline{w}$ for $c_1$ and $\overline{c_2}$ respectively, if $g=\lambda dz d\overline{w}$ and $\Q_1 =\phee dz^3,$ $\overline{\Q_2}=\overline{\psi}d\overline{w}^3,$ then $$\frac{1}{4}g(\Q_1,\overline{\Q_2})=2\frac{\phee\overline \psi}{\lambda^3}.$$ As well, take note that, in terms of the Pick tensor, $g(\Q_1,\overline{\Q_2})=\frac{1}{2}g(C,C),$ since $\Q_1$ and $\overline{\Q_2}$ are $g$-isotropic.
\begin{proof}
As above, let $z$ and $\overline{w}$ be coordinates for $c_1$ and $\overline{c_2}$ respectively, and $g=\lambda dzd\overline{w}$, $\Q_1 =\phee dz^3,$ and $\overline{\Q_2}=\overline{\psi}d\overline{w}^3.$

Let $\nabla$ be the affine connection on $\C TS$ defined by $C=-2g(\nabla-\nabla^g, \cdot)$. It suffices to show that $g$, $\nabla$, $\mathsf S =-\mathrm{id}$, and $\tau=0$ satisfy the hypothesis of Theorem \ref{thm: integration GC}, as well as $\nabla dV_g=0$, to produce an equivariant complex affine sphere. As in Remark \ref{rmk: meeting point}, by translating we can ensure that the holonomy lies in $\SL(3,\C)$.

Observe that Equations \eqref{eq: Cod2 generic} and \eqref{eq: Ricci generic} hold trivially. We prove that Equation \eqref{eq: Cod1 generic} holds. Since $\nabla^g g=0$, for all $X,Y,Z\in \Gamma(\C TS)$,
    \begin{align*}
        (\nabla_X g)(Y,Z)-(\nabla_Y g)(X,Z)&=(\nabla_X g)(Y,Z)-(\nabla_Y g)(X,Z) - (\nabla^g_X g)(Y,Z)-(\nabla^g _Y g)(X,Z)\\
        &=-  g(\nabla_X Y, Z)-   g(Y,\nabla_X Z)+  g(\nabla_Y X, Z)+  g(X,\nabla_Y Z) \\
        &=\frac 1 2 (C(X,Y,Z)+ C(X,Z,Y)-C(Y,X,Z)-C(Y,Z,X))=0,
    \end{align*}
   where in the last line we used the symmetry of $C$.
    
    We next prove that $\nabla dV_g=0$. Suggestively defining the symmetric tensor $K$ by $K_X Y= \nabla_X Y-\nabla^g_X Y$, we observe that for all $X,$ $$0=C(\cdot , \partial_{\overline z}, \partial_w)=-2g(K_{\cdot} \partial_{\overline z}, \partial_w),$$ and hence $K_{X} \partial_{\overline z}=K_{\partial_{\overline z}} X\in \Gamma(
\mathrm{Span}(\partial_w))$. Similarly, $K_{X} \partial_{w}= K_{\partial_w} X\in \Gamma(\mathrm{Span}(\partial_{\overline z}))$. One can compute the tensor $K$ explicitly:
\begin{equation}
\label{eq: K in a basis}
    \begin{split}
        &K_{\partial_w}\partial_w= -\frac 1 2\frac{(\partial_w z)^3}{g(\partial_{\overline z},\partial_ w) }\phee \partial_{\overline z},\\
        & K_{\partial_{\overline z}}\partial_{\overline z}=-\frac 12  \frac{(\partial_{\overline z} \overline w)^3}{g(\partial_{\overline z},\partial_w )}\overline \psi \partial_{w},\\
       &  
       K_{\partial_{\overline z}} \partial_w= K_{\partial_w}\partial_{\overline z}=0.
    \end{split}
\end{equation}

Observe that, for all 1-forms $\theta$, one has $K_X(\theta):= \nabla_X \theta- \nabla_X^g \theta=-\theta( K_X)$. Hence, $K_{\partial_w} dz=-dz(K_{\partial_w})=0$, implying that $K dz \in \mathrm{Span}(d\overline w^2)$ and $K d\overline w\in \mathrm{Span}(dz^2) $. 

As a result, writing $dV_g$ in the form $dV_g= \delta dz \wedge d\overline w$, we have that for all $X\in \Gamma(\C TS)$
\[
\nabla_X dV_g= \nabla_X dV_g- \nabla^g_X dV_g= \delta  K_X(dz)\wedge d\overline w+  \delta dz\wedge K_X(d\overline w)=0,\
\]
as desired.

Now we prove that Equation \eqref{eq: GE generic} holds.
First of all, observe that $\nabla^g C$ is symmetric in all of its entries. In fact, 
    \[
    \nabla^g C= \partial_z \phee dz^4 +\phee \nabla^g (dz^3) + \partial_{\overline w} \overline \psi d\overline w^4 +\overline \psi \nabla^g(d\overline w^3)
    \]
    and, as seen in the proof of Proposition \ref{prop: Pick tensor is sum}, $\nabla^g (dz^3)\in \Gamma(\mathrm{Span}(dz^4))$ and $\nabla^g (d\overline w^3)\in \Gamma(\mathrm{Span}(d\overline w^4))$. As a consequence, $\nabla^g K$ is symmetric in all of its entries because one can easily check that \[(\nabla^g_X C) (Y,Z,W)-(\nabla^g_Y C)(X,Z, W)= -2 g((\nabla^g_X K)(Y,Z),W)+ 2g((\nabla^g_YK)(X,Z), W)\ \]
for all $X,Y,Z\in \Gamma(\C TS).$ Now, from $\nabla =\nabla^g+K$, we have that \begin{align*}
\mathrm R^\nabla(X,Y)Z&=\mathrm R^g(X,Y)Z+ (\nabla^g_X K)(Y,Z)- (\nabla^g_Y K)(X,Z) +[K_X,K_Y]Z\\
&=\mathrm R^g(X,Y)Z+[K_X,K_Y]Z, \ 
\end{align*}
so the Gauss Equation \eqref{eq: GE generic} for $\mathsf S=-\mathrm{id}$ is equivalent to 
\[
\mathrm R^g(X,Y)Z= -[K_X, K_Y]Z-g(Y,Z)X+g(X,Z)Y,
\]
hence to
\begin{equation}
    \label{eq: Gauss'}
    g(\mathrm R^g(X,Y)Z,W)= -g([K_X, K_Y]Z,W)- g(Y,Z)g(X,W)+g(X,Z)g(Y,Z)
\end{equation}
for all $X,Y,Z,W\in\Gamma(\C TS)$.
It is now easy to check, using that $K$ is $g$-self-adjoint, that in Equation \eqref{eq: Gauss'} the right-hand side has the same symmetries as the left-hand side, so it is sufficient to check that the equality holds for $X=W=\partial_{\overline z}$ and $Y=Z=\partial_w$. Finally writing out the Gauss curvature as $$\mathrm K_g= \frac{g(\mathrm R^g(\partial_{\overline z},\partial_{ w})\partial_{ w},\partial_{\overline z})}{-(g(\partial_{\overline z}, \partial_w)^2)}= \frac{g(K_{\partial_{\overline{z}}}K_{\partial_w}\partial_w, \partial_{\overline z})-
g(K_{\partial_w}K_{\partial_{\overline{z}}}\partial_w, \partial_{\overline z})-(g(\partial_{\overline z}, \partial_w))^2 }{(g(\partial_{\overline z}, \partial_w))^2},$$ Equation \eqref{eq: K in a basis} returns that the Gauss equation \eqref{eq: GE generic} is equivalent to
$$\mathrm K_g= \frac {g(K_{\partial_w}\partial_w, K_{\partial_{\overline{z}}}\partial_{\overline z})} {(g(\partial_{\overline z}, \partial_w))^2} -1 =\frac 1 4 \phee \overline \psi \cdot \frac {(\partial_w z)^3(\partial_{\overline z}\overline w)^3}{(g(\partial_{\overline z}, \partial_w))^3}-1= 2\frac{\phee \overline \psi}{\lambda^3}-1.$$
\end{proof}
From now on, we will be interested only in Blaschke metrics $g$ of the form $g=e^{2u}h$, with $h$ being a Bers metric and $u:S\to \C$ a function. Any Blaschke metric that's connected to a Riemannian metric can be written in this way. 

\begin{example}
We are not providing a full construction, but there are in fact positive complex metrics $g$ that have constant curvature $-1$ and that are conformal multiples of Bers metrics by a conformal multiple that does not admit a logarithm; by \ref{thm: integrating GE for CAS}, $g$ and $C=0$ define a positive hyperbolic complex affine sphere. To construct such a metric, one can imagine starting from a Fuchsian type immersion $(f,\overline f)\colon \widetilde S\to \mathbb G$ and applying to $f$ a $2\pi$-grafting around a closed non-separating curve $\gamma$: precomposing with a suitable isotopy before grafting, we get a new developing map $\hat f\colon \widetilde S\to \CP^1$ with the same holonomy and such that $\hat f(p)\ne \overline f(p)$ for all $p\in \widetilde S$, hence $(\hat f, \overline f)\colon \widetilde S\to \mathbb G$ is an equivariant immersion. The induced complex metric $g$ now has curvature $-1$ by Theorem 6.7 in \cite{BEE}, and it is such that, assuming $\dot \gamma\ne 0$, $g(\dot \gamma, \dot \gamma)$ defines a homotopically nontrivial loop on $\mathbb C^*$, defining an example as above.
\end{example}

As a consequence of Theorem \ref{thm: integrating GE for CAS} and of the formula for the curvature of conformal metrics in Section \ref{subsection: complex metrics}, we have the following corollary.

\begin{cor}
\label{cor: PDE of Gauss eq}
    Let $h=\hpair$ be a Bers metric, with $\ccpair\in \CSCS$. Let $\Q_1$ and $\overline{\Q_2}$ be holomorphic cubic differentials for $c_1$ and $\overline{c_2}$ respectively. Then $g=e^{2u}\cdot h$, with $u\colon S\to \C$, and $C=\Q_1+\overline{\Q_2}$ are the Blaschke metric and the Pick tensor of an equivariant positive hyperbolic complex affine sphere in $\C^3$ if and only if 
\begin{equation}
    \label{eq: PDE of Gauss eq}
    G(c_1, \overline{c_2}, \Q_1, \overline{\Q_2},u):=\Delta_h u-e^{2u}+\frac 1 4 {h(\Q_1, \overline{\Q_2})}\cdot   e^{-4u}+1=0.
\end{equation}
\end{cor}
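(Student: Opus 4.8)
The plan is to derive Corollary \ref{cor: PDE of Gauss eq} directly from Theorem \ref{thm: integrating GE for CAS} by unwinding the effect of the conformal factor $e^{2u}$ on the quantities appearing in the Gauss equation \eqref{eq: Gauss CAS}. By Theorem \ref{thm: integrating GE for CAS}, $g=e^{2u}h$ and $C=\Q_1+\overline{\Q_2}$ are the Blaschke metric and Pick tensor of an equivariant positive hyperbolic complex affine sphere in $\C^3$ (with holonomy in $\SL(3,\C)$) if and only if $g$ is a positive complex metric with conformal class $\ccpair$ and
\[
\mathrm K_g = -1 + \tfrac14 g(\Q_1,\overline{\Q_2}).
\]
So the task reduces to showing that this curvature identity, with $g=e^{2u}h$, is equivalent to the PDE \eqref{eq: PDE of Gauss eq}. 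First I would note that $g=e^{2u}h$ is automatically a positive complex metric in the same conformal class $\ccpair$ as $h$: positivity is an open condition preserved under multiplication by the nowhere-vanishing function $e^{2u}$ (indeed $e^{2u}$ admits a global logarithm, so this is even simpler than the general conformal rescaling, and the isotropic directions — hence the induced complex structures $c_1,\overline{c_2}$ — are unchanged).

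Next I would apply the conformal change formulas for complex metrics recorded in Section \ref{subsection: complex metrics}, namely Equation \eqref{eq: conf curvature}, with $\varrho = e^{2u}$: since $\log\varrho = 2u$ is globally defined,
\[
\mathrm K_g = \mathrm K_{e^{2u}h} = e^{-2u}\!\left(\mathrm K_h - \tfrac12 \Delta_h \log(e^{2u})\right) = e^{-2u}\!\left(-1 - \Delta_h u\right),
\]
using that $h$ has constant curvature $-1$ because it is a Bers metric. For the right-hand side of the Gauss equation, I would use that $\Q_1$ and $\overline{\Q_2}$ are $g$-isotropic (being cubic differentials built from the two isotropic directions), together with the conformal behavior of the pairing: since $\Q_1 = \phee\,dz^3$ and $\overline{\Q_2} = \overline\psi\,d\overline w^3$ pair through $g = e^{2u}\lambda_h\,dz\,d\overline w$, we get $g(\Q_1,\overline{\Q_2}) = e^{-6u} h(\Q_1,\overline{\Q_2})$ — concretely, $\tfrac14 g(\Q_1,\overline{\Q_2}) = 2\phee\overline\psi/(e^{2u}\lambda_h)^3 = e^{-6u}\cdot 2\phee\overline\psi/\lambda_h^3 = e^{-6u}\cdot \tfrac14 h(\Q_1,\overline{\Q_2})$, as in the local computation displayed after the statement of Theorem \ref{thm: integrating GE for CAS}.

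Substituting both expressions into $\mathrm K_g = -1 + \tfrac14 g(\Q_1,\overline{\Q_2})$ gives
\[
e^{-2u}(-1-\Delta_h u) = -1 + e^{-6u}\cdot \tfrac14 h(\Q_1,\overline{\Q_2}),
\]
and multiplying through by $e^{2u}$ and rearranging yields exactly
\[
\Delta_h u - e^{2u} + \tfrac14 h(\Q_1,\overline{\Q_2})\, e^{-4u} + 1 = 0,
\]
i.e. $G(c_1,\overline{c_2},\Q_1,\overline{\Q_2},u)=0$. Since every step is a reversible algebraic manipulation, this establishes the equivalence. I do not expect a serious obstacle here; the only points requiring a small amount of care are (i) verifying that $e^{2u}h$ genuinely lands in the hypotheses of Theorem \ref{thm: integrating GE for CAS} (positivity and unchanged conformal class), and (ii) correctly tracking the conformal weight of the isotropic pairing $h(\Q_1,\overline{\Q_2})$ versus $g(\Q_1,\overline{\Q_2})$ — the factor $e^{-6u}$ rather than $e^{-2u}$, which comes from $\Q_1,\overline{\Q_2}$ being cubic (so three factors of the inverse metric enter). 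Both are routine given the machinery already set up in Sections \ref{subsection: complex metrics} and \ref{section: complex affine spheres}.
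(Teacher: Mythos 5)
Your proof is correct and is precisely the argument the paper intends (the paper omits the computation, stating only that the corollary follows from Theorem \ref{thm: integrating GE for CAS} and the conformal curvature formula \eqref{eq: conf curvature}). Both the curvature transformation $\mathrm K_{e^{2u}h}=e^{-2u}(-1-\Delta_h u)$ and the conformal weight $g(\Q_1,\overline{\Q_2})=e^{-6u}h(\Q_1,\overline{\Q_2})$ are computed exactly as the paper's setup prescribes, and the algebraic rearrangement is reversible, so the equivalence is established.
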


\subsection{Motivating examples}
Our first two examples show that complex affine spheres generalize both real affine spheres and the quasiconformal maps from the Bers theorem.
\begin{example}[Real affine spheres]\label{ex: ras}
    Affine differential geometry is a special case of the theory outlined here.

  By embedding $\R^3$ inside $\C^3$, any immersion $\sigma:\widetilde{S}\to \R^3$ can be viewed as an admissible immersion $\sigma: \widetilde{S}\to \C^3$. Moreover, if we can pick a section $\xi$ of $\sigma^* T\R^3$ that is transverse to $TS$, in which case $\sigma$ is an affine immersion in the traditional sense \cite[Definition II.1.1]{NS}, then $\sigma: \widetilde{S}\to \C^3$ is a complex affine immersion. If we demand that the image of $\sigma$ is locally strictly convex, then one can find a global real transverse normal. Choosing the transverse normal to be the complexification of a real transverse normal, all of the tensors that come into play are complexifications of real tensors. Moreover, the Blaschke metric is Riemannian, the Pick tensor is real, and the immersion data from Theorem \ref{thm: integrating GE for CAS} is always of the form $(c,\overline{c},\Q,\overline{\Q})$. Equation (\ref{eq: PDE of Gauss eq}) is a real semilinear elliptic PDE, $$\Delta_h u = e^{2u}-\frac{1}{4}h(\Q,\overline{\Q}) e^{-4u}-1,$$ which is the ordinary Tzitz{\'e}ica equation and is known to admit a unique real solution \cite[Proposition 4.0.2]{Lof}, \cite[Lemma 4.1.2]{Lab2}. If one takes that real solution and integrates it out to an affine sphere using Theorem \ref{thm: integrating GE for CAS}, then by uniqueness that affine sphere lives in an isometrically and totally geodesically embedded copy of $\R^3$. 
\end{example}

\begin{example}[Bers maps]\label{bersmaps}
Consider $\C^{2,1}$, namely $\C^3$ with the complex bilinear form $\inners$ defined by $\langle \underline z, \underline w \rangle= z_1w_1 + z_2 w_2 - z_3w_3$, and define $\mathbb{ \hat X}_2=\{\underline z\in \mathbb C^3 \ |\ \langle \underline z, \underline z \rangle =-1\}\  $
with the inherited holomorphic Riemannian metric. With reference to Section \ref{section: holo Riem mflds}, one has $\hat {\mathbb X}_2\cong \mathbb X _2 \cong \mathbb G$. 

We show that positive hyperbolic complex affine spheres with Pick tensor $C=0$ correspond to equivariant immersions into ${\mathbb {\hat X}}_2$ with pull-back metric being the Blaschke metric (which, by Theorem \ref{thm: integrating GE for CAS}, has curvature -1). Moreover, in the identification of $\PSL(2,\C)$ with $Isom_0(\mathbb {\hat X}_2)= Isom_0(\C^{2,1})= \mathrm{SO}(2,1,\C)$, if the Blaschke metric is a Bers metric, then the holonomy is quasi-Fuchsian, which shows that the construction is compatible with Bers' theorem.

As a consequence of Theorem 6.7 in \cite{BEE}, for all positive complex metric $g$ of curvature $-1$ on $S$, there exists a  $(\pi_1(S), \mathrm{SO}(3,\mathbb C))$-equivariant admissible immersion $\sigma\colon \widetilde S\to \mathbb {\hat X}_2$ with pull-back metric $g$. Moreover, such an immersion is totally geodesic, namely $(\sigma^*D^{\mathbb {\hat X}_2})_X Y= \nabla^g_X Y$ where $D^{\mathbb {\hat X}_2}$ denotes the Levi-Civita connection of $\mathbb {\hat X}_2$. On the other hand, $N_{\underline z}= i  \underline z$ is a normal vector field for $\mathbb {\hat X}_2$ inside $\mathbb C^{2,1}$ (namely, at each point it is the unique vector - up to a sign - with norm 1 orthogonal to $T\mathbb {\hat X}_2$). As a result, one can easily see that $(\sigma^*D)_{\mathsf X} N=  i\mathsf X$ and the second fundamental form of the immersion inside $\mathbb C^3$ is $-i g \otimes N= g\otimes \underline {\sigma(p)}$, where $\underline {\sigma(p)}$ denotes $\sigma(p)$ seen as a vector of $\mathbb C^3$.
Hence, denoting $\xi_{p}= \sigma^*(\underline {\sigma(p)})$, we have that  
\[
\begin{cases}  (\sigma^*D)_\mathsf{X} Y&= \nabla^g_{\mathsf X} Y + g(\mathsf X, Y)\xi   \\
(\sigma^*D)_{\mathsf X} \xi& = \mathsf X.
\end{cases}\
\]
Fix the canonical volume form $\omega$ on $\C^{2,1}$ given by the determinant. One can easily see that $\omega(\cdot, \cdot, \underline z)$ on $\mathbb {\hat X}_2$ is a volume, i.e. it is compatible with the holomorphic Riemannian metric. Up to composing $\sigma$ with the map $-\mathrm{id}$ on $\mathbb {\hat X}_2$ (which does not lie in $\mathrm{SO}(2,1,\C)$), we have that $dV_g= \sigma^*(\omega(\cdot, \cdot, z))$. We therefore conclude that $\sigma$ is a positive hyperbolic complex affine sphere. Since the affine connection coincides with $\nabla^g$, the Pick tensor is zero, and by uniqueness, we conclude that this is the unique positive hyperbolic complex affine sphere with Blaschke metric $g$ and Pick tensor $C=0$.
Finally, in the identification of $\PSL(2,\C)\cong Isom_0(\mathbb G)\cong Isom_0(\mathbb {\hat X}_2)\cong \mathrm {SO}(2,1,\C)$, one has trivially that the holonomy $\pi_1(S)\to \mathrm{SO}(2,1,\C)$ of the positive hyperbolic complex affine sphere with Blaschke metric being the Bers metric $\hpair$ and Pick tensor $C=0$ is the quasi-Fuchsian representation $\rho\ccpair$.
\end{example}

\begin{example}[Complex Tzitz{\'e}ica surfaces] 
The Tzitz{\'e}ica surface plays an important role in studying high energy harmonic maps to symmetric spaces \cite{DW}. We can explicitly construct a complex analogue. We won't refer to this example again, but we hope it conveys that one can make a lot of examples of positive hyperbolic complex affine spheres by hand.

 Let $z=z(x,y),w=w(x,y):\R^2\to\C$ be diffeomorphisms, and $\zeta=e^{2\pi \frac i 3}$ the third root of unity. For each $c\in\C^*,$ we consider the surface given by, $$f(x,y)=c(e^{z+\overline{w}}, e^{\zeta^2z+\zeta\overline{w}},e^{\zeta z+\zeta^2\overline{w}}).$$ 
Note that the image lies inside the complex surface $$\{z=(z_1,z_2,z_3)\in\C^3: z_1z_2z_3=c\}.$$ When $z=w$ and $c\in \R,$ this is the ordinary Tzitz{\'e}ica surface, describing a connected component of the surface in $\R^3$ defined implicitly by $xyz=c.$ For this reason, when $z\neq w$, we call our surfaces complex Tzitz{\'e}ica surfaces. We leave it to the reader to compute that $f$ defines a positive hyperbolic complex affine sphere, and that the Blaschke metric is $2dzd\overline{w}$ and the Pick tensor is $C=dz^3 + d\overline{w}^3$.
%the parametrization becomes $$f(x,y) = c(e^{\mathrm{Re}z}, e^{\mathrm{Re}z\zeta^{-1}}, e^{\mathrm{Re}z\zeta^{-2}}),$$ and if we assume as well that $c\in \R,$ then this surface lives in $\R^3$ and is called the Tzitz{\'e}ica surface. In this real case, the map $f$ describes a connected component of the surface in $\R^3$ defined implicitly by $xyz=c.$ For this reason, when $z\neq w$, we call our surfaces complex Tzitz{\'e}ica surfaces. We leave it to the reader to compute that $f$ defines a positive hyperbolic complex affine sphere, and that the Blaschke metric is $2dzd\overline{w}$ and the Pick tensor is $C=dz^3 + d\overline{w}^3$.
\end{example}

\section{Complex Lie derivatives and transport}

The space of complex structures on a surface is an infinite-dimensional Banach manifold. Still, its quotient up to isotopy, namely Teichm\"uller space, is finite-dimensional and the holonomy map factors through it. Generally speaking, taking the quotient up to isotopy is a powerful tool for constructing finite-dimensional configuration spaces while preserving their geometric significance. This is also the case for the moduli space of real hyperbolic affine spheres (discussed in Section \ref{sec: LL parametrization}).

Since $\mathcal{C}(S)\times \mathcal{C}(\overline{S})$ is the complexification of $\mathcal{C}(S),$ the natural group acting on $\mathcal{C}(S)\times \mathcal{C}(\overline{S})$ is the product $\mathrm{Diff}_0(S)\times \mathrm{Diff}_0(S)$, which returns a finite dimensional quotient, namely $\mathcal{T}(S)\times \mathcal{T}(\overline{S})$ (note that if we naively act by $\mathrm{Diff}_0(S)$ on the left or right or diagonally, we get an infinite dimensional quotient). The map $(c_1,\overline{c_2})\mapsto \vb*h_{(c_1,\overline{c_2})}$ pushes forward the action of $\mathrm{Diff}_0(S)\times \mathrm{Diff}_0(S)$ onto the space of Bers metrics, but the meaning of the induced quotient is not clear. For instance, if $\phi\in \mathrm{Diff}_0(S),$ it is not straightforward to see in what sense or why $-\frac 4 {(z-\overline w)^2}dz d\overline w$ and $-\frac 4 {(z-\phi^*(\overline w))^2}dz \phi^*(d\overline w)$ should be identified. 

In this section, we define the notions of complex Lie derivative and of transport of tensors through paths of complex vector fields. As we will see, these tools allow us to study the $\mathrm{Diff}_0(S)\times \mathrm{Diff}_0(S)$ action and will motivate the construction of the finite-dimensional space $\Sph(S)$ in Section \ref{sec: CAS space}. Moreover, this approach will be a key tool in proving Theorem D.

\subsection{Complex Lie derivatives}

Let $M$ be a smooth manifold. Denote by $$\mathrm{Tens}^{p,q}(M)=\Gamma\left( (TM)^{p_ \otimes} \otimes (T^*M)^{q_\otimes}\right)$$ the space of $(p,q)$-type tensors on $M$. The Lie derivative along a vector field can be seen as a bilinear map
\begin{equation*}
\begin{split}
    \Gamma(TM)\times \mathrm{Tens}^{p,q}(M)&\to \mathrm{Tens}^{p,q}(M),\\
    (X,\alpha)&\mapsto \mathscr L_X \alpha. 
\end{split}\ 
\end{equation*}
We extend it $\C$-bilinearly to a map
\[
\Gamma(\mathbb C TM)\times (\mathrm{Tens}^{p,q} (M)\otimes \C) \to \mathrm{Tens}^{p,q} (M)\otimes \C. \ 
\]
That is, we extend by the rule $$\mathscr L_Z \alpha= \mathscr L_{Re Z} (Re (\alpha)) -\mathscr L_{Im Z}(Im(\alpha))+ i\mathscr L_{Re Z}(Im (\alpha)) +i\mathscr L_{Im Z}(Re (\alpha)).$$
We call $\mathscr L_Z \alpha$ the \textbf{complex Lie derivative of $\alpha$ along the complex vector field $Z$.} $\mathscr L_Z$ is characterized by the following algebraic properties:
\begin{enumerate}
    \item for all $C^1$ function $f\colon S\to \mathbb C$, $\mathscr L_Z(f)=  \partial_Z f,$ 
    \item for all complex vector fields $W\in \Gamma(\mathbb C TM)$, 
    $
    \mathscr L_Z W= [Z,W], 
    $
    \item for all complex differential forms $\omega\in \Omega^r(M)\otimes \C$,
$
\mathscr L_Z \omega= d\omega(Z, \cdot)+ d( \omega(Z, \cdot)),$ 
    \item for all $\alpha,\beta$ smooth complex tensors,
     $\mathscr L_Z (\alpha\otimes \beta)= (\mathscr L_Z \alpha)\otimes \beta+ \alpha \otimes (\mathscr L_Z \beta),$
     \item one has the recursive formula for every $\alpha\in \mathrm{Tens}^{p,q} (M)\otimes \mathbb C$ given by
     \begin{align*}
     (\mathscr L_Z \alpha)(\omega_1, \dots, \omega_p, W_1,\dots, W_q)&= \partial_Z(\alpha(\omega_1, \dots, \omega_p, W_1,\dots, W_q)) - \alpha ( \mathscr L_Z \omega_1, \dots, \omega_p, W_1,\dots, W_q))- \dots \\
     &-\alpha (  \omega_1, \dots, \mathscr L_Z\omega_p, W_1,\dots, W_q))-\alpha (  \omega_1, \dots, \mathcal \omega_p, \mathscr L_ZW_1,\dots, W_q))-\dots \\
     &-\alpha (  \omega_1, \dots, \mathcal \omega_p, W_1,\dots, \mathscr L_ZW_q)), 
     \end{align*}
     where $\omega_1, \dots,\omega_p\in \Omega^1(\mathbb C TM)$, and $W_1, \dots, W_q\in \Gamma(\mathbb C TM)$.
\end{enumerate}

\begin{remark}
\label{rmk: complex derivative of a complex structure}
We show a way to define the \textbf{complex Lie derivative of a complex structure} $c_1\in \mathcal C(S)$ for which $\mathscr L_Z {c_1}\in T_{c_1}\mathcal C(S)$ lies in the kernel of the projection $\mathcal C(S)\to \mathcal T(S)$.

We define it as follows. Let $Z\in \Gamma(\C TS)$ and let $z$ be any local holomorphic coordinate for $c_1$, and decompose $Z$ with respect to the basis $\{\partial_z, \partial_{\overline z}\}$, hence $Z=pr_{\partial_z} (Z) + pr_{\partial_{\overline z}} (Z)$. We define 
\[
\mathscr L_{Z} c_1:= \mathscr L_{pr_{\partial_z} (Z)+\overline{pr_{\partial_z} (Z)}} c_1
\]
where the RHS is a standard real Lie derivative, corresponding to the infinitesimal deformation through the isotopy $\phi_t$ with $\dot \phi_0= pr_{\partial_z} (Z)+\overline{pr_{\partial_z} (Z)}$.

A motivation for this choice is that 
\[
\mathscr L_{Z} z= \mathscr L_{pr_{\partial_z}(Z)} z= \mathscr L_{pr_{\partial_z} (Z)+\overline{pr_{\partial_z} (Z)}} z  = \frac d {dt}\big|_{t=0}(z\circ \phi_t),
\]
and also  $\mathscr L_Z dz= \mathscr L_{\dot \phi_0} dz$. 
Moreover, by the properties of the complex Lie derivative, one has that for all $c_1$-holomorphic $n$-th differential $\alpha$, $\mathscr L_Z \alpha= \mathscr L_{\dot \phi_0} \alpha$.

Using the same approach on $\overline S$, one defines the complex Lie derivative of any element in $\mathcal C(\overline S)$.
\end{remark}

\begin{example}
\label{ex: Lie derivative of Bers metrics}
Here is a key example to motivate the complex Lie derivative. Let $S$ be a closed surface, let $\phi_t:[0,1]\times S\to S$ be an isotopy, and let $\ccpair\in \CSCS$ with Bers metric $\vb*h_{(c_1, \overline{c_2})}$. Let $z$ and $\overline{w}$ be local coordinates for $c_1$ and $\overline{c_2}$ respectively, and consider the path of Bers metrics given locally by 
\[
h_t= \vb*h_{(c_1, \phi_t^*(\overline{c_2}))}= -\frac{4}{(z-\phi_t^*(\overline w))^2}dz\cdot  (\phi_t^*(d\overline{w})).\ 
\]
The variation $\dot h_t=\frac{d}{d t}\big|_{0} h_t$ defines a bilinear form on $\C TS$. We claim that this variation can be seen as a complex Lie derivative of $h_t$. Indeed, let $\overline {w_t}:=\phi_t^*(\overline w)$ be a local coordinate for $\phi_t^*(\overline{c_2})$, denote $\dot \phi_t=: \gamma_t \partial_z+\overline{\gamma_t}\partial_{\overline z}$, and consider the complex vector field 
\[
Z_t:= \left(\gamma_t \frac{\partial_z \overline w_t}{\partial_{\overline z}\overline w_t}+\overline{\gamma_t}\right)\partial_{\overline z}. \ 
\]
One can easily check that this description of $Z_t$ does not depend on the choice of the local holomorphic coordinates $z$ and $\overline w_t$, so $Z_t\in \Gamma(\mathbb CTS)$. 

By construction, one has that: \begin{align*}
&\mathscr L_{Z_t} c_1=0,\\
&\mathscr L_{Z_t} \phi_t^*(\overline {c_2})= \gamma_t \partial_z \overline{w_t} +\overline{\gamma_t} \partial_{\overline z} \overline{c_2}= \mathscr L_{\dot{\phi_t}}\phi_t^*( \overline {c_2} ).
\end{align*}
In particular, $\mathscr L_{Z_t} z=0$, $\mathscr L_{Z_t} dz=0$, $\mathscr L_{Z_t} \overline w_t=\mathscr L_{\dot \phi_t}w_t$ and $\mathscr L_{Z_t} d\overline {w_t}=\mathscr L_{\dot \phi_t}d\overline{w_t}$.
In conclusion, by applying the properties of the complex Lie derivative, one gets that 
\[
\dot h_t=\mathscr L_{Z_t} h_t.\ 
\]
\end{example}

Recall the quotient map $\mathcal{C}(S)\to \mathcal{T}(\overline{S})$, which we label as $\Psi.$
As Example \ref{ex: Lie derivative of Bers metrics} suggests, Lie derivatives through complex vector fields allow to deform simultaneously pairs of complex structures. This is made clearer by the following Proposition which can be seen as a complex extension of the natural description for $T\mathcal C(S)$ through the bundle $\Psi$.

\begin{prop}
    \label{prop: tangent to C(S)C(S)}
    Let $\ccpair\in \CSCS$ and consider the map $$d_{c_1}\Psi\oplus d_{\overline{c_2}}\overline{\Psi}: T_{c_1}\mathcal{C}(S)\times T_{\overline{c_2}}\mathcal{C}(\overline{S})\to T_{[c_1]}\mathcal{T}(S)\times T_{[\overline{c_2}]}\mathcal{T}(\overline{S}).$$
    The map
    \begin{align*}
        \Gamma(\C TS)&\to T_{c_1}\mathcal C(S)\times T_{\overline {c_2}}\mathcal C(\overline S)\\
        Z &\mapsto (\mathscr L_Z c_1, \mathscr L_Z \overline{c_2})
    \end{align*}
    defines a $\C$-linear isomorphism onto the kernel of $d\Psi_{c_1}\oplus d\overline{\Psi}_{\overline{c_2}}$ .
\end{prop}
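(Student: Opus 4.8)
The plan is to verify three things: that the map $Z \mapsto (\mathscr L_Z c_1, \mathscr L_Z \overline{c_2})$ is well-defined and linear, that its image lies in the kernel of $d\Psi_{c_1} \oplus d\overline\Psi_{\overline{c_2}}$, and that it is a bijection onto that kernel. Linearity is immediate from the $\C$-bilinearity of the complex Lie derivative. For well-definedness I would appeal to Remark \ref{rmk: complex derivative of a complex structure}: there $\mathscr L_Z c_1$ is defined using only the $\partial_z$-component $pr_{\partial_z}(Z)$ of $Z$ in a $c_1$-holomorphic coordinate, and one checks this is coordinate-independent. Dually, $\mathscr L_Z \overline{c_2}$ depends only on the $\partial_{\overline w}$-component of $Z$ in a $\overline{c_2}$-holomorphic coordinate, i.e. on $pr_{\partial_{\overline w}}(Z)$.

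\textbf{Image lies in the kernel.} By construction (Remark \ref{rmk: complex derivative of a complex structure}), $\mathscr L_Z c_1 = \mathscr L_{V} c_1$ where $V = pr_{\partial_z}(Z) + \overline{pr_{\partial_z}(Z)}$ is a \emph{real} vector field; hence $\mathscr L_Z c_1$ is the infinitesimal deformation of $c_1$ along the genuine isotopy with initial velocity $V$. Since $\Psi = [\cdot]\colon \mathcal C(S) \to \mathcal T(S)$ kills deformations coming from $\mathrm{Diff}_0(S)$, we get $d\Psi_{c_1}(\mathscr L_Z c_1) = 0$, and likewise $d\overline\Psi_{\overline{c_2}}(\mathscr L_Z \overline{c_2}) = 0$. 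So the image lands in the kernel.

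\textbf{Bijectivity.} The key structural fact is that, pointwise, $Z = pr_{\partial_z}(Z) + pr_{\partial_{\overline z}}(Z)$ and the first summand is recorded by $\mathscr L_Z c_1$ while the second is recorded by $\mathscr L_Z \overline{c_2}$; so one should show the two pieces of data are independent and together reconstruct $Z$. Concretely: the kernel of $d\Psi_{c_1}$ is, by Earle–Eells (Theorem \ref{thm: earle eells}), the tangent space to the $\mathrm{Diff}_0(S)$-orbit, i.e. $\{\mathscr L_V c_1 : V \in \Gamma(TS)\}$; and since only the $(0,1)$-part of a real vector field contributes to the Beltrami differential, this is naturally identified with the space of sections of the $c_1$-holomorphic tangent bundle, i.e. $\Gamma(\mathrm{Span}_{\C}(\partial_z))$. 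The analogous statement on $\overline S$ identifies $\ker d\overline\Psi_{\overline{c_2}} \cong \Gamma(\mathrm{Span}_{\C}(\partial_w)) = \Gamma(\mathrm{Span}_{\C}(\partial_{\overline{w}}))$ — wait, more precisely with sections of the $\overline{c_2}$-holomorphic tangent line, which is locally spanned by $\partial_{\overline w}$. Since $g = \lambda\,dz\,d\overline w$ for any Bers metric in the class $(c_1,\overline{c_2})$ shows $\{\partial_{\overline z}, \partial_w\}$ and $\{\partial_z, \partial_{\overline w}\}$ are both frames for $\C TS$, the assignment $Z \mapsto (pr_{\partial_z}(Z), pr_{\partial_{\overline w}}(Z))$ — equivalently $Z \mapsto (\mathscr L_Z c_1, \mathscr L_Z \overline{c_2})$ — is a bundle isomorphism $\C TS \xrightarrow{\sim} \mathrm{Span}_{\C}(\partial_z) \oplus \mathrm{Span}_{\C}(\partial_{\overline w})$, hence an isomorphism on sections, and the target is exactly $\ker(d\Psi_{c_1} \oplus d\overline\Psi_{\overline{c_2}})$.

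\textbf{Main obstacle.} The genuinely delicate point is identifying $\ker d\Psi_{c_1}$ precisely with $\Gamma(\mathrm{Span}_{\C}(\partial_z))$ via the complex Lie derivative, and checking this identification is exactly the one induced by $Z \mapsto \mathscr L_Z c_1$ rather than something that differs by a coordinate-dependent or non-injective correction. One must be careful that a real vector field $V$ and the complex field $pr_{\partial_z}(Z)$ with $V = pr_{\partial_z}(Z) + \overline{pr_{\partial_z}(Z)}$ produce the same deformation of $c_1$, so that no information in the $\partial_z$-direction is lost — this is where the convention in Remark \ref{rmk: complex derivative of a complex structure} does the work — and symmetrically that $\mathscr L_Z c_1$ sees \emph{nothing} of $pr_{\partial_{\overline z}}(Z)$, which follows because $\mathscr L_Z c_1 = \mathscr L_{pr_{\partial_z}(Z)} c_1$ by definition. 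Granting these, injectivity (if $\mathscr L_Z c_1 = 0$ and $\mathscr L_Z \overline{c_2} = 0$ then $pr_{\partial_z}(Z) = 0$ and $pr_{\partial_{\overline w}}(Z) = 0$, so $Z = 0$) and surjectivity onto the kernel are then formal consequences of the frame decomposition.
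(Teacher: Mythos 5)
Your argument is correct, and the step showing the image lies in the kernel is the same as the paper's. Where you differ is in bijectivity. The paper proves injectivity directly from $\mathrm{Span}(\partial_{\overline z})\cap\mathrm{Span}(\partial_w)=\{0\}$, and then proves surjectivity constructively: given any $(\mathscr L_{\dot\phi_1}c_1,\mathscr L_{\dot\phi_2}\overline{c_2})$ in the kernel, it uses the explicit formula of Example \ref{ex: Lie derivative of Bers metrics} to exhibit a field $Z_1$ with $(\mathscr L_{Z_1}c_1,\mathscr L_{Z_1}\overline{c_2})=(\mathscr L_{\dot\phi_1}c_1,0)$ and a field $Z_2$ with $(\mathscr L_{Z_2}c_1,\mathscr L_{Z_2}\overline{c_2})=(0,\mathscr L_{\dot\phi_2}\overline{c_2})$, and takes $Z_1+Z_2$. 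You instead identify $\ker d\Psi_{c_1}\cong\Gamma(\mathrm{Span}_\C(\partial_z))$ and $\ker d\overline\Psi_{\overline{c_2}}\cong\Gamma(\mathrm{Span}_\C(\partial_{\overline w}))$, recognize the map as $Z\mapsto(pr_{\partial_z}(Z),pr_{\partial_{\overline w}}(Z))$, and argue it is an isomorphism onto the external direct sum. Your route is cleaner and more conceptual; the paper's has the advantage of foreshadowing the concrete vector-field formulas that the remainder of Section 4 relies on.

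Two imprecisions worth flagging. First, $(pr_{\partial_z},pr_{\partial_{\overline w}})$ is \emph{not} the decomposition of $Z$ in the mixed frame $\{\partial_z,\partial_{\overline w}\}$: $pr_{\partial_z}$ is taken with respect to the $c_1$-frame $\{\partial_z,\partial_{\overline z}\}$, while $pr_{\partial_{\overline w}}$ is taken with respect to the $\overline{c_2}$-frame $\{\partial_w,\partial_{\overline w}\}$, so the two components come from different splittings and the map is not an inverse of the mixed-frame coordinatization. What makes the combined map injective is that $\ker pr_{\partial_z}\cap\ker pr_{\partial_{\overline w}}=\mathrm{Span}(\partial_{\overline z})\cap\mathrm{Span}(\partial_w)=\{0\}$, i.e.\ that $\{\partial_{\overline z},\partial_w\}$ is a frame, after which a rank count finishes; the fact that $\{\partial_z,\partial_{\overline w}\}$ is also a frame is a red herring here. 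Second, identifying $\ker d\Psi_{c_1}$ with $\Gamma(\mathrm{Span}_\C(\partial_z))$ — and, more to the point, the injectivity of $Z\mapsto\mathscr L_Z c_1$ on $\Gamma(\mathrm{Span}_\C(\partial_z))$ on which your whole bijectivity argument rests — silently uses that $(S,c_1)$ carries no nonzero holomorphic vector fields, which holds because the genus is at least $2$. The paper relies on the same fact tacitly when it deduces $pr_{\partial_z}Z=0$ from $\mathscr L_Z c_1=0$; neither proof spells this out.
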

    As a result, one can see
    \[
    T_{\ccpair} \mathcal C(S)\times \mathcal C(\overline S)\cong T_{[c_1]}\mathcal T(S)\times T_{[\overline{c_2}]} \mathcal T(\overline S)\times \Gamma(\C TS).
    \]
\begin{proof}
Use the notation of Remark \ref{rmk: complex derivative of a complex structure} and Example \ref{ex: Lie derivative of Bers metrics}.
    The map is injective: since $\mathrm{Span}(\partial_{\overline z})\cap \mathrm{Span}(\partial_w)=\{0\}$, $pr_{\partial_z} Z=0=pr_{\partial_{\overline w}}Z$ implies $Z=0$.

By Remark \ref{rmk: complex derivative of a complex structure}, we see that the image of the map is in fact contained in the kernel.

    Let $\dot \phi_1, \dot \phi_2\in T_{\mathrm{id}} \mathrm{Diff}(S)$, hence $\mathscr L_{\dot \phi_1} c_1\in T_{c_1} \mathcal C(S)$ and $\mathscr L_{\dot \phi_2} \overline {c_2}\in T_{\overline{c_2}} \mathcal C(\overline S)$. As in Example \ref{ex: Lie derivative of Bers metrics}, one can find $Z_1, Z_2\in \Gamma(\C TS)$ such that 
    \begin{align*}
        \mathscr L_{Z_1} c_1= \mathscr L_{\dot \phi_1} c_1 \qquad &\text{and} \qquad   \mathscr L_{Z_1} \overline{c_2}=0,\ \\
        \mathscr L_{Z_1} c_1= 0\qquad &\text{and} \qquad   \mathscr L_{Z_1} \overline{c_2}=\mathscr L_{\dot \phi_2} \overline{c_2} ,
    \end{align*}
    hence \[
    (\mathscr L_{Z_1+Z_2} c_1, \mathscr L_{Z_1+Z_2} \overline{c_2})= (\mathscr L_{\dot \phi_1} c_1, \mathscr L_{\dot \phi_2} \overline{c_2}),
    \]
    proving surjectivity.

    We are left to prove that the map is $\C$-linear, namely that $\mathscr L_{iZ}c_1=i\mathscr L_{Z}c_1$ and that $\mathscr L_{iZ} \overline{c_2}=-i\mathscr L_Z \overline{c_2}$. To see this, let $\phi_t$ be an isotopy, and let $Z\in \Gamma(\C TS)$ be any vector field such that $\mathscr L_{Z}c_1= \frac{d}{dt}|_0 \phi^*_t(c_1)$. Denoting $z_t:=\phi_t^*z$ and seeing $\mathcal C(S)$ as the space of $C^{\infty}$ Beltrami forms for $c_1$, we have that 
    \[
    \mathscr L_Z{c_1} =\frac d {dt}|_0 \left( \frac{dz_t(\partial_{\overline z})}{d{z_t}(\partial_{ z})} \frac{d\overline z}{dz}\right)= (d\dot z)(\partial_{\overline z}) \frac{d\overline z}{dz}= (\mathscr L_Z dz)(\partial_{\overline z}).
    \]
    Since $\mathscr L_{iZ} dz=i\mathscr L_Z dz$ we conclude that $ \mathscr L_{iZ}{c_1}= i \mathscr L_Z{c_1}$. Finally, using that $\mathscr L_Z \overline{c_2}=\overline{\mathscr L_{\overline{Z}} (c_2)}$, we get that $\mathscr L_{iZ}{\overline{c_2}}=-i\mathscr L_Z(\overline{c_2})$.
\end{proof}

\begin{cor}
\label{cor: lie derivative Bers}
    Let $h_t=\vb*h_{(c_1^t, \overline{c_2}^t)}$ be a $C^1$ path of Bers metrics with constant holonomy, then $\dot h_t=\mathscr L_{Z_t} h_t$, where $Z_t$ is such that $(\dot c_1^t, \dot{\overline{c_2}}^t)=(\mathscr L_{Z_t} c_1^t, \mathscr L_{Z_t} \overline{c_2}^t)$.
\end{cor}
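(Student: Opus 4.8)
The plan is to reduce the corollary to two ingredients already available: Example \ref{ex: Lie derivative of Bers metrics}, which computes $\dot h$ for a path of Bers metrics in which only the second complex structure is moved by an isotopy, and the naturality of the Bers metric under diffeomorphisms, namely $\vb*h_{(f^*c_1,\,f^*\overline{c_2})}=f^*\vb*h_{(c_1,\overline{c_2})}$ for every $f\in\mathrm{Diff}_0(S)$. This naturality follows from the defining formula \eqref{eq: def Bers metric} together with the uniqueness in Theorem \ref{thm: Bers thm}: since $f_*=\mathrm{id}$ on $\pi_1(S)$, the quasi-Fuchsian representation is the same for $(c_1,\overline{c_2})$ and for $(f^*c_1,f^*\overline{c_2})$, and choosing a lift $\tilde f$ of $f$ commuting with the deck group, the maps $f_1\circ\tilde f$ and $\overline{f_2}\circ\tilde f$ are the Bers maps for $(f^*c_1,f^*\overline{c_2})$, so \eqref{eq: def Bers metric} exhibits $\vb*h_{(f^*c_1,f^*\overline{c_2})}$ as the $\tilde f$-pullback of $\vb*h_{(c_1,\overline{c_2})}$, hence the $f$-pullback after descending to $S$.

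First I would make sense of $Z_t$. Since $h_t$ is a $C^1$ path and the pair $(c_1^t,\overline{c_2}^t)=(\vb*c_+,\vb*c_-)(h_t)$ depends smoothly on $h_t$, the path $t\mapsto(c_1^t,\overline{c_2}^t)$ is $C^1$; and constancy of the holonomy forces, by Theorem \ref{thm: Bers thm}, the class $([c_1^t],[\overline{c_2}^t])\in\TSTS$ to be constant, so $(\dot c_1^t,\dot{\overline{c_2}}^t)$ lies in the kernel appearing in Proposition \ref{prop: tangent to C(S)C(S)}; that proposition then provides the unique $Z_t\in\Gamma(\C TS)$ with $(\dot c_1^t,\dot{\overline{c_2}}^t)=(\mathscr L_{Z_t}c_1^t,\mathscr L_{Z_t}\overline{c_2}^t)$. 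By Proposition \ref{appendixholomorphicity} the map $(c_1,\overline{c_2})\mapsto\vb*h_{(c_1,\overline{c_2})}$ is holomorphic, hence $C^1$, so $\dot h_t=D_{(c_1^t,\overline{c_2}^t)}\vb*h\,(\dot c_1^t,\dot{\overline{c_2}}^t)$. Thus the corollary reduces to the identity, for all $(c_1,\overline{c_2})\in\CSCS$ and $Z\in\Gamma(\C TS)$,
\[
D_{(c_1,\overline{c_2})}\vb*h\,(\mathscr L_Z c_1,\mathscr L_Z\overline{c_2})=\mathscr L_Z\,\vb*h_{(c_1,\overline{c_2})}.
\]

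To prove this identity, write $\mathscr L_Z c_1=\mathscr L_X c_1$ and $\mathscr L_Z\overline{c_2}=\mathscr L_Y\overline{c_2}$ for the real vector fields $X,Y$ attached to $Z$ as in Remark \ref{rmk: complex derivative of a complex structure}, let $\phi_s,\psi_s$ be their (complete) flows on $S$, and set $\theta_s:=\psi_s\circ\phi_s^{-1}$, a $C^1$ path in $\mathrm{Diff}_0(S)$ with $\theta_0=\mathrm{id}$ and $\dot\theta_0=Y-X$. The curve $s\mapsto(\phi_s^*c_1,\psi_s^*\overline{c_2})$ passes through $(c_1,\overline{c_2})$ with velocity $(\mathscr L_Z c_1,\mathscr L_Z\overline{c_2})$, so the left side above equals $\frac{d}{ds}\big|_{0}\vb*h_{(\phi_s^*c_1,\,\psi_s^*\overline{c_2})}$; rewriting $\psi_s^*\overline{c_2}=\phi_s^*(\theta_s^*\overline{c_2})$ and applying naturality, this equals $\frac{d}{ds}\big|_{0}\phi_s^*\,\vb*h_{(c_1,\,\theta_s^*\overline{c_2})}$. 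By the Leibniz rule the outer $\phi_s$ contributes $\mathscr L_X\vb*h_{(c_1,\overline{c_2})}$, while $\frac{d}{ds}\big|_{0}\vb*h_{(c_1,\,\theta_s^*\overline{c_2})}$ is precisely the setting of Example \ref{ex: Lie derivative of Bers metrics} at the parameter where the isotopy is the identity, hence equals $\mathscr L_W\vb*h_{(c_1,\overline{c_2})}$ for the complex vector field $W$ with $\mathscr L_W c_1=0$ and $\mathscr L_W\overline{c_2}=\mathscr L_{\dot\theta_0}\overline{c_2}=\mathscr L_{Y-X}\overline{c_2}$. Therefore the left side is $\mathscr L_{X+W}\vb*h_{(c_1,\overline{c_2})}$. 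Now $\mathscr L_{X+W}c_1=\mathscr L_X c_1+\mathscr L_W c_1=\mathscr L_Z c_1$ and $\mathscr L_{X+W}\overline{c_2}=\mathscr L_X\overline{c_2}+\mathscr L_{Y-X}\overline{c_2}=\mathscr L_Y\overline{c_2}=\mathscr L_Z\overline{c_2}$, using linearity of $\mathscr L_\bullet$ in its subscript, so injectivity in Proposition \ref{prop: tangent to C(S)C(S)} gives $X+W=Z$, and the identity follows. Taking $Z=Z_t$ yields $\dot h_t=\mathscr L_{Z_t}h_t$.

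I expect the only real obstacle to be the naturality identity $\vb*h_{(f^*c_1,f^*\overline{c_2})}=f^*\vb*h_{(c_1,\overline{c_2})}$ together with the accompanying bookkeeping — absorbing the $c_1$-deformation into an outer pullback by means of the substitution $\psi_s^*\overline{c_2}=\phi_s^*\theta_s^*\overline{c_2}$ with $\theta_s=\psi_s\phi_s^{-1}$, so that what remains is a pure isotopy deformation of $\overline{c_2}$ to which Example \ref{ex: Lie derivative of Bers metrics} directly applies. Everything else is the Leibniz rule, the elementary flow identity $\dot\theta_0=\dot\psi_0-\dot\phi_0$, and linearity of the complex Lie derivative; one should also observe that at $s=0$ Example \ref{ex: Lie derivative of Bers metrics} is used with $\theta_0=\mathrm{id}$, so the local coordinate $\theta_0^*\overline w=\overline w$ is nonsingular and no issue arises there.
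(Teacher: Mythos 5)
Your proof is correct, but it takes a genuinely different route from the one in the paper. The paper's proof is a one-line direct coordinate computation, exactly parallel to Example \ref{ex: Lie derivative of Bers metrics}: writing $h_t=-\tfrac{4}{(z_t-\overline{w_t})^2}dz_t\,d\overline{w_t}$ in the deforming Bers coordinates $z_t,\overline{w_t}$, observing that constancy of the holonomy gives $\tfrac{d}{dt}z_t=\partial_{Z_t}z_t$, $\tfrac{d}{dt}\overline{w_t}=\partial_{Z_t}\overline{w_t}$ (and hence $\tfrac{d}{dt}dz_t=\mathscr L_{Z_t}dz_t$, $\tfrac{d}{dt}d\overline{w_t}=\mathscr L_{Z_t}d\overline{w_t}$), and then reading off $\dot h_t=\mathscr L_{Z_t}h_t$ from the product rule and the algebraic Leibniz property of the complex Lie derivative. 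Your argument, by contrast, reduces the corollary to the abstract linearization identity $D_{(c_1,\overline{c_2})}\vb*h\,(\mathscr L_Z c_1,\mathscr L_Z\overline{c_2})=\mathscr L_Z\vb*h_{(c_1,\overline{c_2})}$, invoking Proposition \ref{appendixholomorphicity} to justify the chain rule, and then proves the identity by a flow bookkeeping trick: split $Z$ into real pieces $X,Y$, pull the $c_1$-isotopy out front via the naturality $\vb*h_{(f^*c_1,f^*\overline{c_2})}=f^*\vb*h_{(c_1,\overline{c_2})}$ (which you correctly derive from \eqref{eq: def Bers metric} and the uniqueness in Theorem \ref{thm: Bers thm}), and apply Example \ref{ex: Lie derivative of Bers metrics} at the origin to the residual isotopy $\theta_s=\psi_s\phi_s^{-1}$, closing the argument with the linearity of $\mathscr L_\bullet$ in its subscript, $\dot\theta_0=Y-X$, and the injectivity in Proposition \ref{prop: tangent to C(S)C(S)}. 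Both arguments are sound. The paper's calculation is shorter and works directly with the metric in the moving coordinates; yours avoids that computation at the cost of establishing (and relying on) the naturality lemma and the differentiability of the Bers metric assignment, but has the minor payoff of isolating the infinitesimal identity $D\vb*h(\mathscr L_Z c_1,\mathscr L_Z\overline{c_2})=\mathscr L_Z\vb*h$ as a statement of independent interest and of reducing the two-sided deformation to the one-sided deformation already handled in Example \ref{ex: Lie derivative of Bers metrics}.
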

\begin{proof}
   Applying the same method as in example \ref{ex: Lie derivative of Bers metrics}, denoting by $z_t$ and $\overline{ w_t}$ complex coordinates of $c_1^t$ and $\overline{c_2}^t$, we get
    \[
    \frac d{dt} h_t=  \frac{8(\partial_{Z_t} z_t- \partial_{Z_t} \overline{w_t})}{(z_t-\overline w_t)^3} dz_t d\overline{w_t} - \frac 4{(z_t-\overline w_t)^2} ( (\mathscr L_{Z_t}dz)\cdot  d\overline w_t + dz \cdot ( \mathscr L_{Z_t} d\overline w_t) )= \mathscr L_{Z_t} h_t.\ 
    \]
\end{proof}

\subsection{Transport through complex vector fields}\label{subsec: trans}

Unlike real Lie derivatives, realizing complex Lie derivatives as infinitesimal deformations is difficult. More precisely, given a path of complex vector fields $Z_t\in \Gamma(\C TM)$, $t\in [0, T]$ and a tensor $\alpha$, we are interested in solutions $\alpha_t$ to the system
\begin{equation}
    \label{eq: transport problem}
    \begin{cases}
        \mathscr L_{Z_t} \alpha_t = \dot \alpha_t\\
        \alpha_0=\alpha,
    \end{cases}
\end{equation}
and understanding the existence and uniqueness for this Cauchy problem \eqref{eq: transport problem} is not in general straightforward. A simple case is as follows: on the unit disk, for $g(z,t)\partial_{\overline{z}}$ a time-dependent vector field and $f$ a given function, find $f_t$ such that
\begin{equation}
    \label{eq: transport problem2}
    \begin{cases}
        g(z,t)\partial_{\overline{z}} f_t(z) = \frac{d}{dt}f_t(z)\\
       f_0(z)=f(z).
    \end{cases}
\end{equation}
As we will see below, by the Cauchy-Kovalevskaya and Holmgren theorems, when $g$ and $f$ are real analytic, we can find a unique short time solution. But in fact, as can be verified through H{\"o}rmander's classical criterion, the problem (\ref{eq: transport problem2}) is not locally solvable, which means one can find $C_0^\infty$ input data for which no short time solution even exists. For more on local solvability, see \cite{Fol} and \cite{Treves}.

\begin{defn}
\label{def: analytic transport}
    We will call a solution to $\eqref{eq: transport problem}$ a \textbf{transport of $\alpha$ through $Z_{\bullet}$} and we denote it by $\alpha_t =: \mathscr T_{Z_t} \alpha$.
\end{defn}
We will also use the notation $\mathscr T_{Z_t} c$ to denote paths of complex structures such that $\frac d {dt} \mathscr T_{Z_t} c= \mathscr L_{Z_t} (\mathscr T_{Z_t} c)$, in the sense of Remark \ref{rmk: complex derivative of a complex structure}.

For the rest of this subsection, we assume that one can solve the problem \eqref{eq: transport problem}, and in the next subsection we discuss existence in the real analytic case.

\begin{example}
  In fact, the theory of Bers metrics suggests that we shouldn't hope for long time existence of the transport either. Using the unit disk as a toy model, consider a map $(f_1,\overline{f_2}):\mathbb{D}\to \mathbb{CP}^1\times \mathbb{CP}^1\backslash \Delta$ for which there exists points $p,q\in\mathbb{D}$ such that $f_1(p)=\overline{f_2}(q).$ Then let $\phi_t$ be any isotopy of the disk taking $p$ to $q$ at a time $t_0.$ For $t<t_0$, we have a family of positive complex metrics given by $$-\frac{4}{(f_1-\phi_t^*\overline{f_2})^2}df_1\cdot (\phi_t^*d(\overline{f_2})),$$ which is tangent to a complex Lie derivative at time zero (see Example \ref{ex: Lie derivative of Bers metrics}), but which also blows up at $p$ as $t$ increases to $t_0.$
\end{example}

\begin{remark}
\label{rmk: properties of veet}
    By the definition of complex Lie derivative, we can deduce some natural properties of the transport.
    \begin{enumerate}[leftmargin=*]
        \item If $\omega\in\Omega^p(S)\otimes \C$ is transportable through $Z_{\bullet}$, then
        \[
        \veet (d\omega) = d (\veet \omega ).
        \]
        \item For all complex tensors $\alpha, \beta$ which can be transported through $Z_\bullet$ one has 
        \[
        \mathscr T_{Z_t} (\alpha \otimes \beta)= (\mathscr T_{Z_t} \alpha) \otimes (\mathscr T_{Z_t} \beta).
        \]
        \item If $\alpha$ in $\mathrm{Tens}^{p,q}(S)\otimes \mathbb C$, $W_1,\dots W_q\in \Gamma(\mathbb C TS)$, and $\omega_1, \dots \omega_p\in \Omega^1(\mathbb C TS)$ can all be transported through $Z_\bullet$, then
        \[
        \veet \left( \alpha(\omega_1, \dots, \omega_p, W_1, \dots, W_q )\right) = (\veet \alpha)( \veet(\omega_1), \dots, \veet(\omega_p), \veet(W_1), \dots, \veet (W_q)).
        \]
    \end{enumerate}
\end{remark}

\begin{remark}
\label{rmk: transport Bers metrics}
Let $\ccpair\in \CSCS$, let $\phi^1_t,\phi^2_t$ be isotopies. Let $Z_t\in \Gamma(\C TS)$ be such that $\frac d{dt}((\phi^1_t) ^*c_1, (\phi^2_t)^* \overline{c_2})= (\mathscr L_{Z_t} c_1^t, \mathscr L_{Z_t}\overline{c_2}^t)$ as in Proposition \ref{prop: tangent to C(S)C(S)}. Then, for all $c_1$-holomorphic (resp. $\overline{c_2}$-holomorphic) tensor $\alpha$, one has $(\phi_1^t)^*\alpha= \veet \alpha$ (resp. $(\phi_2^t)^*\alpha= \veet \alpha $). 
Moreover, by Corollary \ref{cor: lie derivative Bers}, $\vb*h_{((\phi_1^t)^*{c_1}, (\phi_2^t)^*\overline{c_2})}= \veet \vb*h_{(c_1, \overline{c_2})}$.
\end{remark}

Here are a few more remarks about the transport through complex vector fields. Observe that the hypothesis of the following proposition holds for $Z_t\in \Gamma(\C TS)$ corresponding to $\frac d {dt} ((\phi_1^t)^* c_1, (\phi_2^t)^*{\overline{c_2}})$ and for $\veeet (\hpair)= \vb*h_{((\phi_1^t)^* c_1, (\phi_2^t)^*{\overline{c_2}})}$.

\begin{prop}
\label{prop: transport connection and laplacian}
Denote by $\veeet= \veet$ the transport of tensors through the path of vector fields $t\mapsto Z_t\in \Gamma(\C TS)$, $t\in [0,T]$. Assume that $g$ is a positive complex metric on $S$ for which $\veeet g$ exists and is a positive complex metric, and that $(\vb*c_+, \vb* c_-)(\veeet g)= (\veeet (\vb*c_+(g)), \veeet( \vb* c_-(g)))$.

\begin{enumerate}[leftmargin=*]
\item Up to reducing $T$, there exists a local orthonormal frame $(e_1, e_2)$ for $g$ which admits a transport through $Z_\bullet$ and such that $(\veeet e_1, \veeet e_2)$ is orthonormal for $\veeet g$.
\item If $\alpha\in \mathrm{Tens}^{p,q}(S)\otimes \mathbb C$ can be transported through $Z_\bullet$,
\[
\veeet (g(\alpha, \alpha))= (\veeet g)(\veeet \alpha,\veeet \alpha). 
\]
    \item If $X,Y\in \Gamma(\mathbb C TS)$ can be transported through $Z_\bullet$,  
    \[\veeet (\nabla^g_X Y)=\nabla^{\veeet g}_{\veeet X} (\veeet Y), \]
    where $\nabla^{\veeet(g)}$ denotes the Levi-Civita connection of $\veeet (g)$.
    \item The curvature changes according to $\veeet( \mathrm R^g)= \mathrm R^{\veeet(g)}$ and $\veeet (\mathrm K_g)= \mathrm K_{\veeet g}$.
    \item If $\alpha$ can be transported through $Z_\bullet$, 
    \[\veeet (\nabla^g \alpha)= \nabla^{\veeet g} (\veeet \alpha).\  \]
    \item If $f\in C^r(S, \mathbb C)$ can be transported through $Z_\bullet$, higher-order derivatives change according to
    \[
    \veeet \left((\nabla^g)^r f\right)= \left(\nabla^{\veeet g} \right)^r (\veeet f).\ 
    \]
    In particular, the Hessian and the Laplacian change according to 
    \[\veeet(\mathrm{Hess}_g(f))= \mathrm{Hess}_{\veeet (g)} (\veeet f),\]\[
    \veeet (\Delta_g (f))=\Delta_{\veeet (g)} (\veeet f).\ 
    \]

\end{enumerate}
\end{prop}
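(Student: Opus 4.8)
The plan is to prove the six items in order, since each one feeds into the next, and to exploit throughout the characterization of the complex Lie derivative by its algebraic properties (items (1)--(5) in the list before Remark \ref{rmk: complex derivative of a complex structure}) together with the compatibility of transport with tensor operations recorded in Remark \ref{rmk: properties of veet}. The unifying philosophy is that transport $\veeet$ commutes with every $\C$-bilinear tensorial construction that can itself be built out of Lie derivatives, contractions, and the differential $d$; the Levi-Civita connection and all its derived objects (curvature, Hessian, Laplacian) are such constructions, so once we set up the bookkeeping correctly the statements become formal.

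\emph{Step 1 (orthonormal frames).} Fix a point $p$ and choose a local $g$-orthonormal frame $(e_1^0,e_2^0)$ near $p$ at time $0$. Transport each vector field, setting $e_i^t:=\veeet e_i^0$; this requires $T$ small so that the transport of these vector fields exists (invoking real-analyticity and the short-time existence of Section \ref{subsec: trans} when needed). By Remark \ref{rmk: properties of veet}(3), $\veeet\big(g(e_i^0,e_j^0)\big)=(\veeet g)(\veeet e_i^0,\veeet e_j^0)$, and since $g(e_i^0,e_j^0)=\delta_{ij}$ is constant in space, its transport is the constant $\delta_{ij}$ in $t$ as well (a constant function solves the transport equation with any $Z_t$). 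Hence $(\veeet e_1^0,\veeet e_2^0)$ is $\veeet g$-orthonormal, giving (1). Item (2) is then immediate: it is exactly Remark \ref{rmk: properties of veet}(3) applied to the two occurrences of $\alpha$ in $g(\alpha,\alpha)$, after writing $g(\alpha,\alpha)$ as a contraction of $\alpha\otimes\alpha$ with the inverse metric $g^{-1}$ — and $\veeet g^{-1}=(\veeet g)^{-1}$ by applying transport to the identity $g\cdot g^{-1}=\mathrm{id}$ (again using Remark \ref{rmk: properties of veet}(2)--(3)).

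\emph{Step 2 (Levi-Civita connection, item (3)).} Define a connection $\widetilde\nabla$ on $\C TS$ at time $t$ by $\widetilde\nabla_{\veeet X}\veeet Y:=\veeet(\nabla^g_X Y)$ for transportable $X,Y$, and extend $C^\infty$-bilinearly (one must check this is well-defined: the transport of a frame is a frame, so every vector field near a point is a $C^\infty$-combination of transported ones, and the Leibniz rule $\nabla^g_{fX}Y=f\nabla^g_X Y$, $\nabla^g_X(fY)=(\partial_X f)Y+f\nabla^g_X Y$ is compatible with transport because $\veeet(\partial_X f)=\partial_{\veeet X}(\veeet f)$ by the Lie-derivative property $\mathscr L_Z f=\partial_Z f$). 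One then verifies $\widetilde\nabla$ is torsion-free — using Remark \ref{rmk: properties of veet}(2) and $\veeet[X,Y]=[\veeet X,\veeet Y]$, which follows from the Lie-derivative characterization — and metric with respect to $\veeet g$, using item (2). By uniqueness of the Levi-Civita connection for the positive complex metric $\veeet g$ (which exists by the hypothesis that $\veeet g$ is a positive complex metric, invoking the calculus of Section \ref{subsection: complex metrics}), $\widetilde\nabla=\nabla^{\veeet g}$, which is (3). Here the hypothesis $(\vb*c_+,\vb* c_-)(\veeet g)=(\veeet(\vb*c_+ g),\veeet(\vb* c_- g))$ is what guarantees $\veeet g$ stays positive and that its isotropic structure tracks the transported complex structures, so the Levi-Civita machinery applies.

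\emph{Step 3 (the derived objects, items (4)--(6)).} Item (5), $\veeet(\nabla^g\alpha)=\nabla^{\veeet g}(\veeet\alpha)$, follows by the same pattern: $\nabla^g\alpha$ is built from $\nabla^g$ and contractions via the recursive formula $(\nabla^g_X\alpha)(\dots)=\partial_X(\alpha(\dots))-\sum\alpha(\dots,\nabla^g_X(\cdot),\dots)$, and each ingredient commutes with $\veeet$ by Step 2 and Remark \ref{rmk: properties of veet}. Item (6) is then obtained by iterating (5), and the Hessian/Laplacian statements are the cases $r=2$ after a contraction with $g^{-1}$, using (2). For item (4): the $(1,3)$-curvature is $\mathrm R^g(X,Y)Z=\nabla^g_X\nabla^g_Y Z-\nabla^g_Y\nabla^g_X Z-\nabla^g_{[X,Y]}Z$, and applying $\veeet$ to each term and using (3) twice (plus $\veeet[X,Y]=[\veeet X,\veeet Y]$) gives $\veeet(\mathrm R^g)=\mathrm R^{\veeet g}$; lowering an index with $g$ and using (2) transports the $(0,4)$-tensor as well, and $\veeet(\mathrm K_g)=\mathrm K_{\veeet g}$ follows from the coordinate-free formula $\mathrm K(X,Y)=\mathrm R^g(X,Y,Y,X)/\big(g(X,X)g(Y,Y)-g(X,Y)^2\big)$ evaluated on a transported frame, where both numerator and denominator transport by (2) and (4).

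\emph{Main obstacle.} The only genuinely delicate point is not any individual identity but the \emph{well-definedness and existence} underlying the whole scheme: transport is governed by the Cauchy problem \eqref{eq: transport problem}, which is not locally solvable in general (Section \ref{subsec: trans}), so all statements are implicitly conditional on the objects in question admitting transports on a common short time interval. The proof must therefore be careful to (a) reduce $T$ as needed so that a $g$-orthonormal frame and the relevant vector fields all transport simultaneously, and (b) check the $C^\infty(S,\C)$-bilinearity extension in Step 2 is consistent — i.e. that defining $\widetilde\nabla$ on transported frames and extending by the Leibniz rule does not depend on the chosen frame. Once these foundational checks are in place, every remaining step is a formal consequence of the algebraic properties of $\mathscr L_Z$ and the compatibility properties in Remark \ref{rmk: properties of veet}, exactly as the corresponding identities for real Lie derivatives and flows.
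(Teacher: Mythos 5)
Your global strategy---commute transport with every tensorial construction built out of contractions, $d$, and Lie derivatives, then invoke uniqueness of the Levi-Civita connection---is the right philosophy and is what the paper ultimately does. However, two steps in your write-up have genuine gaps, and both are resolved in the paper by choices you did not make.

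\emph{Gap in Step 1 (item (1)).} You transport an \emph{arbitrary} local $g$-orthonormal frame $(e_1^0,e_2^0)$, justifying existence of the transport by ``invoking real-analyticity and the short-time existence of Section \ref{subsec: trans}.'' But Section \ref{subsec: trans} explicitly records that the transport Cauchy problem is \emph{not} locally solvable in general, and the real-analytic existence theorem (Theorem \ref{thm: complex transport of tensor}) requires both the tensor and $Z_t$ to be real analytic---hypotheses absent from the proposition. The content of item (1) is precisely that \emph{some} orthonormal frame is transportable; you cannot take this for granted. The paper constructs one explicitly: since the hypothesis $(\vb*c_+,\vb*c_-)(\veeet g)=(\veeet\vb*c_+(g),\veeet\vb*c_-(g))$ plus Proposition \ref{prop: tangent to C(S)C(S)} guarantee that the transported complex structures come from isotopies, the local holomorphic coordinates $z$ and $\overline w$ admit transports $z_t=\veeet z$, $\overline w_t=\veeet\overline w$, and the coframe $\theta^1_t=(dz_t+d\overline w_t)/\sqrt{2g_t(dz_t,d\overline w_t)}$, $\theta^2_t=i(dz_t-d\overline w_t)/\sqrt{2g_t(dz_t,d\overline w_t)}$ is then shown by hand to be orthonormal for $g_t$ and to satisfy the transport ODE, with the dual frame following from $\theta^j_t(e^t_k)=\delta^j_k$. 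The transportability is built into the choice of frame; it does not hold for a generic orthonormal frame.

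\emph{Gap in Step 2 (item (3)).} Your definition $\widetilde\nabla_{\veeet X}\veeet Y:=\veeet(\nabla^g_X Y)$ is circular: it assumes the transport of $\nabla^g_X Y$ exists, which is exactly the existence content of the identity $\veeet(\nabla^g_X Y)=\nabla^{\veeet g}_{\veeet X}(\veeet Y)$ you are trying to establish. The paper sidesteps this by passing to the Levi-Civita connection forms $(\omega^i_j)_t$ of $\nabla^{\veeet g}$ in the transported coframe from item (1): these are uniquely characterized by the Cartan structural equations $\veeet(d\theta^i)=-\sum_j(\omega^i_j)_t\wedge\veeet\theta^j$ together with skew-symmetry, and \emph{every other quantity in those equations is already known to transport}. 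Applying $\tfrac{d}{dt}-\mathscr L_{Z_t}$ to the structural equations then forces $(\tfrac{d}{dt}-\mathscr L_{Z_t})(\omega^i_j)_t=0$, i.e.\ the connection forms transport, which yields both existence and the identity. Your uniqueness argument can be salvaged, but only after one has established---by some such device---that the transport of $\nabla^g_X Y$ exists.

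Items (2), (4), (5), (6) in your outline match the paper's argument in spirit (the paper expresses tensors in the transported frame and lets the coefficients transport; for (4) it writes $[X,Y]=\nabla^g_XY-\nabla^g_YX$ via torsion-freeness rather than appealing separately to $\veeet[X,Y]=[\veeet X,\veeet Y]$, but this is cosmetic). Once items (1) and (3) are repaired as above, the rest of your reasoning goes through.
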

\begin{proof}
In the proof, we denote $g_t= \veeet g$ and $\ccpair=(\vb* c_+, \vb* c_-)(g)$, with $z$ and $\overline w$ local holomorphic coordinates for $c_1$ and $\overline{c_2}$ respectively. Observe that since $\veeet c_1$ and $\veeet \overline{c_2}$ correspond, by Proposition \ref{prop: tangent to C(S)C(S)}, to deformation by isotopies, up to shrinking the charts and reducing $T$, we can take $z_t=\veeet z$ and $\overline{w_t}= \veeet \overline w$ local holomorphic coordinates for $\veeet c_1$ and $\veeet \overline{c_2}$. 

\begin{enumerate}[leftmargin=*]
\item We first determine a transportable orthonormal coframe. Use $g_t$ to denote also the dual bilinear forms on $\C T^*S$. By hypothesis, $ g_t ( dz_t,  dz_t)= g_t( d\overline w_t,  d \overline w_t)=0$. It is therefore easy to check that
\[
\theta^1_t= \frac{dz_t+ d\overline w_t}{\sqrt{2g_t(dz_t, d\overline w_t)}}\qquad \quad \theta^2_t= i\frac{dz_t- d\overline w_t}{\sqrt{2 g_t (dz_t, d\overline w_t)}}
\]
determine an orthonormal coframe for $g_t$, and satisfy $\frac d {d} \theta^j_t= \mathscr L_{Z_t} \theta^j_t$, hence $\theta^j_t= \veeet \theta^j_0$. (Here the square roots exist up to shrinking to a contractible neighborhood.)

Denote with $(e_1^t, e_2^t)$ the dual frame to $(\theta^1_t, \theta^2_t)$, which is therefore $g_t$-orthonormal. Then, applying $\frac d {dt}- \mathscr L_{Z_t}$ to the equations $\theta^j_t(e_k^t)=\delta_k^j$, we get that $\frac d {dt} e_k^t- \mathscr L_{Z_t} e_k^t$ lies in the kernel of both $\theta^1_t$ and $\theta^2_t$, therefore it is zero and $e_k^t= \veeet e_k^0$.

\item Let $(e_1, e_2)$ be a $g$-orthonormal frame as in item 1. Denote the dual frame $(\theta^1, \theta^2)$, which is therefore such that there exist transports for which $(\veeet \theta^1_t, \veeet \theta^2_t)$ is an orthonormal frame for $g_t$. Then, 
\begin{align*}
&\veeet \left( g\left( \theta^{m_1}\otimes \theta^{m_p} \otimes e_{n_1}\otimes e_{n_q},   \theta^{m'_1}\otimes \theta^{m'_p}\otimes  e_{n'_1}\otimes e_{n'_q}\right) \right)\\
&=  \delta_{m_1, m'_1}\cdot \dots \cdot  \delta_{m_p, m'_p} \cdot \delta_{n_1, n'_1}\cdot  \dots \cdot \delta_{n_q, n'_q}\\
&=(\veeet g)\left(\veeet \big(\theta^{m_1}\otimes\dots \otimes \theta^{m_p} \otimes e_{n_1}\otimes \dots \otimes e_{n_q}\big), \veeet \left(  \theta^{m'_1}\otimes \dots \otimes\theta^{m'_p}\otimes  e_{n'_1}\otimes \dots \otimes e_{n'_q} \right)\right).
\end{align*}
To conclude the proof, observe that applying $\frac d {dt}- \mathscr L_{Z_t}$ to the basis decomposition $\veeet\alpha=\sum_{I,J} a^t_{I,J} \veeet(\theta^I) \otimes \veeet(e_J)$, with $I,J$ multi-indices, one gets that $a^t_{I,J}= \veeet a_{I,J}^0$, hence the statement follows by linearity.
\item 
Use the notation as in the previous steps. As shown in \cite{BEE}, for complex metrics one has a natural notion of Levi-Civita connection forms that extend the Riemannian notion. So the metric $g_t$ and the orthonormal frame $(\veeet e_1, \veeet e_2)$ coframe $(\veeet\theta^1, \veeet \theta^2)$ induce the forms $(\omega_i^j)_t$ defined by
\[\nabla^{g_t} \veeet(e_i)= \sum _{j} (\omega_i^j)_t\otimes \veeet(e_j),
\] and which
are characterized as the unique 1-forms such that 
\[
\begin{cases}
\veeet(d\theta^i) =-\sum_j (\omega_j^i)_t \wedge \veeet( \theta^j)\\
(\omega_j^i)_t=-(\omega_i^j)_t .
\end{cases}
\]
Applying $\frac d {dt}-\mathscr L_{Z_t}$ to the equation on top, one gets by Remark \ref{rmk: properties of veet} that \[\left(\frac d {dt}-\mathscr L_{Z_t}\right)(\omega^i_j)_t\wedge (\veeet \theta^j)=0=-\left(\frac d {dt}-\mathscr L_{Z_t}\right)(\omega_j^i)_t \wedge (\veeet \theta^i),\] hence $(\frac d {dt}-\mathscr L_{Z_t})(\omega_j^i)_t=0$, thus $\veeet(\nabla^g { e_j})=\nabla^{\veeet g} (\veeet e_j)$ for all $j$. Finally if  $X\in \Gamma(\mathbb C T S)$ admits a transport through $Z_{\bullet}$, and $X=\sum_j a_j e_j$, we obtain once again that $\veeet X= \sum_j \veeet a_j \veeet e_j$ and
\begin{align*}
\veeet( \nabla^g X)&= \veeet\left(\sum_j d a_j \otimes e_j + \sum_j a_j \otimes \nabla^g e_j\right) \\
&=\sum_j d(\veeet{a_j}) \otimes (\veeet {e_j}) + \sum_j \veeet(a_j) \otimes \nabla^{\veeet (g)} (\veeet e_j)= \nabla^{\veeet(g)}\veeet(X).\ 
\end{align*}

which concludes the proof.

\item The proof follows easily by applying iteratively the previous step to the tensor \[
\mathrm R^g(X,Y) Z= \nabla^g_X\nabla^g_Y Z- \nabla^g_Y\nabla^g_X Z- \nabla^g_{\nabla^g_X Y-\nabla^g_Y X} Z. \ 
\]

With the notation as in the previous steps, we also see that
\[
\veeet(\mathrm K_g)= \veeet ( g(\mathrm R_g ( e_1, e_2) e_2, e_1) )=  (\veeet g)(\mathrm R_{\veeet g} (\veeet e_1, \veeet e_2)\veeet e_2, \veeet e_1)= \mathrm K_{(\veeet g)}.
\]

\item By the first item and Remark \ref{rmk: properties of veet}, the statement is true if $\alpha$ is a 1-form since
\[
(\veeet (\nabla^g \alpha) )(\veeet e_j)= \veeet ((\nabla^g \alpha) e_j)= \veeet ( d( \alpha(e_j)) )- \veeet(\alpha(\nabla^g e_j))= (\nabla^{\veeet(g)} \veeet(\alpha) ) (\veeet e_j).  \ 
\]
The proof then follows for all $(p,q)$-type real analytic tensors $\alpha$ using the formula
   \begin{align*}
      (\nabla^g \alpha)(\omega_1, \dots, \omega_p, Y_1,\dots, Y_q)&= d(\alpha(\omega_1, \dots, \omega_p, W_1,\dots, W_q)) - \alpha ( \nabla^g \omega_1, \dots, \omega_p, W_1,\dots, W_q))- \dots -\\
     &-\alpha (  \omega_1, \dots, \nabla^g \omega_p, W_1,\dots, W_q))-\alpha (  \omega_1, \dots, \omega_p, \nabla^g W_1,\dots, W_q))-\dots -\\
     &-\alpha (  \omega_1, \dots, \mathcal \omega_p, W_1,\dots, \mathcal \nabla^g W_q)) 
\end{align*}
for $\omega_k\in \{\theta^1, \theta^2\}$ and $W_k\in \{e_1, e_2\}$.

\item The statement is true for $r=1$:
\[
g_t\left(\veeet\left( \nabla^g f \right), \veeet e_k\right)=\veeet( g(\nabla^g f, \veeet e_k))= \veeet(df (Y))= d f_t(\veeet e_k)= g_t(\nabla^{g_t} f_t,\veeet e_k). \ 
\]
where $g_t=\veeet (g)$, $f_t=\veeet (f)$, $Y_t=\veeet (Y)$. The proof then follows directly by induction on $r$.

To see the equality for the Laplace-Beltrami operator, one can just compute that
\[
\veeet(\Delta_g f) =\sum_j \veeet (\mathrm{Hess}_g (f)(e_j, e_j))= \sum_j \mathrm{Hess}_{\veeet g} (\veeet f)(\veeet e_j, \veeet e_j)= \Delta_{(\veeet g)} (\veeet f). \ 
\]
\end{enumerate}
\end{proof}

\begin{remark}
   The condition on the conformal type of the complex metric is used only in the proof of the first item of the Proposition. In fact, one can prove the other items for complex metrics on manifolds provided that an analog of step 1 is assumed as a hypothesis, namely that there exist local orthonormal frames that admit transports that are orthonormal for the transported metric. 
\end{remark}

\subsection{Transport of real analytic tensors}
\label{subsec: transport analytic}
A useful class of cases for which the problem \eqref{eq: transport problem} admits a unique solution is for the transport of real analytic complex tensors through real analytic complex vector fields. Real analytic objects on $M$ have some nice density properties, which extend some results also more generally in the smooth setting. Here we recall a few results: 
\begin{itemize}[leftmargin=*]
    \item The space of real analytic sections of a real analytic bundle over $M$ is dense in the space of sections
(see \cite[\S 30.12]{Bookanalytic}). In particular, real analytic complex vector fields are dense in the space of complex vector fields.
\item The space $C^{\omega}(M,M)$ of real analytic mappings from $M$ to $M$ is locally modeled on the space of real analytic vector fields by locally applying the inverse of the exponential map for any real analytic Riemannian metric on $M$ (see \cite[\S 37]{Bookanalytic}). In particular, it is locally path-connected through paths of the form $t\mapsto \phi_t$ such that $t\mapsto \phi_t(p)$ is piecewise analytic for all $p\in M$.
\item  The space $\mathrm{Diff}^{\omega}(M)$ of analytic diffeomorphisms of $M$ is an open subset of $C^\omega(M,M)$, and it is therefore locally path-connected through paths of the same type.
\item  $\mathrm{Diff}^\omega(M)$ is dense in $\mathrm{Diff}(M)$ and the inclusion is a homotopy equivalence (see \cite{Tsuboi}). In particular, $\mathrm{Diff}_0^\omega(M)$ is dense in $\mathrm{Diff}_0(M)$. 
\end{itemize}

As we will see, the following theorem will be very useful also in the non-analytic setting because of the nice density properties above.

\begin{thm}
\label{thm: complex transport of tensor}
    Let $M$ be a real analytic manifold. Let $t\mapsto Z_t$ be a path of (local) real analytic complex vector fields on $M$ that is piecewise real analytic with respect to $t\in [0,T]$.
Let $\alpha$ be a real analytic (local) complex tensor on $M$.

Then, for all $p\in M$ there exists a neighborhood $U_p\subset M$, $T_0\in (0, T]$, and a unique path of tensors $t\mapsto \alpha_t$, $t\in [0,T_0]$, such that on $U_p$ 
\[\begin{cases}
    \dot \alpha_t=\mathscr L_{Z_t} \alpha_t\\
    \alpha_0=\alpha.
\end{cases}\ 
\]
A posteriori the path $t\mapsto \alpha_t$ is piecewise real analytic with respect to $t$ and each $\alpha_t$ is a real analytic tensor.
  
  Moreover, if $M$ is compact, one can take $M=U_p$. 
\end{thm}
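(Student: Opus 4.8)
The plan is to recognize the transport equation as a linear first-order evolution system whose coefficients are real analytic in the space variables and piecewise real analytic in $t$, and then invoke the classical Cauchy-Kovalevskaya existence theorem together with Holmgren's uniqueness theorem. First I would pass to real-analytic local coordinates $(x^1,\dots,x^m)$ on a chart containing $p$. Writing $Z_t=\sum_c Z^c(x,t)\,\partial_{x^c}$ and identifying a tensor $\alpha_t$ with the tuple $u=(u_I)$ of its $\C$-valued components, the recursive formula (5) for $\mathscr L_{Z_t}$, combined with $\mathscr L_{Z_t}dx^a=\sum_c(\partial_{x^c}Z^a)\,dx^c$ and $\mathscr L_{Z_t}\partial_{x^b}=-\sum_c(\partial_{x^b}Z^c)\,\partial_{x^c}$, converts $\dot\alpha_t=\mathscr L_{Z_t}\alpha_t$ into a system
\[
\partial_t u=\sum_c Z^c(x,t)\,\partial_{x^c}u+B(x,t)\,u,\qquad u|_{t=0}=\alpha,
\]
where $B$ is a matrix-valued function assembled from the first $x$-derivatives of $Z_t$. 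Both $Z^c$ and $B$ are real analytic in $x$ and piecewise real analytic in $t$ by hypothesis, $\alpha$ is real analytic in $x$, and — this is the point that makes the classical theory apply — the hypersurface $\{t=0\}$ is non-characteristic, since $\partial_t u$ enters with coefficient the identity.

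For existence, I would work on the first subinterval $[0,s_1]$ of a partition $0=s_0<s_1<\dots<s_N=T$ on which $Z_t$ is jointly real analytic in $(x,t)$. All coefficients and the initial datum extend holomorphically to a neighborhood of the chart times $\{0\}$ in $\C^m\times\C$, and the holomorphic Cauchy-Kovalevskaya theorem yields, for each $p$, a neighborhood $U_p$, a time $T_0\in(0,s_1]$, and a holomorphic — hence, restricting, real-analytic — solution $u$ on $U_p\times\{|t|<T_0\}$; restricting $t$ to $[0,T_0]$ produces the desired path $\alpha_t$, jointly real analytic in $x$ and $t$. (If one wants a longer interval, one iterates across the $s_k$, using the real-analytic-in-$x$ value $\alpha_{s_k}$ as fresh Cauchy data, which gives the piecewise real-analytic solution asserted a posteriori.) For uniqueness, among solutions regular enough for the equation to make sense (say $C^1$ in $t$ with each $\alpha_t\in C^1$), the difference of two solutions solves the homogeneous version of the system, with zero Cauchy data and real-analytic coefficients; Holmgren's uniqueness theorem — again using that $\{t=0\}$ is non-characteristic — forces it to vanish near $t=0$, and a standard continuation argument along $[0,T_0]$ (the set where the two solutions agree near $p$ is open by Holmgren and closed by continuity) upgrades this to $\alpha_t\equiv\tilde\alpha_t$ on all of $[0,T_0]$.

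For the compact case, assuming $Z_t$ and $\alpha$ are globally defined on $M$, I would cover $M$ by finitely many real-analytic charts $V_1,\dots,V_n$ and pick relatively compact $V_i'\Subset V_i$ still covering $M$. The local Cauchy-Kovalevskaya solutions then exist for times $T_{0,i}>0$ that are \emph{uniform} over the compact closures $\overline{V_i'}$; taking $T_0=\min_i T_{0,i}>0$ and using the local uniqueness statement to see that these solutions agree on the overlaps $(V_i'\cap V_j')\times[0,T_0]$, they patch to a global solution on $M\times[0,T_0]$, which is unique because uniqueness is local.

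The step I expect to be the main obstacle is exactly this uniformity of the existence time: Cauchy-Kovalevskaya is intrinsically local, so care is needed to guarantee a short existence time that does not degenerate as the base point ranges over a compact set, which is what gives content to the clause "one can take $M=U_p$''. I would handle it either by tracking the dependence of the majorant construction on bounds for the coefficients and the initial data over a fixed complex neighborhood of $\overline{V_i'}$, or more cleanly by invoking the abstract (Ovsyannikov-Nirenberg-Nishida) Cauchy-Kovalevskaya theorem on a scale of Banach spaces of functions holomorphic on such a neighborhood. A secondary point, worth only a remark, is that the $\C$-valued nature of $Z_t$ and $\alpha$ causes no difficulty here: both Cauchy-Kovalevskaya and Holmgren apply verbatim to $\C$-valued systems with $\C$-valued real-analytic coefficients — in sharp contrast with the failure of local solvability in the smooth category noted in Section \ref{subsec: trans}.
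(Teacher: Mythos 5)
Your approach is correct and rests on the same two pillars as the paper's argument — Cauchy–Kovalevskaya for existence and Holmgren for uniqueness — but the packaging differs in a way worth noting. You assemble the full component system $\partial_t u=\sum_c Z^c\,\partial_{x^c}u+B(x,t)\,u$ for the tensor all at once and apply the (holomorphic) Cauchy–Kovalevskaya theorem directly to this $\C$-valued analytic system; the paper instead first transports the coordinate functions $x_j$ and the coframe $dx_j^t$/frame $e_j^t$, and only then reduces the tensor case to the scalar equation $\dot a_{I,J}^t=\partial_{Z_t}a_{I,J}^t$ for the components in the transported basis (and it also splits into real and imaginary parts to produce a system with real coefficients before invoking Cauchy–Kovalevskaya, which you correctly observe is unnecessary — the holomorphic version applies verbatim). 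Your flattened version is more direct; the paper's version has the advantage of exhibiting the transported frame explicitly, which is exactly what is reused in Proposition~\ref{prop: transport connection and laplacian} and in the proof of Proposition~\ref{prop: holonomy is constant}.

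One remark on the point you flag as "the main obstacle": the worry about the existence time degenerating as the base point ranges over a compact set is, as stated, a non-issue and does not require majorant-tracking or the Ovsyannikov–Nirenberg–Nishida machinery. The local Cauchy–Kovalevskaya statement at a point $p$ already furnishes a fixed open neighborhood $U_p$ and a fixed $T_0(p)>0$ valid throughout $U_p\times[0,T_0(p)]$. When $M$ is compact, one extracts a finite subcover $\{U_{p_1},\dots,U_{p_k}\}$ and takes $T_0=\min_i T_0(p_i)>0$; the local solutions agree on the overlaps by the uniqueness you already established, so they patch. This is precisely what the paper does in its Step~4, and it is enough.
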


\begin{proof}
This theorem is mainly an application of the Cauchy-Kovalevskaya Theorem which allows to find the unique real analytic solution. Uniqueness then follows by Holmgren's theorem, which says that any solution to the problem must be real analytic.

    \emph{Step 1}. We first prove the statement for a real analytic function $\alpha$. 
    In a real analytic chart $(x_1, \dots, x_n)$ centered in $p$, solving the equation $\dot \alpha_t=\mathscr L_{Z_t} \alpha_t$ with prescribed $\alpha_0$ is equivalent to solving the Cauchy problem (with real coefficients)
    \[
    \begin{cases}
    \dot u_1^t= \sum_j a^t_j \partial_{x_j} u_1^t -b^t_j\partial_{x_j}u_2^t + a^t_j \partial_{x_j} Re(\alpha_0) -b^tj\partial_{x_j}Im(\alpha_0)\\
    \dot u_2^t= \sum_j b^t_j \partial_{x_j} u_1^t+ a^t_j \partial_{x_j} u_2^t + b^t_j \partial_{x_j} Re(\alpha_0)+ a^t_j \partial_{x_j} Im(\alpha_0)\\
    u_1^0=u_2^0=0,
    \end{cases}\ 
    \]
    where $u_1^t+iu_2^t:= \alpha_t- \alpha_0$, and 
    $Z_t= \sum_j (a_j^t +i b_j^t)\partial_{x_j}.$

    By the Cauchy-Kovalevskaya Theorem, there exists a unique real analytic solution $(u_1^t, u_2^t)$ to this problem in a neighborhood $U_0$ of the point and for small enough time. 
    
\vspace{8pt}

\emph{Step 2.} We prove existence for tensors. Consider again a local analytic chart $(x_1, \dots, x_n)$ around $p$. 
By the previous step, there exists a neighborhood $U_p$ contained in the chart and $T_0>0$ such that for all $t\in [0,T_0]$: 
\begin{itemize}
    \item there exists a unique $x_j^t\colon U_p\to \mathbb C$ such that $\frac d{dt} x_j^t=\partial_{Z_t} x_j^t$ and $x_j^0=x_j$,
    \item $dx^t_1, \dots, dx^t_n$ are $\mathbb C$-linearly independent $1$-forms.
\end{itemize}
By the properties of the complex Lie derivative, $\mathscr L_{Z_t} dx_j^t= \frac d {dt} (dx_j^t)$. 
Now, denote by $e^t_1, \dots, e^t_n\in \Gamma(\mathbb C T U_p)$ the dual frame corresponding to $dx^t_1,\dots, dx^t_n$. Then, $e^0_j=\partial_{x_j}$, and 
    \begin{align*}
        &0= \mathscr L_{Z_t} (dx_j^t(e^t_k) )= (\mathscr L_{Z_t} dx_j^t)(e^t_k) + dx_j^t(\mathscr L_{Z_t}e^t_k), \\
        &  0= \frac d {dt} (dx_j^t(e^k_t) )= d\dot{x}_j^t(e^t_k) + dx_j^t( \dot{e}^t_k)= (\mathscr L_{Z_t} dx_j^t)(e^t_k) + dx_j^t( \dot{e}^t_k),
    \end{align*}
so $\frac d {dt} (e^t_k)=\mathscr L_{Z_t}e^t_k.$

Finally, let $I=(i_1, \dots, i_p), J=(j_1, \dots, j_q)$ be multindices.
    Denote $e^t_I= e^t_{i_1}\otimes \dots \otimes e^t_{i_N}$ and $ dx^t_J= dx^t_{j_1}\otimes dx^t_{j_{N'}}$. One can easily check that $\frac d {dt} e^t_I= \mathscr L_{Z_t} e^t_I$ and $\frac d {dt} dx^t_J= \mathscr L_{Z_t} dx^t_J$. 
    
    Finally, for any analytic tensor $\alpha_0=\sum_{I,J} \alpha_{I,J} e_I\otimes dx_J$ tensor, let $\alpha^t_{I,J}$ be the unique function defined for small times in a neighborhood of $p_0$ such that $\dot \alpha^t_{I,J}=\partial_{Z_t} \alpha^t_{I,J}$; then it is straightforward to see, using the properties of the complex Lie derivative, that \[
    \frac d {dt} \alpha_t= \mathscr L_{Z_t} \alpha_t,
    \]
    with $\alpha_t=\sum_{I,J} \alpha^t_{I,J} e^t_I\otimes dx^t_J$.

\vspace{8pt}

    \emph{Step 3.} We prove uniqueness. Assume that $\alpha_t$ and $\beta_t$ satisfy $\dot \alpha_t=\mathscr L_{Z_t} \alpha_t$, $\dot \beta_t=\mathscr L_{Z_t}\beta_t$ and $\alpha_0=\beta_0$. 
With respect to the bases above we can write $  \alpha_t=\sum_{I,J} \alpha_{I,J} ^t e_I^t \otimes dx_J^t$ and $\beta_t=\sum_{I,J} \beta_{I,J}^t e_I^t\otimes dx_J^t$ for some functions $\alpha_{I,J}^t$ and $\beta_{I,J}^t$. Using the properties of the complex Lie derivative, we deduce that $\frac d {dt} \alpha_{I,J}^t= \partial_{Z_t} \alpha_{I,J}^t$ and $\frac d {dt} \beta_{I,J}^t= \partial_{Z_t} \beta_{I,J}^t$ with $\alpha_{I,J}^0=\beta_{I,J}^0$, hence $\alpha_{I,J}^t=\beta_{I,J}^t$ by Step 1 and by the Cauchy-Kovalevskaya Theorem.

\vspace{8pt}

\emph{Step 4.} If $M$ is compact, one obtains a solution for small time in a neighborhood of any given point, and the solutions coincide on the intersections of such neighborhoods by uniqueness. Extracting a finite subcover from such neighborhoods, one can find analytic solutions on the whole $M$ for all $t\in [0, T']$ for some $T'>0$ small enough.
\end{proof}

\begin{cor}
     Let $M$ be a real analytic manifold. Let $t\mapsto Z_t$ be a path of (local) real analytic complex vector fields on $M$ that is piecewise real analytic with respect to $t\in [0,T]$.

Then, for all $C^1$ tensors $\alpha$, the problem \eqref{eq: transport problem} has at most one solution.
\end{cor}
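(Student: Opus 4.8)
The plan is to deduce uniqueness from Holmgren's uniqueness theorem applied to the coordinate expression of \eqref{eq: transport problem}, together with an open--closed continuation argument. Unlike the existence half established in Theorem \ref{thm: complex transport of tensor}, for uniqueness no analyticity of the \emph{solution} is needed --- only of the coefficients, which here are built from $Z_t$ and its first derivatives. Since \eqref{eq: transport problem} is linear in the tensor, it suffices to show that a $C^1$ path of tensors $\gamma_t$ on $M$ (say for $t\in[0,T']$, $T'\le T$) with $\dot\gamma_t=\mathscr L_{Z_t}\gamma_t$ and $\gamma_0=0$ must vanish identically on its domain. Fix $p\in M$ and a real analytic chart $(x_1,\dots,x_n)$ around $p$ on which $Z_t=\sum_j Z_t^j\partial_{x_j}$ is defined; write $\gamma_t=\sum_{I,J}\gamma^I_J(t,\cdot)\,\partial_{x_I}\otimes dx_J$ in the coordinate (co)frame, so each component $\gamma^I_J$ is a $C^1$ function of $(t,x)$. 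Using the standard coordinate formula for the Lie derivative of a tensor (valid here by $\C$-bilinear extension, since $\mathscr L_{Z_t}\partial_{x_j}=-\sum_k(\partial_{x_j}Z_t^k)\partial_{x_k}$ and $\mathscr L_{Z_t}dx_j=\sum_k(\partial_{x_k}Z_t^j)\,dx_k$), the equation $\dot\gamma_t=\mathscr L_{Z_t}\gamma_t$ becomes, after stacking all components into a vector $u=(\gamma^I_J)$,
\begin{equation*}
\partial_t u=\sum_k A_k(t,x)\,\partial_{x_k}u+B(t,x)\,u,
\end{equation*}
where $A_k$ has entries among the $Z_t^j$ and $B$ has entries among the $\partial_{x_m}Z_t^j$.

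On any subinterval of $[0,T']$ on which $t\mapsto Z_t$ is real analytic, the coefficients $A_k,B$ are real analytic in $(t,x)$; separating $u$ into real and imaginary parts turns the system into a homogeneous linear first order \emph{real} system of twice the size, still with real analytic coefficients and still of Cauchy--Kovalevskaya (evolution) type in $t$ --- in particular the hypersurfaces $\{t=\mathrm{const}\}$ are non-characteristic (the principal symbol at their conormal is $\pm\mathrm{Id}$). Since $\gamma_0=0$, Holmgren's uniqueness theorem gives $u\equiv 0$, i.e.\ $\gamma_t\equiv 0$, in a full space--time neighborhood of $(p,0)$. Covering a neighborhood of $p$ --- and, when $M$ is compact, finitely many charts covering $M$ --- we conclude that $\gamma_t$ vanishes on a neighborhood of $p$ for all sufficiently small $t$.

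Finally I would globalize by continuation: let $t^\star=\sup\{t\in[0,T']:\gamma_s\equiv 0\text{ for all }s\le t\}$. By continuity of $(t,x)\mapsto\gamma_t(x)$ one has $\gamma_{t^\star}\equiv 0$, and if $t^\star<T'$ then the local argument above --- run with initial time $t^\star$ in place of $0$, and with $[0,T']$ broken at the finitely many points where $t\mapsto Z_t$ fails to be analytic --- forces $\gamma_t\equiv 0$ on a right-neighborhood of $t^\star$, contradicting the definition of $t^\star$. Hence $t^\star=T'$ and $\gamma_t\equiv 0$, proving the corollary. The only step carrying real content is the invocation of Holmgren's theorem; everything else is bookkeeping. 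The point worth emphasizing is that Holmgren applies to merely $C^1$ (indeed distributional) solutions, which is precisely why the analyticity hypothesis needed for \emph{existence} in Theorem \ref{thm: complex transport of tensor} can be dropped for \emph{uniqueness}; one could alternatively decompose $\gamma_t$ in the transported frame of that theorem and reduce to the scalar equation $\dot f_t=\partial_{Z_t}f_t$, but the direct coordinate system above already packages all components at once.
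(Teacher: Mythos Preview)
Your proof is correct and uses essentially the same mechanism as the paper: reduce by linearity to zero initial data and invoke Holmgren's uniqueness theorem, which requires only analyticity of the coefficients, not of the solution. The paper's proof is a one-liner that cites Theorem~\ref{thm: complex transport of tensor} (whose uniqueness step already rests on Holmgren), whereas you unpack the Holmgren argument directly in the standard coordinate frame as a first-order linear system; you even mention at the end the alternative route via the transported frame, which is exactly how the paper's Theorem handles Step~3. So the approaches coincide, yours being more self-contained.
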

\begin{proof}
    It follows by the linearity of the equation and by the fact that by Theorem \ref{thm: complex transport of tensor}, $\alpha_t\equiv 0$ is the unique solution for $\alpha=0$.
\end{proof}

\section{Complex elliptic operators}
Here we derive the local expression for the Laplacian of a complex metric, investigate the analytic properties of our complex elliptic operators, and prove Theorems D and E.

We keep the following notation. For a constant $C$ depending on quantities $a_1,\dots, a_n$, we write $C(a_1,\dots, a_n)$. In this notation, the value of such a constant may change in the course of a proof. Analogous to Section 2, for $U$ a domain inside $\C,$ or the surface $S,$ we denote $L^p$ spaces and Sobolev spaces by $L^p(U)$ and $W^{k,p}(U)$ respectively, and local spaces by $L_{loc}^p(U)$ and $W_{loc}^{k,p}(U).$ We use $||\cdot||_{L^p(U)}$, $||\cdot||_{W^{k,p}(U)}$ to denote the norm. When $U=\C$, we'll just write $||\cdot||_{p}$ or $||\cdot||_{k,p}$. We view Beltrami forms defined on domains in $\C$ as functions with respect to the standard coordinate $z$ and use the terms function and form interchangeably.
We'll often work locally in disks inside $\mathbb{C}$ so for this we set the notation, for $r>0,$ $\mathbb{D}_r=\{z\in\mathbb{C}: |z|<r\},$ and $\mathbb{D}_1=\mathbb{D}.$

\subsection{The Laplacian of a complex metric} Throughout this subsection, let $z$ and $w$ be coordinates on the unit disk $\mathbb{D}$. We view $z$ as the standard coordinate and $w$ as a $C^2$ reparametrization of $\mathbb{D},$ $w=w(z).$ Let $\mu$ denote the Beltrami form, defined by $\mu =\frac{\partial_{\overline z} w}{\partial_z w},$ which satisfies $|\mu|<1.$ Let $g$ be a $C^2$ positive complex metric on $\mathbb{D}$ that is locally of the form $g(z)=\lambda(z)dzd\overline{w}$, where $\lambda\in C^2(\mathbb{D},\C^*).$ We compute the Laplacian as a function of $z$. First we record a lemma. Let $\nabla^g$ be the Levi-Civita connection of $g$.
\begin{lem}\label{LCbelt}
  $\nabla_{\partial_{w}}^g\partial_{\overline z}=  \partial_w z\cdot \partial_{\overline z}\overline \mu\cdot \partial_{\overline z}$.
\end{lem}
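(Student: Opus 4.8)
The plan is to leverage the two structural facts just recorded. By Lemma~\ref{parallellem}, $\nabla^g_{\partial_w}\partial_{\overline z}$ is proportional to $\partial_{\overline z}$, and by Lemma~\ref{projectionlem} the proportionality factor is read off from $\Pi_1([\partial_w,\partial_{\overline z}])$, where $\Pi_1$ denotes the projection of $\C T\mathbb D$ onto $\mathrm{Span}(\partial_{\overline z})$ with kernel $\mathrm{Span}(\partial_w)$ (the two isotropic lines of $g$ span $\C T\mathbb D$ since $g$ is positive). Thus the whole computation reduces to evaluating the Lie bracket $[\partial_w,\partial_{\overline z}]$ in the standard frame $\{\partial_z,\partial_{\overline z}\}$ and extracting its $\partial_{\overline z}$-component relative to the (non-orthogonal) splitting $\C T\mathbb D=\mathrm{Span}(\partial_w)\oplus\mathrm{Span}(\partial_{\overline z})$.

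To carry this out, I would first write $\partial_w=p\,\partial_z+q\,\partial_{\overline z}$ with $p=\partial_w z$ and $q=\partial_w\overline z$, the derivatives of the inverse change of coordinates. The one nontrivial input is the identity $q=-\overline\mu\,p$: differentiating the composition $\overline w(z(w,\overline w),\overline z(w,\overline w))=\overline w$ in $w$ gives $(\partial_z\overline w)\,p+(\partial_{\overline z}\overline w)\,q=0$, and since $\partial_z\overline w=\overline{\partial_{\overline z}w}=\overline\mu\,\overline{\partial_z w}=\overline\mu\,(\partial_{\overline z}\overline w)$ — using $|\mu|<1$, so $\partial_z w\neq0$ — one divides by $\partial_{\overline z}\overline w$ to obtain $q=-\overline\mu\,p$.

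Then the bracket is computed directly: $[\partial_w,\partial_{\overline z}]=[p\,\partial_z+q\,\partial_{\overline z},\,\partial_{\overline z}]=-(\partial_{\overline z}p)\,\partial_z-(\partial_{\overline z}q)\,\partial_{\overline z}$, using $[\partial_z,\partial_{\overline z}]=0$. Writing this as $\alpha\,\partial_w+\beta\,\partial_{\overline z}=\alpha p\,\partial_z+(\alpha q+\beta)\,\partial_{\overline z}$ forces $\alpha=-(\partial_{\overline z}p)/p$ and $\beta=-\partial_{\overline z}q+\tfrac qp\,\partial_{\overline z}p$, so that $\nabla^g_{\partial_w}\partial_{\overline z}=\Pi_1([\partial_w,\partial_{\overline z}])=\beta\,\partial_{\overline z}$. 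Substituting $q=-\overline\mu\,p$ makes $\beta$ collapse:
\[
\beta=\partial_{\overline z}(\overline\mu\,p)-\overline\mu\,\partial_{\overline z}p=(\partial_{\overline z}\overline\mu)\,p,
\]
and therefore $\nabla^g_{\partial_w}\partial_{\overline z}=(\partial_w z)\,(\partial_{\overline z}\overline\mu)\,\partial_{\overline z}$, which is the assertion. There is no serious obstacle here; the one place to be careful is the chain-rule bookkeeping for the coordinate change $z\leftrightarrow w$, and the stated regularity ($g\in C^2$, $w\in C^2$) is exactly what guarantees that each derivative appearing above is well defined.
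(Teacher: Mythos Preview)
Your proof is correct and follows essentially the same approach as the paper: both reduce the statement to computing $\Pi_1([\partial_w,\partial_{\overline z}])$ via Lemma~\ref{projectionlem}, expand $\partial_w$ in the $\{\partial_z,\partial_{\overline z}\}$ frame using the key identity $\partial_w\overline z=-\overline\mu\,\partial_w z$, and then extract the $\partial_{\overline z}$-component of the bracket. The only cosmetic difference is that you derive $q=-\overline\mu p$ by differentiating $\overline w\circ(z,\overline z)$ in $w$, whereas the paper differentiates $w\circ(z,\overline z)$ in $\overline w$ and conjugates; the computations are otherwise identical.
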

\begin{proof}
Recall from Lemma \ref{projectionlem} that if $\Pi_1$ is the projections of $\C T\mathbb{D}$ onto the the line spanned by $\partial_{\overline{z}}$, with kernel spanned by $\partial_w$, then $\nabla_{\partial_w}^g \partial_{\overline{z}}=\Pi_1([\partial_w,\partial_{\overline{z}}])$. As well, note that $\frac{\partial_w \overline{z}}{\partial_w z}=-\overline{\mu}$, as can be seen from the expression $$0=\partial_{\overline{w}}w=\partial_{\overline{w}}\overline{z}\partial_{\overline{z}}w+\partial_{\overline{w}}z\partial_z w.$$ Thus, writing out
\begin{equation}
    \label{eq: change basis}
    \partial_w= \partial_w z \cdot\partial_z + \partial_w \overline z \cdot \partial_{\overline z}= \partial_w z ( \partial_z - \overline \mu \partial_{\overline z}), 
\end{equation}
we get that 
\begin{align*}
    [\partial_w,\partial_{\overline z}] =&\partial_{\overline z} (\partial_w z\overline \mu ) \cdot \partial_{\overline z}-\partial_{\overline z} \partial_w z \cdot \partial_z  \\
    &= \partial_{\overline z} (\partial_w z\overline \mu ) \cdot \partial_{\overline z}-\frac{ \partial_{\overline z} \partial_w z }{\partial_w z}\partial_w - \partial_{\overline z} \partial_w z  \overline \mu \partial_{\overline z} \\
    &= \partial_w z \partial_{\overline z}\overline \mu \partial_{\overline z}-\frac{ \partial_{\overline z} \partial_w z }{\partial_w z}\partial_w,
\end{align*}
so $\nabla_{\partial_{w}}^g\partial_{\overline z}=  \partial_w z \partial_{\overline z}\overline \mu \partial_{\overline z}$.
\end{proof}

\begin{prop}\label{Deltaformula}
    In the holomorphic coordinate $z$, the Laplacian $\Delta_g$ is expressed as $$\Delta_g = \frac{4}{\lambda\overline{\partial_z w}}\partial_{\overline{z}}(\partial_z-\overline{\mu}\partial_{\overline{z}}).$$
\end{prop}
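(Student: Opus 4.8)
The plan is to compute $\Delta_g$ directly from its intrinsic definition $\Delta_g f = \operatorname{tr}_g(\operatorname{Hess}_g f)$, working in the (non-coordinate) local frame $\{\partial_w, \partial_{\overline z}\}$ of $\C T\mathbb D$ that diagonalizes the structure of $g$. Since $g = \lambda\, dz\, d\overline w$, we have $g(\partial_{\overline z}, \partial_{\overline z}) = g(\partial_w,\partial_w) = 0$, and the only nonzero pairing is $g(\partial_{\overline z}, \partial_w)$, which I will express in the $z$-coordinate: from $g = \lambda\, dz\, d\overline w$ and $\partial_w = \partial_w z\,(\partial_z - \overline\mu\,\partial_{\overline z})$ (Equation \eqref{eq: change basis}), one gets $g(\partial_{\overline z},\partial_w) = \tfrac12 \lambda\, \overline{\partial_z w}$ (using $d\overline w(\partial_{\overline z}) = \overline{\partial_z w}$, since $\partial_{\overline z}\overline w = \overline{\partial_z w}$). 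So the trace with respect to $g$ of a symmetric bilinear form $B$ is $\operatorname{tr}_g B = \dfrac{2\,B(\partial_{\overline z},\partial_w)}{\lambda\,\overline{\partial_z w}}\cdot 2$, i.e. $= \dfrac{4}{\lambda\,\overline{\partial_z w}} B(\partial_{\overline z},\partial_w)$ — I will need to be careful with the factor of $2$ coming from the off-diagonal Gram matrix, and sort out the convention for $\operatorname{tr}_g$ of a rank-$2$ form with null diagonal.

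Next I would compute $\operatorname{Hess}_g(f)(\partial_{\overline z},\partial_w) = g(\nabla^g_{\partial_{\overline z}}\nabla^g f, \partial_w)$. Using metric compatibility this equals $\partial_{\overline z}(df(\partial_w)) - df(\nabla^g_{\partial_{\overline z}}\partial_w)$. Here the key structural input is Lemma \ref{projectionlem}: $\nabla^g_{\partial_{\overline z}}\partial_w = \Pi_2([\partial_{\overline z},\partial_w])$ lies in $\operatorname{Span}(\partial_w)$, the kernel of... wait, I should instead symmetrize and use $\operatorname{Hess}_g(f)(\partial_{\overline z},\partial_w) = \partial_w(\partial_{\overline z} f) - df(\nabla^g_{\partial_w}\partial_{\overline z})$, and now invoke Lemma \ref{LCbelt}, which gives $\nabla^g_{\partial_w}\partial_{\overline z} = \partial_w z\cdot\partial_{\overline z}\overline\mu\cdot\partial_{\overline z}$. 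Therefore
\[
\operatorname{Hess}_g(f)(\partial_{\overline z},\partial_w) = \partial_w(\partial_{\overline z}f) - \partial_w z\cdot \partial_{\overline z}\overline\mu\cdot \partial_{\overline z}f.
\]
Expanding $\partial_w = \partial_w z\,(\partial_z - \overline\mu\,\partial_{\overline z})$ and factoring out $\partial_w z$, the right side becomes $\partial_w z\big[(\partial_z - \overline\mu\partial_{\overline z})(\partial_{\overline z}f) - \partial_{\overline z}\overline\mu\cdot\partial_{\overline z}f\big] = \partial_w z\,\partial_{\overline z}\big((\partial_z - \overline\mu\partial_{\overline z})f\big)$, where the last equality is just the product rule pulling $\partial_{\overline z}$ out past the $\overline\mu$ term. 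Finally, dividing by $g(\partial_{\overline z},\partial_w)$-related factors: $\Delta_g f = \dfrac{4}{\lambda\,\overline{\partial_z w}}\cdot \partial_w z \cdot \partial_{\overline z}\big((\partial_z - \overline\mu\partial_{\overline z})f\big)$, and since $\overline{\partial_z w}\cdot$ hmm — I need $\partial_w z = 1/\partial_z w$, so this isn't quite matching; I'll have to recheck whether the claimed formula has $\lambda\,\overline{\partial_z w}$ in the denominator or $\lambda\,\partial_z\overline w$, and reconcile: indeed $\partial_z \overline w = \overline{\partial_{\overline z} w} = \overline{\mu\,\partial_z w}$, while the stated formula \eqref{laplacian} in the introduction uses $\lambda\,\partial_{\overline z}\overline w$. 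Getting all the conjugation bars and the $\partial_w z$ versus $\overline{\partial_z w}$ bookkeeping consistent — that is the main obstacle; the differential-geometric content is entirely contained in Lemmas \ref{projectionlem} and \ref{LCbelt}.

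So concretely my steps are: (1) record $g(\partial_{\overline z},\partial_w)$ and the null-diagonal trace formula $\operatorname{tr}_g B = \tfrac{4}{\lambda\overline{\partial_z w}}B(\partial_{\overline z},\partial_w)$; (2) symmetrize the Hessian and reduce to evaluating $\operatorname{Hess}_g(f)(\partial_{\overline z},\partial_w)$; (3) apply Lemma \ref{LCbelt} to handle $\nabla^g_{\partial_w}\partial_{\overline z}$; (4) expand $\partial_w$ via \eqref{eq: change basis} and collapse the resulting expression into $\partial_w z\,\partial_{\overline z}(\partial_z - \overline\mu\partial_{\overline z})f$ by the product rule; (5) assemble and simplify $\tfrac{4}{\lambda\overline{\partial_z w}}\cdot\partial_w z$ using $\partial_w z = 1/\partial_z w$. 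The one subtlety worth flagging is that $f$ need only be $C^2$ while $\mu$ and $\lambda$ are $C^1$ and $C^2$ respectively, so all the derivatives appearing — $\partial_{\overline z}\overline\mu$, $\partial_{\overline z}(\partial_z f - \overline\mu\partial_{\overline z}f)$ — exist as continuous (or distributional) objects, and the identity holds pointwise. I do not expect any genuine difficulty beyond the algebraic bookkeeping of conjugates and Jacobian factors.
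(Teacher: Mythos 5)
Your overall strategy matches the paper's proof step for step: pass to the $g$-isotropic frame $\{\partial_{\overline z},\partial_w\}$, write $\Delta_g f = \tfrac{2}{g(\partial_w,\partial_{\overline z})}\mathrm{Hess}_g(f)(\partial_w,\partial_{\overline z})$, use Lemma \ref{LCbelt} to handle $\nabla^g_{\partial_w}\partial_{\overline z}$, and expand $\partial_w$ by Equation \eqref{eq: change basis}. So there is no structural gap; the discrepancy you flag at the end stems from a single bookkeeping error and is easily fixed.

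The error is in your step (1): $g(\partial_{\overline z},\partial_w)$ is not $\tfrac{1}{2}\lambda\overline{\partial_z w}$. From $g=\lambda\,dz\,d\overline w$ together with $dz(\partial_w)=\partial_w z$, $d\overline w(\partial_{\overline z})=\partial_{\overline z}\overline w=\overline{\partial_z w}$, and $dz(\partial_{\overline z})=d\overline w(\partial_w)=0$, one gets
\[
g(\partial_{\overline z},\partial_w)=\tfrac{1}{2}\lambda\,\partial_w z\,\overline{\partial_z w},
\]
so you dropped the factor $\partial_w z$. Reinstating it, the trace formula becomes $\mathrm{tr}_g B = \tfrac{4}{\lambda\,\partial_w z\,\overline{\partial_z w}}B(\partial_{\overline z},\partial_w)$, and the $\partial_w z$ you correctly isolated when you wrote $\mathrm{Hess}_g(f)(\partial_{\overline z},\partial_w)=\partial_w z\,\partial_{\overline z}\bigl((\partial_z-\overline\mu\partial_{\overline z})f\bigr)$ cancels on the nose, yielding the stated formula with no further manipulation. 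In particular, the identity $\partial_w z=1/\partial_z w$ that you reach for in step (5) is both unnecessary and false here: since $w$ is only a quasiconformal (not holomorphic) reparametrization of the disk, $\partial_w w=1$ together with $\partial_w\overline z=-\overline\mu\,\partial_w z$ and $\partial_{\overline z}w=\mu\,\partial_z w$ gives $\partial_w z=\bigl(\partial_z w\,(1-|\mu|^2)\bigr)^{-1}$, which reduces to $1/\partial_z w$ only when $\mu=0$. Once $g(\partial_{\overline z},\partial_w)$ is computed correctly, you never have to evaluate $\partial_w z$ at all.
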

\begin{proof}
In the $g$-isotropic basis $\{\partial_{\overline{z}},\partial_w\}$ for the complexified tangent bundle $\C T\mathbb{D},$ for all $C^2$ functions $f\colon \mathbb D \to \C$, 
$$\Delta_g f= \frac{2}{g( \partial_{ w}, \partial_{\overline z})} \mathrm{Hess}_g(f)(\partial_{w},\partial_{\overline z})= \frac{2}{g(\partial_{w}, \partial_{\overline z})}\left(\partial_w \partial_{\overline{z}}f - (\nabla_{\partial_w}^g\partial_{\overline z})f\right)\ .$$
Hence, by Lemma \ref{LCbelt} and Equation \eqref{eq: change basis},
\begin{align*}
    \Delta_g=& \frac{2}{g( \partial_{ w}, \partial_{\overline z})} \left( 
\partial_w\partial_{\overline z} - (\partial_w z) (\partial_{\overline z} \overline \mu) \partial_{\overline z}\right) 
 \\
 &=
 \frac{2 \partial_w z}{g( \partial_{ w}, \partial_{\overline z})} \left(\partial_z \partial_{\overline z}  - \overline \mu \partial_{\overline z}\partial_{\overline z}  - \partial_{\overline z} \overline \mu \partial_{\overline z} 
 \right)\\
 &= \frac{ 4}{\lambda \overline{\partial_z w}} (\partial_{\overline z } (\partial_z- \overline \mu \partial_{\overline z})). 
\end{align*}
\end{proof}

\subsection{Interior elliptic estimates}
Here we deduce interior elliptic estimates for complex Laplacians via estimates for the Beltrami operator. Note that we need elliptic estimates in particular over, say, Schauder estimates, for the proof of Lemma \ref{lem: analyticity}. We also need to apply the holomorphicity results of \cite{ESbelt} in Section 6, which are framed using Sobolev spaces, so it's more convenient to establish elliptic estimates and Fredholm properties in Sobolev spaces. 

We make reference to the Beurling transform, which is a singular intergal operator $T$ defined a priori on $C_0^\infty(\C)$, and which, for all $1<p<\infty$, extends to a continuous operator $T:L^p(\C)\to L^p(\C)$, which is an isometry for $p=2$. The operator norm, which we denote by $C_p,$ varies continuously with $p$. For a reference, see \cite[Chapter 4]{IT}. The operator $T$ is appearing because, for a Beltrami form $\mu,$ the Beltrami operator $\Big (\partial_{\overline{z}}-\mu\partial_z \Big )$ has elliptic estimates on $W^{k,p}$ spaces (i.e., is Fredholm) only for $p$ such that $||\mu||_\infty C_p<1$ (see \cite[Theorem 2.3]{ESbelt}).

%We make reference to the \textbf{Beurling transform} $T,$ defined on $C_0^\infty(\mathbb{C})$ by the principal value integral $$T(u)(z) =\lim_{\varepsilon\to 0} -\frac{1}{\pi}\int_{|\zeta-z|>\varepsilon}\frac{u(\zeta)}{(\zeta-z)^2}dxdy.$$
%Via the Calderon-Zygmund theory, the continuity properties of $T$ are well understood. 
%\begin{prop}\label{beurlingprop}
 %   For all $1<p<\infty$, $T$ extends to a continuous linear operator from $L^p(\C)\to L^p(\C)$, which is an isometry when $p=2$. The operator norm of $T: L^p(\C)\to L^p(\C)$ varies continuously with $p.$
%\end{prop}
%For a source, see \cite[Chapter 4]{IT}.

%Henceforth, we set $C_p$ to be the operator norm of $T:L^p(\C)\to L^p(\C)$.
As in the previous subsection, let $g$ be a positive complex metric on $\mathbb{D}$ of the form $g=\lambda dz d\overline{w},$ and let $\mu$ be the Beltrami form of $w$. We now choose $k>2$ and $l>k+2$. We assume that $\mu$ is $W^{l,\infty}$. We use $W^{k+2,p}$ for the Sobolev spaces on which our linear maps act. For a linear differential operator $L,$ we define $||L||_{W^{k,p}(U)}$ to be the sum of the $W^{k,p}(U)$ norms of the coefficients, and we say that $L$ is $W^{k,p}$ (resp. $W_{loc}^{k,p}$) if all of the coefficients are $W^{k,p}$ (resp. $W_{loc}^{k,p}$). The main estimate is as follows.

\begin{prop}\label{interiorestimate}
Let $L$ be a $W_{loc}^{k+1,\infty}$ linear second order operator with the same principle symbol as $\Delta_g$. Let $u\in L_{\mathrm{loc}}^{p}(\C)$ and $v\in W_{\mathrm{loc}}^{k,p}(\C)$, with $u$ satisfying $Lu=v$ in the weak sense. Then $u\in W_{\mathrm{loc}}^{k+2,p}(\C)$ and for all $r<R<\infty$ and $p$ such that $||\mu||_\infty C_p<1$, there exists $C=C(||L||_{W^{k+1,\infty}(\mathbb{D}_R)},||\mu||_{W^{k+1,\infty}(\mathbb{D}_R)},r,R,k,p)$ such that 
$$
    ||u||_{W^{k+2,p}(\mathbb{D}_r)}\leq C(||u||_{W^{k,p}(\mathbb{D}_R)}+||v||_{W^{k,p}(\mathbb{D}_R)}).$$
    \end{prop}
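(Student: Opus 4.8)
The plan is to reduce the estimate for $L$ — a general second order operator with the same principal symbol as $\Delta_g$ — to the classical Ahlfors–Bers interior estimates for the Beltrami operator $\dbar - \overline{\mu}\partial$, and then bootstrap on the order of regularity. By Proposition \ref{Deltaformula}, up to multiplication by the nonvanishing factor $\tfrac{\lambda \overline{\partial_z w}}{4}$ (which is $W^{k+1,\infty}_{\mathrm{loc}}$ and bounded away from zero), the principal part of $L$ is $\partial_{\overline z}(\partial_z - \overline\mu\,\partial_{\overline z})$. So write $Lu = a\,\partial_{\overline z}(\partial_z - \overline\mu\,\partial_{\overline z})u + (\text{first order terms}) = v$, with $a$ invertible; absorbing $a$ and the lower-order terms into the right-hand side, we are led to study $\partial_{\overline z}\big((\partial_z - \overline\mu\,\partial_{\overline z})u\big) = \tilde v$ where $\tilde v \in W^{k,p}_{\mathrm{loc}}$ depends on $v$ and on lower derivatives of $u$.

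**Key steps.** First I would recall the one-step gain for the $\overline\partial$-operator: if $\partial_{\overline z}h = \phi$ locally with $\phi \in W^{k,p}_{\mathrm{loc}}$, then $h \in W^{k+1,p}_{\mathrm{loc}}$ with the corresponding interior estimate (Cauchy transform / $\partial_{\overline z}$-regularity). Applying this with $h = (\partial_z - \overline\mu\,\partial_{\overline z})u$ and $\phi = \tilde v$, after cutting off by a bump function $\chi$ supported in $\mathbb{D}_R$ and equal to $1$ on $\mathbb{D}_{r'}$ for an intermediate radius $r < r' < R$, we obtain $(\partial_z - \overline\mu\,\partial_{\overline z})u \in W^{k+1,p}_{\mathrm{loc}}$. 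Next, the crucial step: invert the Beltrami operator $\partial_z - \overline\mu\,\partial_{\overline z}$. Writing $u = f$ where $\partial_z f - \overline\mu\,\partial_{\overline z}f = \psi \in W^{k+1,p}_{\mathrm{loc}}$, I would differentiate: applying $\partial_{\overline z}$ and using the Beurling transform identity $\partial_z = T\partial_{\overline z}$ on compactly supported data, one gets $(I - \overline\mu\,T)\partial_{\overline z}(\chi f) = (\text{terms involving } \psi, \partial_{\overline z}\overline\mu, \text{ and lower derivatives of } f)$. Here the hypothesis $\|\mu\|_\infty C_p < 1$ guarantees that $I - \overline\mu\,T$ is invertible on $L^p$ with inverse bounded by $(1 - \|\mu\|_\infty C_p)^{-1}$ (geometric series), which is exactly where the restriction on $p$ enters, and is also why the constant depends continuously on the data through $C_p$ (Proposition \ref{beurlingprop}). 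This yields $\partial_{\overline z}(\chi f) \in L^p$, hence $\partial_{\overline z}u \in L^p_{\mathrm{loc}}$, and then $\partial_z u = \overline\mu\,\partial_{\overline z}u + \psi \in L^p_{\mathrm{loc}}$, so $u \in W^{1,p}_{\mathrm{loc}}$. To gain the full $W^{k+2,p}$ regularity one iterates: differentiating the equation $(I-\overline\mu T)\partial_{\overline z}(\chi f) = \cdots$ up to $k+1$ times, each new derivative of $\overline\mu$ (which is $W^{k+1,\infty}$, so this is legitimate) produces terms already controlled in $W^{j,p}$ for $j \le k+1$ by the previous stage, so that $\partial_{\overline z}u \in W^{k+1,p}_{\mathrm{loc}}$, hence $u \in W^{k+2,p}_{\mathrm{loc}}$. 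Each stage contributes a multiplicative factor depending only on $\|L\|_{W^{k+1,\infty}(\mathbb{D}_R)}$, $\|\mu\|_{W^{k+1,\infty}(\mathbb{D}_R)}$, the radii, $k$, $p$, and $C_p$; tracking these through the finitely many stages gives the stated constant $C$. Finally one chains the cutoff radii $r = r_0 < r_1 < \dots < r_{k+2} = R$ so that at each step the interior estimate is applied between consecutive radii, and telescopes.

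**Main obstacle.** The technical heart — and the step I expect to require the most care — is the bootstrapping of the Beltrami inversion through higher Sobolev spaces while keeping the constants explicit and uniform: specifically, controlling the commutator terms $[\partial^\alpha, \overline\mu T]$ that arise when differentiating $(I - \overline\mu T)\partial_{\overline z}(\chi f) = \cdots$, and checking that at no stage do we need more than $W^{k+1,\infty}$ regularity of $\mu$ (the coefficients of $\Delta_g$ involve $\overline\mu$ and $\partial_{\overline z}\overline\mu$, so the top-order commutator costs one derivative of $\mu$ beyond the $k+1$ used to put the lower-order pieces in $W^{k+1,p}$ — I would need to verify the counting closes, possibly by being slightly more generous and asking $\mu \in W^{l,\infty}$ with $l > k+2$ as already assumed). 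A secondary subtlety is that $T$ is a global operator on $\C$ whereas our estimates are interior, so every application is preceded by a cutoff, and one must carefully absorb the resulting commutator $[T,\chi]$-type terms, which are smoothing of order one and hence harmless, into the lower-order right-hand side at each stage. Modulo these bookkeeping points, the argument is a direct adaptation of the Ahlfors–Bers $L^p$-theory for the Beltrami equation.
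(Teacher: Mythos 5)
Your proposal follows essentially the same route as the paper: factor $\Delta_g$ (up to the nonvanishing conformal factor $\lambda\overline{\partial_z w}/4$) as $\partial_{\overline z}\circ(\partial_z - \overline\mu\,\partial_{\overline z})$, apply $\overline\partial$-regularity to the outer factor, and then invert the conjugate Beltrami operator using the Beurling transform and the Neumann series $(I-\overline\mu T)^{-1}$, which converges precisely because $\|\mu\|_\infty C_p<1$. The one organizational difference is that the paper packages the Beltrami inversion as a standalone statement (Proposition~\ref{Beltramiestimate}, established in Appendix A via Lemma~\ref{C_0inftycase}), whereas you inline the $T$-operator bootstrap; the content is the same. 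Your worry about whether the derivative counting closes with only $W^{k+1,\infty}$ regularity of $\mu$ is not an issue: applying the Beltrami estimate to produce $W^{k+2,p}$ control costs exactly $\mu\in W^{k+1,\infty}$, which is what appears in the constant, and no strengthening to $W^{l,\infty}$ with $l>k+2$ is needed for this proposition. The only step you leave implicit is converting the resulting $\|u\|_{W^{k+1,p}(\mathbb{D}_R)}$ on the right-hand side into $\|u\|_{W^{k,p}(\mathbb{D}_R)}$; the paper does this with the Gilbarg--Trudinger interpolation inequality, while your nested-radii chaining would also work, but either way a brief remark is needed to reach the stated form of the estimate.
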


\begin{remark}
If $(c_1,\overline{c_2})$ are $W^{l,\infty}$ complex structures, then the associated Bers metric $h$ is $W^{l,2}$ (see \cite[Theorem B]{ESbelt}). For $l>k+2$, by Morrey's inequality, $h$ is $k+1$ times differentiable with H{\"o}lder continuous $(k+1)^{th}$-derivatives, hence $\Delta_h$ is $W^{k+1,\infty},$ and the hypothesis of Proposition \ref{interiorestimate} applies to $L=\Delta_h.$
\end{remark}
Since the Laplacian factors as a product of first order complex elliptic operators, Proposition \ref{interiorestimate} essentially follows from the interior elliptic estimates for the Beltrami operator. 
%\begin{prop}\label{Beltramiestimate}
 %   Let $\mu\in W_{loc}^{k,\infty}(\C)$, $u\in L_{\mathrm{loc}}^{p}(\C)$, and $v\in W_{\mathrm{loc}}^{k,p}(\C)$, with $u$ satisfying
  %  \begin{equation}
   %     \Big ({\partial}_{\overline{z}}- \mu{\partial}_{ z}\Big )u = v
    %\end{equation}
%in the weak sense. Then $u\in W_{\mathrm{loc}}^{k+1,p}$, and for all $r<R<\infty$ and $p$ such that $||\mu||_\infty C_p<1$, there exists $C=C(||\mu||_{W^{k,\infty}(\mathbb{D}_R)},r,R,k,p)$  such that 
%$$
 %   ||u||_{W^{k+1,p}(\mathbb{D}_r)}\leq C(||u||_{W^{k,p}(\mathbb{D}_R)}+||v||_{W^{k,p}(\mathbb{D}_R)}).$$
  %  \end{prop}
%Proposition \ref{Beltramiestimate} is certainly well-known to experts, although we couldn't locate a precise reference. We provide the proof in Appendix A (although at some point we refer to \cite{Ni} instead of repeating a standard argument). For reference, in Lemmas 5.21 and 5.22 of \cite{Ast}, it is explained how to obtain higher regularity for $u\in W^{1,p}$, assuming $\mu,v\in C_0^\infty(\C)$. As well, Schauder estimates for the Beltrami equation are proved in Chapter 15 of \cite{Ast}. We need elliptic estimates in particular over Schauder estimates for the proof of Theorem D and the proof of Proposition \ref{appendixholomorphicity}.

\begin{proof}[Proof of Proposition \ref{interiorestimate}]
It suffices to assume that $u$ is $C^\infty$ and has compact support contained in $\mathbb{D}_r$, for then the result follows from standard approximation arguments, using the density of $C_0^\infty(\C)$ in all of the Sobolev spaces for disks. One can follow the proof of Theorem 10.3.6 and Corollary 10.3.10 in \cite{Ni}, as long as one remembers to restrict the range of $p$ so that $||\mu||_\infty C_p<1$. 

Under this assumption, we first treat the case where $L=\Delta_g$. The hypothesis $\Delta_g u = v$ means that
$u':=\Big ({\partial_z} - \overline{\mu} {\partial_{\overline{z}}}\Big)u $ satisfies ${\partial}_{\overline{z}}u' = \lambda \overline{\partial_z w}v$. By elliptic regularity for ${\partial}_{\overline{z}}$, we obtain 
\begin{align*}
       ||u'||_{k+1,p}&\leq C(r,k)(||u'||_{k,p}+||\lambda \overline{\partial_z w}v||_{k,p}) \\
       &\leq C(||L||_{W^{k,\infty}(\mathbb{D}_r)},r,k)(||u'||_{k,p}+||v||_{k,p}).
\end{align*}
Note that $||u'||_{k,p}\leq C(||\mu||_{W^{k,\infty}(\mathbb{D}_r)})||u||_{k+1,p},$ so we can turn the inequality above into 
 \begin{equation}\label{secondestimate}
     ||u'||_{k+1,p}\leq C(||L||_{W^{k,\infty}(\mathbb{D}_r)},r,k)(||u||_{k+1,p}+||v||_{k,p}).
 \end{equation}
Now, the elliptic estimate for the Beltrami equation, Theorem 2.3 from \cite{ESbelt}, applied to the equation $\Big ({\partial}_{ z} - \overline{\mu} {\partial}_{ \overline{z}}\Big)u=u',$ returns
\begin{equation}\label{thirdestimate}
    ||u||_{k+1,p}\leq C(||\mu||_{W^{k+1,\infty}(\mathbb{D}_r)},r,k,p)(||u||_{k+1,p}+||u'||_{k+1,p}).
\end{equation}
Here we stress that we could only apply the above estimate because $||\mu||_\infty C_p<1.$ Note also that \cite[Theorem 2.3]{ESbelt} applies to the ordinary Beltrami operator, rather than the conjugate Beltrami operator $\Big ({\partial}_{ z} - \overline{\mu} {\partial}_{ \overline{z}}\Big)$, but the two operators are related by a complex conjugation and hence the same estimate holds.
Composing the estimates (\ref{secondestimate}) and (\ref{thirdestimate}), we arrive at 
\begin{equation}\label{fourthestimate}
    ||u||_{k+2,p}\leq C(||L||_{W^{k+1,\infty}(\mathbb{D}_r)},r,k,p)(||u||_{k+1,p}+||v||_{k,p}).
\end{equation}
We want to adjust the constants in order to replace the $||u||_{k+1,p}$ on the right with $||u||_{k,p}$. By the interpolation inequality \cite[Theorem 7.28]{GT}, there exists $C(r,k)>0$ such that for every $\varepsilon>0$, 
\begin{equation}\label{interpolationestimate}
    ||u||_{k+1,p}\leq C(r,k)(\varepsilon||u||_{k+2,p}+\varepsilon^{-(k-1)}||u||_{k,p}).
\end{equation}
Inserting (\ref{interpolationestimate}) into (\ref{fourthestimate}), for $\varepsilon>0$ small enough we can rearrange to obtain 
\begin{equation}\label{fifthestimate}
    ||u||_{k+2,p}\leq C(||L||_{W^{k+1,\infty}(\mathbb{D}_r)},r,k,p)(||u||_{k,p}+||v||_{k,p}),
\end{equation}
which is the desired result for $L=\Delta_g.$

Next, we establish the result for arbitrary $L$. Toward this, we just need to control $||\Delta_g u||_{k,p}$ in terms of $||L u||_{k,p}$ and $||u||_{k,p}$. The only needed observation is that our hypothesis guarantees that $(L-\Delta_g)$ is an operator of order at most $1$, and therefore
\begin{align*}
    ||\Delta_g u||_{k,p}&\leq ||L u||_{k,p} +||(L-\Delta_g)u||_{k,p}\\
    &\leq ||L u||_{k,p}+C(||L||_{W^{k,\infty}(\mathbb{D}_r)}, ||\mu||_{W^{k+1,\infty}(\mathbb{D}_r)})||u||_{k+1,p}.
\end{align*}
We then insert into the last inequality into the $v$ slot of (\ref{fifthestimate}) and use the interpolation inequality as above to turn the $||u||_{k+1,p}$ term into a $||u||_{k,p}$ term. This establishes the proposition in the $C_0^\infty$ case, and as we discussed above, the main proposition follows from arguments of \cite{Ni}.
\end{proof}

Having finished Proposition \ref{interiorestimate}, we deduce the expected consequences that should come along with elliptic estimates. From bootstrapping the estimate of Proposition \ref{interiorestimate}, we have the following.
\begin{prop}[Regularity]\label{nonlinearreg}
 Let $U\subset \C$ and $V\subset \C\times \C^2$ be open domains, let $F:U\times V \to \C$ be a $W^{k+1,\infty}$ function, and let $L$ be a $W_{loc}^{k+1,\infty}$ semi-linear second order complex elliptic operator of the form $$Lu(z) = \Delta_g u +F(z,u,Du).$$
If $u\in W_{\mathrm{loc}}^{k,p}(U)$ satisfies $Lu=0$,  then for $p$ such that $||\mu||_\infty C_p<1$, $u$ is in $W_{loc}^{k+2,p}(U).$
\end{prop}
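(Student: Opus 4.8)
The strategy is a standard bootstrap argument: I will feed the interior estimate of Proposition \ref{interiorestimate} into itself, trading two orders of Sobolev regularity at a time against the regularity cost of composing with the coefficient function $F$. First I observe that the hypothesis $Lu = 0$ rewrites as $\Delta_g u = -F(z,u,Du) =: v$ in the weak sense, with $u \in W^{k,p}_{\mathrm{loc}}(U)$ to start. Since $p$ is chosen so that $\|\mu\|_\infty C_p < 1$, Proposition \ref{interiorestimate} applies to $L = \Delta_g$ once I know the right-hand side $v$ lies in a suitable Sobolev space on shrinking disks. The main point is therefore to track how smooth $v = -F(z,u,Du)$ is, given the current regularity of $u$, and this is purely a question about how composition and multiplication interact with Sobolev spaces $W^{j,p}$ for $jp > n = 2$ (the relevant regime here, since we will always be in $W^{j,p}$ with $j \geq k \geq 2$ and $p > 2$, so these are Banach algebras contained in $C^0$, and composition $w \mapsto F(\cdot, w)$ with a $W^{k+1,\infty}$, hence $C^{k,\alpha}$, function preserves $W^{j,p}$ for $1 \le j \le k$).

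The bootstrap runs as follows. At stage zero, $u \in W^{k,p}_{\mathrm{loc}}(U)$, so $Du \in W^{k-1,p}_{\mathrm{loc}}(U)$, hence $(z,u,Du)$ takes values in $W^{k-1,p}_{\mathrm{loc}}$, and composing with $F \in W^{k+1,\infty}$ gives $v = -F(z,u,Du) \in W^{k-1,p}_{\mathrm{loc}}(U)$. Applying Proposition \ref{interiorestimate} with "$k$" there replaced by $k-1$ (legitimate since $L = \Delta_g$ and the coefficients of $\Delta_g$, being $W^{k+1,\infty}_{\mathrm{loc}}$, are certainly $W^{k,\infty}_{\mathrm{loc}} \supset W^{(k-1)+1,\infty}_{\mathrm{loc}}$), we upgrade $u$ from $W^{k,p}_{\mathrm{loc}}$ to $W^{k+1,p}_{\mathrm{loc}}$ on slightly smaller disks. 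Then $Du \in W^{k,p}_{\mathrm{loc}}$, so $v \in W^{k,p}_{\mathrm{loc}}$, and a second application of Proposition \ref{interiorestimate} (now with the index exactly $k$, matching the stated hypotheses there, since $\Delta_g$ has $W^{k+1,\infty}_{\mathrm{loc}}$ coefficients) yields $u \in W^{k+2,p}_{\mathrm{loc}}(U)$, which is the claim. Throughout, "loc" is handled by exhausting $U$ with nested disks $\mathbb{D}_r \subset \mathbb{D}_R$ and applying the local estimate on each pair; the finitely-many shrinkings in the two bootstrap steps cause no loss since any compact subset of $U$ can be covered by disks on which the argument runs.

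The only genuine obstacle — really a bookkeeping point rather than a deep one — is making sure the composition $F(z,u,Du)$ lands in the claimed Sobolev space at each stage. This requires: (i) $u$ and $Du$ are at least continuous, which holds because $W^{j,p}_{\mathrm{loc}} \hookrightarrow C^0_{\mathrm{loc}}$ for $jp > 2$ and we always have $j \geq k-1 \geq 1$ with $p > 2$ — one should double check the borderline case $j = k-1 = 1$, where $p > 2$ still gives the embedding by Morrey; (ii) the chain rule / Moser-type estimate that, for $F \in W^{m,\infty}_{\mathrm{loc}}$ and $w \in W^{j,p}_{\mathrm{loc}} \cap C^0$ with $1 \le j \le m$, one has $F(\cdot, w) \in W^{j,p}_{\mathrm{loc}}$ with norm controlled by $\|F\|_{W^{j,\infty}}$ and a polynomial in $\|w\|_{W^{j,p}}$; this is standard (see, e.g., the multiplication and composition lemmas underlying Proposition \ref{Balgebra}), but I would state it cleanly as a lemma and note that it is exactly why the hypothesis asks for $F \in W^{k+1,\infty}$ (one order above $k$) rather than merely $W^{k,\infty}$ — the extra order is consumed in the first bootstrap step where the index there is $k-1$ but we still want to reach $W^{k+1,p}$ and then $W^{k+2,p}$. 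Modulo this lemma, the proof is a clean two-step induction on the estimate already in hand.
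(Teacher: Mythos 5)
Your proof is correct and is essentially the same two-step bootstrap the paper gives: rewrite $\Delta_g u = -F(z,u,Du)\in W^{k-1,p}_{\mathrm{loc}}$, apply Proposition~\ref{interiorestimate} with index $k-1$ to reach $W^{k+1,p}_{\mathrm{loc}}$, then repeat with index $k$. The extra discussion of the composition lemma and of why $F\in W^{k+1,\infty}$ suffices is accurate bookkeeping (and note the paper's standing assumption $k>2$, so your worry about the borderline $j=k-1=1$ never arises).
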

    It of course follows that, if $L$ is $C^\infty,$ then $u$ is $C^\infty.$
\begin{proof}
Since $u\in W_{\mathrm{loc}}^{k,p}(U)$ the function $G(z)=F(z,u(z),Du(z))$ is $W_{\mathrm{loc}}^{k-1,p}(U)$. Therefore, $\Delta_g u \in W_{\mathrm{loc}}^{k-1,p}(U)$. Applying the interior elliptic estimate from Proposition \ref{interiorestimate} with $\Delta_g$, we obtain that $u$ is $W_{\mathrm{loc}}^{k+1,p}.$ This implies that $G$ and hence $\Delta_g u$ are $W_{\mathrm{loc}}^{k,p}.$ Elliptic estimates then give that $u$ is $W_{\mathrm{loc}}^{k+2,p}.$
\end{proof}
On closed surfaces, we have the expected Fredholm property. Given a positive complex metric $g$, locally of the form $g=\lambda dz d\overline{w}$, with $\partial_{\overline{z}} w=\mu \partial_z w$, we say that $g$ has  Beltrami form $\mu$.
\begin{prop}[Fredholmness]\label{fredholmness}
    Let $S$ be a closed surface, $g$ a positive complex metric on $S$ with induced $W^{l,\infty}$ complex structures $(c_1,\overline{c_2})$, and with Beltrami form $\mu \in W^{l,\infty}$. For $l>k+2,$ let $$L:W^{k+2,p}(S)\to W^{k,p}(S)$$ be a linear operator whose principle symbol agrees with that of $\Delta_h$. For $p$ such that $||\mu||_\infty C_p<1,$ $L$ is a Fredholm operator. If $g$ is deformable to a Bers metric, the Fredholm index is zero. 
\end{prop}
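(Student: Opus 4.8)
The plan is to reduce to the interior estimate of Proposition \ref{interiorestimate} by a standard patching argument. First I would observe that the hypotheses on $l$ and $\mu$ guarantee, as in the remark following Proposition \ref{interiorestimate}, that $\Delta_h$ (and hence any $L$ with the same principal symbol, up to a lower-order operator with $W^{k+1,\infty}$ coefficients) has coefficients in $W^{k+1,\infty}(S)$, so all the local estimates apply on a finite atlas of coordinate disks compatible with $c_1$. Covering $S$ by finitely many such disks $\mathbb{D}_R^{(j)}$ with subdisks $\mathbb{D}_r^{(j)}$ still covering $S$, and summing the estimates of Proposition \ref{interiorestimate} against a subordinate partition of unity, one obtains the global \emph{a priori} (``elliptic'') estimate
\begin{equation}\label{eq: global elliptic}
\|u\|_{W^{k+2,p}(S)}\le C\big(\|Lu\|_{W^{k,p}(S)}+\|u\|_{W^{k,p}(S)}\big)
\end{equation}
for all $u\in W^{k+2,p}(S)$, with $C$ depending only on $S$, $g$, $k$, $p$. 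Here one uses that the condition $\|\mu\|_\infty C_p<1$ is coordinate-independent in the sense required (the Beltrami form of $w$ transforms by a unit modulus factor under holomorphic changes of the $z$-coordinate, so its sup-norm is unchanged), so the hypothesis of Proposition \ref{interiorestimate} holds uniformly on the atlas. This is the step I expect to be the only mildly technical point: making sure that passing to a finite atlas does not degrade the constant and that the partition-of-unity commutators are absorbed by the lower-order term $\|u\|_{W^{k,p}(S)}$, which is routine since multiplication by a fixed smooth cutoff is bounded on Sobolev spaces and produces only lower-order corrections.

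From \eqref{eq: global elliptic} the Fredholm property follows by the classical functional-analytic package. First, the kernel of $L$ is finite-dimensional: any $u\in\ker L$ satisfies $\|u\|_{W^{k+2,p}(S)}\le C\|u\|_{W^{k,p}(S)}$, and since the inclusion $W^{k+2,p}(S)\hookrightarrow W^{k,p}(S)$ is compact (Rellich--Kondrachov on the closed manifold $S$), the closed unit ball of $\ker L$ in the $W^{k+2,p}$-topology is compact, forcing $\dim\ker L<\infty$. Second, the range of $L$ is closed: if $Lu_n\to v$ in $W^{k,p}(S)$, after subtracting elements of $\ker L$ one may assume $u_n\perp\ker L$; if $\|u_n\|_{W^{k+2,p}(S)}$ is bounded then by compactness a subsequence of $u_n$ converges in $W^{k,p}(S)$ and then, by \eqref{eq: global elliptic} applied to differences, in $W^{k+2,p}(S)$, whence $v\in\operatorname{ran}L$; if instead $\|u_n\|_{W^{k+2,p}(S)}\to\infty$, normalizing $\hat u_n=u_n/\|u_n\|_{W^{k+2,p}(S)}$ gives $L\hat u_n\to 0$ and, by the same compactness argument, a $W^{k+2,p}$-limit $\hat u$ with $L\hat u=0$, $\|\hat u\|_{W^{k+2,p}}=1$, $\hat u\perp\ker L$, a contradiction. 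Finally, the cokernel is finite-dimensional: since $L$ differs from the formally self-adjoint $\Delta_h$ (with respect to the complex volume form, extended bilinearly) by a first-order operator, its formal transpose $L^{t}$ has the same principal symbol as $\Delta_h$, so the same estimate \eqref{eq: global elliptic} holds for $L^{t}$ on the relevant dual Sobolev space, giving $\dim\ker L^{t}<\infty$; and $\operatorname{coker}L\cong\ker L^{t}$ by the closed-range theorem together with the duality pairing of $W^{k,p}$ with $W^{-k,p'}$. Hence $L$ is Fredholm.

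For the index statement, I would use homotopy invariance of the Fredholm index. If $g$ is deformable to a Bers metric, then by Proposition \ref{appendixholomorphicity} (and the connectedness of the space of Bers metrics, respectively of pairs of complex structures) there is a continuous path $g_s$, $s\in[0,1]$, of positive complex metrics from $g_0=g$ to a Bers metric $g_1$, and one can simultaneously interpolate the lower-order part of $L$ linearly to $0$, obtaining a norm-continuous family of Fredholm operators $L_s:W^{k+2,p}(S)\to W^{k,p}(S)$ with $L_0=L$ and $L_1=\Delta_{g_1}$; continuity in the operator norm holds because the coefficients vary continuously in $W^{k+1,\infty}$ along the path, so the index is constant, $\operatorname{ind}L=\operatorname{ind}\Delta_{g_1}$. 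Now $\Delta_{g_1}$ is formally self-adjoint with respect to the bilinear extension of the complex volume form, so $\ker\Delta_{g_1}^{t}$ is identified with $\ker\Delta_{g_1}$, giving $\operatorname{ind}\Delta_{g_1}=\dim\ker\Delta_{g_1}-\dim\operatorname{coker}\Delta_{g_1}=0$. (Here the only subtlety is that ``self-adjointness'' is with respect to a nondegenerate $\mathbb{C}$-bilinear pairing rather than a Hermitian one, but this is exactly what is needed to match $\operatorname{coker}$ with the kernel of the transpose, and it is the reason the index vanishes even though $\Delta_{g_1}$ need not be injective.) This yields $\operatorname{ind}L=0$, completing the proof.
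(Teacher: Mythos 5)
Your proposal matches the paper's proof, which is only a one-line reference to the "usual real proof" (Nicolaescu's Theorem 10.4.7) using the interior estimate of Proposition \ref{interiorestimate}, followed by a homotopy argument for the index. You spell out the standard patching/Rellich--Kondrachov/closed-range/transpose machinery that the paper elides, and your index computation (homotope to a Bers Laplacian, then invoke formal self-adjointness with respect to the $\mathbb C$-bilinear pairing) is a minor variant of the paper's "homotope to the Euclidean Laplacian"; both are the same standard argument.
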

\begin{proof}
    Once one has the estimate of Proposition \ref{interiorestimate}, one can carry out the usual ``real" proof of Fredholmness verbatim (for example, see \cite[Theorem 10.4.7]{Ni}). If $g$ is deformable to a Bers metric, then $L$ is homotopic through Fredholm operators to the Euclidean Laplacian, so its index is zero.
\end{proof}
\begin{remark}\label{moregeneral}
We believe it should be possible to prove analogous results for more general second order complex linear elliptic operators, for example those of the form $$a \partial_z^2+b\partial_z\partial_{\overline{z}}+c \partial_{\overline{z}}^2,$$ where $a,b,c$ are complex-valued functions satisfying $a
\zeta^2 + b\zeta\overline{\zeta}+c\overline{\zeta}^2\neq 0$ for all $\zeta\in\C^*,$ using more general factorizations (see \cite[\S 7.8]{Ast}), or higher order singular integral operators, as developed by Begehr (see \cite[\S 5]{Beg}). We worked with the Laplacian rather than a more general operator because doing so allows for more expeditious proofs that use fewer tools and also highlights special features of the Laplacian.

The same remark applies to Lemma \ref{lem: analyticity}.
\end{remark}

\subsection{Proof of Lemma \ref{lem: analyticity}}\label{sec: thmE}
We prove Lemma \ref{lem: analyticity}. Let $g$ be a real analytic positive complex metric on an open domain $U\subset \C$, locally of the form $g=\lambda dzd\overline{w}.$ Recalling the setup, let $V\subset \C\times \C^2$ be an open domain, let $F:U\times V \to \C$ be a real analytic function that's linear in the second two coordinates. Throughout this subsection, suppose we are given $u\in C^2(U,\C)$ with $(u,Du)(U)\subset V$ that satisfies the inhomogeneous second order complex elliptic linear equation $$Lu(z) := \Delta_g u +F(z,u,Du)=0.$$ The statement of Lemma \ref{lem: analyticity} is that $u$ is real analytic.

To prove the theorem, we will use Cauchy's method of majorants and borrow an idea from Kato's work on real analyticity for smooth solutions to certain real analytic (real) elliptic second order equations that are non-linear \cite{Kato}. The idea of Kato from \cite{Kato} has been used by Hashimoto \cite{Hash} and Blatt \cite{Blatt} to study analyticity for more general fully non-linear elliptic equations.
 \begin{remark}\label{rem: differences}
     We have to control a few extra terms compared to the real case. The reason is as follows. For any real semi-linear elliptic second order PDE, around any point one can find a small neighbourhood in which we have a change of coordinates that transforms the principle symbol to the Euclidean Laplacian. For complex elliptic PDE, such local transformations do not exist. For example, the reader can verify that no such transformation exists for $$\partial_z\partial_{\overline{z}}+c\partial_{\overline{z}}^2,$$ where $c$ is a constant of norm strictly less than $1$.
 \end{remark}

 \begin{remark}
 In \cite{Blatt}, Blatt proves the result analogous to Lemma \ref{lem: analyticity} for fully non-linear second-order elliptic operators. Lemma \ref{lem: analyticity} could surely be pushed through to non-linear settings.
 \end{remark}

We now begin the preparations toward the proof.  A multi-index is a tuple of positive integers $\alpha=(m_1,\dots, m_N),$ and we write $|\alpha|=\sum_{i=1}^N m_i$. Given (real or complex) coordinates $x_1,\dots, x_n$ and a multi-index $\alpha=(m_1,\dots, m_N),$ we set $\partial^\alpha = \partial_{x_1}^{m_1}\dots \partial_{x_N}^{m_N}$. By Proposition \ref{nonlinearreg}, we know that any solution $u$ to $Lu=0$ is $C^\infty$, so we can apply Cauchy's method of majorants to $u$.
\begin{thm}[Cauchy's method of majorants]\label{Cauchyestimates}
Given a domain $U\subset \R^n,$ a $C^\infty$ function $f:U\to \C$ is real analytic if and only if for any relatively compact sub-domain $V$ and all multi-indices $\alpha,$ there exist constants $A,C>0$ such that
$$||\partial^\alpha f||_{L^\infty(V)}\leq CA^{|\alpha|}|\alpha|!.$$  
\end{thm}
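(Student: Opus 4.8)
The plan is to prove both implications directly from the definition of real analyticity (locally the sum of a convergent power series), using Cauchy's estimates for holomorphic extensions in one direction and the multivariate Taylor formula with remainder in the other; the one genuinely quantitative input is the multinomial identity $\sum_{|\alpha|=N}N!/\alpha!=n^{N}$ (apply the multinomial theorem to $(\underbrace{1+\dots+1}_{n})^{N}$), together with the elementary comparison $\alpha!\le|\alpha|!$. For the ``only if'' direction I would use that a real analytic $f$ extends to a holomorphic function $\tilde f$ on an open neighborhood $W\subset\C^{n}$ of $U$ (viewing $U\subset\R^{n}\subset\C^{n}$). Given a relatively compact subdomain $V$, compactness of $\overline V$ furnishes $\delta>0$ and $M>0$ such that the closed polydiscs of polyradius $\delta$ about points of $\overline V$ all lie in $W$ and $|\tilde f|\le M$ on their union. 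Cauchy's estimates on these polydiscs then give $|\partial^{\alpha}f(x)|\le M\,\alpha!\,\delta^{-|\alpha|}$ for all $x\in\overline V$ and all multi-indices $\alpha$; since $\alpha!\le|\alpha|!$ this is the asserted bound with $C=M$ and $A=1/\delta$.

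For the ``if'' direction, assume the bounds hold on every relatively compact subdomain. Fix $p\in U$, choose $r_{0}>0$ with the closed Euclidean ball $\overline B=\overline{B(p,r_{0})}\subset U$, and apply the hypothesis to a relatively compact subdomain containing $\overline B$ to get $C,A>0$ with $\sup_{\overline B}|\partial^{\alpha}f|\le C A^{|\alpha|}|\alpha|!$ for all $\alpha$. For $x\in B(p,r_{0})$, writing $r=|x-p|$, Taylor's theorem with integral remainder gives
\[
f(x)=\sum_{|\alpha|\le N-1}\frac{\partial^{\alpha}f(p)}{\alpha!}(x-p)^{\alpha}
+N\!\!\sum_{|\alpha|=N}\frac{(x-p)^{\alpha}}{\alpha!}\int_{0}^{1}(1-t)^{N-1}\partial^{\alpha}f\bigl(p+t(x-p)\bigr)\,dt .
\]
Using $|(x-p)^{\alpha}|\le r^{|\alpha|}$ (valid for the Euclidean norm since each $|x_{i}-p_{i}|\le|x-p|$), $\int_{0}^{1}(1-t)^{N-1}\,dt=1/N$, and the multinomial identity, the remainder is bounded by
\[
r^{N}\sum_{|\alpha|=N}\frac{1}{\alpha!}\,\sup_{\overline B}|\partial^{\alpha}f|
\le C\,(Ar)^{N}\sum_{|\alpha|=N}\frac{N!}{\alpha!}=C\,(nAr)^{N},
\]
and the same computation shows $\sum_{\alpha}\frac{|\partial^{\alpha}f(p)|}{\alpha!}\,r^{|\alpha|}\le C\sum_{N\ge0}(nAr)^{N}$, which converges for $r<1/(nA)$. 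Hence for $|x-p|<\min\{r_{0},1/(nA)\}$ the Taylor series of $f$ at $p$ converges absolutely and its remainder tends to $0$, so $f$ agrees with it near $p$; since $p$ was arbitrary, $f$ is real analytic on $U$. The $\C$-valued case follows by applying the real statement to the real and imaginary parts of $f$ (every estimate above is insensitive to whether scalars are real or complex).

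I do not expect a genuine analytic obstacle here, as this is a classical fact; the work is essentially bookkeeping. The points that need care are the Cauchy coefficient estimates via the holomorphic extension in the necessity direction, and the remainder control in the sufficiency direction. The only slightly delicate ingredients are the identity $\sum_{|\alpha|=N}N!/\alpha!=n^{N}$ and the inequality $\alpha!\le|\alpha|!$, which together reconcile the $|\alpha|!$ appearing in the statement of the theorem with the $\alpha!$ that arises naturally from both power-series coefficients and Taylor's formula.
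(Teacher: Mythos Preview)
Your proof is correct and complete; both directions are handled by the standard arguments (Cauchy estimates on a holomorphic extension for necessity, Taylor remainder plus the multinomial identity for sufficiency), and the bookkeeping with $\alpha!\le|\alpha|!$ and $\sum_{|\alpha|=N}N!/\alpha!=n^{N}$ is done correctly. The paper itself does not prove this theorem: it is stated as a classical fact and used as a black box in the proof of Theorem~F, so there is no ``paper's own proof'' to compare against.
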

Of course, one can absorb the $C$ into the $A^{|\alpha|}$ term, so we'll often replace the condition with $||\partial^\alpha f||_{L^\infty(V)}\leq A^{|\alpha|}|\alpha|!.$
Since real analyticity is a local property, we're welcome to work locally and assume $U=\mathbb{D}$, and furthermore we can assume that our function $u$ admits uniform $C^\infty$ bounds in $\mathbb{D}.$
By Sobolev embedding theorems, we can work in a space such as $W^{k,2}$ (for any $k$) rather than $L^\infty$: namely, when working on the unit disk $\mathbb{D}$, one need only prove that for some $k$ and any $0<r<1$, 
\begin{equation}\label{RAL2}
||\partial^\alpha u||_{W^{k,2}(\mathbb{D}_r)}\leq CA^{|\alpha|}|\alpha|!,
\end{equation}
where again $A,C>0$ are fixed constants, which now depend on $r$. %We also need the higher order chain rule.
Working with $W^{k,2}$ norms is more convenient because of the elliptic theory. We will also repeatedly use that $W^{k,2}$ spaces are Banach algebras under multiplication when $k>1$.
\begin{prop}\label{Balgebra}
    Let $k>1$. There exists $C=C(r,k)>0$ such that for any two compactly supported $W^{k,2}$ functions $f_1,f_2$ $$||f_1f_2||_{W^{k,2}(\mathbb{D}_r)}\leq C||f_1||_{W^{k,2}(\mathbb{D}_r)}||f_2||_{W^{k,2}(\mathbb{D}_r)}.$$
\end{prop}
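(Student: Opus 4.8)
The plan is to reduce to an $L^2$ estimate on derivatives via the Leibniz rule and then close with Hölder's inequality together with the Sobolev embedding for $W^{k,2}$ of a planar domain. Since $f_1,f_2$ have compact support inside $\mathbb{D}_r$, extend them by zero to all of $\C=\R^2$; this does not change any $W^{j,2}$ norm, so it suffices to prove the estimate on $\R^2$ with the $W^{k,2}(\R^2)$ norms on the right-hand side (the constant will still be allowed to depend on $r$ and $k$, although here it need not). By the generalized Leibniz rule, for each multi-index $\alpha$ with $|\alpha|\le k$,
\begin{equation*}
\partial^\alpha(f_1 f_2)=\sum_{\beta\le\alpha}\binom{\alpha}{\beta}\,\partial^\beta f_1\cdot\partial^{\alpha-\beta}f_2,
\end{equation*}
so, since the number of terms is bounded in terms of $k$ only, it is enough to bound $\|\partial^\beta f_1\cdot\partial^\gamma f_2\|_{L^2(\R^2)}$ by $C(k)\|f_1\|_{W^{k,2}(\R^2)}\|f_2\|_{W^{k,2}(\R^2)}$ for every pair of multi-indices with $|\beta|+|\gamma|\le k$.

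The key point is the choice of exponents. If $|\beta|<k$ and $|\gamma|<k$, then $k-|\beta|\ge 1$ and $k-|\gamma|\ge 1$, so the Sobolev embedding theorem on $\R^2$ (see \cite{GT}) gives $W^{k,2}(\R^2)\hookrightarrow W^{|\beta|,4}(\R^2)$ and $W^{k,2}(\R^2)\hookrightarrow W^{|\gamma|,4}(\R^2)$, hence $\|\partial^\beta f_1\|_{L^4}\le C\|f_1\|_{W^{k,2}}$ and $\|\partial^\gamma f_2\|_{L^4}\le C\|f_2\|_{W^{k,2}}$; Hölder's inequality with $\tfrac14+\tfrac14=\tfrac12$ then yields $\|\partial^\beta f_1\cdot\partial^\gamma f_2\|_{L^2}\le C\|f_1\|_{W^{k,2}}\|f_2\|_{W^{k,2}}$. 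In the remaining case one of the multi-indices, say $\beta$, has $|\beta|=k$ and hence $\gamma=0$ (the case $|\gamma|=k$ is symmetric). Then $\|\partial^\beta f_1\|_{L^2}\le\|f_1\|_{W^{k,2}}$, and since in the setting of this section $k\ge 2>1=n/2$ we have $W^{k,2}(\R^2)\hookrightarrow L^\infty(\R^2)$, so $\|f_2\|_{L^\infty}\le C\|f_2\|_{W^{k,2}}$; Hölder's inequality for the pair $(2,\infty)$ finishes this case as well. Summing the finitely many Leibniz terms and then over all $\alpha$ with $|\alpha|\le k$ gives the claimed inequality with a constant depending only on $r$ and $k$.

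I do not expect any real obstacle here: the only points requiring a word of care are the passage to the zero extension on $\R^2$ (valid because the supports are compactly contained in $\mathbb{D}_r$) and the endpoint term $|\beta|=k$, where one uses the embedding $W^{k,2}(\R^2)\hookrightarrow L^\infty$, which is exactly where the hypothesis $k\ge 2$ (satisfied since $k>2$ throughout Section 5) is needed. If one prefers to avoid the zero-extension step, one may instead invoke the Sobolev embeddings for the bounded Lipschitz domain $\mathbb{D}_r$ directly, at the cost of an $r$-dependent constant, which is harmless.
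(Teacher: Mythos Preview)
The paper does not give a proof of this proposition; it is stated as a known fact (the Banach algebra property of $W^{k,2}$ in dimension two for $k>1$) and used freely thereafter. Your argument via the Leibniz rule, H\"older's inequality, and the Sobolev embeddings $W^{k,2}(\R^2)\hookrightarrow W^{j,4}(\R^2)$ for $j<k$ and $W^{k,2}(\R^2)\hookrightarrow L^\infty(\R^2)$ for $k>1$ is the standard proof and is correct.
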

For convenience, we replace $||\cdot||_{W^{k,2}(\mathbb{D}_r)}$ with $C(r,k)||\cdot||_{W^{k,2}(\mathbb{D}_r)}$ so that multiplication in $W^{k,2}(\mathbb{D}_r)$ has norm at most $1$. 

Preliminary results aside, fix $R>r>0$ and $k>2$. Similar to \cite{Kato}, \cite{Hash}, and \cite{Blatt}, we define a cutoff function $\eta$ in $C_0^\infty(\mathbb{D},[0,1])$ that is equal to a fixed constant $c>0$ in $\mathbb{D}_{r}$ and with support in $\mathbb{D}_R,$ with $c$ chosen to be small enough to ensure that $||\eta||_{W^{k,2}(\mathbb{D}_R)}\leq 1$ (note that in previous works such as \cite{Kato}, \cite{Hash}, and \cite{Blatt}, the authors choose $c=1$ while not imposing a $W^{k,2}$ bound). Rather than estimate terms such as $\partial^\alpha u$, we will actually estimate things like $\eta^N\partial^{\alpha} u$ in the subdisk $\mathbb{D}_R$ and then simply restrict to $\mathbb{D}_{r}$. Since $\mathbb{D}_R$ is specified to contain the support, we will abuse notation slightly and write $||\cdot||_{k}$ for $||\cdot ||_{W^{k,2}(\mathbb{D}_R)}.$ The point of multiplying by $\eta^N$ is that we don't have to deal with passing to subdisks: if $v$ is any regular enough function, with an $s<r$ such that $\mathbb{D}_s$ also contains the support of $\eta$, then, $$||\eta v||_{k+2}\leq C(||L_0(\eta v)||_{k}+||\eta v||_{k}).$$
The following notation will be helpful. For $N=0$ and $1$, set $\widetilde{L}_N=\max_{|\alpha|=N} ||\partial^\alpha u||_{k}$ and for $N\geq 2$ put
$$\widetilde{L}_N=\max_{|\alpha|=N} ||\eta^{N-2}\partial^\alpha u||_{k}.$$ Further set $$L_N = \widetilde{L}_{N+1}+\widetilde{L}_N+1.$$ %We record one estimate before stepping into the proof of Theorem D.
Since $\eta=c$ on $\mathbb{D}_r,$ $$\max_{|\alpha|=N+1}||\partial^\alpha u||_{W^{k,2}(\mathbb{D}_r)}\leq c^{-(N-1)}\Tilde{L}_{N+1}\leq c^{-(N-1)}L_N.$$ Hence, to prove Lemma \ref{lem: analyticity}, it suffices to show that $L_N\leq B^{N+1} (N+1)!$ for some $B>0,$ since then one has the bound  $$\max_{|\alpha|=N+1}||\partial^\alpha u||_{W^{k,2}(\mathbb{D}_r)}\leq c^2\big (\frac{B}{c}\big )^{N+1}(N+1)!.$$

In the proof below, keep in mind that at the beginning of the section we declared that, without comment, uniform constants $C$ can change in the course of the proofs. Below, without stating explicitly, we will use the Banach algebra property to make estimates with the same idea as $\max_{|\alpha|=N}||\eta^N\partial^\alpha u||_k\leq \max_{|\alpha|=N}||\eta^2||_k\Tilde{L}_N\leq \Tilde{L}_N.$ 
\begin{proof}[Proof of Lemma \ref{lem: analyticity}]
Let $u$ be a solution to $Lu=0,$ which we know is $C^\infty.$ Let's write the equation $\Delta_g u + F(x,u,Du)=0$ as $L_0 + \lambda\overline{\partial_z w}\hat{F}(x,u,Du)=0,$ where now $L_0=\partial_z\partial_{\overline{z}}-\overline{\mu}\partial_{\overline{z}}^2,$ and $\hat{F}$ contains all of the first order and zero order information of $L$. To simplify things, let's just abuse notation and redefine $F$ to be $\lambda\overline{\partial_z w}\hat{F}$. Getting rid of the conformal factor and the first order terms from $\Delta_g$ makes for a slightly cleaner computation below. With these modifications done, we write $F$ as $$F(z,u,Du) = f_1(z)\partial_z u(z) + f_2(z)\partial_{\overline{z}}u(z) + f_3(z)u(z)+f_4(z),$$
with the $f_i$'s real analytic. We fix $A>0$ such that for every $i$ and all multi-indices $\alpha$ of length $N$, $||\partial^\alpha f_i||_{k}\leq A^NN!$

We prove by induction on the length of the multi-index $N$ that there is a $B>0$ such that $L_N\leq B^{N+1}(N+1)!$. In the work below, we will put specifications on $B$ when they are needed. The base case is trivial and we assume the result holds for a given $N$. Let $\alpha$ be a multi-index of length $N,$ and $\beta$ a multi-index of length $2$.  Write $$\eta^N\partial^{\alpha+\beta}u = \partial^\beta (\eta^N \partial^\alpha u) + [\eta^N,\partial^\beta]\partial^\alpha u.$$
The elliptic estimate of Proposition \ref{interiorestimate} then returns 
\begin{align*}
    ||\eta^N \partial^{\alpha+\beta}u||_{k}&\leq ||\partial^\beta\eta^N \partial^{\alpha}u||_{k}+||[\eta^N,\partial^\beta] \partial^{\alpha}u||_{k} \\
&\leq ||\eta^N \partial^{\alpha}u||_{k+2}+||[\eta^N,\partial^\beta] \partial^{\alpha}u||_{k} \\
    &\leq C(||L_0(\eta^N \partial^\alpha u)||_{k}+||\eta^N \partial^\alpha u||_{k})+||[\eta^N,\partial^\beta] \partial^{\alpha}u||_{k}.
\end{align*}
Similar to above, we express $L_0(\eta^N \partial^\alpha u)$ as
$$L_0(\eta^N \partial^\alpha u)=\eta^N L_0(\partial^\alpha u)+[L_0,\eta^N]\partial^\alpha u.$$ We further unfoil, using that $L_0(u)+F(x,u,Du)=0$, $$L_0(\partial^\alpha u)=\partial^\alpha L_0 u + [L_0,\partial^\alpha]u  =\partial^\alpha F+\partial^\alpha(\overline{\mu})\partial_{\overline{z}}^2u.$$
Moreover, $$||L_0(\eta^N\partial^\alpha u)||_{k}\leq ||\eta^N\partial^\alpha F||_{k}+||\partial^\alpha(\overline{\mu})||_{k}||\partial_{\overline{z}}^2u||_{k} + ||[L_0,\eta^N]\partial^\alpha u||_{k}.$$ 
Returning to the main estimate, we have
\begin{align*}
    ||\eta^N \partial^{\alpha+\beta}u||_{k}
    &\leq C(||\eta^N \partial^\alpha F||_{k}+||[L_0,\eta^N]\partial^\alpha u||_{k}+||\eta^N \partial^\alpha u||_{k}+||\eta^N \partial_{\overline{z}}^2 u||_{k})+||[\eta^N,\partial^\beta] \partial^{\alpha}u||_{k} \\
    &=: J_1 + J_2 + J_3 + J_4+J_5.
\end{align*}
We find a suitable bound for each $J_i$. The terms $J_3$ and $J_4$ are both straightforwardly bounded by $L_{N-1}$ and $L_1$ respectively, and hence, making $B\geq \max\{L_1,1\}$, both bounded above by $B^{N}N!$. To handle $J_2$ and $J_5,$ first take note of the formula, for any sufficiently regular function $v,$ for coordinates $x_1,\dots, x_n,$ writing $\partial_{ij}=\partial_{x_i}\partial_{x_j},$
$$\partial_{ij}(\eta^N v) = (N(N-1)\eta^{N-2}\partial_i \eta\partial_j \eta+N\eta^{N-1}\partial_{ij}\eta)v + N\eta^{N-1}(\partial_i \eta\partial_j v+\partial_j\eta \partial_i v)+\eta^N \partial_{ij}v,$$ from which we derive the identity 
\begin{equation}\label{thingtoestimate}
    [\eta^N,\partial_{ij}]\partial^\alpha u = (N(N-1)\eta^{N-2}\partial_i \eta\partial_j \eta+N\eta^{N-1}\partial_{ij}\eta)\partial^\alpha u + N\eta^{N-1}(\partial_i \eta\partial_j \partial^\alpha u+\partial_j\eta \partial_i \partial^\alpha u).
\end{equation}
Splitting $L_0$ as $L_0=\partial_z\partial_{\overline{z}}-\overline{\mu}\partial_{\overline{z}}^2$,
$$J_2 + J_5\leq C\left(\left\|\left[\partial_z\partial_{\overline{z}},\eta^N\right]\partial^\alpha u\right\|_{k}+ \left\|\left[\partial_{\overline{z}}^2,\eta^N\right]\partial^\alpha u\right\|_{k}+ \left|\left[\partial^\beta,\eta^N\right]\partial^\alpha u\right\|_{k}\right).$$
Via the formula (\ref{thingtoestimate}), and the Banach algebra property, all three of the terms above can be controlled by $CN(N-1)L_{N-1}+CNL_N,$ and therefore $$J_2+J_5\leq CN(N-1)L_{N-1}+CNL_N\leq C(N+1)!(B^N+B^{N+1}).$$
To bound $J_1,$ we use the higher order product rule. For notation's sake, if $\partial^\alpha = \partial_{x_{i_N}}\dots \partial_{x_{i_1}}$, write $\partial_1^k = \partial_{x_{i_N}}\dots \partial_{x_{i_{N-k+1}}},$ $\partial_2^k = \partial_{x_{i_{N-k}}}\dots\partial_{x_{i_1}}.$ Then, $$\partial^\alpha F = \Big (\sum_{j+k=N}{N \choose j} (\partial_1^j f_1\partial_2^k \partial_z u +\partial_1^j f_2\partial_2^k \partial_{\overline{z}} u +\partial_1^j f_3\partial_2^k u)\Big )+\partial^\alpha f_4.$$ Multiplying by $\eta^N$ and taking the norm, and using $||\partial^\alpha f_i||_k\leq A^NN!,$ we get $$||\eta^N\partial^\alpha F||_k\leq C \sum_{j+k=N}{N \choose j} (A^jj!\Tilde{L}_{k+1}+A^jj!\Tilde{L}_k) + A^N N!.$$
We add the condition that $B\geq A$. Inserting $\Tilde{L}_k\leq L_{k-1}\leq B^{k}k!$, we obtain further bounds:
\begin{align*}
    ||\eta^N\partial^\alpha F||_k&\leq C \sum_{j+k=N}N! (A^jB^{k+1}(k+1)+A^jB^k) + A^N N!\\
    &\leq C\sum_{j+k=N}N!(B^{N+1}(k+1)+B^{N})+B^NN! \\
    &\leq C N(N!)(B^{N+1}(N+1)+B^{N}) +B^N N!.
\end{align*}
Using trivial bounds such as $B^k\leq B^{k+1}$, we conclude that $||\eta^N\partial^\alpha F||_k\leq C(N+2)!B^{N+1}.$
Putting everything together,
\begin{align*}
    L_{N+1}&\leq C((N+2)!B^{N+1}+ (N+1)!B^{N+1}+B^N N!)
    \leq C (N+2)! B^{N+1}
\end{align*}
Imposing now that $B$ is greater than or equal to our constant $C$, which does not depend on $N$, we have the desired bounded $L_{N+1}\leq (N+2)!B^{N+2}$. As discussed right before the proof, this bound implies the Cauchy estimate for $u$, and Theorem \ref{Cauchyestimates} then yields that $u$ is real analytic.
\end{proof}

\subsection{Proof of Theorem D}
The aim of this section is to prove Theorem \ref{thm F extended}, which directly implies Theorem D. Toward this goal, we first prove some general results. We define more generally, for any positive complex metric $g,$ $L_{g,\lambda}: W^{k+2,p}(S)\to W^{k,p}(S)$ by $L_{g,\lambda} u = \Delta_gu - \lambda u$. By Proposition \ref{fredholmness}, when $g$ is deformable to a Bers metric, $L_{g, \lambda}$ has index zero and is invertible if and only if $\lambda$ is not an eigenvalue for $\Delta_g$.

\begin{lem}\label{openness}
    Let $g$ be a positive complex metric such that $\Delta_g$ has Fredholm index zero, $\lambda\in \C,$ and suppose that $L_{g,\lambda}$ is an isomorphism. Then there exists a neighbourhood $U$ of $g$ inside $\mathcal{CM}(S)$ such that for all $g'\in U$, $L_{g',\lambda}$ is an isomorphism. 
\end{lem}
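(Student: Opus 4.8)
The plan is to exploit the fact that invertibility is an open condition for Fredholm-index-zero operators, combined with continuous dependence of the relevant operator on the metric. First I would fix the Sobolev exponents $k$ and $p$ (with $l>k+2$ and $||\mu||_\infty C_p<1$ as in Proposition \ref{fredholmness}) and regard $g\mapsto L_{g,\lambda}$ as a map from a neighbourhood of $g$ in $\mathcal{CM}(S)$ into the Banach space $\mathcal{B}(W^{k+2,p}(S),W^{k,p}(S))$ of bounded operators. The key point is that this assignment is continuous: using the local expression $\Delta_g=\frac{4}{\lambda\overline{\partial_z w}}\partial_{\overline z}(\partial_z-\overline\mu\partial_{\overline z})$ from Proposition \ref{Deltaformula}, the coefficients of $\Delta_{g'}$ depend continuously (indeed smoothly) on $g'$ in the $W^{l,\infty}$, hence $W^{k+1,\infty}$, topology on a fixed coordinate atlas, and multiplication by $W^{k+1,\infty}$ coefficients is a bounded operation $W^{k+2,p}\to W^{k,p}$ and $W^{k+1,p}\to W^{k,p}$. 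So $\|L_{g',\lambda}-L_{g,\lambda}\|_{\mathcal{B}(W^{k+2,p},W^{k,p})}\to 0$ as $g'\to g$.

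Next I would invoke the standard Banach-space fact that the set of invertible bounded operators between two Banach spaces is open, and that the Fredholm index is locally constant. Since $\Delta_g$ has Fredholm index zero, by Proposition \ref{fredholmness} so does $\Delta_{g'}$ for $g'$ near $g$ (index zero is preserved under deformation, and $g'$ remains deformable to a Bers metric since being deformable to a Bers metric is an open condition, or one can just note that the Fredholm index is continuous under the operator-norm convergence already established). Given that $L_{g,\lambda}$ is an isomorphism, there is $\epsilon>0$ such that any bounded operator $T$ with $\|T-L_{g,\lambda}\|<\epsilon$ is also an isomorphism; by the continuity established above, shrinking the neighbourhood $U$ of $g$ in $\mathcal{CM}(S)$ ensures $\|L_{g',\lambda}-L_{g,\lambda}\|<\epsilon$ for all $g'\in U$, whence $L_{g',\lambda}$ is an isomorphism. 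This gives exactly the claimed statement.

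I do not anticipate a serious obstacle here; the only point requiring a little care is the continuity of $g\mapsto L_{g,\lambda}$ in the operator norm. This is where one must be slightly careful about which topologies are in play: $\mathcal{CM}(S)$ carries its Fréchet topology, the complex structures $(c_1,\overline{c_2})$ and hence the Beltrami form $\mu$ depend continuously on $g$ in $W^{l,\infty}$, and by Morrey's inequality (as in the remark following Proposition \ref{interiorestimate}) the coefficients of $\Delta_{g'}$ then depend continuously in $W^{k+1,\infty}$, which suffices for boundedness of the difference operator between the relevant Sobolev spaces. Everything else is soft functional analysis. If one wanted, one could also phrase the argument via a Neumann series: write $L_{g',\lambda}=L_{g,\lambda}(\mathrm{id}+L_{g,\lambda}^{-1}(L_{g',\lambda}-L_{g,\lambda}))$ and observe the correction term has operator norm $<1$ for $g'$ close to $g$, so the inverse exists.
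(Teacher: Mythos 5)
Your proposal is correct but takes a genuinely different route from the paper. The paper argues by contradiction: if the conclusion failed, one could find positive metrics $g_n\to g$ with nontrivial kernel elements $u_n$ normalized so that $\|u_n\|_{W^{k+2,p}}=1$; Rellich--Kondrachov gives $W^{k,p}$-subconvergence to some nonzero $u_\infty$, which by weak continuity satisfies $L_{g,\lambda}u_\infty=0$, and the interior estimate (Proposition \ref{interiorestimate}) upgrades $u_\infty$ to $W^{k+2,p}$, contradicting injectivity of $L_{g,\lambda}$. This compactness argument sidesteps the need to prove continuity of $g\mapsto L_{g,\lambda}$ in operator norm at all. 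Your argument is softer and more standard---once $g\mapsto L_{g,\lambda}$ is shown continuous into $\mathcal{B}(W^{k+2,p},W^{k,p})$, openness of the invertibles (Neumann series) finishes it immediately, and you correctly note that the Fredholm bookkeeping is then superfluous. The only part you gloss over is that the local formula $\Delta_g=\frac{4}{\lambda\overline{\partial_z w}}\partial_{\overline z}(\partial_z-\overline\mu\partial_{\overline z})$ is written in coordinates $z,\overline w$ that are themselves adapted to the isotropic directions of $g$, so ``reading off continuity of the coefficients'' from that expression is slightly circular: as $g'$ varies, so do $z',w'$. To make the continuity clean, one should re-express $\Delta_{g'}$ in a fixed background chart $\zeta$, where the coefficients are rational in $(g'_{ij},\partial g'_{ij})$ and depend manifestly continuously on $g'$ in the Fr\'echet topology; alternatively one can note that $\mu$ and the conformal factor are extracted algebraically from $g$ in a fixed chart. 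With that clarification your proof is complete, and each approach buys something: yours is a two-line functional-analytic argument once continuity is established, while the paper's avoids any claim about coefficient dependence and relies only on tools (compact embedding, interior estimate) already developed.
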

\begin{proof}
 Suppose that the conclusion of the lemma fails, so that we can find positive metrics $g_n$ converging to $g$ such that $L_{g_n,\lambda}$ is not an isomorphism. Since for $n$ large, each $L_{g_n,\lambda}$ is Fredholm of index zero, this means that for every $n$ we can find a non-zero $u_n\in W^{k+2,p}(S)$ such that $$L_{g_n,\lambda}u_n=0.$$   Since each $L_{g_n}^\lambda$ is linear, we can rescale each $u_n$ so that $||u_n||_{W^{k+2,p}}=1$. By the Rellich-Kondrachov theorem, $u_n$ subconverges in $W^{k,p}$ to some non-zero $u_\infty$ that satisfies $L_{g,\lambda} u=0$ weakly. By Proposition \ref{interiorestimate}, $u\in W^{k+2,p}(S),$ and we've contradicted the fact that $L_{g,\lambda}$ is an isomorphism.
\end{proof}

We will say that $c_1\in \mathcal C(S)$ and $\overline {c_2}\in \mathcal C(\overline S)$ are \textbf{co-real analytic} (or that $\ccpair$ is a co-real analytic pair) if they induce the same real analytic structure on $S$. Equivalently, $c_1$ and $\overline{c_2}$ are co-real analytic if the Beltrami form of one with respect to the other is real analytic (by \cite[Theorem 3]{AB}). Observe that the diagonal action of $\mathrm{Diff}_0(S)$ preserves this condition. 

By the density of $\Diff_0^\omega(S)$ in $\Diff_0(S)$, the projection $\CSCS\to \TSTS$ is such that co-real analytic pairs in each fiber are dense in the fiber.

We will use the following result about the spectrum of the Laplacian of Bers metrics from co-real analytic complex structures, which is independently interesting.
\begin{prop}\label{isotopypreserve}
 Let $(c_1, \overline{c_2}),(c_1', \overline{c_2}')\in \mathcal C(S)\times \mathcal C(\overline S)$ be two pairs of co-real analytic complex structures on $S$ that project to the same point in $\TSTS$. 

 Then $L_{\hpair,\lambda}$ is an isomorphism if and only if $L_{\vb*{h}_{(c'_1, \overline{c_2}')},\lambda}$ is an isomorphism. 
\end{prop}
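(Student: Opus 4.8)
The statement is a ``transport invariance'' result: if two co-real-analytic pairs $\ccpair$ and $(c_1', \overline{c_2}')$ project to the same point in $\TSTS$, then $c_1' = \phi^*c_1$ and $\overline{c_2}' = \psi^*\overline{c_2}$ for some $\phi,\psi\in \mathrm{Diff}_0(S)$. The plan is to connect the two pairs by a path realizing a \emph{double isotopy}, to transport the Bers metric along an associated path of complex vector fields (Section 4), and to invoke the naturality properties from Proposition \ref{prop: transport connection and laplacian} to see that the Laplacians are conjugate, hence isospectral, hence simultaneously invertible.

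\textbf{Step 1: reduce to a piecewise-real-analytic double isotopy.} Since $c_1$ and $c_1'$ differ by a diffeomorphism isotopic to the identity, and similarly for $\overline{c_2}$ and $\overline{c_2}'$, and since by the density results of Section \ref{subsec: transport analytic} (specifically the density of $\mathrm{Diff}_0^\omega(S)$ in $\mathrm{Diff}_0(S)$ and the local path-connectedness of $\mathrm{Diff}^\omega(S)$ through piecewise-analytic isotopies) we may choose isotopies $\phi_t^1,\phi_t^2$ that are piecewise real analytic, with $\phi_0^i = \mathrm{id}$, $\phi_1^1{}^*c_1 = c_1'$, $\phi_1^2{}^*\overline{c_2} = \overline{c_2}'$. (One subtlety: a priori the isotopies from a co-real-analytic pair to another co-real-analytic pair need not be analytic, but co-real-analyticity is preserved under the diagonal action of $\mathrm{Diff}_0(S)$, so we can first move $c_1$ to a common analytic structure, and then handle the two factors; alternatively one works directly with the path $t\mapsto (\phi_t^1{}^*c_1,\phi_t^2{}^*\overline{c_2})$ and uses that the transport is only needed for the Bers metric, which is analytic along the path because the $c_i^t$ are.) By Proposition \ref{prop: tangent to C(S)C(S)}, there is a path $Z_t\in\Gamma(\C TS)$ with $(\tfrac{d}{dt}\phi_t^1{}^*c_1, \tfrac{d}{dt}\phi_t^2{}^*\overline{c_2}) = (\mathscr L_{Z_t}(\phi_t^1{}^*c_1), \mathscr L_{Z_t}(\phi_t^2{}^*\overline{c_2}))$, and this $Z_t$ is piecewise real analytic since the $\phi_t^i$ and the $c_i$ are.

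\textbf{Step 2: transport the Bers metric and identify it.} Since the $\phi_t^i{}^*c_i$ are real analytic in $(x,t)$, the Bers metric $h_t := \vb*h_{(\phi_t^1{}^*c_1,\phi_t^2{}^*\overline{c_2})}$ is real analytic, and by Corollary \ref{cor: lie derivative Bers} (or Remark \ref{rmk: transport Bers metrics}) it equals $\veet h$, the transport of $h = \hpair$ through $Z_\bullet$; Theorem \ref{thm: complex transport of tensor} guarantees existence and uniqueness of this transport on all of $S$ (shrinking the time interval and subdividing at the finitely many non-analyticity times of $Z_t$ if necessary, then concatenating). Moreover the conformal-type hypothesis of Proposition \ref{prop: transport connection and laplacian} holds, since $(\vb*c_+,\vb*c_-)(\veet h) = (\veet\vb*c_+(h), \veet\vb*c_-(h))$ by construction. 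Then item (6) of Proposition \ref{prop: transport connection and laplacian} gives $\veet(\Delta_h f) = \Delta_{h_t}(\veet f)$ for every transportable $f\in C^\infty(S,\C)$, and since $\veet$ is $\C$-linear on functions, $\veet(\Delta_h f - \lambda f) = \Delta_{h_t}(\veet f) - \lambda(\veet f)$. In other words, $L_{h_t,\lambda}\circ \veet = \veet \circ L_{h,\lambda}$ on the (dense, and ultimately all) space of transportable smooth functions.

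\textbf{Step 3: conclude invertibility is preserved.} It remains to argue that the transport map $f\mapsto \veet f$ on functions is, for each fixed $t$, the pullback by the diffeomorphism $\phi_t := (\phi_t^1$-part composed appropriately$)$ — more precisely, from Step 1 the transport of the coordinate functions $\veet z = z\circ(\text{isotopy})$ etc., so $\veet f = \phi_t^*f$ for the genuine diffeomorphism realizing the simultaneous isotopy (as in Remark \ref{rmk: transport Bers metrics} and the proof of Proposition \ref{prop: holonomy is constant}). Hence $\veet$ is an invertible operator on $C^\infty(S,\C)$, bounded with bounded inverse on each Sobolev space $W^{k,p}(S)$ (pullback by a diffeomorphism), and the intertwining relation $L_{h_1,\lambda} = \veee_{Z_1}\circ L_{h,\lambda}\circ\veee_{Z_1}^{-1}$ shows $L_{h_1,\lambda} = L_{\vb*h_{(c_1',\overline{c_2}')},\lambda}$ is an isomorphism iff $L_{\hpair,\lambda}$ is.

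\textbf{Main obstacle.} The crux is Step 1 -- ensuring that the path connecting the two co-real-analytic pairs can be taken piecewise real analytic in \emph{both} space and time, so that $Z_t$ is piecewise analytic and Theorem \ref{thm: complex transport of tensor} applies; this is where the density/homotopy-equivalence facts for $\mathrm{Diff}^\omega_0(S)$ recalled in Section \ref{subsec: transport analytic} do the work, together with the observation that a priori non-analytic isotopies can be replaced by analytic ones without changing the endpoints in $\TSTS$. A secondary technical point is verifying that the transport $\veet$, defined a priori only locally and for short time, globalizes over the compact surface $S$ and over the full time interval -- handled by the compactness clause of Theorem \ref{thm: complex transport of tensor} and by subdividing $[0,1]$ at the finitely many times where $Z_t$ fails to be analytic, then composing the resulting finitely many transport maps.
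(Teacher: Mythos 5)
Your Step 3 contains a genuine error that breaks the argument. You claim the transport map $f\mapsto \veet f$ on functions is pullback by a genuine diffeomorphism $\phi_t$, citing Remark \ref{rmk: transport Bers metrics}. But that remark only says $c_1$-holomorphic tensors transport by $(\phi_1^t)^*$ and $\overline{c_2}$-holomorphic tensors transport by $(\phi_2^t)^*$ — two \emph{different} isotopies. A general smooth function $f$ is neither $c_1$-holomorphic nor $\overline{c_2}$-holomorphic, and its transport is the solution to the genuinely complex Cauchy problem \eqref{eq: transport problem}, not a pullback. Concretely, the vector field $Z_t$ from Example \ref{ex: Lie derivative of Bers metrics} is a proper $\C$-valued vector field (a multiple of $\partial_{\overline z}$ alone), so $\partial_{Z_t}u$ does not agree with $\partial_{\dot\phi_t}u$ for generic $u$, and $\veet$ is not conjugation by a diffeomorphism. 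This matters in two ways: (a) there is no bounded invertible operator $\veee_{Z_1}$ on Sobolev spaces to serve as the intertwiner, so the conjugation $L_{h_1,\lambda} = \veee_{Z_1}\circ L_{h,\lambda}\circ\veee_{Z_1}^{-1}$ does not hold; and (b) even for real analytic $f$, Theorem \ref{thm: complex transport of tensor} gives only \emph{short-time} transport — the paper explicitly warns in Section \ref{subsec: trans} that long-time existence fails in general — so you cannot in general reach $t=1$ by subdividing, since the blow-up time depends on the data, not on where you fail to be analytic.

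The paper's proof of this proposition avoids these problems by replacing the global conjugation argument with a continuity-by-contradiction argument on the set $A^\lambda = \{t : \lambda \in \operatorname{spec}\Delta_{h_t}\}$: closedness of $A^\lambda$ comes from Lemma \ref{openness}, and if the maximum $t_0$ of the component of $A^\lambda$ containing $0$ were less than $1$, one takes an eigenfunction $u\in\ker L_{h_{t_0},\lambda}$, invokes Theorem F to see that $u$ is \emph{real analytic}, transports $u$ for a short time via Theorem \ref{thm: complex transport of tensor}, and uses Proposition \ref{prop: transport connection and laplacian}(6) to get that $\veee_{Z_t}u$ is an eigenfunction of $\Delta_{h_{t_0+t}}$ — contradicting maximality. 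Your proposal does not use Theorem F at all, which is the essential ingredient making the eigenfunction transportable, and it does not appeal to the closedness of the spectral set; both are load-bearing in the paper's argument. The reduction in your Step 1 (piecewise-analytic double isotopies) is in the right spirit, though the paper also makes the cleaner preliminary reduction, via diagonal diffeomorphism invariance, to the case $c_1=c_1'$ with $\overline{c_2},\overline{c_2}'$ in the same $\mathrm{Diff}_0^\omega(S)$-orbit.
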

\begin{proof}
We will actually prove the contrapositive statements, namely $\lambda$ lies in the spectrum of $\Delta_{\hpair}$ if and only if $\lambda$ lies in the spectrum of $\Delta_{\vb*{h}_{(c'_1, \overline{c_2}')},\lambda}$.

Fix on $S$ the real analytic structure induced by $c_1$ and $\overline{c_2}$ and denote by $\mathrm{Diff}^\omega(S)$ the corresponding subgroup of real analytic diffeomorphisms. 

Since, for all $ \phi\in$ $\mathrm{Diff}(S)$, $L_{\hpair,\lambda}$ is an isomorphism if and only if $L_{\vb*{h}_{( \phi^*c_1,  \phi^*\overline{c_2})},\lambda}$ is an isomorphism, we can assume that $c_1=c_1'$ and that $\overline{c_2}'$ and $\overline{c_2}$ lie in the same $\mathrm{Diff}_0^{\omega}(S)$-orbit.

As we mentioned in the introduction to Section \ref{subsec: transport analytic}, $\mathrm{Diff}_0^{\omega}(S)$ is locally analytically path connected, so we are left to show the following: if $\lambda$ is an eigenvalue for $\Delta_{\hpair}$, for all real analytic path $\phi_t\in \mathrm{Diff}^\omega_0(S)$, $t\in [0,1]$, $\lambda$ is an eigenvalue for $\Delta_{\vb*h_{(c_1, \phi_t^*{\overline {c_2}})}}$ for all $t\in [0,1]$. 

Denote $h_t=\vb*h_{(c_1, \phi_t^*{\overline {c_2}}) }$ and 
$$A^\lambda = \{t\in [0,1]: \lambda \text{ lies in the spectrum of }\Delta_{h_t} \}.$$ $A_\lambda$ contains $0$ and, by Lemma \ref{openness}, $A^{\lambda}$ is closed. Let $t_0$ be the maximum of the connected component of $A^{\lambda}$ containing $0$, and assume for the sake of contradiction that $t_0<1$.  Denoting by $z$ and $\overline w$ local coordinates of $c_1$ and $\overline{c_2}$ respectively, as well as $\dot \phi_t= \xi_t \partial_{z}+\overline \xi_t \partial_{\overline z}$, define the vector field $Z_t:= \left(\xi_{t+t_0} \frac{\partial_z \overline w_{t+t_0}}{\partial_{\overline z}\overline w_t}+\overline{\xi_{t+t_0}}\right)\partial_{\overline z}$ as in Example \ref{ex: Lie derivative of Bers metrics}, so that, in the notation of Definition \ref{def: analytic transport}, $\veet (\vb*h_{(c_1, \phi_{t_0}^*(\overline {c_2})) })= \vb*h_{(c_1, \phi_{t_0+t}^*(\overline {c_2}))} $. Now, let $u$ be in the kernel of $L_{h_t,\lambda}.$ By Lemma \ref{lem: analyticity}, $u$ is a real analytic function. By Theorem \ref{thm: complex transport of tensor}, there exists $t_1$ with $t_0<t_1\le 1$ such that the analytic transport $u_t=\veee_{Z_t} u$ is defined for all $t\in [0, t_1-t_0]$ (with $u_t$ being non-zero), and, by Proposition \ref{prop: transport connection and laplacian}, for all $t\in [0,t_0-t_1]$
\[
\Delta_{\vb*h_{(c_1, \phi_{t+t_0}^*{\overline{c_2}})}} u_t=\Delta_{\vb*h_{(c_1, \phi_{t_0}^*{\overline{c_2}})}}\left(\veee_{Z_{t}} u\right)=\veee_{Z_{t}}\left(\Delta_{\vb*h_{(c_1, \phi_{t_0}^*({\overline{c_2}}))}} u\right)= \lambda \veee_{Z_{t}} u= \lambda u_t,
\]
hence $[t_0,t_1]\subset A^{\lambda}$, contradicting the maximality of $t_0$.
\end{proof}

For the case $\lambda=2$, we can say more. Recall we're setting $L_h=L_{h,2}.$

\begin{prop}\label{perturb}
   Let $h=\hpair$ be a smooth Bers metric and $q$ a small-enough $c_1$-holomorphic quadratic differential such that $h+q$ is a Bers metric.
   Then, $L_{h}$ is an isomorphism if and only if $L_{h+q}$ is an isomorphism.
\end{prop}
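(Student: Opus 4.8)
The plan is to exploit Proposition~\ref{prop: Kg and Kg+q}, which tells us that $h$ and $h+q$ have the same Gauss curvature $-1$ and induce the same complex structure $c_1$ on the ``$+$'' side, so $h+q$ is the Bers metric $\vb*{h}_{(c_1,\overline{c_2}')}$ for some $\overline{c_2}'\in\mathcal{C}(\overline{S})$ which moves as $q$ varies. In particular $h+q$ is genuinely a Bers metric when $q$ is small, and the deformation $q\mapsto[\vb*{c_-}(h+q)]$ is, by Theorem~\ref{thm: ElE deform}, a biholomorphism onto an open neighborhood of $[\overline{c_2}]$ in $\mathcal{T}(\overline{S})$. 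So moving $q$ is the same as moving the point $[\overline{c_2}']$ in Teichm\"uller space, with $c_1$ fixed.

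The heart of the argument will be a continuity/openness argument in $q$. First I would observe that the statement is symmetric: it suffices to show that if $L_{h}$ is an isomorphism then so is $L_{h+q}$ for $q$ small, since we can then run the same argument starting from $h+q$ and the quadratic differential $-q$ to get the converse. By Proposition~\ref{fredholmness}, $L_{h+q}=L_{\vb*{h}_{(c_1,\overline{c_2}')}}=\Delta_{h+q}-2$ has Fredholm index zero, so it is an isomorphism precisely when $2$ is not in its spectrum. Now by Lemma~\ref{openness}, the set of positive complex metrics $g$ for which $L_{g}=L_{g,2}$ is an isomorphism is open in $\mathcal{CM}(S)$; since the map $q\mapsto h+q$ is continuous (indeed holomorphic) into $\mathcal{CM}(S)$ near $q=0$, and $h+0=h$ has $L_h$ an isomorphism by hypothesis, there is a neighborhood of $0$ in $\mathrm{QD}(c_1)$ on which $L_{h+q}$ remains an isomorphism. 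Combined with the symmetric argument, this gives the ``if and only if'' for all sufficiently small $q$.

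There is a small subtlety I would address carefully: the proposition as stated ranges over all $q$ ``small enough that $h+q$ is a Bers metric,'' and a priori Lemma~\ref{openness} only gives openness in a fixed metric neighborhood, so one must make sure the two notions of ``small'' are compatible. The clean way is to phrase the conclusion as: there is an open neighborhood $U'\subset\mathrm{QD}(c_1)$ of $0$, contained in the set $U_{(c_1,\overline{c_2})}$ of Theorem~\ref{thm: ElE deform} where $h+q$ is a Bers metric, such that for all $q\in U'$, $L_{h}$ is an isomorphism iff $L_{h+q}$ is; this matches the intended use of the proposition in the continuity argument for Theorem~\ref{thm F extended}. The main obstacle, such as it is, is bookkeeping rather than conceptual: verifying that $q\mapsto h+q$ lands in $\mathcal{CM}(S)$ continuously (immediate, since $\mathrm{QD}(c_1)$ sits inside sections of $\mathrm{Sym}^2(\C TS)$ and adding a holomorphic quadratic differential keeps the metric nondegenerate and positive for $q$ small by openness of the positive condition) and quoting Lemma~\ref{openness} with $\lambda=2$. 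No transport or analyticity input is needed here; those enter only in Proposition~\ref{isotopypreserve} and the final assembly of Theorem~\ref{thm F extended}.
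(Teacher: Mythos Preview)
Your argument has a genuine gap. Lemma~\ref{openness} tells you only that the set of metrics $g$ for which $L_g$ is an isomorphism is \emph{open}; it says nothing about the complement. So your argument proves exactly this: if $L_h$ is an isomorphism, then $L_{h+q}$ is an isomorphism for $q$ in some neighborhood $U_1(h)$ of $0$. That is the forward direction, and only in the case where $L_h$ happens to be invertible.

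The symmetry trick does not rescue the other direction. To run it you would apply the forward direction at $h+q$: if $L_{h+q}$ is an isomorphism, then $L_{(h+q)+q'}$ is an isomorphism for $q'$ in some neighborhood $U_1(h+q)$ of $0$, and you want to take $q'=-q$. But $U_1(h+q)$ comes from Lemma~\ref{openness} applied at $h+q$, and there is no control on its size; nothing prevents $-q\notin U_1(h+q)$ even for arbitrarily small $q$. Concretely, if $L_h$ is \emph{not} an isomorphism, your argument gives no information at all, and it is entirely consistent with openness alone that $L_{h+q}$ is an isomorphism for every nonzero small $q$. So your reformulated biconditional ``there exists $U'\ni 0$ such that for all $q\in U'$, $L_h$ iso iff $L_{h+q}$ iso'' is simply not established when $L_h$ is not an isomorphism---and this is precisely the case needed in Theorem~\ref{thm F extended} to show that the set $N$ is open.

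The paper's proof avoids this by proving the forward implication for \emph{every} $q$ with $h+q$ a Bers metric, not merely for $q$ near $0$. It does this via a curvature computation: perturbing $h$ conformally to $(1+t\dot u)h$ and using Proposition~\ref{prop: Kg and Kg+q} to match curvatures of $(1+t\dot u)h$ and $\sigma_t h'_t$ (where $h'_t$ is the Bers metric conformal to $(1+t\dot u)h+q$), then differentiating at $t=0$ yields the identity
\[
\Delta_h\dot u-2\dot u=\Delta_{h+q}\dot\sigma-2\dot\sigma,
\]
so that any element in the image of $L_h$ lies in the image of $L_{h+q}$. This gives surjectivity of $L_{h+q}$ from surjectivity of $L_h$, uniformly in $q$, and now the symmetry $h=(h+q)-q$ genuinely closes the loop. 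The transport/curvature identity is the missing idea in your proposal; openness alone is too weak.
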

\begin{proof}
Notice that it is sufficient to prove that if $L_{h}$ is an isomorphism, then $L_{h+q}$ is an isomorphism. Indeed, the converse implication follows by seeing $h= (h+q)-q$.

   Observe that there exists $\eps=\eps(h, q)$ such that, for all smooth $\varrho\colon S\to \mathbb C^*$ for which $\|1-\varrho\|_{\infty}<\eps$, $\varrho h+q$ is a positive complex metric. 
In fact, one can see explicitly from the computation of Lemma 6.1 in \cite{ElE} that the condition for $\varrho h +q$ to be positive is 
\[
\left|\overline{\mu}+ \frac 1 \varrho \frac{q(\partial_z, \partial_z)}{ h(\partial_z, \partial_z)}\right|<1,
\]
where $\mu=\frac{\partial_{\overline z} w}{\partial_z \overline w }$, and $z$ and $\overline w$ are local holomorphic coordinates for $c_1$ and $\overline{c_2}$ respectively.

We prove that the image of $L_{h+q}$ contains $C^{\infty}(S,\overline \C)$. It will follow from Fredholmness  that $L_{h+q}$ is surjective, and hence an isomorphism because it has index zero.

Since $L_h$ is an isomorphism, for all $f\in  C^\infty (S,\C)$ there exists $\dot u$ such that $L_h(\dot u)=f$. By Proposition \ref{nonlinearreg}, $\dot u\in C^\infty(S, \C)$.
Now, assume $t$ is small enough so that $(1+t\dot u) h+q$ is a positive complex metric, and set $h'_t$ to be the Bers metric in the conformal class of $(1+t\dot u) h +q$, so there exists a path of functions $\sigma_t,$ with $\sigma_0=1$, such that
\[
(1+t\dot u) h + q= \sigma_t h'_t.
\]
One can see explicitly, with computations as in \cite[Remark 3.9]{ElE}, that $\vb*{c_+}(h'_t)= c_1$ and that the Beltrami form of $\vb* {c_-}(h'_t)$ with respect to $c_1$ is $ \mu+\frac{\overline{q(\partial_z, \partial_z)}}{\ \overline{(1+t\dot u)h(\partial_z, \partial_z)}\ }$, so $\vb* {c_-}(h'_t)$ depends smoothly on $t$, hence, by Proposition \ref{appendixholomorphicity}, $h'_t$ depends smoothly on $t$ and so does $\sigma_t$. 

By Proposition \ref{prop: Kg and Kg+q}, we have that $\mathrm K_{(1+t\dot u) h}=\mathrm K_{ \sigma_t h'_t}$, hence, using  Equation \eqref{eq: conf curvature},
\[
\frac{1}{(1+t\dot u)} \left(-1-\frac 12 \Delta_{h} (\log  (1+t\dot u))\right)= \frac{1}{\sigma_t} \left(-1-\frac 12 \Delta_{h'_t} (\log  \sigma_t)\right), \ 
\]
and, taking the time derivative at $0,$
\begin{equation}
    \label{eq: transport Delta-2}
\Delta_{h}\dot u -2 \dot u = \Delta_{h+q}\dot \sigma -2 \dot \sigma, 
\end{equation}
where we used that the coefficients of the operators $\Delta_{h'_t}$ change smoothly with respect to $t$, so that $\left(\frac {d}{dt}_{|0}\Delta_{h_t}\right) (0)=0.$ As a result, we get that $f=\Delta_{h+q}\dot \sigma -2 \dot \sigma$, hence $\dot \sigma$ is smooth by Proposition \ref{nonlinearreg}, proving the surjectivity of $L_{h+q}$.

\end{proof}

\begin{lem} 
\label{lemma: h+q co real analytic}
Let $h=\hpair$ be a Bers metric and $q$ a small-enough $c_1$-holomorphic quadratic differential such that $h+q=\vb*h_{(c_1, \overline{c_2}')}$ is a Bers metric. If $c_1, \overline{c_2}$ are co-real analytic, then $c_1, \overline{c_2}'$ are co-real analytic.
\end{lem}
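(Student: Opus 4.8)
The plan is to unwind the definitions of ``co-real analytic'' and compute the Beltrami form of $\overline{c_2}'$ with respect to $c_1$ explicitly in terms of the (real analytic) Beltrami form of $\overline{c_2}$ with respect to $c_1$ and the coefficients of $q$, $h$. Recall from the remark preceding the lemma that $c_1$ and $\overline{c_2}$ co-real analytic is equivalent to the Beltrami form of $\overline{c_2}$ with respect to $c_1$ being real analytic (via \cite[Theorem 3]{AB}), and that this condition does not depend on which of the two is taken as the ``base'' structure. So it suffices to exhibit the Beltrami form of $\overline{c_2}'$ relative to $c_1$ and check it is real analytic.

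First I would fix local holomorphic coordinates $z$ for $c_1$ and $\overline w$ for $\overline{c_2}$, and write $h=\lambda\, dz\, d\overline w$ with $\lambda\in C^\infty(\cdot,\C^*)$ and $q=\phee\, dz^2$ with $\phee$ a $c_1$-holomorphic function (in particular real analytic in $z$, hence in the underlying real analytic structure). Exactly as in the proof of Proposition \ref{perturb} — which cites the computation of \cite[Remark 3.9]{ElE} — the metric $h+q$ has $\vb*{c_+}(h+q)=c_1$, and the Beltrami form of $\vb*{c_-}(h+q)=\overline{c_2}'$ with respect to $c_1$ is
\[
\mu' = \mu + \frac{\overline{q(\partial_z,\partial_z)}}{\overline{h(\partial_z,\partial_z)}},
\]
where $\mu$ is the Beltrami form of $\overline{c_2}$ with respect to $c_1$. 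Here I am using that $h$ is positive, so $h(\partial_z,\partial_z)$ is nowhere zero and the quotient makes sense; smallness of $q$ guarantees $\|\mu'\|_\infty<1$ so that $\mu'$ genuinely defines a complex structure.

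Now the key point is that each ingredient on the right-hand side is real analytic in the real analytic structure $\mathcal{S}$ on $S$ induced by $c_1$ (equivalently by $\overline{c_2}$, since they are co-real analytic): $\mu$ is real analytic by hypothesis (the co-real analyticity of $(c_1,\overline{c_2})$), $q(\partial_z,\partial_z)=\phee$ is $c_1$-holomorphic hence real analytic, and $h(\partial_z,\partial_z)$ is real analytic because $h=\hpair$ is real analytic whenever $c_1,\overline{c_2}$ are co-real analytic. This last fact I would justify by recalling that when $c_1$ and $\overline{c_2}$ induce the same real analytic structure, the Bers maps $\f+\ccpair$ and $\fm\ccpair$ are real analytic (they solve the Beltrami equation with real analytic Beltrami coefficient $\mu$, which by elliptic regularity / Cauchy–Kovalevskaya type arguments — or directly by Theorem \ref{higherholdep} combined with real analyticity — have real analytic Riemann–Mapping solutions), so by formula \eqref{eq: def Bers metric} the Bers metric $h$ is real analytic. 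Since the ring of real analytic functions is closed under sums, products, complex conjugation (here we conjugate in the $z$-variable, which is compatible with $\mathcal{S}$), and reciprocals of nowhere-vanishing elements, $\mu'$ is real analytic with respect to $\mathcal{S}$. Hence the Beltrami form of $\overline{c_2}'$ with respect to $c_1$ is real analytic, so by \cite[Theorem 3]{AB} the complex structures $c_1$ and $\overline{c_2}'$ induce the same real analytic structure on $S$, i.e.\ they are co-real analytic. The main obstacle is simply making the claim ``$h=\hpair$ is real analytic when $c_1,\overline{c_2}$ are co-real analytic'' precise and citing the right source for real analyticity of Beltrami solutions with real analytic coefficient; everything else is a formal closure property of real analytic functions applied to the explicit formula for $\mu'$.
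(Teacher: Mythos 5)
Your proof is correct and follows essentially the same strategy as the paper's: both express the Beltrami form of $\overline{c_2}'$ with respect to $c_1$ as an algebraic combination of the real analytic Beltrami form of $\overline{c_2}$, the $c_1$-holomorphic coefficient $\phee$ of $q$, and the coefficients of the Bers metric $h$, then conclude by closure of real analytic functions under these operations. The only cosmetic difference is that the paper derives the needed formula in-line by comparing coefficients in $\lambda\,dz\,d\overline w + \phee\,dz^2 = \lambda'\,dz\,d\overline\eta$ rather than invoking \cite[Remark 3.9]{ElE} via Proposition \ref{perturb}, and it leaves the real analyticity of $h$ (hence of its coefficients) implicit, whereas you correctly spell it out.
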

\begin{proof}
    As usual, denote $h=\lambda dz d\overline w$, $h+q= \lambda'dz d\overline \eta$ and $q=\phee dz^2$ in local coordinates $z, \overline w, \overline \eta$ for $c_1, \overline{c_2},\overline{c_2}'$ respectively.

    From $\lambda dz d\overline w+q= \lambda'dz d\overline \eta$ we get that 
    \[
    \frac{\partial_z \overline \eta}{\partial_{\overline z}\overline \eta} \frac{dz}{\overline{dz}} = \frac{\lambda \partial_z \overline w + \phee}{\lambda \partial_{\overline z}\overline w}\frac{dz}{d\overline z},
    \]
    which is real analytic.
\end{proof}

Theorem D now follows from the following statement. 
\begin{thm}
\label{thm F extended}
    The set
    \[
    \mathcal B^{reg}=\{\ccpair \in \CSCS\ |\ L_{\hpair}\colon W^{k+2, p}(S)\to W^{k,p}(S) \text{ is invertible}\}
    \]
    is an open dense subset of $\mathcal C(S)\times\mathcal C(\overline S)$ which contains all the pairs of co-real analytic complex structures. The intersection of $\mathcal B^{reg}$ with any fiber of the projection $\CSCS\to \TSTS$ is connected.
\end{thm}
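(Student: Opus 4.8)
The plan is to establish three separate assertions: (i) openness of $\mathcal B^{reg}$, (ii) that every co-real analytic pair lies in $\mathcal B^{reg}$, and (iii) density together with fiberwise connectedness. Assertion (i) is immediate from Lemma \ref{openness}: since a Bers metric $\hpair$ has $\Delta_{\hpair}$ Fredholm of index zero (Proposition \ref{fredholmness}) and the map $\ccpair\mapsto\hpair$ is continuous (indeed holomorphic, Proposition \ref{appendixholomorphicity}), the set of pairs where $L_{\hpair}$ is invertible is open in $\CSCS$.

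For assertion (ii), the plan is a continuity/transport argument along a path of isotopies. Fix a co-real analytic pair $\ccpair$; up to the diagonal $\mathrm{Diff}_0^\omega(S)$-action (which preserves co-real analyticity) I may connect $\ccpair$ to $(c_1,\overline{c_1})$, i.e. to a genuine Riemannian-type pair, by first writing $\overline{c_2}=\psi^*\overline{c_1}$ for some $\psi\in\mathrm{Diff}_0^\omega(S)$ — this uses that $c_1$ and $\overline{c_2}$ induce the same underlying real analytic structure — and then taking a real analytic path $\phi_t\in\mathrm{Diff}_0^\omega(S)$ from $\mathrm{id}$ to $\psi$. Along $h_t=\vb*h_{(c_1,\phi_t^*\overline{c_1})}$ I run exactly the argument of Proposition \ref{isotopypreserve}: the set $A=\{t: 2\in\mathrm{spec}(\Delta_{h_t})\}$ is closed by Lemma \ref{openness}, and if it had a maximal point $t_0<1$ in the component containing an endpoint, Theorem F (real analyticity of kernel elements, since $h_t$ and the data are real analytic) plus the transport machinery of Theorem \ref{thm: complex transport of tensor} and Proposition \ref{prop: transport connection and laplacian} would propagate an eigenfunction past $t_0$, contradiction. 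Hence membership in $\mathcal B^{reg}$ is constant along the path, so it suffices to check the Riemannian endpoint $(c_1,\overline{c_1})$: there $\hpair$ is a genuine hyperbolic metric, $\Delta_{\hpair}$ is the usual Laplacian with nonpositive spectrum, so $2$ is not an eigenvalue and $L_{\hpair}$ is invertible.

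For assertion (iii), density follows because co-real analytic pairs are dense in every fiber of $\CSCS\to\TSTS$ (this is noted in the excerpt, via density of $\mathrm{Diff}_0^\omega(S)$ in $\mathrm{Diff}_0(S)$ and \cite[Theorem 3]{AB}), and by (ii) they all lie in $\mathcal B^{reg}$; since each fiber meets $\mathcal B^{reg}$ in a dense subset and every fiber is already dense in a neighborhood by openness, $\mathcal B^{reg}$ is dense in $\CSCS$. For fiberwise connectedness, the plan is: a fiber of $\CSCS\to\TSTS$ over $([c_1],[\overline{c_2}])$ is an orbit of $\mathrm{Diff}_0(S)\times\mathrm{Diff}_0(S)$ modulo the diagonal — more concretely it is parametrized, via the Schwarzian/quadratic-differential charts of Theorem \ref{thm: ElE deform}, by holomorphic quadratic differentials $q\in\mathrm{QD}(c_1)$ (deforming $\hpair$ to $\hpair+q$), and this parameter space is a vector space, hence connected and even convex. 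Proposition \ref{perturb} shows $L_{\hpair}$ invertible $\iff L_{\hpair+q}$ invertible, so $\mathcal B^{reg}$ is either all of or disjoint from such a chart; combined with the fact (Lemma \ref{lemma: h+q co real analytic}) that $\hpair+q$ stays co-real analytic when $\hpair$ is, and that the whole fiber is covered by overlapping such charts all meeting the co-real analytic locus which is contained in $\mathcal B^{reg}$, one concludes $\mathcal B^{reg}$ contains the entire fiber whenever it contains one co-real analytic point of it — so the intersection with each fiber is either empty or the whole fiber, in particular connected. (In fact, combined with (ii), the intersection is nonempty for every fiber, so it always equals the fiber.)

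The main obstacle is the continuity argument in assertion (ii): making precise that the transport $\veee_{Z_t}u$ of an eigenfunction exists for a definite time interval past $t_0$ and remains nonzero, and that $\Delta_{h_t}$ transports correctly — this is where Theorem F (to get real analyticity, a hypothesis of the transport theorem), Theorem \ref{thm: complex transport of tensor} (short-time existence and uniqueness), and Proposition \ref{prop: transport connection and laplacian} (the Laplacian intertwines with transport) must be combined carefully, exactly as in the proof of Proposition \ref{isotopypreserve}. The endpoint computation and the covering-by-charts bookkeeping for connectedness are routine by comparison.
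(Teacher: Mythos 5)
There is a genuine gap in your handling of assertion (ii). Your claim that a co-real analytic pair $\ccpair$ satisfies $\overline{c_2}=\psi^*\overline{c_1}$ for some $\psi\in\mathrm{Diff}_0^\omega(S)$ is false: such a $\psi$ exists precisely when $[\overline{c_2}]=[\overline{c_1}]$ in $\mathcal T(\overline S)$. Co-real analyticity only asks that $c_1$ and $\overline{c_2}$ be compatible with one common $C^\omega$ structure (equivalently, that the Beltrami form of one relative to the other be real analytic), and this puts no constraint whatsoever on the Teichm{\"u}ller class of $\overline{c_2}$ --- any real analytic Beltrami form produces a co-real analytic pair, and these hit every point of $\mathcal T(\overline S)$. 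Consequently your transport argument (which correctly mirrors the proof of Proposition \ref{isotopypreserve}) only shows that co-real analytic pairs lying in the fiber over the diagonal $\Delta\subset\TSTS$ are in $\mathcal B^{reg}$; it does not reach co-real analytic pairs over any other $([c_1],[\overline{c_2}])\in\TSTS$, which is the bulk of what the theorem asserts. Since density and connectedness in assertion (iii) both rest on (ii), they are not established either.

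The mechanism you are missing is precisely the combination of Proposition \ref{perturb}, Lemma \ref{lemma: h+q co real analytic}, and Theorem \ref{thm: ElE deform}: the quadratic-differential perturbation $h\mapsto h+q$ changes the Teichm{\"u}ller class of $\vb*{c_-}$ (Theorem \ref{thm: ElE deform} says $q\mapsto[\vb*{c_-}(h+q)]$ charts $\mathcal T(\overline S)$), preserves co-real analyticity (Lemma \ref{lemma: h+q co real analytic}), and preserves invertibility of $L_h$ (Proposition \ref{perturb}). The paper uses these to show the subsets $Y$ and $N$ of $\{[c_1]\}\times\mathcal T(\overline S)$ --- over which a co-real analytic representative has $L_{\hpair}$ invertible, resp. not --- are both open, complementary (by Proposition \ref{isotopypreserve}), and $Y\ni([c_1],[\overline{c_1}])$; connectedness of $\mathcal T(\overline S)$ then forces $N=\emptyset$. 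That is the argument that crosses fibers, and it is absent from your proposal. Your sketch of (iii) compounds the confusion by asserting that the \emph{fiber} of $\CSCS\to\TSTS$ is parametrized by quadratic differentials via Theorem \ref{thm: ElE deform}; it is not --- that chart moves \emph{between} fibers, while the fiber itself is an orbit of $\mathrm{Diff}_0(S)\times\mathrm{Diff}_0(S)$ and is infinite-dimensional. The paper's connectedness argument is instead that $\mathcal B^{reg}\cap F$ is open and every point of it has a connected neighborhood meeting the connected, dense set of co-real analytic pairs in $F$, which lies inside $\mathcal B^{reg}\cap F$ once (ii) is proved.
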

\begin{proof}
We first prove that $\mathcal B^{reg}$ contains all the co-real analytic pairs.

Fix $[c_1]\in \mathcal T(S)$. Define $Y\subseteq \{[c_1]\}\times \mathcal T(\overline S)$ to be the subset of elements admitting a co-real analytic pair of representatives $\ccpair$ such that $L_{\hpair}$ is invertible. Conversely, define $N\subseteq \{[c_1]\}\times \mathcal T(\overline S)$ to be the subset of elements admitting a co-real analytic pair of representatives $\ccpair$ such that $L_{\hpair}$ is \emph{not} invertible. 

By Proposition \ref{isotopypreserve}, $Y$ and $N$ are complementary. $Y$ contains $([c_1], [\overline{c_1}])$: in fact, for the Riemannian hyperbolic metric $h=\vb*h_{(c_1, \overline {c_1})}$, $L_h$ is the complexification of a real operator $W^{k+2, p}(S, \mathbb R)\to W^{k, p}(S, \mathbb R)$, which is invertible by a simple maximum-principle argument.

 Moreover, both $Y$ and $N$ are open: by Theorem \ref{thm: ElE deform}, Proposition \ref{perturb}, and Lemma \ref{lemma: h+q co real analytic}, if $c_1$ and $\overline{c_2}$ are co-real analytic and $([c_1], [\overline{c_2}])$ lies in $Y$ (resp $N$), then one can parametrize a local neighborhood of it in $Y$ (resp. $N$)  whose elements are of the form $([c_1], [\overline{c_2}^q])$ where $\hpair +q = :\vb*h_{(c_1, \overline{c_2}^q)}$ and $q$ is a small-enough $c_1$-holomorphic quadratic differential. We conclude that $Y= \{[c_1]\}\times \mathcal T(\overline S)$ so, by the generality of $c_1$, we get that all the co-real analytic pairs lie in $\mathcal B^{reg}$.

By Lemma \ref{openness}, $\mathcal B^{reg}$ is open.

Finally, let $F$ be a fiber of $\CSCS\to \TSTS$. 
 By looking at any smooth trivialization of $\CSCS\cong \mathrm{Diff}_0(S)\times \mathrm{Diff}_0(S)\times \TSTS$, the density of $\mathrm{Diff}^{\omega}_0(S)$ in $\mathrm{Diff}_0(S)$ implies that any element of $F\cap \mathcal B^{reg}$ admits a connected neighborhood inside it containing a co-real analytic pair; the statement follows from the fact that co-real analytic pairs in $F\cap \mathcal B^{reg}$ determine a connected subset.
\end{proof}

\section{The holomorphic extension of the Labourie-Loftin parametrization}

\subsection{The Labourie-Loftin parametrization}\label{sec: LL parametrization}
We recall the definition of Hitchin representations from the introduction. Firstly, embedding $\iota: \mathrm{PSL}(2,\R)\to \mathrm{PSL}(3,\R)=\mathrm{SL}(3,\R)$ using an irreducible representation to $\textrm{SL}(3,\R)$, we can turn a Fuchsian representation $\rho_0:\pi_1(S)\to\mathrm{PSL}(2,\R)$ into a representation to $\mathrm{SL}(3,\R)$. A representation $\rho:\pi_1(S)\to \mathrm{SL}(3,\R)$ is \textbf{Hitchin} if it can be deformed to a representation of the form $\iota\circ \rho_0$ through representations to $\mathrm{SL}(3,\R)$. The space of Hitchin representations inside $\chi(\pi_1(S),\mathrm{SL}(3,\R))$ determines a connected component, called a Hitchin component and labelled $\mathrm{Hit}_3(S).$ 

Recall that a domain $D\subset \mathbb{RP}^2$ is convex if can be seen as a convex domain inside an affine chart. By work of Goldman \cite{Gol} and Choi-Goldman \cite{ChG}, every Hitchin representation $\rho:\pi_1(S)\to \mathrm{SL}(3,\R)$ acts properly discontinuously on a bounded convex domain $D\subset \mathbb{RP}^2$, identifying $S$ with $\faktor{D}{\rho(\pi_1(S))}.$ Labourie and Loftin both observed that, by a result conjectured by Calabi \cite{Calabi} and attributed to Cheng-Yau \cite{CY1} \cite{CY2} and Calabi-Nirenberg (unpublished), there exists a unique $\rho$-equivariant hyperbolic affine sphere centered at zero and asymptotic to the positive cone of $D$ in $\R^3.$

Labourie and Loftin both prove that the Gauss equation for real affine spheres 
 $$\Delta_h u = e^{2u}-\frac{1}{4}h(\Q,\overline{\Q})e^{-4u}-1$$
 has a unique (real) solution, and thus one can parametrize a real analytic moduli space of real affine spheres by the bundle $\mathcal{M}_3(S)$ from the introduction (which we define precisely below). By taking the holonomies of affine spheres, one gets the Labourie-Loftin parametrization $$\mathcal{L}:\mathcal{M}_3(S)\xrightarrow{\sim} \mathrm{Hit}_3(S).$$
 In the same spirit, we now return to complex affine spheres and our Equation (\ref{eq: PDE of Gauss eq}) from Section 3 in order to build an open subset $\Omega\subset \mathcal{M}_3(S)\times \mathcal{M}_3(S)$ containing the diagonal and the map ${\mathcal{L}}_{\C}:\Omega\to \chi(\pi(S),\mathrm{SL}(3,\C))$ that extends $\mathcal{L}$.

\subsection{Global analysis and the Gauss equation}\label{sec: BM and Gauss}
In this subsection we parametrize a class of $C^\infty$ positive hyperbolic affine spheres as a complex Fréchet manifold. In order to make the global analysis more convenient, we first work over a family of Banach manifolds, and then we take a projective limit.

Recall $\mathcal{C}^l(S),\mathcal{C}(S),$ etc., defined in Section 2. Let $\mathcal{CD}^l(S)$ be the bundle over $\mathcal C^l(S)$ whose fiber over $c\in \mathcal C^l(S)$ is the space $H^0(S, \mathcal K_c^3)$ of $c-$holomorphic cubic differentials. Similarly, we have the analogous bundle $\mathcal{CD}(S)\to \mathcal{C}(S)$. It follows from work of Bers \cite[Theorem 2]{Bhd} that $\mathcal{CD}^l(S)$ (resp. $\mathcal{CD}(S)$) is a holomorphic vector bundle over $\mathcal{C}^l(S)$ (resp. $\mathcal{C}(S)$). We explain how to see these bundles precisely. Bers gave a biholomorphic embedding from $\mathcal{T}(S)$ onto a bounded domain $\mathcal{B}$ in $\C^{3g-3}$. In \cite[Appendix A]{Nolte}, Nolte explains carefully how to use \cite{Bhd} to see our bundle $\mathcal M_3(S)$ from the introduction as a holomorphic vector bundle on $\mathcal{B}$ (and thus on $\mathcal{T}(S)$). For every $l,$ Bers' construction defines a holomorphic map from $\mathcal{C}^l(S)\to \mathcal{B}$. Topologically pulling back the vector bundle structure of $\mathcal{M}_3(S)\to \mathcal{B}$ gives the bundle $\mathcal{CD}^l(S).$ Picking trivializations of the form $U\times V,$ with $U\in \mathcal{C}^l(S)$ and $V$ a finite dimensional complex vector space, we can direct sum the norm on $\mathcal{C}^l(S)$ with any norm on $V$. Patching things together, $\mathcal{CD}^l(S)$ obtains the structure of a Banach manifold and a holomorphic vector bundle over $\mathcal{C}^l(S).$ The procedure is the same for $\mathcal{CD}(S).$ Note that $\mathrm{Diff}_0(S)$ acts on $\mathcal{CD}(S)$ via the ordinary action on the base $\mathcal{C}(S)$ and by pull-back on the fibers. Using \cite{EE} to split trivializations for $\mathcal{CD}(S)$ as $U\times V =(U_1\times U_2)\times V,$ where $U_1\subset \mathcal{T}(S)$ and $U_2\subset \textrm{Diff}_0(S),$ we see that the quotient under this action is, as expected, $\mathcal{M}_3(S)$, and the projection map is holomorphic. We will denote by $\CD^l(\overline S)$, $\CD(\overline S)$, and $\mathcal M_3(\overline S)$ the analogous spaces for the oppositely oriented surface $\overline S$. We point out in passing that these constructions are not restricted to cubic differentials, but work just the same for differentials of any degree.

For $k>2$ and $l>k+2$, we consider the function
\begin{equation}
\label{Gauss equation section 6}
    \begin{split}
        G\colon \mathcal{CD}^l(S)\times \mathcal{CD}^l&(\overline S)\times W^{k+2, 2}(S) \to W^{k, 2}(S)
        \\
        G(c_1, \overline{c_2}, \Q_1, \overline{\Q_2},u):&=\Delta_{\vb*h_{\ccpair}} u-e^{2u}+\frac 1 4 \vb*h_{\ccpair}(\Q_1,\overline{\Q_2})\cdot   e^{-4u}+1
    \end{split}
\end{equation}
whose zeroes, by Corollary \ref{cor: PDE of Gauss eq}, correspond to positive hyperbolic complex affine spheres with Blaschke metric of the form $g=e^{2u} \hpair$. Moreover, we define the space
\begin{equation}
        \label{eq: def of CAS}
    \mathrm{CAS}(S):=\{(c_1,\overline{c_2},\Q_1,\overline{\Q_2},u) \in \CD(S)\times \CD(\overline S)\times C^\infty(S,\C) : G(c_1,\overline{c_2},\Q_1,\overline{\Q_2},u)=0\},\ 
\end{equation}
with the induced subspace topology.

Note that by Proposition \ref{nonlinearreg}, if $c_1,\overline{c_2},\Q_1,\overline{\Q_2}$ are $C^{\infty}$, then any $W^{k+2,2}$ solution $u$ to $G(c_1,\overline{c_2},\Q_1,\overline{\Q_2},u)=0$ is actually $C^\infty$, so in the definition of $\CAS(S)$ one can replace the $C^\infty$ above by $W^{k+2,2}$.

 By Example \ref{ex: ras}, the space of real hyperbolic affine spheres corresponds to the subspace
\[
\mathrm{RAS}(S):= \{ (c, \overline c, \Q, \overline{\Q}, u)\in \mathrm{CAS}(S)\ |\ u \text{ is a real function} \} .
\]
 
The map   $G\colon \mathcal{CD}^l(S)\times \mathcal{CD}^l(\overline S)\times W^{k+2, 2}(S) \to W^{k, 2}(S)$ is holomorphic: by Hartogs' theorem on separate holomorphicity, we're permitted to check on each coordinate separately. Since holomorphicity in $u$ is clear, we are left to prove it in the direction of $\mathcal{CD}^l(S)\times \mathcal{CD}^l(\overline S)$. By Proposition \ref{appendixholomorphicity}, the association $\ccpair\to \hpair$ is holomorphic. It follows from the Koszul formula for the Levi-Civita connection that for fixed $u$, $(c_1,\overline{c_2})\mapsto \Delta_{\hpair}u$ is holomorphic as well. Similarly, $(c_1,\overline{c_2}, \Q_1, \overline{\Q_2})\mapsto \hpair(\Q_1,\overline{\Q_2})$, being a composition of holomorphic functions, is holomorphic.

Fix a point $\sigma=(c_1,\overline{c_2},\Q_1,\overline{\Q_2},u)\in \CAS(S)$ and consider the linearization $L_{\sigma}$ of $G$ at $\sigma$ in the direction of $u$, which is a linear map from $T_uW^{k+2,2}(S,\C)=W^{k+2,2}(S,\C)$ to $T_{0}W^{k,2}(S,\C)=W^{k,2}(S,\C)$, where the equal sign is an implicit identification that we will always make. The evaluation of the map $L_\sigma$ at a tangent vector $v\in W^{k+2,2}(S,\C)$ is obtained by computing 
$$L_\sigma(v)=\frac{d}{dt}\Big|_{t=0}G(c_1,\overline{c_2},\Q_1,\overline{\Q_2},u+tv),$$ which returns 
\begin{equation}
\label{eq: linearized GE}
L_\sigma(v)=\Delta_{\hpair} v-2ve^{2u}- ve^{-4u}\hpair(\Q_1,\overline{\Q_2}) .
\end{equation}
By Propositions \ref{nonlinearreg} and \ref{fredholmness}, $L_\sigma$ defines a Fredholm operator of index zero on each $W^{k+2,2},$ and it is an isomorphism on $C^\infty$ if and only if it defines an isomorphism for every $W^{k+2,2}.$

\begin{defn}
    We say that $\sigma\in \CAS(S)$ is \textbf{infinitesimally rigid} if $L_\sigma$ is an isomorphism. We denote the subspace of infinitesimally rigid elements by $\CAS^*(S)$.
\end{defn}

First of all, we observe that $\mathrm{CAS}^*(S)$ is non-empty and contains the real affine spheres.
\begin{prop}
    $\mathrm{RAS}(S)$ is contained in $\mathrm{CAS}^*(S)$.
\end{prop}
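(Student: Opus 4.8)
The plan is to recognize $L_\sigma$, for $\sigma$ a real affine sphere, as a real Schr\"odinger-type operator with strictly positive potential, and then combine the maximum principle with the index-zero Fredholm property.

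First I would fix $\sigma=(c,\overline c,\Q,\overline\Q,u)\in\mathrm{RAS}(S)$ and recall from Example \ref{ex: ras} that in the real locus $h:=\vb*h_{(c,\overline c)}$ is the complexification of the Riemannian hyperbolic metric, $u$ is real-valued, and, writing $h=\lambda\,dz\,d\overline z$ and $\Q=\phee\,dz^3$ in a local holomorphic chart, $h(\Q,\overline\Q)=8|\phee|^2/\lambda^3$ (using $\lambda>0$), which is a globally well-defined real non-negative function. By formula \eqref{eq: linearized GE} the linearization is then
\[
L_\sigma v=\Delta_h v-c_0\,v,\qquad c_0:=2e^{2u}+e^{-4u}\,h(\Q,\overline\Q),
\]
where $c_0\colon S\to\R$ is smooth and strictly positive (as $2e^{2u}>0$). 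Since on real functions $\Delta_h$ is the ordinary Laplace--Beltrami operator of the hyperbolic metric and $c_0$ is real, $L_\sigma$ has real coefficients, hence sends real functions to real functions.

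Next I would verify invertibility. By Proposition \ref{fredholmness}, $L_\sigma\colon W^{k+2,2}(S,\C)\to W^{k,2}(S,\C)$ is Fredholm of index zero, because $h$ is a Bers metric and $L_\sigma$ has the same principal symbol as $\Delta_h$. It therefore suffices to prove that $L_\sigma$ is injective. Given $v\in W^{k+2,2}(S,\C)$ with $L_\sigma v=0$, I would write $v=v_1+iv_2$ with $v_1,v_2$ real (and at least $C^2$, either by Sobolev embedding since $k>2$, or by Proposition \ref{nonlinearreg}); reality of the coefficients gives $\Delta_h v_j=c_0 v_j$ for $j=1,2$. If $v_j$ attained a positive maximum at some $p\in S$ then $\Delta_h v_j(p)\le 0$ while $c_0(p)v_j(p)>0$, a contradiction, so $v_j\le 0$; applying the same argument to $-v_j$ forces $v_j\equiv 0$. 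Hence $v=0$, so $L_\sigma$ is an isomorphism and $\sigma\in\mathrm{CAS}^*(S)$.

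This argument is essentially immediate and I do not expect a genuine obstacle; the only point deserving a moment of care is checking that $h(\Q,\overline\Q)\ge 0$ in the real locus (equivalently, that the zeroth-order coefficient of $L_\sigma$ has the sign needed for the maximum principle), which follows at once from the local expression above. This is precisely the same maximum-principle mechanism already used, with the constant potential $c_0\equiv 2$, in the proof of Theorem G.
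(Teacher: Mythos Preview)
Your proof is correct and follows essentially the same structure as the paper's: both split $v$ into real and imaginary parts, observe that $L_\sigma$ has real coefficients, establish injectivity on real functions, and conclude via the index-zero Fredholm property. The only difference is that where the paper cites Labourie's \cite[Lemma 4.1.2]{Lab2} for the bijectivity of the real restriction, you supply the maximum-principle argument directly---the same mechanism the paper itself invokes later (for the constant-potential case $c_0\equiv 2$) in the proof of Theorem~\ref{thm F extended}.
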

\begin{proof}
    For all $\sigma\in \mathrm{RAS}(S)$, the linear map  $L_{\sigma}$ restricts to a map $W^{k+2,2}(S, \mathbb R)\to W^{k, 2}(S, \mathbb R)$, and Labourie showed in his study of real affine spheres that such a restriction is bijective \cite[Lemma 4.1.2]{Lab2}. $L_\sigma$ is the complex bilinear extension of the map from $W^{k+2,2}(S, \mathbb R)\to W^{k, 2}(S, \mathbb R)$, and hence for any $v\in W^{k+2,2}(S)$, $L_{\sigma}(v)=L_{\sigma}(Re(v))+ i L_{\sigma}(Im(v))$, and it follows that $L_\sigma$ is injective on $W^{k, \infty}(S, \mathbb C)$. Since $L_{\sigma}$ is Fredholm of degree zero, it is bijective.
\end{proof}
Denote by \begin{align*}
    \hat\pi\colon \CAS^*(S)&\to \CD(S)\times\CD(\overline S)\\
    \sigma=(c_1, \overline{c_2}, \Q_1, \overline{\Q_2}, u)&\mapsto (c_1,\overline{c_2},\Q_1 \overline{\Q_2})
\end{align*}
the natural projection. By transversality theory, we have the following. 

\begin{prop}
\label{prop: CAS* local biholo}
    The space $\CAS^*(S)$ is a complex Fr{\'e}chet manifold and $ \hat\pi\colon \CAS^*(S)\to \CD(S)\times\CD(\overline S)$ is a local biholomorphism.
\end{prop}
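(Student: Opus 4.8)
The plan is to apply the implicit function theorem for holomorphic maps between complex Banach (then Fréchet) manifolds to the holomorphic map $G$ at each point of $\CAS^*(S)$. First I would work in the Banach category: fix $k>2$ and $l>k+2$ and consider $G\colon \mathcal{CD}^l(S)\times\mathcal{CD}^l(\overline S)\times W^{k+2,2}(S)\to W^{k,2}(S)$, which was already shown to be holomorphic. Given $\sigma=(c_1,\overline{c_2},\Q_1,\overline{\Q_2},u)\in\CAS(S)$ with $L_\sigma$ an isomorphism (onto, using Fredholmness of index zero plus injectivity — this is exactly the definition of infinitesimal rigidity, and by Propositions \ref{nonlinearreg} and \ref{fredholmness} being an isomorphism on $C^\infty$ is equivalent to being an isomorphism on each $W^{k+2,2}$), the partial derivative $D_u G(\sigma)=L_\sigma$ is a Banach space isomorphism. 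The holomorphic implicit function theorem then produces a neighborhood $\mathcal{U}$ of $(c_1,\overline{c_2},\Q_1,\overline{\Q_2})$ in $\mathcal{CD}^l(S)\times\mathcal{CD}^l(\overline S)$, a neighborhood of $u$ in $W^{k+2,2}(S)$, and a unique holomorphic map $\mathcal{U}\to W^{k+2,2}(S)$, $(c_1',\overline{c_2}',\Q_1',\overline{\Q_2}')\mapsto u'$, whose graph is precisely the zero set of $G$ near $\sigma$. This graph, restricted to the smooth locus, is a holomorphic chart for $\CAS(S)$ near $\sigma$, and on it the projection $\hat\pi$ is visibly the (holomorphic) graph projection, hence a biholomorphism onto $\mathcal{U}$.

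The key steps, in order: (i) record that $G$ is holomorphic in the Banach setting and that $L_\sigma=D_uG(\sigma)$; (ii) verify $L_\sigma$ is a Banach isomorphism $W^{k+2,2}(S)\to W^{k,2}(S)$ — this uses Fredholmness of index zero (Proposition \ref{fredholmness}, since Bers metrics are deformable to Bers metrics) together with injectivity, which is the hypothesis; (iii) invoke the holomorphic implicit function theorem to get the local holomorphic section $u'=u'(c_1',\overline{c_2}',\Q_1',\overline{\Q_2}')$ and conclude that near $\sigma$ the set $\{G=0\}$ is a holomorphic Banach submanifold on which $\hat\pi$ is a biholomorphism; (iv) upgrade the regularity: by Proposition \ref{nonlinearreg}, if $(c_1',\overline{c_2}',\Q_1',\overline{\Q_2}')$ are $C^\infty$ then the $W^{k+2,2}$ solution $u'$ is automatically $C^\infty$, so the restriction to the smooth locus makes sense; (v) pass to the Fréchet category by taking the projective limit over $l\to\infty$ (equivalently $k\to\infty$), using that $\mathcal{C}(S)$, $\mathcal{CD}(S)$, $C^\infty(S,\C)$, etc., carry their natural Fréchet structures as inverse limits of the Banach ones and that the locally-defined solution maps are compatible across different $k$ (the solution is unique, so there is no ambiguity), yielding that $\CAS^*(S)$ is a complex Fréchet manifold and $\hat\pi$ a local biholomorphism there.

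One technical point to handle carefully in step (v) is that the holomorphic chart obtained at Banach level $l$ must be shown to be independent of $l$ (so that the charts glue to a Fréchet-manifold structure): this follows from uniqueness of the solution $u'$ to $G=0$ near $\sigma$, which holds at every level since $L_\sigma$ is injective at every level, combined with the regularity bootstrap of Proposition \ref{nonlinearreg} ensuring the level-$l$ solution is the same smooth function as the level-$l'$ solution for $l'>l$. I would also note that openness of the infinitesimally rigid condition — needed to know $\CAS^*(S)$ is open in $\CAS(S)$, so that the Fréchet-manifold structure on $\CAS(S)$ near rigid points restricts to one on $\CAS^*(S)$ — follows from Lemma \ref{openness} applied to the family of Blaschke metrics $g=e^{2u}h$ (the relevant operator $L_\sigma$ in \eqref{eq: linearized GE} differs from $L_{g,\lambda}$ only by a lower-order zeroth-order term with $C^\infty$ coefficients, and the same Rellich–Kondrachov compactness argument applies).

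\textbf{Main obstacle.} I expect the main obstacle to be organizing the passage from the Banach to the Fréchet category cleanly — in particular, checking that the locally-defined holomorphic solution maps at different Sobolev levels are genuinely the same map (so the charts are well-defined on $C^\infty$ data) and that holomorphicity survives the inverse limit. The underlying analytic inputs (Fredholmness, elliptic regularity, holomorphicity of $G$) are all already established in the excerpt, so the remaining work is essentially the bookkeeping of the implicit function theorem plus this Banach-to-Fréchet compatibility, rather than any new estimate.
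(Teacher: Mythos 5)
Your proposal is correct and follows essentially the same route as the paper's proof: the paper phrases the local construction via the regular-value/transversality theorem for the holomorphic map $\Phi(c_1,\overline{c_2},\Q_1,\overline{\Q_2},u)=(c_1,\overline{c_2},\Q_1,\overline{\Q_2},G(\sigma))$ and then observes $d_\sigma\Phi=(d_\sigma\hat\pi,0)$ on the zero set, while you invoke the implicit function theorem directly to produce the local holomorphic section $u'$ and identify $\hat\pi$ with the graph projection; these are the same argument. The passage to the Fréchet setting by fixing a compatible scale (e.g.\ $l=k+3$), checking that charts at different Sobolev levels agree by uniqueness of the IFT section plus elliptic bootstrap (Proposition \ref{nonlinearreg}), and taking the projective limit, is exactly what the paper does.
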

\begin{proof}
For $k>2$ and $l>k+2$, define 
$\CAS^*_{l,k}(S)$ to be the subset of elements $\sigma\in \CD^l(S)\times \CD^l(\overline S)\times W^{k+2,2}(S,\C)$ such that $G(\sigma)=0$ and its linearization $L_\sigma: W^{k+2,2}(S,\C)\to W^{k,2}(S,\C)$ is an isomorphism. Also denote by $\hat \pi\colon \CAS_{l,k}^*(S)\to \CD^l(S)\times\CD^l(\overline S) $ the projection that extends the projection from the smooth setting. The map \begin{align*}
\Phi\colon \CD^l(S)\times \CD^l(\overline S)\times W^{k+2,2}(S,\C)&\to \CD^l(S)\times \CD^l(\overline S) \times W^{k,2}(S,\C)\\
(c_1, \overline{c_2}, \Q_1, \overline{\Q_2}, u)&\mapsto (c_1, \overline{c_2}, \Q_1,  \overline{\Q_2}, G(c_1,  \overline{c_2}, \Q_1, \overline{\Q_2}, u) )
\end{align*} is holomorphic and maps onto the submanifold of the target described by $\mathcal{CD}^l(S)\times \mathcal{CD}^l(\overline{S})\times \{0\}$. By assumption, for all $\sigma\in \CAS_{l,k}^*(S)$, $L_\sigma$ is an isomorphism, and hence so is $d_\sigma \Phi$. By the Transversality Theorem for Complex Banach manifolds, $\mathrm{CAS}_{l,k}^*(S)$ is a complex Banach submanifold of $\CD^l(S)\times \CD^l(\overline S)\times W^{k+2,2}(S,\C)$.

Since $G|_{\CAS_{k,l}^*(S)}\equiv 0$, $d_\sigma\Phi= (d_\sigma \hat\pi, 0)$, and hence $d_\sigma \hat\pi\colon \CAS_{l,k}^*(S)\to  \CD^l(S)\times \CD^l(\overline S)$ is bijective, which implies that $\hat\pi$ is a local biholomorphism.

From now on, set $l= k+3$, and $\mathrm{CAS}_k^*(S)=\mathrm{CAS}_{k,k+3}^*$. Then, $\mathrm{CAS}_k^*(S)$  parametrizes complex affine spheres with lower regularity, and the intersection is $\cap_{k'\geq k}\mathrm{CAS}_k^*(S)$ is $\mathrm{CAS}^*(S).$ To put a Fréchet structure on $\mathrm{CAS}^*(S),$ fix an open cover of $\CAS_{k}^*(S)$ by open subsets $U_k$ over which $\hat \pi$ projects biholomorphically to open subsets $V_k\subset \CD^{k}(S)\times \CD^{k}(\overline S)$,  thus giving a compatible atlas for the complex Banach structure on $\CAS_{k}^*(S)$. For all $k'>k$, the intersection with $\CAS^*_{k'}(S)$ gives complex charts $U_{k'}= U_k\cap \CAS^*_{k'}\to V_{k'}= V_k \cap (\CD^{k'+3}(S)\times \CD^{k'+3}(\overline S))$. Finally, the restriction to 
\[
U_k\cap \CAS^*(S)= \bigcap_{k'\ge k} U_{k'} \to  \bigcap_{k'\ge k} V_{k'}= V_k\cap (\CD(S)\times \CD(\overline S))
\]
defines Frechét charts for $\CAS^*(S)$ for which $\hat \pi$ is holomorphic. 

The construction above can be formalized through a projective limit construction for complex Banach manifolds. As explained in \cite{Dodson}, in the smooth category, one can take a projective limit of Banach manifolds and the limit is a Fréchet manifold, and moreover a projective family of locally diffeomorphic mappings between projective systems gives rise to a local diffeomorphism between the two limiting Fréchet manifolds (see Propositions 3.13, 3.18, and Corollary 3.29 in \cite{Dodson}). The proofs go through verbatim in the holomorphic category, although we don't need to refer to \cite{Dodson} for the whole thing since the construction above is explicit.
\end{proof}

\begin{remark}\label{rem: character variety}
    We need to make a brief digression on the character variety and holomorphicity, which concerns Theorem \textrm{A}, Proposition \ref{prop: hol of CAS*} and Theorem \ref{thm: holonomy from the quotient of CAS}. 
 The space $\mathrm{Hom}(\pi_1(S),\mathrm{SL}(3,\C))$ embeds inside $\mathrm{SL}(3,\C)^{2g}$ as a complex manifold, and the $\mathrm{SL}(3,\C)$-action by conjugation is by biholomorphisms. We set $\chi(\pi_1(S), \SL(3,\C))$ to be the Hausdorffification of the quotient with respect to this action. In this paper, we take a naive standpoint and say that a map $F: M\to \chi(\pi_1(S),\mathrm{SL}(3,\C))$, with $M$ a complex (Fréchet) manifold, is holomorphic if, around every point of $M$, it locally lifts to holomorphic maps $M\supset U\to \mathrm{Hom}(\pi_1(S),\mathrm{SL}(3,\C))$. As a result, if $N\to M$ is a holomorphic map between complex (Fréchet) manifolds, then the composition $N\to \chi(\pi_1(S),\mathrm{SL}(3,\C))$ is holomorphic.

The quotient of the subset of irreducible representations in $\textrm{Hom}(\pi_1(S),\mathrm{SL}(3,\C))$ carries the structure of a complex orbifold \cite[Proposition 49]{Sik} (the smooth locus consists of classes of representations that are also simple, which means the centralizer is the center of $\textrm{SL}(3,\C)$ \cite[Corollary 50]{Sik}). If $F$ has image in this subset, then it is holomorphic as a map between complex orbifolds.

We don't know whether the holonomy of a positive hyperbolic complex affine sphere is irreducible. Since the irreducible condition is open, we certainly have it in a neighbourhood of the diagonal. In a future work, we will confirm that every point in the domain $\Omega$ of Theorem A corresponds to an irreducible representation.

\iffalse
but a lot of the families of points in the set $\Omega$ from Theorem A have irreducible holonomy. Indeed, by openness of irreducibility, we have the condition for free around the diagonal and the quasi-Fuchsian space $\mathcal{T}(S)\times \mathcal{T}(\overline{S})$. By Theorem D, points of the form $(c,\overline{c},\Q_1,0)$ and $(c,\overline{c},0,\Q_2),$ correspond to opers, which are known to have irreducible holonomy (see \cite{BD} and \cite[Proposition 4.8]{Wen}). As we alluded to in the introduction, points coming from $\mathcal{M}_3(S)\times \mathcal{T}(\overline{S})$ should correspond to some generalization of opers, so we strongly believe that these points should be irreducible (see Remark \ref{rem: opersirreducible} for more on this)
\fi.
 \end{remark}

\begin{prop}
\label{prop: hol of CAS*}
    The holonomy map $hol
\colon\mathrm{CAS}^*(S)\to \chi(\pi_1(S), \mathrm{SL}(3, \C))$ is holomorphic.
\end{prop}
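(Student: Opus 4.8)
The plan is to exhibit, locally around any point of $\CAS^*(S)$, a holomorphic lift of $hol$ to $\mathrm{Hom}(\pi_1(S),\SL(3,\C))$, in the sense of Remark \ref{rem: character variety}. By Proposition \ref{prop: CAS* local biholo} the projection $\hat\pi\colon \CAS^*(S)\to \CD(S)\times\CD(\overline S)$ is a local biholomorphism, so it suffices to fix $\sigma_0\in\CAS^*(S)$, pass to a chart $U$ on which $\hat\pi$ is a biholomorphism onto an open set $V\subset\CD(S)\times\CD(\overline S)$, and show that the composition $V\xrightarrow{(\hat\pi)^{-1}}\CAS^*(S)\xrightarrow{hol}\chi$ admits a holomorphic lift. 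Over $U$ we have, tautologically, holomorphically varying data $(c_1,\overline{c_2},\Q_1,\overline{\Q_2},u)$: the Bers metric $h=\vb*h_{\ccpair}$ varies holomorphically by Proposition \ref{appendixholomorphicity}, the cubic differentials vary holomorphically by construction of the bundles $\CD(S),\CD(\overline S)$, and $u$ varies holomorphically because $\hat\pi$ is a biholomorphism and the $u$-coordinate of a point of $\CAS^*(S)$ is, by the implicit function theorem argument inside Proposition \ref{prop: CAS* local biholo}, a holomorphic function of the base point.

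The next step is to produce, from this data, a holomorphically varying family of flat connections on a fixed rank-$3$ bundle over $S$, whose holonomies give $hol$. Here I would unwind the construction in the proof of Theorem \ref{thm: integration GC} (Step 1): from $g=e^{2u}h$ and $C=\Q_1+\overline{\Q_2}$ one builds the affine connection $\nabla$ on $\C TS$ via $C=-2g(\nabla-\nabla^g,\cdot)$, forms the bundle $\mathrm V=\C T\widetilde S\oplus N$ with the connection $\hat D$ determined by $(\nabla,g,\mathsf S=-\id,\tau=0)$, and observes that the Gauss equation (\ref{eq: Gauss CAS})—which holds by Corollary \ref{cor: PDE of Gauss eq} exactly because $G(\sigma)=0$—forces $\mathrm R^{\hat D}=0$. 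Trivializing $\mathrm V$ over $\widetilde S$ by parallel transport produces the developing map and the holonomy $\rho\colon\pi_1(S)\to\SL(3,\C)\ltimes\C^3$, which after the normalization of Remark \ref{rmk: meeting point} (translating the meeting point of affine normals to $\underline 0$) lands in $\SL(3,\C)$. The essential point is that every object in this chain—$\nabla^g$ and $\nabla$ from the Koszul/Levi-Civita formula, the coefficient matrices of $\hat D$ in a fixed smooth local frame, hence the connection $1$-form as a $W^{k,2}$-valued (or, after elliptic bootstrapping via Proposition \ref{nonlinearreg}, smooth) section—depends holomorphically on $\sigma\in U$, because each is built by holomorphic algebraic operations and $\C$-linear differential operators with holomorphically varying coefficients out of $(h,\Q_1,\overline{\Q_2},u)$.

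Finally I would argue that holonomy of a holomorphically varying flat connection is holomorphic. Concretely, fix a smooth trivialization of the pullback bundle on $\widetilde S$ and express the parallel transport of $\hat D$ along a fixed family of loops generating $\pi_1(S)$ as the solution of a linear ODE whose coefficients depend holomorphically on $\sigma$; the holomorphic dependence of solutions of linear ODEs on holomorphic parameters gives a holomorphic map $U\to\SL(3,\C)^{2g}$ landing in $\mathrm{Hom}(\pi_1(S),\SL(3,\C))$ and lifting $hol$. Since $U$ covers $\CAS^*(S)$ and these local lifts are what the definition in Remark \ref{rem: character variety} requires, $hol\colon\CAS^*(S)\to\chi(\pi_1(S),\SL(3,\C))$ is holomorphic. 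The main obstacle I anticipate is bookkeeping the regularity carefully: one must track that the connection coefficients genuinely vary holomorphically as maps into the correct Banach spaces at each finite level $\CAS^*_k(S)$ (so that one may invoke holomorphicity of ODE solutions with Banach-space-valued holomorphic coefficients), and then pass to the Fréchet limit—this is precisely where the holomorphicity of $\ccpair\mapsto\vb*h_{\ccpair}$ in the sharp Sobolev spaces (Proposition \ref{appendixholomorphicity}, proved via Theorem \ref{higherholdep}) is used, and where one should be mildly careful rather than hand-wavy.
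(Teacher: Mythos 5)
Your proposal is correct and follows essentially the same route as the paper: build the flat connection $D_\sigma$ on $\mathrm V=\C T\widetilde S\oplus N$ from the structural data $(g=e^{2u}\vb*h_{\ccpair},\nabla,\mathsf S=-\id,\tau=0)$, observe that this assignment is holomorphic using Proposition~\ref{appendixholomorphicity}, and conclude by holomorphicity of the holonomy of a holomorphically varying flat connection (the paper cites Gunning for this last step; your parallel-transport-ODE argument is the standard way to see it). Your initial reduction via the local biholomorphism $\hat\pi$ is a harmless extra step --- the paper argues directly on $\CAS^*(S)$ as a complex Fréchet manifold, but the substance is identical.
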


\begin{proof}
The complex affine sphere $\sigma$ parametrized by $(c_1, \overline{c_2}, \Q_1, \overline{\Q_2}, u)\in \mathrm{CAS}^*(S)$ 
gives rise to structural data $g=e^{2u}\hpair$, $\nabla= -\frac 1 2 g^{-1}(\Q_1+\overline{\Q_2})+\nabla^g$, $\mathrm S=-\mathrm{id}$, and $\tau=0$.  Going through Theorem \ref{thm: integration GC}, out of this data we obtain a rank $3$ complex vector bundle over $S$ that carries a flat $\mathrm{SL}(3,\C)$-connection $D^\sigma$ that satisfies the structural equation given at the beginning of the proof of Theorem \ref{thm: integrating GE for CAS}.

Using Proposition \ref{appendixholomorphicity}, the correspondence $(c_1, \overline{c_2}, \Q_1, \overline{\Q_2}, u)\mapsto e^{2u}\hpair$ is holomorphic, and it is routinely checked that $(c_1, \overline{c_2}, \Q_1, \overline{\Q_2}, u)\mapsto -\frac 1 2 g^{-1}(\Q_1+\overline{\Q_2})+\nabla^g$ is holomorphic as well.  Using the formula from Theorem \ref{thm: integrating GE for CAS}, it is clear that $\sigma=(c_1, \overline{c_2}, \Q_1, \overline{\Q_2}, u)\mapsto D_\sigma$ defines a holomorphic map to the complex affine space that models flat $\mathrm{SL}(3,\C)$ connections.

It is then well known that the holonomy map from the space of flat $\mathrm{SL}(3,\C)$-connections to the character variety is holomorphic \cite[\S 9]{Gun}. Intuitively, if $D$ is a holomorphically varying flat connection, the $D$-parallel section passing by a fixed element in the bundle varies holomorphically as well: by fixing a point $p$ on $\widetilde S$ and a basis on one fiber, one gets that the holonomy homomorphism $\pi_1(S)\to \SL(3,\C)$ induced by the corresponding basis of $D$-parallel sections varies holomorphically inside $\mathrm{Hom}(\pi_1(S), \SL(3,\C))$, hence $hol$ is holomorphic in the sense of Remark \ref{rem: character variety}.
\end{proof}

\subsection{The space $\Sph(S)$}\label{sec: CAS space}

We are now ready to define a finite-dimensional quotient of $\CAS^*(S)$ that keeps track of the geometric information of the corresponding complex affine spheres.

For the rest of the section, we will take local trivializations of the bundle $\mathcal C(S)\to \mathcal T(S)$ obtained from local holomorphic sections $s\colon \mathcal T(S)\supset U\to \mathcal C(S)$ such that each element in the image is a real analytic complex structure compatible with a fixed real analytic structure on $S$. {We show a way to construct such a section in Remark \ref{rmk: nice sections of C(S)} below.} In this way, we can trivialize open subsets of $\mathcal C(S)$ as $
U\times \mathrm{Diff}_0(S)$, with $U$ projecting injectively to $\mathcal T(S)$, in such a way that the projection $U\times \mathrm{Diff}_0(S)\to U$ is holomorphic (recall Theorem \ref{thm: earle eells}), and the trivialization restricts to  $\mathcal C^{\omega}(S)\cap (U\times \mathrm{Diff}_0(S))=U\times \mathrm{Diff}_0^{\omega}(S) $. By trivializing $\mathcal{CD}(S)\to \mathcal{C}(S)$ over such an open subset, we also obtain local trivializations for the bundle $\CD(S)\to \mathcal M_3(S)$.

\begin{remark}
    \label{rmk: nice sections of C(S)}
 We can construct such sections using Theorem \ref{thm: ElE deform}: starting from a real analytic representative $c$, let $h=\vb*h_{(c,c)}$ be the (real analytic) hyperbolic metric in its conformal class, then define a local section by taking complex structures of the form $\vb*{c_+}(h+\overline q)$ with $q\in \mathcal K_c^2(S)$ and $\|q\|_h<\frac 12$ (the corresponding Beltrami form for $\vb*{c_+}(h+\overline q)$ is in fact $\frac {\overline q}h$, see \cite[Remark 2.9]{ElE}). 
\end{remark}

Now, define the map 
\begin{align*}    
[\hat \pi]\colon \CAS^*(S)&\to \mathcal M_3(S)\times\mathcal M_3(\overline S)\\
(c_1, \overline{c_2}, \Q_1, \overline{\Q_2}, u)&\mapsto\left( [c_1, \Q_1], [\overline{c_2}, \overline{\Q_2}]\right). \ 
\end{align*}

{Continuous deformations within the fibers of $[\hat \pi]$ are best understood using the  transport formalism.}

\begin{prop}
\label{prop: fibers are transports}
    Let $\sigma_t=(c_1^t, \overline{c_2}^t, \Q_1^t,  \overline{\Q_2}^t, u_t)\in \CAS^*(S)$ be a $C^1$ path contained in a fiber of $[\hat \pi]$. Then, for all $t$, there exists $Z_t\in \Gamma(\C TS)$ such that
    \[
    \dot \sigma_t= (\mathscr L_{Z_t} c^t_1,\mathscr L_{Z_t} \overline{c_2}^t, \mathscr L_{Z_t} \Q_1^t, \mathscr L_{Z_t} \overline{\Q_2}^t, \mathscr L_{Z_t} u_t). 
    \]
\end{prop}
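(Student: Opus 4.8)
The plan is to recover the pair of isotopies hidden inside the fibers of $[\hat\pi]$, repackage them as a single complex vector field via Proposition \ref{prop: tangent to C(S)C(S)}, and verify the five coordinate equations in three groups, the last of which uses the Gauss equation together with infinitesimal rigidity. First, since $[\hat\pi](\sigma_t)$ is constant and $\hat\pi$ is a local biholomorphism (Proposition \ref{prop: CAS* local biholo}), the path $\hat\pi(\sigma_t)=(c_1^t,\overline{c_2}^t,\Q_1^t,\overline{\Q_2}^t)$ stays in one fiber of the quotient $\CD(S)\times\CD(\overline S)\to\mathcal M_3(S)\times\mathcal M_3(\overline S)$. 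This quotient being a principal $\mathrm{Diff}_0(S)\times\mathrm{Diff}_0(S)$-bundle, a $C^1$ path in a fiber lifts to $C^1$ isotopies $\phi_1^t,\phi_2^t\in\mathrm{Diff}_0(S)$ with $\phi_i^0=\mathrm{id}$ and $(c_1^t,\Q_1^t)=(\phi_1^t)^*(c_1^0,\Q_1^0)$, $(\overline{c_2}^t,\overline{\Q_2}^t)=(\phi_2^t)^*(\overline{c_2}^0,\overline{\Q_2}^0)$. In particular $[c_1^t]$ and $[\overline{c_2}^t]$ are constant in Teichm\"uller space, so $(\dot c_1^t,\dot{\overline{c_2}}^t)$ lies in the kernel of $d\Psi_{c_1^t}\oplus d\overline\Psi_{\overline{c_2}^t}$, and Proposition \ref{prop: tangent to C(S)C(S)} produces a unique $Z_t\in\Gamma(\C TS)$ with $\mathscr L_{Z_t}c_1^t=\dot c_1^t$ and $\mathscr L_{Z_t}\overline{c_2}^t=\dot{\overline{c_2}}^t$, settling the first two components.

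For the cubic differentials, let $X_1^t$ be the real vector field attached to $Z_t$ by Remark \ref{rmk: complex derivative of a complex structure}, so $\mathscr L_{Z_t}c_1^t=\mathscr L_{X_1^t}c_1^t$. Writing $\dot\phi_1^t$ for the generating field of $\phi_1^t$ (normalized so that $\tfrac{d}{dt}(\phi_1^t)^*\alpha=\mathscr L_{\dot\phi_1^t}((\phi_1^t)^*\alpha)$ for all tensors $\alpha$), we have $\mathscr L_{X_1^t}c_1^t=\dot c_1^t=\mathscr L_{\dot\phi_1^t}c_1^t$; since a closed surface of genus $\geq2$ admits no holomorphic vector fields, the infinitesimal $\mathrm{Diff}$-action on $\mathcal C(S)$ is injective, forcing $X_1^t=\dot\phi_1^t$. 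By the closing statement of Remark \ref{rmk: complex derivative of a complex structure} on complex Lie derivatives of $c_1$-holomorphic tensors, $\mathscr L_{Z_t}\Q_1^t=\mathscr L_{X_1^t}\Q_1^t=\mathscr L_{\dot\phi_1^t}(\phi_1^t)^*\Q_1^0=\dot\Q_1^t$, and symmetrically $\mathscr L_{Z_t}\overline{\Q_2}^t=\dot{\overline{\Q_2}}^t$; alternatively one can cite Remark \ref{rmk: transport Bers metrics}. This settles the third and fourth components.

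It remains to show $\dot u_t=\mathscr L_{Z_t}u_t=\partial_{Z_t}u_t$, and this is where the equation enters. Differentiating $G(c_1^t,\overline{c_2}^t,\Q_1^t,\overline{\Q_2}^t,u_t)\equiv0$ gives $\partial_{c_1}G[\dot c_1^t]+\partial_{\overline{c_2}}G[\dot{\overline{c_2}}^t]+\partial_{\Q_1}G[\dot\Q_1^t]+\partial_{\overline{\Q_2}}G[\dot{\overline{\Q_2}}^t]+L_{\sigma_t}[\dot u_t]=0$, with all derivatives taken at $\sigma_t$ and $L_{\sigma_t}=\partial_uG$ the linearization \eqref{eq: linearized GE}. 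The key input is the infinitesimal naturality of $G$: for any $\sigma=(c_1,\overline{c_2},\Q_1,\overline{\Q_2},u)$ with $\Q_i$ holomorphic for $c_i$ and any $Z\in\Gamma(\C TS)$,
\[
\partial_{c_1}G[\mathscr L_Zc_1]+\partial_{\overline{c_2}}G[\mathscr L_Z\overline{c_2}]+\partial_{\Q_1}G[\mathscr L_Z\Q_1]+\partial_{\overline{\Q_2}}G[\mathscr L_Z\overline{\Q_2}]+L_\sigma[\partial_Zu]=\partial_Z\big(G(\sigma)\big).
\]
Applying this at $\sigma=\sigma_t$, $Z=Z_t$: the right side vanishes since $G(\sigma_t)=0$, and on the left the first four terms coincide with those in the differentiated equation by the identities of the previous two paragraphs; subtracting yields $L_{\sigma_t}[\dot u_t-\partial_{Z_t}u_t]=0$. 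Because $\sigma_t\in\CAS^*(S)$, $L_{\sigma_t}$ is an isomorphism, so $\dot u_t=\partial_{Z_t}u_t$, which is the fifth equation.

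The main obstacle is the infinitesimal naturality identity, which is in effect the differentiated form of Corollary \ref{cor: lie derivative Bers} and Proposition \ref{prop: transport connection and laplacian}. To prove it directly, I would pick real isotopies $\phi_1^s,\phi_2^s$ realizing the deformations $\mathscr L_Zc_1,\mathscr L_Z\overline{c_2}$ in the sense of Remark \ref{rmk: complex derivative of a complex structure}, together with the linear path $u^s=u+s\,\partial_Zu$, so that the left side is $\tfrac{d}{ds}\big|_0G\big((\phi_1^s)^*c_1,(\phi_2^s)^*\overline{c_2},(\phi_1^s)^*\Q_1,(\phi_2^s)^*\overline{\Q_2},u^s\big)$. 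By Corollary \ref{cor: lie derivative Bers}, the Bers metric $h_s=\vb*h_{((\phi_1^s)^*c_1,(\phi_2^s)^*\overline{c_2})}$ satisfies $\dot h_0=\mathscr L_Zh$; combining this with the naturality of the Laplacian and of the metric pairing --- each a natural operator, so differentiating the pullback identity $\Delta_{\psi^*g}\psi^*f=\psi^*\Delta_gf$ along a real flow and extending $\C$-bilinearly in the vector-field slot gives $D_g\Delta(g,f)[\mathscr L_Zg]+\Delta_g(\partial_Zf)=\partial_Z(\Delta_gf)$, and likewise for $g\mapsto g(\Q_1,\overline{\Q_2})$ --- and with $\partial_Z$ a derivation on functions, one computes term by term that $\tfrac{d}{ds}\big|_0G(\cdots)=\partial_Z(G(\sigma))$. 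The one point that is not purely formal is that $\mathscr L_Zu$ is the full complex directional derivative $\partial_Zu$ rather than a real Lie derivative; this is exactly what the $\C$-bilinear extension in the vector-field argument absorbs, so I expect it to amount to careful bookkeeping rather than a genuine difficulty.
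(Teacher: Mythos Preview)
Your proposal is correct and follows essentially the same approach as the paper: recover the pair of isotopies from the fiber, package them into a complex vector field $Z_t$ via Proposition~\ref{prop: tangent to C(S)C(S)}, and then differentiate the Gauss equation and invoke the injectivity of $L_{\sigma_t}$ to pin down $\dot u_t$. Your treatment of the cubic-differential components via the absence of holomorphic vector fields on genus $\geq 2$ surfaces is a cleaner substitute for the paper's reliance on the specific construction in Remark~\ref{rmk: complex derivative of a complex structure}, and your explicit formulation of the ``infinitesimal naturality'' identity is exactly what the paper's appeal to Proposition~\ref{prop: transport connection and laplacian} is implicitly using at first order; the logical skeleton is the same.
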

\begin{proof}
  To simplify the notation, we do the computation in $t=0$, then the statement follows for the whole path by reparametrizing. Denote $\sigma_0=(c_1, \overline{c_2}, \Q_1, \overline{\Q_2}, u)$. Since $\sigma_t$ is contained in a fiber of $[\hat \pi]$, there exist $\phi_1^t, \phi_2^t\in \mathrm{Diff}_0(S)$ such that 
  \[
  (c_1^t, \Q_1^t)=((\phi_1^t)^*c_1, (\phi_1^t)^*\Q_1), \qquad (\overline{c_2}^t, \overline{\Q_2^t})=((\phi_2^t)^*\overline{c_2}, (\phi_2^t)^*\overline{\Q_2}). \ 
  \]
  By Proposition \ref{prop: tangent to C(S)C(S)} and Remark \ref{rmk: complex derivative of a complex structure}, there exist $Z\in \Gamma(\C TS)$ such that 
$$\frac d{dt}\Big|_0 ((\phi_1^t)^*c_1, (\phi_2^t)^*\overline{c_2}, (\phi_1^t)^*\Q_1, (\phi_2^t)^*\overline{\Q_2})= (\mathscr L_{Z} c_1,\mathscr L_{Z} \overline{c_2}, \mathscr L_{Z} \Q_1, \mathscr L_{Z} \overline{\Q_2}). $$
Applying both $\frac d {dt}$ and $\mathscr L_Z$ to Gauss equation $G(\sigma_t)=0$ in $t=0$, by Proposition \ref{prop: transport connection and laplacian} one gets that 
\begin{align*}
0=&\Delta_{\hpair} \left(\frac d {dt}\Big|_0 u- \mathscr L_ Z u\right)- 2 e^{2u}\left(\frac d {dt}\Big|_0 u- \mathscr L_ Z u\right) - e^{-4u} {\hpair} ( \Q_1 ,\overline{\Q_2}) \left(\frac d {dt}\Big|_0 u- \mathscr L_ Z u\right)  \\
&=L_\sigma\left(\frac d {dt}\Big|_0 u- \mathscr L_ Z u\right). \ 
\end{align*}
Since $\sigma\in \CAS^*(S)$, $L_\sigma$ is injective, hence $\frac d {dt}\Big|_0 u= \mathscr L_ Z u$.
\end{proof}

The last proposition motivates the definition of the following quotient for $\CAS^*(S)$.

\begin{defn}
Define the equivalence relation $\sim$ on $\mathrm{CAS}^*(S)$ by $\sigma\sim \sigma'$ if and only if $\sigma$ and $\sigma'$ lie in the same connected component of a fiber of $[\hat \pi]$.

Define $$\Sph (S):= \faktor{\CAS^*(S)}{\sim}$$ as the quotient by $\sim$. We denote the quotient map by $\mathrm p_{\sim}\colon \CAS^*(S)\to \Sph(S)$.
\end{defn}

By construction, $\hat \pi$ descends to a map
\begin{align*}
    \pi\colon \Sph (S)&\to \mathcal M_3(S)\times \mathcal M_3(\overline S)\\
    [\sigma]&\mapsto [\hat \pi(\sigma)].
\end{align*}

\begin{prop}
    \label{prop: finite quotient of CAS}
$\Sph(S)$ has a structure of finite-dimensional complex manifold such that the quotient map $\mathrm p_\sim\colon \CAS^*(S)\to \Sph(S)$ is holomorphic, and the map $\pi\colon \Sph(S)\to  \mathcal M_3(S)\times \mathcal M_3(\overline S)$ is a local biholomorphism.
\end{prop}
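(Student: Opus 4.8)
The plan is to produce local complex charts for $\Sph(S)$ from the local biholomorphism $\hat\pi\colon\CAS^*(S)\to\CD(S)\times\CD(\overline S)$ (Proposition \ref{prop: CAS* local biholo}) combined with the local trivializations of $\CD(S)\to\mathcal M_3(S)$ and $\CD(\overline S)\to\mathcal M_3(\overline S)$ fixed at the start of the subsection, which trivialize $\mathcal C(S)$ as $U\times\mathrm{Diff}_0(S)$ with $U$ mapping injectively to $\mathcal T(S)$. Concretely, over a trivializing open set $W\subset\mathcal M_3(S)\times\mathcal M_3(\overline S)$, write $\CD(S)\times\CD(\overline S)\supset\widetilde W\cong W\times\bigl(\mathrm{Diff}_0(S)\times\mathrm{Diff}_0(S)\bigr)$, pull this product structure back through a local section of $\hat\pi$ to a neighbourhood $V\subset\CAS^*(S)$, and observe that the composite $V\to W\times(\mathrm{Diff}_0(S)\times\mathrm{Diff}_0(S))\to W$ is holomorphic with connected fibers contained in the fibers of $[\hat\pi]$. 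The key point, which I address below, is that the $\sim$-classes in $V$ are exactly the fibers of this map onto $W$; granting that, $\mathrm p_\sim$ restricted to $V$ is (up to the identification) this holomorphic submersion onto $W$, so $W$ itself serves as a chart for $\Sph(S)$, $\mathrm p_\sim$ is holomorphic, $\pi$ is locally the identity on $W$ hence a local biholomorphism, and $\dim_{\C}\Sph(S)=\dim_{\C}\mathcal M_3(S)\times\mathcal M_3(\overline S)=12g-12$, in particular finite.

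The first substantive step is to verify that the fibers of $V\to W$ coincide with the $\sim$-classes in $V$. One inclusion is immediate: a fiber of $V\to W$ is connected (the fibers of the product trivialization are $\mathrm{Diff}_0(S)\times\mathrm{Diff}_0(S)$-orbits, which are connected since $\mathrm{Diff}_0(S)$ is connected) and is contained in a fiber of $[\hat\pi]$, so it lies in a single $\sim$-class. For the reverse inclusion, one must show that if $\sigma,\sigma'\in V$ lie in the same connected component of a fiber of $[\hat\pi]$, then they have the same image in $W$; equivalently, that a path in $\CAS^*(S)$ staying in a fiber of $[\hat\pi]$ and starting in $V$ cannot escape $V$ while changing its $W$-coordinate. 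Here one uses that $\hat\pi$ is a local biholomorphism: the $[\hat\pi]$-fiber through $\sigma$ maps under $\hat\pi$ into the $\bigl(\mathrm{Diff}_0(S)\times\mathrm{Diff}_0(S)\bigr)$-orbit of $\hat\pi(\sigma)$ in $\CD(S)\times\CD(\overline S)$, and because $\hat\pi$ is a local diffeomorphism and the orbit is a (locally closed, weak) submanifold, the component of the $[\hat\pi]$-fiber through $\sigma$ is locally identified with a piece of that orbit, which sits inside a single $W$-fiber. Making this precise needs a little care about the regularity/Banach-manifold structure of the $\mathrm{Diff}_0(S)$-orbits and about working at finite regularity $l=k+3$ and then passing to the projective limit as in Proposition \ref{prop: CAS* local biholo}; this bookkeeping — patching the charts $W$ consistently over overlaps and checking the transition maps are biholomorphic, which follows from the holomorphy of $\hat\pi$ and of the bundle trivializations of $\CD(S)$ and $\CD(\overline S)$ — is routine but is where the writing effort goes.

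The second step is to record that $\Sph(S)$ is Hausdorff (so that "complex manifold" is legitimate): since $\mathcal M_3(S)\times\mathcal M_3(\overline S)$ is Hausdorff and $\pi$ is a local homeomorphism, one needs that distinct points of $\Sph(S)$ with the same $\pi$-image can be separated; but two such points are distinct connected components of a single $[\hat\pi]$-fiber, and one separates them by saturated open sets using that a fiber of $[\hat\pi]$, being locally modelled on the $\mathrm{Diff}_0(S)\times\mathrm{Diff}_0(S)$-orbit, has open connected components. Finally, holomorphy of $\pi$ and of $\mathrm p_\sim$ is immediate from the chart description: in the chart $W$, $\pi$ is the identity and $\mathrm p_\sim|_V$ is the holomorphic submersion $V\to W$.

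The main obstacle I anticipate is precisely the reverse inclusion in the first step — showing a connected component of a $[\hat\pi]$-fiber cannot spread across different $W$-fibers — because it requires knowing that the $\mathrm{Diff}_0(S)$-action on $\CD(S)$ has orbits that are embedded (or at least initial) submanifolds transverse in the right way, and that the local biholomorphism $\hat\pi$ respects this structure; the transport machinery of Section 4 (Proposition \ref{prop: holonomy is constant} and Theorem \ref{thm: complex transport of tensor}) and the preceding proposition identifying tangent directions to $[\hat\pi]$-fibers with complex Lie derivatives $(\mathscr L_{Z_t}c_1^t,\dots,\mathscr L_{Z_t}u_t)$ are exactly the tools that make this work, since they show the fiber directions integrate to double-isotopy deformations, which by construction stay in a single $W$-fiber.
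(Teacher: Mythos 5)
Your strategy --- build charts from the local biholomorphism $\hat\pi$ and the fixed bundle trivializations, and check that the $\sim$-classes within such a chart are exactly the fibers of $V\to W$ --- is the paper's strategy, packaged around the same commutative diagram. But you have mislabeled which step is nontrivial. The ``reverse inclusion'' --- that $\sigma\sim\sigma'$ with both in $V$ forces the same image in $W$ --- is a tautology, not the crux: by definition $\sigma\sim\sigma'$ already forces $[\hat\pi](\sigma)=[\hat\pi](\sigma')$, and the map $V\to W$ is simply $[\hat\pi]|_V$, because the trivialization $\widetilde W\cong W\times\mathrm{Diff}_0(S)^2$ commutes with the bundle projection to $\mathcal M_3(S)\times \mathcal M_3(\overline S)$. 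Consequently the transport machinery you invoke here (Proposition \ref{prop: holonomy is constant}, Theorem \ref{thm: complex transport of tensor}, Proposition 6.6) plays no role in this proposition; those tools are used later, in Theorem \ref{thm: holonomy from the quotient of CAS}, to show $hol$ is constant on $\sim$-classes, and relying on them here is also delicate since the transport problem is not generally solvable. The direction that actually carries content is the one you call ``immediate'' --- that fibers of $V\to W$ lie inside $\sim$-classes --- which needs $B_1,B_2$ connected and $\hat\pi|_V$ a biholomorphism; your argument for that part is fine.

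There remain two genuine subtleties you wave off as routine. First, that the sets $\mathrm p_\sim(\hat U_\sigma)$ form a base requires showing that if $\tau_1\in\hat U_{\sigma_1}$, $\tau_2\in\hat U_{\sigma_2}$ and $\tau_1\sim\tau_2$, then points near $\tau_1$ are $\sim$-equivalent to points in $\hat U_{\sigma_2}$; this is a continuation argument along a path in the $[\hat\pi]$-fiber, and is where the writing effort actually goes. Second, your Hausdorffness argument (separating by saturated opens because each $[\hat\pi]$-fiber has open connected components) does not work as stated: the line with a doubled origin shows that a submersion whose fibers have open connected components can still produce a non-Hausdorff quotient under ``same connected component of the fiber'' --- one must rule out components of nearby fibers merging in the limit, which openness of components within a single fiber does not see. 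To be fair, the paper's proof says only ``it is easy to check'' and never mentions Hausdorffness, so you are at least flagging a point the authors leave implicit.
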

\begin{proof}
Consider local trivializations of $\CD(S)\to \mathcal M_3(S)$ as in Remark \ref{rmk: nice sections of C(S)}.

    By Proposition \ref{prop: CAS* local biholo}, for all $\sigma\in \CAS^*(S)$, we can restrict $\hat \pi$ to a biholomorphism in a neighborhood of $\sigma$, namely  \[\hat \pi\colon {\hat U_\sigma}\xrightarrow{\sim}  A_1\times B_1\times A_2\times B_2\subset \CD(S)\times\CD(\overline S), \]  where $\hat U_{\sigma}$ is a connected neighborhood of $\sigma$, while $A_1\subset \mathcal M_3(S)$, $A_2\subset \mathcal M_3(\overline S)$, and $B_1, B_2\subset \mathrm{Diff}_0(S)$ are open subsets. Note that splitting $A_1\times B_1\times A_2\times B_2$ is not holomorphic, but the projections to $A_1$ and $A_2$ are holomorphic.

   Denoting with $\mathrm p_\sim\colon  \CAS^*(S)\to \Sph(S)$ the quotient map, we therefore have the following commutative diagram:
    \begin{equation}
    \label{eq: diagram quotient CAS}
    \begin{tikzcd}
    \hat U_{\sigma} \arrow[r, "\hat \pi \quad \sim"] \arrow[d, "\mathrm p_{\sim}"] \arrow[rd, , "{[\hat \pi]}" ] &A_1\times B_1\times A_2\times B_2 \arrow[d, "\text{projection}"]\\
        U_{\sigma}:=\mathrm p_\sim(\hat U_{\sigma}) \arrow[r, "\pi\quad \sim "] &A_1 \times A_2
    \end{tikzcd}
    \end{equation}
It is easy to check that the collection of subsets $\{U_\sigma\}_{\sigma\in \CAS^*(S)}$ of $\Sph(S)$ defines a base for a topology on $\Sph(S)$. Moreover, using the one-to-one restrictions of $\pi$ to each $U_\sigma$ and the fact that the projection is holomorphic, we get a holomorphic atlas on $\Sph(S)$ by pulling back the complex structure on $\mathcal M_3 (S)\times \mathcal M_3(\overline S)$: by the diagram \eqref{eq: diagram quotient CAS}, $\pi$ is a local biholomorphism and $\mathrm p_{\sim}$ is holomorphic.
\end{proof}

The following theorem shows that $\Sph(S)$ preserves the information about the holonomy. 

\begin{thm}
\label{thm: holonomy from the quotient of CAS}
The holonomy map 
\begin{align*}
    hol\colon \Sph(S)&\to \chi(\pi_1(S), \SL(3,\C))\\
    [\sigma]&\mapsto hol(\sigma)
\end{align*}
is well-defined and holomorphic. 
\end{thm}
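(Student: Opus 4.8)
The claim is that $hol$ descends from $\CAS^*(S)$ to a well-defined holomorphic map on $\Sph(S)$. There are two things to establish: well-definedness (the holonomy is constant on each connected component of a fiber of $[\hat\pi]$), and holomorphicity (the descended map is holomorphic with respect to the complex structure on $\Sph(S)$ constructed in Proposition~\ref{prop: finite quotient of CAS}). The plan is to reduce well-definedness to a transport argument, and holomorphicity to the local biholomorphism $\pi$ together with Proposition~\ref{prop: hol of CAS*}.

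\emph{Well-definedness.} Fix $\sigma,\sigma'\in\CAS^*(S)$ with $\sigma\sim\sigma'$, i.e.\ lying in the same connected component of a fiber of $[\hat\pi]$. Since that component is path-connected (it is a manifold, being an open subset of a fiber of the local biholomorphism $\hat\pi$ composed with a submersion), I would join them by a $C^1$ path $\sigma_t$ inside the fiber. By the Proposition immediately preceding the definition of $\Sph(S)$, along such a path there is $Z_t\in\Gamma(\C TS)$ with $\dot\sigma_t=(\mathscr L_{Z_t}c_1^t,\mathscr L_{Z_t}\overline{c_2}^t,\mathscr L_{Z_t}\Q_1^t,\mathscr L_{Z_t}\overline{\Q_2}^t,\mathscr L_{Z_t}u_t)$; in particular the whole tuple $\sigma_t$ is a transport $\veet$ of $\sigma_0$ through $Z_\bullet$ in the sense of Definition~\ref{def: analytic transport}. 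Now I want to invoke Proposition~\ref{prop: holonomy is constant}, whose conclusion is precisely that the holonomy is constant along such a transport. The subtlety is that Proposition~\ref{prop: holonomy is constant} is stated under the hypothesis that the Blaschke metric is \emph{real analytic} (and uses $\phi_t\in\mathrm{Diff}_0^\omega(S)$). So the remaining work is a density/approximation step: I would first treat the real-analytic case directly via Proposition~\ref{prop: holonomy is constant} (running the argument in the proof of that proposition, which constructs $\pi_1(S)$-invariant $D^t$-parallel frames via transport and differentiates the holonomy matrices to get $\frac{d}{dt}m^{jk}_t(\gamma)=0$), and then pass to the general $C^\infty$ case. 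Concretely, I expect to use Theorem~F: since the input data along the path can be arranged to have real-analytic Bers metric (density of $\mathcal C^\omega$, density of $\mathrm{Diff}_0^\omega$, plus the fact that $u$ is automatically real analytic when the data is, by Theorem~F applied to the Gauss equation), one reduces to the analytic setting, or alternatively one approximates the path by analytic paths with the same endpoints up to isotopy and uses continuity of $hol$ on $\CAS^*(S)$ (Proposition~\ref{prop: hol of CAS*}) to conclude. This gives $hol(\sigma)=hol(\sigma')$, so $hol$ descends to a map on $\Sph(S)$.

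\emph{Holomorphicity.} This is the easier half. By Proposition~\ref{prop: finite quotient of CAS}, the quotient map $\mathrm p_\sim\colon\CAS^*(S)\to\Sph(S)$ is holomorphic and, more usefully, $\pi\colon\Sph(S)\to\mathcal M_3(S)\times\mathcal M_3(\overline S)$ is a \emph{local biholomorphism}; equivalently, in the diagram \eqref{eq: diagram quotient CAS} each $U_\sigma\subset\Sph(S)$ receives a local biholomorphic section of $\mathrm p_\sim$. So around any point $[\sigma]\in\Sph(S)$ I would pick such a local holomorphic section $s\colon U_\sigma\to \hat U_\sigma\subset\CAS^*(S)$ of $\mathrm p_\sim$ (it exists because $\hat\pi$ restricted to $\hat U_\sigma$ is a biholomorphism onto $A_1\times B_1\times A_2\times B_2$, and composing with a holomorphic section of the projection to $A_1\times A_2$ gives a holomorphic section of $\mathrm p_\sim$); then the descended $hol$ restricted to $U_\sigma$ equals $hol\circ s$, which is holomorphic by Proposition~\ref{prop: hol of CAS*}. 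Since holomorphicity into $\chi(\pi_1(S),\SL(3,\C))$ is the local-lifting notion of Remark~\ref{rem: character variety}, and $hol\circ s$ is exactly such a local lift, this proves holomorphicity of $hol$ on $\Sph(S)$.

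\emph{Main obstacle.} The genuine difficulty is entirely in the well-definedness step, specifically in bridging from Proposition~\ref{prop: holonomy is constant} (stated for analytic data) to arbitrary smooth complex affine spheres in the fiber. I would handle this by combining Theorem~F (smooth infinitesimally rigid complex affine spheres with co-real-analytic-type data are real analytic, so the transport machinery of Section~4 applies), the density statements for $\mathrm{Diff}_0^\omega(S)$ and analytic complex structures recalled in Section~\ref{subsec: transport analytic}, and the continuity of $hol$ from Proposition~\ref{prop: hol of CAS*}: approximate an arbitrary path in a fiber by analytic paths with the same endpoints modulo double isotopy, apply Proposition~\ref{prop: holonomy is constant} to each, and take limits. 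Everything else is bookkeeping with the diagram \eqref{eq: diagram quotient CAS}.
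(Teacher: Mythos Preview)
Your proposal is essentially correct and uses the same ingredients as the paper: the holomorphicity argument via local sections of $\mathrm p_\sim$ is identical to the paper's, and for well-definedness you correctly identify the obstacle (Proposition~\ref{prop: holonomy is constant} needs analytic data) and the tools to bridge it (Theorem~F, density of $\mathrm{Diff}_0^\omega(S)$, continuity of $hol$).

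The one place where the paper's execution is cleaner than yours is in the density step. You propose to ``approximate an arbitrary path in a fiber by analytic paths with the same endpoints'', which is workable but awkward in infinite dimensions. The paper instead avoids paths altogether: since connected components of fibers are open and connected, it suffices to show $hol$ is \emph{locally constant} on fibers. Using the local trivialization $\hat U_\sigma\cong A_1\times B_1\times A_2\times B_2$, and the trivial fact that diagonal isotopies preserve holonomy, one reduces to showing $hol$ is constant on slices where only the $B_2$-coordinate varies. By continuity of $hol$ and density of $\mathrm{Diff}_0^\omega(S)$ in $\mathrm{Diff}_0(S)$, it then suffices to prove constancy on the analytic sublocus $F_0$; there Theorem~F makes $u$ analytic and the open--closed argument with Proposition~\ref{prop: holonomy is constant} applies. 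So: same ingredients, but the paper uses density of \emph{points} plus continuity rather than approximation of \emph{paths}, which sidesteps any delicacy about path approximation.
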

\begin{proof}
If we prove that the holonomy map descends in a well-defined way to $\Sph(S)$, then it is holomorphic for free: by taking local charts for $\mathrm p_{\sim}$ as in the diagram \eqref{eq: diagram quotient CAS}, one has local holomorphic inverses of $\mathrm p_{\sim}$ in the form $([c_1, \Q_1], [\overline{c_2}, \Q_2])\mapsto ( [c_1, \Q_1], \phi_0, [\overline{c_2}, \Q_2], \phi_0')$ around each point in $\Sph(S)$. The composition with the holonomy map $\CAS^*(S)\to \chi(\pi_1(S), \mathrm {SL}(3,\C))$, which is holomorphic by Proposition \ref{prop: hol of CAS*}, gives precisely the map $hol$ in the statement of the theorem.

We are left to prove that the holonomy is constant along any $C^1$ path $(c_1^t, \overline{c_2}^t, \Q_1^t, \overline{\Q_2}^t, u_t)$.

In order to do this, we need to look at the construction of the corresponding flat connection as in the proof of Theorem \ref{thm: integration GC}. Let $N\colon \C\times \widetilde S\to \widetilde S$ be the trivial bundle with generator $\hat \xi\equiv 1$ and let $E=\C T\widetilde S\oplus N$. Then, the data $g_t=e^{2u_t}\vb*h(c_1^t, \overline{c_2}^t)$ and $C_t=\Q_1^t+\overline{\Q_2}^t$above define a flat connection 
   \[
   \begin{cases}
        \hat D^t_{\mathsf X} Y&=\nabla^t_{\mathsf X}Y+ g_t(\mathsf X,Y) \hat \xi \\
        \hat D^t_{\mathsf X} \hat \xi &= \mathsf X \\
         \hat D^t_{\mathsf X} (i\hat \xi) &= i \mathsf X,
    \end{cases}
\]
where $\nabla^t$ is the affine connection on $\C TS$ defined by $C_t=: g_t(\nabla^t- \nabla^{g_t}, \cdot)$. By Proposition \ref{prop: fibers are transports}, there exists a path of complex vector fields $Z_t$ such that $\dot C_t= \mathscr L_{Z_t}C_t$ and $\dot g_t= \mathscr L_{Z_t}g_t$. One can immediately see that, as a consequence, $\mathscr L_{Z_t}\nabla^t= \dot \nabla^t$, where we denote $(\mathscr L_{Z_t}\nabla^t)_X Y= \mathscr L_{Z_t} (\nabla^t_X Y)- \nabla^t_{\mathscr L_{Z_t}X} Y- \nabla^t_X (\mathscr L_{Z_t} Y)$.

For a generic section $s=X+ b \hat \xi$ of the bundle $E$, denote ${\mathscr L}_{Z_t} s={\mathscr L}_{Z_t}(X)+ ({\mathscr L}_{Z_t} b)\hat \xi$. By the remarks above, we can see that $\frac d {dt}\hat D^t=\mathscr L_{Z_t} \hat D^t$, where
$(\mathscr L_{Z_t} \hat D)_X s= \mathscr L_{Z_t}(\hat D_X s)-\hat D^t_{\mathscr L_{Z_t}X}s- \hat D^t_X \mathscr L_{Z_t}s $. In particular, observe that, if $s$ is $\hat D^t$-parallel, then $(\mathscr L_{Z_t} \hat D^t) s= \hat D^t( \mathscr L_{Z_t}s) $.

In order to study the holonomy, we now construct a useful basis of parallel sections. 
Let $x_0\in \widetilde S$, let $(e^1,e^2,e^3)$ be a $\C$-linear basis for $E_{x_0}$, and let $\mathring s_t^j$ be the unique $\hat D^t$-parallel section such that $\mathring{s}_t^j(x)=e^j$.

We observe that, for each $j$, there exists precisely one path of $\hat D^t$-parallel sections $s^j_t$ such that $\frac{d}{dt} s_t^j|_{x_0}= \mathscr L_{Z_t} s_t^j |_{x_0}$ and $s^j_0=e^j$: in fact, this is equivalent to solving the Cauchy problem
\[
\begin{cases}
    &\dot a_j(t)= \sum_{k} b_{jk}(t) a_k(t) \\
    &a_j(0)=0
\end{cases}\quad \text{where } s_t^j= \sum_k a^j_k(t) \mathring{s}_t^k, \text{ and }  \left(\mathscr L_{Y_t} \mathring{s}_t^j \right)|_{x_0}= \sum_k (b_{jk}(t)) e_k
\]
which has exactly one solution.
\iffalse
Also denote by $b_{jk}(t)$ the coefficients such that $\left(\mathscr L_{Y_t} \mathring{s}_t^j \right)|_{x_0}= \sum_k (b_{jk}(t)) e_k$.

Now, observe that determining a path of $\hat D^t$-parallel sections $s_t^j= \sum_k a^j_k(t) \mathring{s}_t^k$ such that $\dot s_t^j|_x= \mathscr L_{Y_t} s_t^j |_x$ corresponds to solving the system of linear ODEs 
\[
\dot a_j(t)= \sum_{k} b_{jk}(t) a_k(t) 
\]
which has exactly one solution such that $s_0^j(x_0)= e^j$. 
\fi Moreover, we can see that $\frac{d}{dt}s^j_t=\mathscr L_{Z_t}s_t^j$ everywhere on $\widetilde S$: indeed,
\[
0= -(\mathscr L_{Z_t} \hat D_t- \frac d {dt} \hat D_t)(s^j_t)=  \frac d {dt}(\hat D_t s^j_t) + \hat D_t \left((\mathscr L_{Z_t}- \frac d {dt})(s^j_t) \right)= \hat D_t \left((\mathscr L_{Z_t}- \frac d {dt})(s^j_t) \right),
\]
so $(\mathscr L_{Z_t}- \frac d {dt})(s^j_t)$ is a $\hat D_t$-parallel section which vanishes in $x_0$, so it vanishes everywhere. 

Finally, we use the basis $(s^j_t)_j$ to prove that the holonomy of $\hat D^t$ is constant. The holonomy of $\hat D^t$, and of the corresponding complex affine sphere is given by the matrices $(m_t^{jk}(\gamma))_{jk}$ where $\gamma\in \pi_1(S)$ and 
\[
\gamma^* s^j_t= \sum_k m^{jk}_t(\gamma) s_t^k.
\]
Since $Z_t$ is $\pi_1(S)$-invariant we have $$\frac d {dt} (\gamma^* s^j_t)= \mathscr L_{Z_t}(\gamma^* s^j_t)= \sum_k m^{jk}_t(\gamma) \mathscr L_{Z_t}(s^k_t)= \sum_k m^{jk}_t(\gamma) \frac d {dt} s^k_t,\ $$ from which we conclude that $\frac d {dt}(m^{jk}_t(\gamma))=0$, hence the holonomy is constant.

\end{proof}

In conclusion, we have the following commutative diagram of holomorphic maps:
\[
\begin{tikzcd}
\CD(S)\times\CD(\overline S) \arrow[d, swap, "\substack{\mathrm{Diff}_0(S)\times \mathrm{Diff}_0(S) \\
\text{action}}"] & \arrow[l, "\hat \pi"] \CAS^*(S) \arrow[rd, "hol"] \arrow[d, "\mathrm p_\sim"] &\\
 \mathcal M_3(S)\times\mathcal M_3(\overline S) & \arrow[l, "\pi"] \Sph(S) \arrow[r, "hol"] & \chi(\pi_1(S), \SL(3,\C)).
\end{tikzcd}
\]

\subsection{The action by $\mathrm{MCG}(S)$ and the $\mathbb C^*$ twistor action}

We digress to discuss two actions on the spaces $\CAS(S)$, $\CAS^*(S)$, $\Sph(S)$, and $\mathcal{M}_3(S)\times \mathcal{M}_3(S)$ related to Theorem A. Recall the groups $\mathrm{Diff}_+(S),$ $\mathrm{Diff}_0(S),$ and $\mathrm{MCG}(S)$ from Section 2.

Given a positive hyperbolic complex affine sphere $\sigma\colon \widetilde S\to \mathbb C^3$ with Blaschke metric $g$ and Pick tensor $C$, for all $\phi\in \mathrm{Diff}_+(S)$ one gets that $\sigma\circ \phi$ is a positive hyperbolic complex affine sphere with Blaschke metric $\phi^*g$ and Pick tensor $\phi^*C$. Moreover, if $\sigma$ has holonomy $\rho$, then $\sigma\circ \phi$ has holonomy $\rho\circ \pi_1(\phi)$ where $\pi_1(\phi)\colon \pi_1(S)\to \pi_1(S)$ is the induced map in homotopy.

In the notation of the space $\CAS(S)$, $\mathrm{Diff}_+(S)$ acts (on the right) on $\CAS(S)$ through 
\[
\phi\cdot (c_1, \overline{c_2}, \Q_1, \overline{\Q_2}, u) = (\phi^*(c_1), \phi^*(\overline{c_2}), \phi^*(\Q_1), \phi^*(\overline{\Q_2}), \phi^*u).\ 
\]
In fact, $\phi$ defines an isometry between $\phi^* (\hpair)=\vb*h_{(\phi^*(c_1), \phi^*(\overline{c_2}))}$ and $\hpair$ conjugating the respective Levi-Civita connections, hence for all Bers metric $h$ 
$$
\phi^*( h(\Q_1, \overline{\Q_2}))= (\phi^*h)(\phi^*\Q_1, \phi^*\overline{\Q_2}) 
\qquad \text{and}\qquad
\phi^*(\Delta_{h} u)= \Delta_{\phi^*h} \phi^*u,
$$
so $G(\phi\cdot \sigma)=\phi^*(G(\sigma))=0$.

Observe that, for the same reason, $\phi^*(L_\sigma v)=L_{\phi\cdot \sigma}  (\phi^* v)$, so the action of $\mathrm{Diff}_+(S)$ restricts to $\CAS^*(S)$. Moreover, the action is holomorphic since it is a restriction to a complex Banach submanifold of a holomorphic action on $\CD(S)\times\CD(\overline S)\times W^{k,p}(S)$ (it is actually $\C$-linear on the last factor). Restricting to $\CD(S)\times\CD(\overline S)$, it is well known that this action descends to a holomorphic action of $\mathrm{MCG}(S)$ on $\mathcal{M}_3(S)\times \mathcal{M}_3(\overline{S}).$

\begin{prop}
The action of $\mathrm{Diff}_+(S)$ on $\CAS(S)$ descends to an action of $\mathrm{MCG}(S)$ on $\Sph(S)$ and the maps
\[\begin{tikzcd}[column sep=tiny]
& \Sph(S) \arrow[dl, swap, "\pi"] \arrow[dr, "hol"] & \\
\mathcal M_3(S)\times \mathcal M_3(\overline S) & & \chi(\pi_1(S), \SL(3,\C))
\end{tikzcd}\]
are $\mathrm{MCG}(S)$-equivariant.
\end{prop}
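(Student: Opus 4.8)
The plan is to verify three things in sequence: first, that the $\mathrm{Diff}_+(S)$-action on $\mathrm{CAS}^*(S)$ descends to the quotient $\Sph(S)$; second, that the $\mathrm{Diff}_0(S)$-subaction acts trivially on $\Sph(S)$, so that what descends is genuinely an $\mathrm{MCG}(S)$-action; and third, that $\pi$ and $hol$ intertwine this action with the known $\mathrm{MCG}(S)$-actions on $\mathcal M_3(S)\times \mathcal M_3(\overline S)$ and with the (trivial) action on $\chi(\pi_1(S),\mathrm{SL}(3,\C))$ twisted by $\pi_1(\phi)$. The point is that all the hard analytic work has already been done: $\mathrm{CAS}^*(S)$ is a complex Fréchet manifold (Proposition \ref{prop: CAS* local biholo}), the $\mathrm{Diff}_+(S)$-action on it is holomorphic (established in the paragraph preceding the statement), and $hol$ is holomorphic both on $\mathrm{CAS}^*(S)$ and on $\Sph(S)$ (Proposition \ref{prop: hol of CAS*} and Theorem \ref{thm: holonomy from the quotient of CAS}).

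For the descent to $\Sph(S)$: recall $\sigma\sim\sigma'$ iff $\sigma,\sigma'$ lie in the same connected component of a fiber of $[\hat\pi]\colon \mathrm{CAS}^*(S)\to \mathcal M_3(S)\times\mathcal M_3(\overline S)$. Given $\phi\in\mathrm{Diff}_+(S)$, the action on $\CD(S)\times\CD(\overline S)$ covers the $\mathrm{MCG}(S)$-action on $\mathcal M_3(S)\times\mathcal M_3(\overline S)$, and in particular $[\hat\pi](\phi\cdot\sigma) = [\phi]\cdot[\hat\pi](\sigma)$. Since $\phi$ acts as a homeomorphism on $\mathrm{CAS}^*(S)$, it maps connected components of fibers of $[\hat\pi]$ to connected components of fibers of $[\hat\pi]$, hence $\sigma\sim\sigma'$ implies $\phi\cdot\sigma\sim\phi\cdot\sigma'$. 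This gives a well-defined map $\phi_*\colon \Sph(S)\to\Sph(S)$ with $\mathrm p_\sim\circ(\phi\cdot\ ) = \phi_*\circ\mathrm p_\sim$, and since $\mathrm p_\sim$ is holomorphic and locally admits holomorphic sections (the local inverses described in the proof of Theorem \ref{thm: holonomy from the quotient of CAS}), $\phi_*$ is holomorphic. Next, if $\phi\in\mathrm{Diff}_0(S)$, then $\phi\cdot\sigma$ differs from $\sigma$ by a diagonal isotopy: choosing an isotopy $\phi_t$ from $\mathrm{id}$ to $\phi$, the path $t\mapsto \phi_t\cdot\sigma = (\phi_t^*c_1,\phi_t^*\overline{c_2},\phi_t^*\Q_1,\phi_t^*\overline{\Q_2},\phi_t^*u)$ is a continuous path in $\mathrm{CAS}^*(S)$ — it stays in $\mathrm{CAS}(S)$ because $G(\phi_t\cdot\sigma)=\phi_t^*(G(\sigma))=0$, and it stays in the infinitesimally rigid locus because $\phi_t^*(L_\sigma v)=L_{\phi_t\cdot\sigma}(\phi_t^*v)$ shows rigidity is preserved — and $[\hat\pi]$ is constant along it since $([\phi_t^*c_1,\phi_t^*\Q_1],[\phi_t^*\overline{c_2},\phi_t^*\overline{\Q_2}]) = ([c_1,\Q_1],[\overline{c_2},\overline{\Q_2}])$ in $\mathcal M_3(S)\times\mathcal M_3(\overline S)$. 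So $\phi\cdot\sigma$ and $\sigma$ lie in the same connected component of a fiber of $[\hat\pi]$, i.e.\ $\phi\cdot\sigma\sim\sigma$, and $\phi_*=\mathrm{id}$ on $\Sph(S)$. Therefore the $\mathrm{Diff}_+(S)$-action factors through $\mathrm{MCG}(S)=\faktor{\mathrm{Diff}_+(S)}{\mathrm{Diff}_0(S)}$.

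For equivariance of $\pi$: the identity $\pi\circ\phi_* = [\phi]\circ\pi$ follows immediately from $[\hat\pi](\phi\cdot\sigma)=[\phi]\cdot[\hat\pi](\sigma)$ together with $\pi\circ\mathrm p_\sim=[\hat\pi]$ and $\mathrm p_\sim\circ(\phi\cdot\ )=\phi_*\circ\mathrm p_\sim$. For equivariance of $hol$: as recalled in the paragraph preceding the statement, a complex affine sphere $\sigma$ with holonomy $\rho$ has $\sigma\circ\widetilde\phi$ (a lift of $\phi$) with holonomy $\rho\circ\pi_1(\phi)$, so on $\mathrm{CAS}^*(S)$ we have $hol(\phi\cdot\sigma) = hol(\sigma)\circ\pi_1(\phi)$ as points of $\chi(\pi_1(S),\mathrm{SL}(3,\C))$; since precomposition by $\pi_1(\phi)$ depends only on the class $[\phi]\in\mathrm{MCG}(S)$ (inner automorphisms act trivially on the character variety), this descends to $hol\circ\phi_* = hol(\ \cdot\ )\circ\pi_1(\phi)$ on $\Sph(S)$, which is the asserted equivariance (the $\mathrm{MCG}(S)$-action on the character variety being the standard outer action). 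The main obstacle — though it is more a matter of care than of difficulty — is checking that the path $\phi_t\cdot\sigma$ genuinely stays inside $\mathrm{CAS}^*(S)$ rather than merely $\mathrm{CAS}(S)$, and that the equivalence relation $\sim$ behaves well under the action; both reduce to the already-established facts that rigidity is an open condition preserved by $\mathrm{Diff}_+(S)$ and that $\mathrm{Diff}_+(S)$ acts by homeomorphisms compatibly with $[\hat\pi]$.
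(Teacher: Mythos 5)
Your proof is correct and follows essentially the same route as the paper's: the key observation in both cases is that a $\mathrm{Diff}_0(S)$-orbit in $\mathrm{CAS}^*(S)$ is path-connected and lies in a single fiber of $[\hat\pi]$, hence in a single $\sim$-class, so the action factors through $\mathrm{MCG}(S)$; the paper states this in one line and declares the equivariance claims to "follow easily from the above remarks," whereas you spell out the well-definedness in both slots (compatibility of $\phi$ with $\sim$, and triviality of the $\mathrm{Diff}_0(S)$-subaction) and write out the equivariance of $\pi$ and $hol$ explicitly.
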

\begin{proof}
Since the $\mathrm{Diff}_0(S)$-orbit of an element in $\CAS^*(S)$ is path connected, it is contained in a fiber of the projection $[\hat \pi]\colon \CAS^*(S)\to \mathcal M_3(S)\times \mathcal M_3(\overline S)$, so it descends to an action of $\mathrm{MCG}(S)$ on $\Sph(S)$. The rest of the statement follows easily from the above remarks.
\end{proof}

The other action on $\CAS(S)$ that we want to highlight is the $\mathbb C^*$-action defined by 
\[
\zeta\cdot (c_1, \overline{c_2}, \Q_1, \overline{\Q_2}, u) = (c_1, \overline{c_2}, \zeta\Q_1, \frac 1 \zeta \overline{\Q_2}, u),
\]
which is easily well-defined, as a result of the fact that $h(\zeta \Q_1, \frac 1 \zeta \overline{\Q_2})= h( \Q_1, \overline{\Q_2})$, and holomorphic.
Similarly to above, this action restricts to an action on $\mathcal{CD}(S)\times \mathcal{CD}(\overline{S})$ that descends to $\mathcal{M}_3(S)\times \mathcal{M}_3(\overline{S}).$ We call each of these actions the \textbf{$\mathbb C^*$ twistor action}. The name comes from the fact that they are related to a well-known $\C^*$ action on the moduli space of flat connections. Since it is not relevant for the purpose of this paper, we will treat this connection in future work.

Observe that the $\mathbb C^*$ twistor action restricts to a well-known $\mathbb S^1$ action on the space of real affine spheres. It is also clear that $L_{\sigma}=L_{\zeta\cdot \sigma}$, hence the action restricts to $\CAS^*(S)$. 

Finally, if $\sigma_0$ and $\sigma_1$ lie in a path-connected component of a fiber of $[\hat \pi]$, with $\sigma_t$ denoting a connecting path, then $\zeta\cdot \sigma_0$ and $\zeta\cdot \sigma_1$ are connected by $\zeta\cdot \sigma_t$ and the whole path is contained in a fiber of $[\hat \pi]$. In conclusion, we get the following.

\begin{prop}
    The $\C^*$ twistor action on $\CAS(S)$ descends to a holomorphic action on $\Sph(S)$ and the map
    \[\pi\colon \Sph(S)\to \mathcal M_3(S)\times \mathcal M_3(\overline S)\]
is equivariant with respect to the $\C^*$ actions on $ \Sph(S)$ and $\mathcal M_3(S)\times \mathcal M_3(\overline S)$.
\end{prop}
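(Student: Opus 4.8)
The statement breaks into two parts: first, that the $\C^*$ twistor action on $\CAS(S)$ restricts to $\CAS^*(S)$ and descends to a well-defined action on the quotient $\Sph(S)$; second, that this descended action is holomorphic and makes $\pi\colon \Sph(S)\to \mathcal M_3(S)\times \mathcal M_3(\overline S)$ equivariant with respect to the corresponding $\C^*$-action downstairs. Most of the ingredients are already laid out in the paragraphs immediately preceding the statement, so the plan is mainly to assemble them carefully and check the compatibility with the equivalence relation $\sim$.

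First I would record the well-definedness of the action on $\CAS(S)$: since $\hpair(\zeta \Q_1, \zeta^{-1}\overline{\Q_2}) = \hpair(\Q_1, \overline{\Q_2})$, the Gauss functional $G$ from Equation \eqref{Gauss equation section 6} is unchanged under $\zeta\cdot(c_1,\overline{c_2},\Q_1,\overline{\Q_2},u) = (c_1,\overline{c_2},\zeta\Q_1,\zeta^{-1}\overline{\Q_2},u)$, so $G(\zeta\cdot\sigma) = G(\sigma)$ and the action preserves $\CAS(S)$. The same identity applied to the linearization \eqref{eq: linearized GE} shows $L_{\zeta\cdot\sigma} = L_\sigma$, so the isomorphism property is preserved and the action restricts to $\CAS^*(S)$. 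Holomorphicity of the action on $\CAS(S)$ and $\CAS^*(S)$ follows because, as in the $\mathrm{MCG}(S)$ discussion above, it is the restriction to a complex submanifold of an evidently holomorphic (indeed fiberwise $\C$-linear on the differentials) action on $\CD(S)\times\CD(\overline S)\times W^{k+2,2}(S,\C)$; the restriction to $\CAS^*(S)$ is holomorphic since $\CAS^*(S)$ is a complex Fréchet submanifold by Proposition \ref{prop: CAS* local biholo}.

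Next I would verify compatibility with $\sim$. By definition of $\hat\pi$, we have $\hat\pi(\zeta\cdot\sigma) = (c_1,\overline{c_2},\zeta\Q_1,\zeta^{-1}\overline{\Q_2})$, and $\zeta$ acts on $\CD(S)\times\CD(\overline S)$ by the same formula, so $\hat\pi$ is $\C^*$-equivariant and therefore $\C^*$ sends fibers of $\hat\pi$ to fibers of $\hat\pi$; since multiplication by a fixed $\zeta$ is a homeomorphism, it sends connected components of fibers to connected components of fibers. Hence if $\sigma_0\sim\sigma_1$, realized by a $C^1$ path $\sigma_t$ inside a single connected component of a fiber of $[\hat\pi]$, then $\zeta\cdot\sigma_t$ is a $C^1$ path inside a connected component of a fiber of $[\hat\pi]$ (using that $\zeta$ also sends fibers of $[\hat\pi]$ to fibers of $[\hat\pi]$, because the $\mathrm{Diff}_0(S)\times\mathrm{Diff}_0(S)$ action and the $\C^*$ action commute), so $\zeta\cdot\sigma_0\sim\zeta\cdot\sigma_1$. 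Therefore the action descends to $\Sph(S)$. That the descended action is holomorphic is then immediate: locally $\mathrm p_\sim$ has holomorphic sections coming from the charts in diagram \eqref{eq: diagram quotient CAS}, and the descended action is the composite of such a section with the (holomorphic) action on $\CAS^*(S)$ followed by $\mathrm p_\sim$. Equivariance of $\pi$ is exactly the descent of the already-noted $\C^*$-equivariance of $\hat\pi$ through the quotient maps $\mathrm p_\sim$ and the $\mathrm{Diff}_0(S)\times\mathrm{Diff}_0(S)$ quotient.

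I do not expect a serious obstacle here; the one point requiring a little care is the verification that the $\C^*$ action and the $\mathrm{Diff}_0(S)\times\mathrm{Diff}_0(S)$ action commute on $\CAS^*(S)$ so that $\C^*$ genuinely permutes the fibers and their connected components of $[\hat\pi]$ (rather than only those of $\hat\pi$) — this is a direct check from the pull-back formulas, since $\phi^*(\zeta\Q_1) = \zeta\,\phi^*\Q_1$. Everything else is bookkeeping already carried out in the preceding subsections for the $\mathrm{MCG}(S)$ action, transcribed to the abelian group $\C^*$.
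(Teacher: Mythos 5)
Your proof is correct and follows essentially the same route as the paper: you verify that the $\C^*$ action preserves $\CAS^*(S)$ via the invariance of $G$ and $L_\sigma$, observe that it commutes with the $\mathrm{Diff}_0(S)\times\mathrm{Diff}_0(S)$ action so that $\zeta$ permutes fibers and connected components of fibers of $[\hat\pi]$, and deduce descent to $\Sph(S)$ and equivariance of $\pi$. The paper phrases the last step by transporting a connecting path $\sigma_t$ to $\zeta\cdot\sigma_t$ within a single $[\hat\pi]$-fiber, which is the same content.
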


\begin{remark}
    It is important to observe that the $\mathrm{MCG}(S)$ action and the $\C^*$ twistor action commute both on $\Sph(S)$ and $\mathcal M_3(S)\times \mathcal M_3(\overline S)$.
\end{remark}

\subsection{Proof of Theorem A}

We are now able to prove Theorem A. The main strategy of the proof consists in taking an inverse of the projection $\pi\colon \Sph(S)\to \mathcal M_3(S)\times \mathcal M_3(\overline S)$ defined on a big open subset $\Omega$ of $\mathcal M_3(S)\times \mathcal M_3(\overline S)$ and then composing it with $hol$.

Recalling the space $\mathrm{RAS}(S)$ from Section \ref{sec: BM and Gauss}, define
$$\mathbb{RAS}(S):= \faktor{\mathrm{RAS}(S)}{\mathrm{Diff}_0(S)}.$$
As a result of Theorems \ref{prop: finite quotient of CAS} and \ref{thm: holonomy from the quotient of CAS}, we have two holomorphic maps from $\Sph(S)$, namely
\begin{equation}
\label{eq: hol and pi from CAS}
\mathcal M_3(S)\times\mathcal M_3(\overline S) \xleftarrow{\pi} \Sph (S) \xrightarrow{hol} \chi(\pi_1(S), \SL(3,\C)).
\end{equation}
On the real locus, the maps above restrict to
\begin{equation}
\label{eq: real hol and pi from CAS}
\mathcal M_3(S) \cong
\Delta_{\mathcal M_3(S)\times\mathcal M_3(\overline S)} \xleftarrow{ \sim } \mathbb {RAS}(S) \xrightarrow{\sim} \mathrm{Hit}_3(S),\ 
\end{equation}
and the composition of the inverse of the LHS with the RHS coincides with the Labourie-Loftin parametrization $\mathcal L$.

For all $(c_1,\overline{c_2}, \Q_1, 0)\in \CD(S)\times \mathcal C(\overline S)$, one can easily see that $g=\hpair$ and $C=\Q_1$ satisfy the hypothesis of Theorem \ref{thm: integrating GE for CAS}, so $\sigma=(c_1, \overline{c_2}, \Q_1, 0, 0)\in \CAS(S)$. Moreover, the linearization $L_{\sigma}$ of the Gauss equation \eqref{Gauss equation section 6} is $L_{\sigma}=\Delta -2 \mathrm{id}$, which, in the notation of Theorem \ref{thm F extended}, is an isomorphism if and only if $(c_1, \overline{c_2})\in \mathcal B^{reg}$: as a result we have a holomorphic map
\begin{equation}
\label{eq: lift s1}
\begin{split}
\CD(S)\times\CD(\overline S)\supset W_+:=\{(c_1,\overline{c_2},\Q_1, 0) \ |\ \ccpair\in \mathcal B^{reg}\} &\to \CAS^*(S) \\
(c_1,\overline{c_2}, \Q_1, 0)  &\mapsto (c_1, \overline{c_2}, \Q_1, 0, 0),
\end{split}
\end{equation}
which has $\hat \pi$ as left inverse.
Notice that $W_+$ is a finite-rank vector bundle of $\mathcal B^{reg}$, and, as a result of Theorem \ref{thm F extended}, $W_+$ is an open dense subset of $\CD(S)\times \mathcal C(\overline S)$. By using the fact that $\hat \pi$ is a local biholomorphism and a left inverse of \eqref{eq: lift s1}, one can easily see that \eqref{eq: lift s1} is a holomorphic embedding with a closed image (as a subset of $\CAS^*(S)$) because it is a continuous, injective, and closed map. Moreover, by theorem \ref{thm F extended}, the fibers of the projection $W_+\to \mathcal M_3(S)\times \mathcal T(\overline S)$ are connected, so \eqref{eq: lift s1} descends to a well-defined map
\begin{equation}
\label{eq: embedding of Hit x Teich}
  \mathrm s_1\colon \mathcal M_3(S)\times \mathcal T(\overline S)\xrightarrow{\sim} \mathcal R \subset \Sph(S),\ 
\end{equation}
which is injective and continuous (as one can easily see by restricting to local charts as in \eqref{eq: diagram quotient CAS}) and whose image we denote by $\mathcal R$. Moreover, $\mathrm{s_1}$ is a holomorphic embedding since $\pi$ is a left inverse and a local biholomorphism. By construction, one can also immediately see that $\mathrm s_1$ is equivariant for both the $\mathrm{MCG}(S)$ action and the $\C^*$ twistor action.

In the same fashion, the embedding
\begin{equation}
\label{eq: lift s2}
\begin{split}
W_-:=\{((c_1,0), (\overline{c_2}, \overline{\Q_2}) )\ |\ \ccpair\in \mathcal B^{reg}\} &\to \CAS^*(S) \\
(c_1,\overline{c_2},0, \Q_2)  &\mapsto (c_1, \overline{c_2}, 0, \overline{\Q_2}, 0)
\end{split}
\end{equation}
descends to the holomorphic embedding
\begin{equation}
\label{eq: embedding of Teich x Hit}
 \mathrm s_2\colon \mathcal T(S)\times\mathcal M_3(\overline S)\xrightarrow {\sim}\overline{\mathcal R} \subset \Sph(S).\ 
\end{equation}
which is equivariant for both $\mathrm{MCG}(S)$ and $\C^*$, and whose image we denote by $\overline{\mathcal R}$.

It is easy to check that, if $\sigma\colon \widetilde S\to \mathbb C^3 $ is the complex affine sphere corresponding to the data $(c_1, \overline{c_2}, \Q_1, \overline{\Q_2}, u)$, then $\overline \sigma$ has conjugate holonomy and corresponds to $(c_2, \overline{c_1}, \Q_2, \overline{\Q_1}, \overline u)$, motivating the notation for $\overline{\mathcal R}$.

Finally, observe that $\mathrm s_1$ and $\mathrm s_2$ coincide on $\mathcal T(S)\times \mathcal T(\overline S)$, defining a diffeomorphism between $\mathcal T(S)\times \mathcal T(\overline S)$ and $\mathcal R\cap \overline{\mathcal  R}$, which, by Example 
\ref{bersmaps}, corresponds to Bers embeddings in $\hat{\mathbb X}_2$. As a consequence, \[hol\circ (\mathrm s_1 \cap \mathrm s_2)\colon \TSTS\to \chi(S, \SL(3,\C))
\]
coincides with Bers parametrization of the quasi-Fuchsian space, after identifying $\PSL(2,\C)$ with $\mathrm{SO}(2,1,\C)\cong Isom_0(\hat {\mathbb X}_2)\subset \SL(3,\C)$.

Theorem A will follow as a corollary of the following theorem.

\begin{thm}
\label{thm: main existence of inverse}
    There exists an open subset $\Omega\subset \mathcal M_3(S)\times\mathcal M_3(\overline S)$ containing the diagonal, $\mathcal M_3(S)\times\mathcal T(
    \overline S)$, and $\mathcal T(S)\times \mathcal M_3(\overline S)$
    over which $\pi$ admits a global biholomorphic inverse $\varsigma\colon \Omega\to \Sph(S)$ which extends $s_1$ and $s_2$ above, and the real parametrization $\Delta\to \mathrm{RAS}(S)$.

Moreover, $\Omega$ can be chosen to be invariant for the actions of both $\mathrm{MCG}(S)$ and $\mathbb C^*$.
\end{thm}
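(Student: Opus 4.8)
The plan is to construct $\Omega$ and the inverse $\varsigma$ by a connectedness/openness argument, exploiting that $\pi\colon\Sph(S)\to\mathcal M_3(S)\times\mathcal M_3(\overline S)$ is a local biholomorphism (Proposition \ref{prop: finite quotient of CAS}) and that we already have globally defined sections $s_1,s_2$ over $\mathcal M_3(S)\times\mathcal T(\overline S)$, $\mathcal T(S)\times\mathcal M_3(\overline S)$, and the real parametrization over the diagonal $\Delta$. The natural candidate for $\Omega$ is the set of points $x\in\mathcal M_3(S)\times\mathcal M_3(\overline S)$ that can be joined to a point of $\Delta\cup(\mathcal M_3(S)\times\mathcal T(\overline S))\cup(\mathcal T(S)\times\mathcal M_3(\overline S))$ by a path along which a section of $\pi$ can be analytically continued. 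Concretely, first I would fix the base locus $B=\Delta\cup\mathrm{im}(s_1)\cup\mathrm{im}(s_2)$ in $\Sph(S)$, which projects biholomorphically under $\pi$ (using that $s_1,s_2$ agree with the real parametrization on $\Delta$, as noted just before the theorem via Example \ref{bersmaps}). Then define $\Omega$ to be the union of all open subsets $U\subset\mathcal M_3(S)\times\mathcal M_3(\overline S)$ that are connected, meet $\pi(B)$, and over which $\pi$ has a single-valued holomorphic section restricting to the known one on the overlap with $\pi(B)$; equivalently, $\Omega$ is the image under $\pi$ of the union of connected components of $\Sph(S)$ meeting $B$, intersected appropriately. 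Since $\pi$ is a local biholomorphism, such a maximal section exists and is unique once one checks the sections patch — this is the standard monodromy-type argument, and the key point making it work is that $\mathcal M_3(S)\times\mathcal T(\overline S)$, $\mathcal T(S)\times\mathcal M_3(\overline S)$, and $\Delta$ are connected and their sections all agree on the common quasi-Fuchsian locus $\mathcal T(S)\times\mathcal T(\overline S)$.

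The cleanest way to make this precise is the following. Let $\mathcal R_0\subset\Sph(S)$ be the union of $\Delta$-lift, $\mathcal R=\mathrm{im}(s_1)$, and $\overline{\mathcal R}=\mathrm{im}(s_2)$; these are connected (each is a continuous image of a connected space) and pairwise intersect (the diagonal meets both $\mathcal R$ and $\overline{\mathcal R}$, since the real parametrization over $\mathcal T(S)$ sits inside $s_1$ and $s_2$ by the zero-cubic-differential computation in \eqref{eq: lift s1}, \eqref{eq: lift s2}), so $\mathcal R_0$ is connected. Let $\Sph_0(S)$ be the connected component of $\Sph(S)$ containing $\mathcal R_0$, and set $\Omega^\circ=\pi(\Sph_0(S))$, which is open since $\pi$ is a local biholomorphism. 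The subtlety is that $\pi$ restricted to $\Sph_0(S)$ need not be injective; to get $\varsigma$ I would instead take $\Omega$ to be the largest open subset of $\Omega^\circ$ over which $\pi^{-1}$ can be defined as a single-valued holomorphic map extending the identification on $\pi(\mathcal R_0)$. Formally, consider the set of germs of local holomorphic sections of $\pi$ that agree on $\pi(\mathcal R_0)$ with the canonical one; this forms an étale space $\widetilde\Omega\to\mathcal M_3(S)\times\mathcal M_3(\overline S)$, and $\Omega$ is the image of the connected component of $\widetilde\Omega$ containing the base germs, provided that component is a section (injective over its image) — which it is, because the base germs already form a single-valued section over the connected set $\pi(\mathcal R_0)$ and the étale-space component is determined by analytic continuation. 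This gives a holomorphic $\varsigma\colon\Omega\to\Sph(S)$ with $\pi\circ\varsigma=\mathrm{id}$, and by construction $\Omega\supset\pi(\mathcal R_0)=\Delta\cup(\mathcal M_3(S)\times\mathcal T(\overline S))\cup(\mathcal T(S)\times\mathcal M_3(\overline S))$ and $\varsigma$ extends $s_1$, $s_2$, and the real parametrization. That $\varsigma$ is a biholomorphism onto its image follows since $\pi$ is a local biholomorphism and $\varsigma$ is a section.

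For the invariance statement, I would observe that both the $\mathrm{MCG}(S)$ action and the $\mathbb C^*$ twistor action on $\mathcal M_3(S)\times\mathcal M_3(\overline S)$ lift to holomorphic actions on $\Sph(S)$ commuting with $\pi$ (as established in the preceding subsection), and these actions preserve the base locus: $\mathrm{MCG}(S)$ and $\mathbb C^*$ both preserve $\Delta$, $\mathcal M_3(S)\times\mathcal T(\overline S)$, and $\mathcal T(S)\times\mathcal M_3(\overline S)$ (the $\mathbb C^*$ action multiplies cubic differentials by $\zeta$ and $\zeta^{-1}$, so it fixes the zero section in each factor), and $s_1,s_2$ are equivariant as noted after \eqref{eq: embedding of Hit x Teich} and \eqref{eq: embedding of Teich x Hit}. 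Since the maximal-continuation construction of $\Omega$ and $\varsigma$ is canonical, it is automatically preserved by any group of biholomorphisms preserving the base locus and commuting with $\pi$; hence $\Omega$ is $\mathrm{MCG}(S)$- and $\mathbb C^*$-invariant and $\varsigma$ is equivariant. One then obtains $\mathcal L_{\mathbb C}=hol\circ\varsigma$ for Theorem A, with the Bers compatibility on $\mathcal T(S)\times\mathcal T(\overline S)$ coming from the identification $\mathcal R\cap\overline{\mathcal R}\cong\mathcal T(S)\times\mathcal T(\overline S)$ via Example \ref{bersmaps}.

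The main obstacle I anticipate is verifying that the distinguished component of the étale space of section-germs is genuinely a single-valued section over $\Omega$ — i.e., ruling out nontrivial monodromy along loops in $\Omega$. This is not automatic from $\pi$ being a local biholomorphism alone; it requires knowing that the analytic continuations of $s_1$, $s_2$, and the real section are mutually consistent wherever their domains of continuation overlap, which ultimately rests on the uniqueness in Theorem \ref{thm: integrating GE for CAS} (an affine sphere is determined up to $\mathrm{SL}(3,\mathbb C)$ by its Blaschke metric and Pick tensor) together with the infinitesimal rigidity $L_\sigma$ being an isomorphism on $\CAS^*(S)$, which forces the solution $u$ to vary holomorphically and uniquely. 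I would phrase this as: over any point of $\Omega$, any two germs in the distinguished component have the same value because they both solve the Gauss equation \eqref{eq: PDE of Gauss eq} with infinitesimally rigid linearization, and two infinitesimally rigid solutions over the same $(c_1,\overline{c_2},\Q_1,\overline{\Q_2})$ that are connected through infinitesimally rigid solutions must coincide by the implicit function theorem applied fiberwise. Making this rigidity-plus-connectedness argument airtight — essentially showing the fiber of $\pi$ over each point of $\Omega$ is discrete with the distinguished section selecting one point consistently — is where the real work lies.
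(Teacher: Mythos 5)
Your proposal is in the right spirit---build $\varsigma$ as a maximal analytic continuation of $s_1$, $s_2$, and the real section across the base locus, then invoke canonicality for the equivariance---and you honestly flag the crux yourself: you need to rule out monodromy, i.e.\ to show the distinguished component of the \'etale space of section-germs is a single-valued section. But the justification you sketch for this does not close the gap. Your argument is that ``two infinitesimally rigid solutions over the same $(c_1,\overline{c_2},\Q_1,\overline{\Q_2})$ that are connected through infinitesimally rigid solutions must coincide.'' That statement is true (the fiber of $\pi$ over a point is discrete by rigidity plus Fredholmness, so a path in the fiber is constant), but it is not what is needed. When you analytically continue $\varsigma$ around a loop $\gamma$ in $\Omega$, the two germs you want to compare at the basepoint are joined by a path of solutions lying over the loop, i.e.\ over \emph{varying} input data, not over a fixed point. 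Fiberwise discreteness says nothing about this. Without a further idea, the connected component $\widetilde\Omega$ of the \'etale space need not project injectively, and ``the largest open over which a single-valued section exists'' is not even a well-defined object (there can be incomparable maximal domains, and their union may carry no section).

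The paper's proof supplies precisely the missing tool: rather than take an abstract maximal continuation, it produces the initial open sets $U_1\supset\mathcal T(S)\times\mathcal T(\overline S)$ and $U_2\supset\Delta$ via the generalized inverse function theorem and then \emph{shrinks} them until they satisfy a star-shaped condition (Assumption $(*)$): the intersection of $U_1$ with each fiber $\mathcal{CD}([c_1])\times\mathcal{CD}([\overline{c_2}])$ is a linear star-shaped set centered at $([c_1,0],[\overline{c_2},0])$, and the intersection of $U_2$ with each leaf $\{[c_1,\Q_1]\}\times\mathcal M_3(\overline S)$ is geodesically star-shaped (for the Riemannian metric of \cite{kim2017kahler,Lab3}) centered on $\Delta$, with an additional $\C^*$-compatibility. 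Under $(*)$, uniqueness of any other inverse agreeing at the center follows by tracking along the distinguished segment or geodesic to the center---a one-dimensional closed-plus-open argument---so overlaps of star-shaped domains automatically carry a unique compatible inverse. This is exactly what defuses the monodromy issue, because any two points of such a domain are joined \emph{inside} the domain to a common center. The paper then propagates these domains by the $\mathrm{MCG}(S)$ and $\C^*$ actions (which preserve $(*)$), checks that the propagated inverses patch, and glues the $U_1$-side and the $\Delta$-side together by a further thin-neighborhood argument. In short: your plan is sound at a high level, but the decisive step you identify as ``where the real work lies'' is carried out in the paper by the geometric choice of star-shaped domains, and your proposed rigidity argument, as stated, addresses a different and weaker assertion.
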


Before proving Theorem \ref{thm: main existence of inverse}, we show how it implies Theorem A.

\begin{proof}[Proof of Theorem A]
    Consider the inverse $\varsigma\colon \Omega\to \Sph(S)$ for $\pi$ given by Theorem \ref{thm: main existence of inverse} and compose it with the holonomy map to get $hol\circ \varsigma\colon \Sph(S)\to \chi(S, \SL(3,\C))$. By construction, this map is holomorphic and $\mathrm{MCG}(S)$-equivariant. Moreover, $\mathcal L= hol\circ \varsigma|_{\Delta}$, and $hol\circ\varsigma|_{\mathcal T(S)\times \mathcal T(\overline S)}= hol\circ (\mathrm{s}_1\cap \mathrm{s}_2)$ which, as shown above, coincides with Bers' Theorem after identifying $\PSL(2,\C)$ with $\mathrm{SO}(2,1, \C)$.
\end{proof}

The main tool in the proof of Theorem \ref{thm: main existence of inverse} is the generalized inverse function theorem, which says that if a local diffeomorphism restricts to a diffeomorphism along a submanifold whose image is a submanifold, then it restricts to a diffeomorphism between connected neighborhoods of them (see Exercise 1.8.14 in \cite{GuillPoll} and \cite{GIFT}). Theorem \ref{thm: main existence of inverse} is much easier to prove if we don't demand that $\Omega$ is $\C^*$ and $\mathrm{MCG}(S)$-invariant. However, the proof for the action-invariant version will require some attention on some topological issues and on the choice of the neighborhoods. In particular, to construct the open subsets, it's useful to recall the $\mathrm{MCG}(S)$-invariant Riemannian metric on $\mathcal M_3(S)$ defined in \cite{kim2017kahler} and \cite{Lab3}, which we call $g$ in the proof below. As a result, $g\oplus g$ defines a Riemannian metric on $\mathcal M_3(S)\times\mathcal M_3(\overline S)$ for which the factors $\mathcal M_3(S)\times\{0\}$ and $\{0\}\times\mathcal M_3(\overline S)$ are totally geodesic. Recall a subset $U$ of a vector space is star-shaped if there is a point $v\in U$, called the center, such that for every point $v'\in U$, the segment between $v$ and $v'$ is contained in $U.$ We call a subset of $\mathcal M_3(S)\times\mathcal M_3(\overline S)$ geodesically star-shaped if it's star-shaped with respect to geodesics for $g\oplus g$.

\begin{proof}[Proof of Theorem \ref{thm: main existence of inverse}]
By the generalized inverse function theorem mentioned above, there exist $ V_1, V_2\subset \Sph(S)$, and $U_1, U_2\subset \mathcal M_3(S)\times \mathcal M_3(\overline S)$ open subsets such that $\pi$ restricts to the biholomorphisms \begin{align*}
  \mathcal R\cap \overline{\mathcal R}  \subset V_1 &\xrightarrow{\sim} U_1 \supset \mathcal T(S)\times \mathcal T(\overline S) \\ 
\mathbb{RAS}(S)\subset V_2 &\xrightarrow{\sim} U_2 \supset  \Delta. \ 
\end{align*}
We denote the inverse maps by $\varsigma_1$ and $\varsigma_2$ respectively.

Up to restricting these open subsets, we can make the following assumptions on $U_1$ and $U_2$, which we denote by \emph{assumptions} $(*)$.

\vspace{3 pt}

\emph{Assumption $(*)$ on} $U_1$.

The intersections between $U_1$ and submanifolds of the form $\mathcal {CD}([c_1])\times \mathcal {CD}([\overline{c_2}])\subset \mathcal M_3(S)\times \mathcal M_3(\overline S)$, which can be seen as open subsets in a vector space, are star-shaped open subsets with center in $([c_1, 0], [\overline{c_2},0])$. Such open subsets can be obtained by taking local trivializations of the bundle $\mathcal M_3(S)\times \mathcal M_3(\overline S)\to \TSTS$.

\vspace{3 pt}

\emph{Assumption $(*)$ on} $U_2$.

If both $m=([c_1,  \Q_1],[\overline{c_2},, \overline{ 
\Q_2}])$ and $\zeta\cdot m=([c_1,  \zeta \Q_1],[ \overline{c_2},\frac 1 \zeta \overline{ \Q_2}])$ lie in $U_2$, then $\varsigma_2(\zeta \cdot m)=\zeta\cdot  \varsigma_2(m)$: to realize this, observe that on $\C^*\times U_2$ the functions $(\zeta, m)\mapsto \varsigma_2(\zeta\cdot m)$ and $(\zeta, m)\mapsto \zeta\cdot  \varsigma_2(m)$ are holomorphic maps that agree on the totally real submanifold $\mathbb S^1\times \Delta$ as a result of $\varsigma_2 |_\Delta$ being $\mathbb S^1$-invariant, so, up to shrinking $U_2$, they coincide on some open subset $W_0\times U_2$ (hence on its intersection with the preimage of $U_2$ itself).

Moreover, we assume that the intersections between $U_2$ and the submanifolds of the form $\{[c_1,\Q_1]\}\times \mathcal M_3(\overline S)$ are geodesically star-shaped for the metric $g\oplus g$ with center in $([c_1,\Q_1],[\overline{c_1},\overline{\Q_1}])$: this can be constructed (within the previous refinements) through the exponential map and the fact that the factors $\mathcal M_3(S)\times\{0\}$ and $\{0\}\times\mathcal M_3(\overline S)$ are totally geodesic makes the construction more simple.

\vspace{3 pt}

\emph{Step 1.}
We prove that if $\hat V_i\to \hat U_i$ is another biholomorphic restriction with $U_i$ satisfying assumption $(*)$, then its inverse coincides with the inverse of $V_i\to U_i$ on the intersection $U_i\cap \hat U_i$. We prove it for $i=1$, but the proof works in the very same fashion for $i=2$. If $([c_1,\Q_1],[c_2, \overline{\Q_2}])\in U_1\cap \hat U_1$, then by assumption $(*)$ the path $([c_1,t\Q_1],[c_2, t\overline{\Q_2}])$ lies in the intersection for all $t\in[0,1] $ (for $i=2$, take the geodesic contained in the leaf $\{[c_1, \Q_1]\}\times \mathcal M_3(\overline S)$ connecting it to the intersection point with $\Delta$). Let $\hat \varsigma_1$ be the inverse of $\hat V_1\to \hat U_1$. Now, the set $$\{t\in [0,1]\ |\ \hat \varsigma_1([c_1,t\Q_1],[c_2, t\overline{\Q_2}])= \varsigma_1 ([c_1,t\Q_1],[c_2, t\overline{\Q_2}])\}$$ contains $t=0$ and is closed; if by contradiction its maximum $T$ were less than 1, inverting $\pi$ in a neighborhood of $\varsigma_1([c_1,T\Q_1],[c_2, T\overline{\Q_2}])$ we get that such a $T$ can be extended a bit further, reaching a contradiction. We conclude that the inverses coincide in the intersection. 

\vspace{3 pt}

\emph{Step 2.} We extend the $U_i$ 's a bit further. 

In fact, observe that, for all $\phi\in \mathrm{MCG}(S)$, $\phi\cdot U_i$ satisfies assumption $(*)$ where the inverses are given by $\phi\circ \varsigma\circ \phi^{-1}\colon \phi\cdot U_i\to \phi\cdot V_i$. 

Moreover, for all $\zeta\in \mathbb C^*$, $\zeta \cdot U_1$ satisfies assumption $(*)$ and an inverse of $\pi$ is given by $\zeta\circ  \varsigma\circ \frac 1 \zeta\colon\colon \zeta\cdot U_1\to \zeta\cdot V_1$. As a result of the previous step, we get that the inverses match on the intersections. Therefore, recalling that the $\mathrm{MCG}(S)$ and the $\mathbb C^*$ twistor actions commute, $\pi$ admits a biholomorphic inverse on the restrictions
\begin{align*}
 V'_1:= \bigcup_{\phi\in \mathrm{MCG}(S)} \bigcup_{\zeta\in \mathbb C^*} \phi\cdot \zeta\cdot V_1\  &\to   \bigcup_{\phi\in \mathrm{MCG}(S)} \bigcup_{\zeta\in \mathbb C^*} \phi\cdot \zeta\cdot U_1 :=  U'_1 \\
 V'_2:= \bigcup_{\phi\in \mathrm{MCG}(S)}  \phi\cdot  V_2\ &\to  \bigcup_{\phi\in \mathrm{MCG}(S)} \phi\cdot U_1 :=  U'_1, 
\end{align*}
whose inverses we still denote by $\varsigma_1$ and $\varsigma_2$ respectively. 

Observe that $V'_1\to U'_1$ restricts to $\mathcal R\to \mathcal M_3(S)\times \mathcal T(\overline S)$ and $\overline{\mathcal R}\to \mathcal T(S)\times \mathcal M_3(\overline S)$ because for all $[(c_1, \overline{c_2}, \Q_1, 0, 0)]\in \mathcal R$ -- resp. $[(c_1, \overline{c_2}, 0, \overline{\Q_2}, 0)]\in \overline{\mathcal R}$ -- there exists $\zeta$ small enough such that $[(c_1, \overline{c_2}, \zeta \Q_1, 0, 0)]$ -- resp. $[(c_1, \overline{c_2}, 0, \zeta \overline{\Q_2}, 0)]$ -- lies in $V_1$.

Notice that a priori it does not seem clear whether $\varsigma_1$ and $\varsigma_2$ agree on $U'_1\cap U'_2$.

\vspace{3 pt}

\emph{Step 3.} We glue $U'_1$ and a neighborhood of $\Delta$.

The restriction $\pi|_{V'_1}$ coincides with the real analytic map $\mathbb{RAS}(S)\to \Delta$ on the intersection ${V'_1}\cap \mathbb{RAS}(S)$: by uniqueness, $\varsigma_1$ must coincide with $\varsigma_2$ in a neighborhood $W\subset U'_1$ of $\Delta\cap U'_1$. Extend now $W$ out of $U'_1$ to an open neighborhood of $\Delta$ (to construct this extension, one could take a thin enough neighborhood of $\Delta$ disjoint from the closure of $U'_1\setminus W$ and then unionize it with $W$), which we still denote by $W$. Up to shrinking it, we can also assume $W$ to be contained inside $U'_2$, and to satisfy assumption $(*)$. By construction, the restriction of $\varsigma_2$ on $W$ coincides with $\varsigma_1$ on $W\cap U'_1$. 

Finally, by the fact that $U_2'$ is $\mathrm{MCG}(S)$-invariant and that $\varsigma_2$ is $\mathrm{MCG}(S)$-equivariant, $\phi\cdot W\subset U_2'$ for all $\phi\in \mathrm{MCG}(S)$ and $\varsigma_2$ is well-defined on  
\[
U''_2 := \bigcup_{\phi\in \mathrm{MCG}(S)} \phi\cdot  W, \ 
\]
whose image we denote by $V''_2:=\varsigma(U'')\subset V'_2$.

Moreover, $\varsigma_1$ and $\varsigma_2$ coincide on $U_2''\cap U_1'$: in fact, if by contradiction there existed a point in which they were different, then, since both sections are $\mathrm{MCG}(S)$-equivariant, one would find a point in $W\cap U'_1$ in which they are different, giving a contradiction. 

We call $\varsigma$ the inverse constructed by patching $\varsigma_1$ and $\varsigma_2$ on $U'_1\cup U_2''$.

\vspace{3 pt}

\emph{Step 4.} We prove that $\varsigma$ can be extended to
\[
\Omega:= U'_1\cup \bigcup_{\zeta\in \mathbb C^*} \zeta\cdot U''_2\ ,
\]
concluding the proof.

On each $\zeta\cdot U''_2$, consider the inverse $\varsigma_2^\zeta:= \zeta \circ \varsigma_2\circ \frac 1 \zeta\colon \zeta\cdot U''_2\to \zeta \cdot V''_2$ for $\pi$. 

For all $\zeta \in \mathbb C^*$, on $\zeta\cdot U''_2\cap U_1'$, $\varsigma^\zeta _2$ and $\varsigma_1$ coincide, since if they didn't then one would find a point of $U''_2\cap U'_1$ in which they don't coincide, because $\varsigma_1$ is $\mathbb C^*$-equivariant.

We prove that different $\varsigma^\zeta$ 's coincide on the intersection. Recall that on $U_2$ the inverse $\varsigma_2$ is such that $\varsigma_2(\zeta\cdot m)= \zeta\cdot \varsigma_2(m)$ when both $m$ and $\zeta\cdot m$ lie in $U_2$; since the $\mathrm{MCG}(S)$ and the $\C^*$ actions commute, this property holds also on $U_2'$, hence on $U_2''$. Finally, if $\varsigma_2^\zeta$ and $\varsigma_2^{\zeta \hat \zeta}$ did not coincide on $(\zeta \cdot U'')\cap (\zeta \hat\zeta \cdot U'')$, then (by applying $\frac 1 \zeta$) we would have that $\varsigma_2$ and $\varsigma_2^{\hat \zeta}$ do not coincide on $U''\cap (\hat \zeta \cdot U'')$, contradicting the assumption $(*)$ for $U$.

\end{proof}

\subsection{Proof of Theorem C}\label{sec: ThmD}
Here we prove Theorem C. As stated in Section 1.2, to do so we analyze one of the simpler families of points in $\mathcal{CD}(S)\times \mathcal{CD}(\overline{S}),$  namely rays $$[0,\infty)\to \mathcal{CD}(S)\times \mathcal{CD}(\overline{S}), t\mapsto (c,\overline{c},t\Q,-t\overline{\Q}).$$  The Gauss equation (\ref{eq: PDE of Gauss eq}) is 
\begin{equation}\label{eqn: bad Gauss}
     \Delta_h u = e^{2u}+\frac{t^2}{4}e^{-4u}h(\Q,\overline{\Q}) -1,
\end{equation}
which, when restricted to real functions $u,$ coincides with the structural equation for minimal Lagrangian immersions into the $2$-dimensional complex hyperbolic space $\mathbb{CH}^2.$ Minimal Lagrangians in $\mathbb{CH}^2$ have been studied in a number of works, such as \cite{LofM} and \cite{HLM}, and we can use known results directly. 

We need only part (3) of the following theorem about real solutions, which is proved by a calculation using Gauss-Bonnet, but we include parts (1) and (2) because they shed light on the theory of complex affine spheres. Note that in previous works on Equation (\ref{eqn: bad Gauss}), authors have considered only real solutions. Here we are allowed to have complex solutions. So when stating previous results, we will be sure to specify that they apply only for real solutions.

\begin{thm}[Theorem 1.1 in Huang-Loftin-Lucia \cite{HLM}]\label{thm: HLL}
    Let $c$ be a complex structure on $S$ and $\Q$ a $c$-holomorphic cubic differential. There are constants $T_0=T_0(c,\Q)>0$ and $T=T(c,\Q)>T_0$ such that the following hold.
    \begin{enumerate}
        \item For any $t\in [0,T_0),$ there are at least two real solutions to (\ref{eqn: bad Gauss}).
        \item For $t=T_0,$ there is a real solution to (\ref{eqn: bad Gauss}).
        \item For $t>T$, there are no real solutions to (\ref{eqn: bad Gauss}).
    \end{enumerate}
\end{thm}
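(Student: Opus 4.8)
\textbf{Proof plan for Theorem \ref{thm: HLL}, part (3).}

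The plan is to establish a Gauss--Bonnet obstruction to the existence of real solutions when $t$ is large. First I would observe that a real solution $u$ to \eqref{eqn: bad Gauss} produces a minimal Lagrangian immersion into $\mathbb{CH}^2$ whose induced metric is $g = e^{2u} h$; equivalently, and more directly for the estimate, the curvature of $g$ is
\[
\mathrm{K}_g = e^{-2u}\Big(\mathrm{K}_h - \tfrac{1}{2}\Delta_h u\Big) = e^{-2u}\Big(-1 - \tfrac12(e^{2u} - 1 + \tfrac{t^2}{4}e^{-4u}h(\Q,\overline\Q))\Big),
\]
using \eqref{eq: conf curvature} with $\mathrm{K}_h=-1$ together with the equation itself. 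So $\mathrm{K}_g = -\tfrac12 - \tfrac12 e^{-2u} - \tfrac{t^2}{8} e^{-6u} h(\Q,\overline\Q)$. Since $u$ is real and $h(\Q,\overline\Q)\geq 0$ pointwise (it is a nonnegative quantity on the real locus, being $|\Q|_h^2$ up to a positive constant), this shows $\mathrm{K}_g \leq -\tfrac12$ everywhere. A cruder but sufficient bound is $\mathrm{K}_g \leq -\tfrac12 e^{-2u}$, which I would keep in mind as an alternative.

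Next I would integrate. By the Gauss--Bonnet theorem, $\int_S \mathrm{K}_g\, dA_g = 2\pi \chi(S) = 4\pi(1-g_S)$, where $g_S \geq 2$ is the genus. Writing $dA_g = e^{2u}\, dA_h$ and using the computed expression for $\mathrm K_g$,
\[
4\pi(1-g_S) = \int_S \mathrm{K}_g e^{2u}\, dA_h = \int_S\Big(-\tfrac12 e^{2u} - \tfrac12 - \tfrac{t^2}{8} e^{-4u} h(\Q,\overline\Q)\Big) dA_h.
\]
Rearranging,
\[
\frac{t^2}{8}\int_S e^{-4u} h(\Q,\overline\Q)\, dA_h = -4\pi(1-g_S) - \tfrac12\mathrm{Area}_h(S) - \tfrac12 \int_S e^{2u}\, dA_h,
\]
so in particular $\tfrac{t^2}{8}\int_S e^{-4u} h(\Q,\overline\Q)\, dA_h \leq 4\pi(g_S - 1)$. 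The left side is $t^2$ times a positive quantity, but the right side is a fixed constant; to turn this into a contradiction I need a lower bound on $\int_S e^{-4u} h(\Q,\overline\Q)\, dA_h$ that does not degenerate as $t\to\infty$. This requires a uniform upper bound on $u$: from the equation, at a maximum point $p$ of $u$ one has $\Delta_h u(p)\leq 0$, hence $e^{2u(p)} \leq 1 + \tfrac{t^2}{4}e^{-4u(p)}h(\Q,\overline\Q)(p) - \Delta_h u(p) $... actually the sign works the other way, so instead I would extract the bound from a maximum-principle argument on the quantity $e^{-6u}$ or directly cite the a priori estimates in \cite{HLM}. The cleanest route: from $\mathrm K_g \le -\tfrac12 e^{-2u}$ and the fact that $\mathrm{Area}_g(S) = \int e^{2u}dA_h$ is controlled via Gauss--Bonnet by $-\int \mathrm K_g dA_g \le$ (something), one gets $\int e^{2u} dA_h \le 8\pi(g_S-1)$, and combined with a pointwise lower bound $u \ge -C$ (again from the maximum principle applied at a minimum of $u$, where $\Delta_h u \ge 0$ forces $e^{2u} + \tfrac{t^2}{4}e^{-4u}h(\Q,\overline\Q) \ge 1$, which if $h(\Q,\overline\Q)$ vanishes somewhere gives $u\ge 0$ there, but in general needs care) one concludes.

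The main obstacle is precisely obtaining the uniform-in-$t$ pointwise bounds on a hypothetical real solution $u$ so that the integral $\int_S e^{-4u}h(\Q,\overline\Q)\,dA_h$ stays bounded below. If $\Q\not\equiv 0$, then $h(\Q,\overline\Q)$ is a fixed nonnegative function, positive on a dense open set, so it suffices to bound $u$ from above uniformly in $t$ on that open set; this follows from a standard maximum-principle/blow-up argument but must be done carefully because the equation's nonlinearity is not monotone in the naive sense. Since Huang--Loftin--Lucia carry out exactly this analysis in \cite{HLM}, I would, for the purposes of this paper, simply invoke their Theorem 1.1(3) directly as stated, and the ``proof'' here amounts to the Gauss--Bonnet bookkeeping above plus the citation; there is no need to reprove their a priori estimates. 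The case $\Q \equiv 0$ is excluded from the statement implicitly (or trivially, since then \eqref{eqn: bad Gauss} always has the hyperbolic solution $u=0$), so no issue arises there.
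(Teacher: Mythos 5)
The paper does not reprove this theorem --- it cites \cite{HLM} verbatim and only remarks in passing that part (3) ``is proved by a calculation using Gauss--Bonnet.'' Your final move, invoking Theorem 1.1(3) of \cite{HLM} directly, is therefore exactly what the paper does.

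Two comments on your Gauss--Bonnet sketch. First, a factor error: formula \eqref{eq: conf curvature} with $\varrho = e^{2u}$ gives
\[
\mathrm K_g = e^{-2u}\Bigl(\mathrm K_h - \tfrac12\Delta_h\log(e^{2u})\Bigr) = e^{-2u}\bigl(\mathrm K_h - \Delta_h u\bigr),
\]
not $e^{-2u}(\mathrm K_h - \tfrac12\Delta_h u)$. Substituting \eqref{eqn: bad Gauss} then yields the cleaner identity $\mathrm K_g = -1 - \tfrac{t^2}{4}e^{-6u}h(\Q,\overline\Q)$, and Gauss--Bonnet becomes the exact balance
\[
\int_S e^{2u}\,dA_h + \frac{t^2}{4}\int_S e^{-4u}h(\Q,\overline\Q)\,dA_h = -2\pi\chi(S).
\]
Second, the gap you flag --- that the argument requires a priori pointwise bounds on $u$ so that $\int_S e^{-4u}h(\Q,\overline\Q)\,dA_h$ stays bounded below --- can be sidestepped entirely by H\"older with exponents $(3,3/2)$:
\[
\int_S \bigl(h(\Q,\overline\Q)\bigr)^{1/3}\,dA_h = \int_S \bigl(e^{-4u}h(\Q,\overline\Q)\bigr)^{1/3}e^{4u/3}\,dA_h\le \Bigl(\int_S e^{-4u}h(\Q,\overline\Q)\,dA_h\Bigr)^{1/3}\Bigl(\int_S e^{2u}\,dA_h\Bigr)^{2/3}.
\]
The Gauss--Bonnet balance bounds both factors, making the right side $O(t^{-2/3})$ while the left side is a fixed positive constant whenever $\Q\not\equiv 0$, which gives the contradiction for $t$ large without any maximum-principle input. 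Your observation that $\Q\equiv 0$ is degenerate (and excluded from the paper's use of the theorem) is correct.
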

One hint toward Theorem \ref{thm: HLL} is that Equation (\ref{eqn: bad Gauss}) strongly resembles the better-known Gauss equation for minimal surfaces in $\mathbb{H}^3$, which is not particularly well behaved (see \cite{Uh}). Heuristically, issues should arise because the linearization at a solution $\sigma$ is expected to have a kernel for $t$ large. Indeed, at a solution $\sigma=(c,\overline{c},t\Q,t\overline{\Q},u)$, the linearization is given by 
\begin{equation}\label{eqn: bad linearization}
    L_{\sigma_t}(v) = \Delta_h v -v(e^{u}-t^2e^{-4u}h(\Q,\overline{\Q})).
\end{equation}
For $\Q$ very large, the expression in the brackets in (\ref{eqn: bad linearization}) is negative. If we pretend that expression is a negative constant, then Equation (\ref{eqn: bad linearization}) becomes the equation for eigenfunctions of the Laplacian, which could have a non-zero solution. The authors in \cite{HLM} realize this expectation by showing that that the linearization does have non-empty kernel at the solution at time $T_0$.

As in \cite{HLM}, one can construct a nice path of real solutions around $(c,\overline{c},0,0,0)$. Define $G_{\R}: \mathcal{CD}(S)\times C^\infty(S,\R)\to C^\infty(S,\R)$ by $G_{\R}(c,\Q,u)=G(c,\overline{c},\Q,-\overline{\Q}, u).$ Taking $\Q=0$ and $u=0$ and extending $G_{\R}$ to all of the Banach manifolds $\mathcal{CD}^l(S)\times W^{k+2,2}(S,\R)$, the linearization of $G_{\R}$ is an isomorphism in the direction of $u$, and it follows that there is a unique continuous path into $C^\infty(S,\R)$ of real solutions $t\mapsto u_t$ of $G(c,\overline{c},t\Q,-t\overline{\Q},u_t)=0,$ for $t$ in some small interval $(-\varepsilon,\varepsilon).$ 

Theorem C follows as a consequence of the following Proposition.
\begin{prop}
    There exists no global holomorphic map 
    $\varsigma\colon\CD(S)\times\CD(\overline S)\to \CD(S)\times \CD(\overline S)\times C^{\infty}(S,\C)$
    whose image is contained in $\CAS(S)$, and which inverts $\pi$ and maps any $(c,\overline c, 0,0)$ to $(c,\overline c, 0,0,0)$. 
\end{prop}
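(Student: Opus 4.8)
The plan is to argue by contradiction, combining the path-of-real-solutions discussion immediately preceding the statement with part (3) of Theorem~\ref{thm: HLL}. Suppose such a global holomorphic section $\varsigma\colon \CD(S)\times\CD(\overline S)\to \CD(S)\times\CD(\overline S)\times C^\infty(S,\C)$ exists, inverting $\hat\pi$, with image in $\CAS(S)$ and with $\varsigma(c,\overline c,0,0)=(c,\overline c,0,0,0)$. Writing $\varsigma(c_1,\overline{c_2},\Q_1,\overline{\Q_2})=(c_1,\overline{c_2},\Q_1,\overline{\Q_2},u(c_1,\overline{c_2},\Q_1,\overline{\Q_2}))$, the map $u$ is a globally defined holomorphic $\C$-valued function on the complex Banach/Fr\'echet manifold $\CD(S)\times\CD(\overline S)$ solving $G\equiv 0$.

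First I would fix a complex structure $c$ and a nonzero $c$-holomorphic cubic differential $\Q$, and restrict $\varsigma$ to the complex line $\zeta\mapsto (c,\overline c,\zeta\Q,-\zeta\overline{\Q})$, $\zeta\in\C$. This gives a holomorphic map $f\colon\C\to C^\infty(S,\C)$, $f(\zeta)=u(c,\overline c,\zeta\Q,-\zeta\overline{\Q})$, with $f(0)=0$, solving $G(c,\overline c,\zeta\Q,-\zeta\overline{\Q},f(\zeta))=0$ for all $\zeta\in\C$. On the real interval $\zeta=t\in(-\varepsilon,\varepsilon)$, uniqueness of the path of real solutions from the implicit function theorem argument recalled before the Proposition forces $f(t)=u_t$ to be \emph{real}-valued (indeed $\overline{f(\overline\zeta)}$ also solves the equation with conjugated data, and since the data $(c,\overline c,\overline\zeta\Q,-\overline\zeta\overline{\Q})$ has the same form, a uniqueness/identity-theorem argument shows $\overline{f(\overline\zeta)}=f(\zeta)$ on a neighborhood of $0$ hence everywhere, so $f(t)\in C^\infty(S,\R)$ for all $t\in\R$). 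Therefore for \emph{every} real $t$ the function $f(t)$ is a real solution of Equation~\eqref{eqn: bad Gauss}. But by part (3) of Theorem~\ref{thm: HLL} there is a threshold $T=T(c,\Q)$ beyond which \eqref{eqn: bad Gauss} has \emph{no} real solution. Taking any $t>T$ yields a contradiction.

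The main technical point, and the step I expect to require the most care, is justifying that $f$ takes real values on all of $\R$ rather than just near $0$. The cleanest route is: (i) real-analyticity/holomorphy of $\varsigma$ together with the $\mathbb S^1$-invariance properties already exploited in the proof of Theorem~\ref{thm: main existence of inverse} shows that $\zeta\mapsto\overline{f(\bar\zeta)}$ is holomorphic and solves the same equation as $f$ (because complex conjugation sends the complex affine sphere data $(c,\overline c,\zeta\Q,-\zeta\overline\Q,u)$ to $(c,\overline c,\overline\zeta\Q,-\overline\zeta\overline\Q,\overline u)$, as noted in the text preceding Theorem~\ref{thm: main existence of inverse}); (ii) these two holomorphic maps $\C\to C^\infty(S,\C)$ agree on $(-\varepsilon,\varepsilon)$ by uniqueness of the local real solution path; (iii) by the identity theorem for holomorphic maps into a complex Banach space they agree on all of $\C$, so $f(t)=\overline{f(t)}$ for $t\in\R$. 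One must also check the mild regularity point that a $W^{k+2,2}$ solution of $G=0$ with smooth data is smooth, which is exactly Proposition~\ref{nonlinearreg}, so working in the Banach category and passing to $C^\infty$ is harmless. Finally, the passage ``$\varsigma$ inverts $\hat\pi$ $\Rightarrow$ the $u$-component is a genuine globally defined solution'' is immediate from the definition of $\hat\pi$, and the normalization $\varsigma(c,\overline c,0,0)=(c,\overline c,0,0,0)$ is what pins down $f(0)=0$ and hence selects the real-solution branch near $t=0$; without it one could not conclude that $f$ stays real. This completes the proof of the Proposition, and Theorem~C follows since such a $\varsigma$ would in particular restrict to a right inverse of $\hat\pi$ extending the real affine spheres parametrization.
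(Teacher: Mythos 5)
Your proof is correct and follows essentially the same route as the paper: restrict $\varsigma$ to the ray $(c,\overline{c},t\Q,-t\overline{\Q})$, identify the local branch near $t=0$ with the real solution from the implicit function theorem (using that $L_{\sigma_0}=\Delta_h-2\cdot\mathrm{id}$ is invertible so $\hat\pi$ is a local biholomorphism near $\sigma_0$), argue that the branch remains real for all $t>0$, and contradict part (3) of Theorem~\ref{thm: HLL}. The only cosmetic difference is in the reality-extension step: you introduce the conjugation symmetry $u\mapsto\overline u$ of the Gauss equation and apply the identity theorem to the holomorphic difference $\zeta\mapsto f(\zeta)-\overline{f(\overline\zeta)}$, whereas the paper observes directly that for each $p\in S$ the function $t\mapsto u_t(p)$ is real-analytic and real on an interval near $0$, hence real for all $t$; these are two renderings of the same identity-theorem idea, and both are fine.
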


\begin{proof}

Denote $\sigma_t=(c, \overline c, t\Q, -t\overline {\Q}, u_t):=\varsigma(c, \overline c, t\Q, -t\overline {\Q})$. By the fact that $L_{\sigma_0}$ is invertible, $\sigma_0\in \CAS^*(S)$, and since $\CAS^*(S)\to \CD(S)\times\CD(\overline S)$ is a local homeomorphism, we get that, for $t>0$ small enough, $u_t$ must be the real solution to $G=0$ mentioned above, just because they can be continuously deformed to $t=0$. Moreover, since $\zeta$ is holomorphic, the path $\zeta(c,\overline c, t\Q, -t\overline{\Q}, u_t)$ is real analytic with respect to $t$, hence at each point $p\in S$, $u_t(p)$ being real for small $t$ implies it being real for all $t>0$. This would produce a real solution $u_t$ for $t>T$, contradicting Theorem \ref{thm: HLL}.
\end{proof}

\begin{remark}
We expect that, as one can possibly see by writing the connection forms explicitly or by means of the perspective of \cite{RT}, complex affine spheres projecting to $(c,\overline c, \Q, -\overline \Q)\in \CD(S)\times \CD(\overline S)$ with Riemannian Blaschke metric have the same holonomy as the corresponding minimal Lagrangian immersion in $\mathbb{CH}^2$.
\end{remark}

%\bibliographystyle{plain}
%\bibliography{bibliography}

\printbibliography

\end{document}